\newtheorem{theorem}{Theorem}[section]
\newtheorem{corollary}[theorem]{Corollary}
\newtheorem{lemma}[theorem]{Lemma}
\newtheorem{proposition}[theorem]{Proposition}
\newtheorem{definition}[theorem]{Definition}
\newtheorem{example}[theorem]{Example}
\newtheorem{remark}[theorem]{Remark}
\newcommand{\R}{\mathbb{R}}
\newcommand{\N}{\mathbb{N}}
\newcommand{\AW}{\mathcal{AW}}
\renewcommand{\epsilon}{\varepsilon}
\title[A DDP for multiperiod bicausal optimization problems]{A dynamic programming principle for multiperiod control problems with bicausal constraints}
\author{Ruslan Mirmominov}
\address{Ruslan Mirmominov\newline
\hbox{}\hspace{0.33cm} Carnegie Mellon University\newline
\hbox{}\hspace{0.33cm} Department of Mathematics}
\email{rmirmomi@andrew.cmu.edu}
\author{Johannes Wiesel}
\address{Johannes Wiesel\newline
\hbox{}\hspace{0.33cm} Carnegie Mellon University\newline
\hbox{}\hspace{0.33cm} Department of Mathematics}
\email{wiesel@cmu.edu}
\keywords{(discrete time) stochastic control, dynamic programming principle, optimal transport, (adapted) Wasserstein distance}
\thanks{JW acknowledges support by NSF Grant DMS-2345556 and would like to thank the Sydney Mathematical Research Institute for its hospitality. JW would also like to thank Martin Larsson for helpful discussions.}
\begin{document}
\maketitle

\begin{abstract}
We consider multiperiod stochastic control problems with non-parametric uncertainty on the underlying probabilistic model. We derive a new metric on the space of probability measures, called the adapted $(p, \infty)$--Wasserstein distance $\AW_p^\infty$ with the following properties: (1) the adapted $(p, \infty)$--Wasserstein distance generates a topology that guarantees continuity of stochastic control problems and (2) the corresponding $\AW_p^\infty$-distributionally robust optimization (DRO) problem can be computed via a dynamic programming principle involving one-step Wasserstein-DRO problems.
If the cost function is semi-separable, then we further show that a minimax theorem holds, even though balls with respect to $\AW_p^\infty$ are neither convex nor compact in general. We also derive first-order sensitivity results. 
\end{abstract}

\section{Introduction}

Distributionally robust optimization (DRO) problems of the type 
\begin{align} \label{eq:dro_intro}
V^{\mathcal{W}_p}(\delta):=\inf_{\alpha\in \mathcal{A}} \sup_{\nu \in B^{\mathcal{W}_p}_\delta(\mu)} \int f(x, \alpha)\,\nu(dx)
\end{align}
have recently gained popularity in operations research, mathematical finance, statistics and many other fields. In \eqref{eq:dro_intro}, the probability measure $\mu$ on $\R^d$ is considered a benchmark, which could either be derived from an idealized model as common in mathematical finance, or from historical observations as often assumed in machine learning or statistics, and we aim to minimize the expected value of a cost function $f$ over a set of controls $\mathcal{A}$. To account for uncertainty in the choice of $\mu$, \eqref{eq:dro_intro} formalises a worst-case approach: instead of merely considering the probabilistic dynamics under $\mu$, nature is permitted to choose perturbations $\nu$ in a $p$-Wasserstein ball $B_\delta ^{\mathcal{W}_p}(\nu)$ centered at $\mu$ (see \eqref{eq:wass} for a formal definition of the Wasserstein distance $\mathcal{W}_p$). For specific choices of $f$ and $\mathcal{A}$, $V^{\mathcal{W}_p}(\delta)$ captures robust versions of (one-period) option pricing models, optimal investment problems and risk measures classically studied in mathematical finance, as well as linear regression or  training of neural networks in machine learning and statistics; we refer to \cite{bartl2021sensitivity} for a more detailed analysis of these exemplary applications. In the last couple of years, many important contributions in the study of $V^{\mathcal{W}_p}(\delta)$ have been made: we refer to \cite{blanchet2019quantifying, gao2023distributionally, bartl2020computational, mohajerin2018data} for dual representations, to \cite{bartl2021sensitivity, blanchet2023statistical} for first-order approximations and to \cite{kuhn2019wasserstein, blanchet2019robust, olea2022out, gao2023finite} and the references therein for applications to machine learning. 

While the DRO problem \eqref{eq:dro_intro} is thus well understood as way to formalize model uncertainty on $\R^d$, the situation changes a lot if $\mu$ is considered to be the law of an $N$-step stochastic process $X=(X_t)_{t=1}^N$ in its natural filtration. Contrary to the static case \eqref{eq:dro_intro} discussed above, it turns out that Wasserstein balls $B^{\mathcal{W}_p}_\delta(\mu)$ around $\mu$ are \emph{not} a suitable model for model uncertainty in this time-dynamic setting anymore. In fact, if $\mathcal{A}$ is the set of predictable processes (as would be the case for utility maximization problems), then the functional 
\begin{align}\label{eq:nu}
\nu \mapsto \inf_{\alpha\in \mathcal{A}} \int f(x, \alpha)\,\nu(dx)
\end{align}
is not continuous with respect to $\mathcal{W}_p$ so that in general $V^{\mathcal{W}_p}(\delta)\downarrow V^{\mathcal{W}_p}(0)$ for $\delta\downarrow 0$ does not hold. In essence this is due to the fact, that any distance compatible with weak convergence does not control the natural filtration of the process $X$ and consequently its set of admissible controls $\mathcal{A}$; we refer to \cite{backhoff2020adapted} for a well-written explanation of this discontinuity effect, illustrated with a simple two-step stochastic process. In consequence, the interpretation of $V^{\mathcal{W}_p}(\delta)$ as a small perturbation of $V^{\mathcal{W}_p}(0)$ is not justified any more, and it is necessary to consider stronger topologies to define feasible versions of \eqref{eq:dro_intro} for laws of stochastic processes.

Recently the \emph{adapted Wasserstein topology} --- as one canonical choice of such a topology --- has received a lot of attention. In fact it is the coarsest topology, which makes  optimal stopping problems continuous \cite{backhoff2020all}. The adapted Wasserstein topology has been rediscovered many times throughout different disciplines; we refer e.g., to \cite{aldous1979extended, hoover1984adapted, ruschendorf1985wasserstein, pflug2012distance, pflug2014multistage, lassalle2018causal, bartl2021wasserstein, bonnier2023adapted} and the references therein.
A suitable metric for this topology is the so-called \emph{adapted Wassersten distance} $\mathcal{AW}_p$ (see Definition \ref{def:aw} below). Similarly to $\mathcal{W}_p$, $\mathcal{AW}_p$ is defined as an optimal transport (OT) problem and under certain regularity assumptions, the functional \eqref{eq:nu} is in fact Lipschitz-continuous with respect to $\AW_p$ \cite{backhoff2020adapted}. It thus seems reasonable to formulate the $\AW_p$-DRO problem 
\begin{align}\label{eq:dro2_intro}
    V^{\mathcal{AW}_p}(\delta) := \inf_{\alpha\in \mathcal{A}} \sup_{\nu \in B^{\mathcal{AW}_p}_\delta(\mu)} \int f(x, \alpha)\,\nu(dx)
\end{align}
as a natural multiperiod counterpart to \eqref{eq:dro_intro}. While theoretically appealing, the structure of $V^{\mathcal{AW}_p}(\delta)$ is significantly more involved than $V^{\mathcal{W}_p}$. In fact, $\mathcal{AW}_p$ is a \emph{nested} optimization problem and contrary to the Wasserstein balls $B^{\mathcal{W}_p}_\delta(\mu)$, $\AW_p$-balls are neither convex nor closed in general. These facts make a numerical exploration of $B^{\mathcal{AW}_p}_\delta(\mu)$ difficult; in consequence, $V^{\mathcal{\AW}}(\delta)$ is hard to compute. While this issue seems to be well-known, to the best of our knowledge it has not been addressed at this level of generality in the literature so far: \cite{jiang2024duality, han2022distributionally, aroradata} derive a Langragian representation of $V^{\mathcal{AW}_p}(\delta)$ under specific assumptions on the function $f$, while \cite{bartl2023sensitivity} give a first-order approximation of the corresponding $\sup$-$\inf$-problem; however due to non-convexity it is not clear if this problem equals $V^\mathcal{AW}(\delta)$ in general. We also refer to \cite{jiang2024sensitivity} for a recent study of sensitivities of an uncontrolled DRO problem with causal constraints in discrete and continuous time, as well as for the specific case of a causal DRO problem with martingale constraints to \cite{sauldubois2024first}.

On the other hand, computation of DRO problems is a well studied problem in the optimization literature (see e.g., \cite{rahimian2019distributionally} for an overview), and is classically achieved through a \emph{dynamic programming principle} (DPP), which breaks up the multiperiod problem $V^{\mathcal{AW}_p}(\delta)$ into its one-step counterparts. Existence of a DPP is strongly connected to a \emph{rectangularity} property of the underlying sets of probabilistic models considered; see e.g., \cite{epstein2003recursive, iyengar2005robust, shapiro2016rectangular, wiesemann2013robust, shapiro2021distributionally, wang2023foundation} and the references therein. In this context, the seminal works \cite{epstein2003recursive, iyengar2005robust} give specific constructions for sets of probability measures which allow for a DPP reformulation. A similar approach is taken for utility maximization under non-parametric uncertainty in discrete time \cite{nutz2016utility}, which also gives a construction of the set of probability measures considered; see also \cite{carassus2019robust, carassus2023discrete, bayraktar2017arbitrage, blanchard2018multiple, neufeld2018robust} for extensions of this framework. On the other hand, \cite{shapiro2016rectangular} goes one step further, and simply defines a set of measures to be rectangular, if a DPP holds. Inspired by this, we aim to answer the following question in this article:

\begin{tcolorbox}\label{color}
Can we identify a (close) variant of $\AW_p$, that makes balls around $\mu$ rectangular, i.e., that allows for a dynamic programming principle with one-step DRO problems of the type $V^{\mathcal{W}_p}(\delta)$?
\end{tcolorbox}

While the recursive structure of the adapted Wasserstein distance $\AW_p$ (see e.g., \cite[Chapter 2]{pflug2014multistage}) might initially suggest that it is in fact already possible to formulate a DPP for $V^{\mathcal{AW}_p}(\delta)$, a closer analysis of the balls $B^{\mathcal{AW}_p}_\delta(\mu)$ and their nested $L^p$-structure quickly reveals that this is not achievable (see Section \ref{sec:aw_inf} for a more detailed discussion). In this note we resolve this issue by deriving a new distance that satisfies the above abstract rectangularity condition. We call this new distance the \emph{adapted $(p,\infty)$--Wasserstein metric} $\mathcal{AW}_p^\infty$ (see Definition \ref{def:aw_inf} below) and show that $\AW_p \lesssim \AW_p^\infty \lesssim \AW_\infty$. In this sense, $\AW_p^\infty$ can be seen as a natural interpolation between adapted Wasserstein metrics. Denoting the corresponding $\AW_p^\infty$--DRO problem by
\begin{align*}
    V(\delta) := \inf_{\alpha\in \mathcal{A}} \sup_{\nu \in B_\delta(\mu)} \int f(x, \alpha)\,\nu(dx),
\end{align*}
where $B_\delta(\mu):= B_\delta^{\AW_p^\infty}(\mu)$ is  a ball of radius $\delta$ around $\mu$ in $\AW_p^\infty$-distance and $\mathcal{A}$ are predictable controls taking values in the compact set $K^N$, our main contributions can be informally summarized as follows: define $V^\delta_N: =f $ and
\begin{align}
V_t^\delta(x_{1:t}, y_{1:t}, \alpha_{1:t})= \inf_{\alpha_{t+1}\in K} \sup_{\gamma^{t+1}\in \Pi_\delta(\mu_{x_{1:t}}, \cdot)} \int V_{t+1}^\delta(x_{1:t+1}, y_{1:t+1}, \alpha_{1:t+1})\,\gamma^{t+1}(dx_{t+1},dy_{t+1})
\end{align}
for $t=N-1, \dots, 0$, where $\Pi_\delta(\mu_{x_{1:t}}, \cdot)$ is the set of couplings $\pi$ on $\R\times \R$ with first marginal $\mu_{x_{1:t}}(\cdot)=\mu(\cdot|X_{1:t}=x_{1:t})$ and $L^p$-cost at most $\delta.$ Theorem \ref{equiv:control_open} below states that the DPP $V(\delta)=V^\delta_0$ holds under mild regularity assumptions; in particular there is no need to assume that $\alpha\mapsto f(x,\alpha)$ is convex. In conclusion, the $\AW_p^\infty$-DRO problem $V(\delta)$ captures the best of two worlds: on the one hand the topology induced by $\AW_p^\infty$ makes \eqref{eq:nu} continuous, while $V(\delta)$ allows for an elegant reformulation of well-studied one-step Wasserstein distributionally robust optimization problems on the other hand.

Next to the derivation of a DPP, efficient computation of DRO problems classically relies on so-called \emph{minimax theorems} \cite{von1928theorie, sion1958general, fan1953minimax}, asserting that the supremum and the infimum in $V(\delta)$ can be interchanged without altering the value of the optimization problem. These minimax problems usually rely on convexity and compactness of the sets $\mathcal{A}$ and $B_\delta(\mu)$. Even though such a convexity and compacity property is not satisfied for the $\AW^\infty_p$-balls $B_\delta(\mu)$ in general, a minimax theorem can still be derived: we show in Corollary \ref{cor:minimax} that the representation
\begin{align*}
V(\delta)=  \sup_{\nu \in B_\delta(\mu)} \inf_{\alpha\in \mathcal{A}} \int f(x, \alpha)\,\nu(dx)
\end{align*}
holds, as soon as $f$ is semi-separable and convex in the control variable. To the best of our knowledge, this is the first minimax theorem for bicausal DRO problems. 

Let us emphasize that $V(\delta)$ allows for an easily interpretable DPP without sacrificing structural results already established for $V^{\mathcal{AW}_p}(\delta)$. In fact, following the approach in \cite{bartl2023sensitivity} we can still compute first order-approximations 
$$V(\delta)\approx V(0)+\delta\cdot \Upsilon +o(\delta),$$
where $\Upsilon$ depends only on $f$ and $\mu$; 
see Theorem \ref{dro:control_sensitivity}. Finally, in order to pave the way for potential applications of our results in robust option pricing, we also extend our study to the case where the plausible models $\nu\in B_\delta(\mu)$ are restricted to be martingale measures.\\

\textbf{Notation.}
Throughout this note, we take $p,q\in [1,\infty)$, such that $1/p + 1/q = 1$, and fix $N\in \mathbb N$. For $k\in \N$ we equip $\R^k$ with the Euclidean norm $|\cdot|$ and often consider the vectors $x_{s:t}:= (x_s, \dots, x_t)$, where $x=(x_1, x_2, \dots, x_k)\in \R^k$ and $1\le s\le t\le k.$ We also define $\|x\|:=(\sum_{t=1}^k |x_t|^p)^{1/p}.$ We use the notation $x\twoheadrightarrow F(x)$ for correspondences (i.e., set-valued functions).
We write $\mathcal{P}(\R^k)$  for the set of Borel probability measures on $\R^k$ and similarly $\mathcal{P}_p(\R^k):= \{\nu\in \mathcal{P}(\R^k): \int \|x\|^p\,\nu(dx)<\infty\}$. The Lebesgue measure on $\R^k$ will be denoted by $\text{Leb}$, while we use $\text{spt}(\mu)$ for the support of a measure $\mu\in \mathcal{P}(\R^k)$.  In order to shorten notation, we write $\nu(A):=\nu(\{x \in \R^k: x\in A\})$ if there is no confusion, and write spt$(\nu)$ for the support of $\nu$. We define the push-forward measures $f_{\#}\nu$ via the relation $f_{\#}\nu(A):=\nu(\{x\in \R^k: f(x)\in A\})$ for a Borel measurable function $f:\R^k\to \R^\ell$ and all $A\in \mathcal{B}(\R^\ell)$, $\ell \ge 1$; in particular we consider the projections $\operatorname{proj}^{s:t}:\R^k\to \R^{t-s+1}$ given by $\operatorname{proj}^{s:t}(x)=x_{s:t}$ for $1\le s\le t\le k$ and write $\nu^{s:t}:= (\operatorname{proj}^{s:t})_{\#}\nu.$ For measures $\mu,\nu \in \mathcal{P}(\R^k)$, the set $\Pi(\mu,\nu)=\{\pi\in \mathcal{P}(\R^k\times \R^k):\, \pi(\cdot \times \R^k)=\mu(\cdot), \pi(\R^k\times\cdot)=\nu(\cdot)\}$ is called the set of transport plans between $\mu$ and $\nu$. We define $\operatorname{proj}^{i, j}(x, y) := (x_i, y_j)$ for $1 \leq i, j \leq k$ and often consider $(\operatorname{proj}^{i, j})_{\#}\pi$ for $\pi \in \mathcal{P}(\mathbb{R}^k \times \mathbb{R}^k)$. We disintegrate measures $\pi\in \mathcal{P}(\R^{k+\ell})$ and write $\pi(A\times B)= \int_A \pi_{x_{1:k}}(B)\, \pi^{1:k}(dx_{1:k})$ for all $(A,B)\in \mathcal{B}(\R^k) \times \mathcal{B}(\R^\ell)$, where $\pi_{x_{1:k}}(\cdot)=\pi(x_{k+1:k+\ell}\in \cdot|X_{1:k}=x_{1:k})$ is a conditional probability distribution. Additionally we set $\pi_{x_{1:0}} := \pi^1$. For two measures $\pi\in \Pi(\mu,\nu),\gamma\in \Pi(\nu, \eta)$ we denote the gluing of $\pi$ and $\gamma$ according to \cite[Gluing lemma, p.12]{villani2009optimal} by $\pi\dot{\oplus} \gamma$. Lastly, we often abbreviate the inequality $a\le C \cdot b$ for some constant $C>0$ by $a\lesssim b$.\\

\textbf{Organization of the paper.} We give basic definitions related to optimal transport and bicausality in Section \ref{sec:ot}. We then define the adapted $(p,\infty)$--Wasserstein distance in Section \ref{sec:aw_inf} and discuss its basic properties. In Section \ref{sec:dpp} we derive the dynamic programming principle for $V(\delta)$ as well as the minimax theorem. Lastly, we compute first-order sensitivies of $V(\delta)$ in Section \ref{sec:sens}. Section \ref{sec:proofs} collects all remaining proofs. 

\section{Optimal transport, bicausality and the adapted Wasserstein distance} \label{sec:ot}

Throughout this article we fix numbers $p\ge 1$ and $N\in \N$. We think of a Borel probability measure $\mu\in \mathcal{P}_p(\R^N)$ as the law of discrete-time stochastic process $X=\{X_t\}_{t=1}^N$ with finite $p$th moment on the canonical space $(\R^N, \mathcal{B}(\R^N))$ with its natural filtration. Given $\mu,\nu\in \mathcal{P}(\R^N)$, we denote the set of its couplings by $\Pi(\mu,\nu)$; in other words $\Pi(\mu,\nu)$ is the set of joint distributions of $\mu$ and $\nu$ on the product space $\R^N\times\R^N$. A well-known metric on the space of probability measures is the so-called \emph{$p$--Wasserstein distance} given by
\begin{equation}\label{eq:wass}
    \mathcal{W}_p(\mu, \nu)^p := \inf_{\gamma \in \Pi(\mu, \nu)} \int \|x - y\|^p \,\gamma(dx, dy),
\end{equation}
where $\|x\|:=(\sum_{t=1}^N |x_t|^p)^{1/p}.$ The optimization problem \eqref{eq:wass} is called an optimal transport problem with cost function $\|\cdot\|^p$; we refer e.g., to \cite{villani2009optimal, santambrogio2015optimal} for a historical overview of such problems, as well as theoretical background.

Throughout this article we are interested in a specific subset of $\Pi(\mu,\nu)$ as stated in the following definition.

\begin{definition}\label{def:bicausal}
Let \(\gamma \in \Pi(\mu, \nu)\) be a transport plan for \(\mu,\nu \in \mathcal{P}_p(\mathbb{R}^N)\). Then $\gamma$ is called \emph{causal} if 
\begin{align}\label{eq:causal}
\gamma_ {x_{1:N}}(y_{1:t}\in A) = \gamma_{x_{1:t}}(y_{1:t}\in A )    
\end{align}
for $A\in \mathcal{B}(\R^t)$ and all $t=1, \dots, N$. It is called \emph{bicausal} if \eqref{eq:causal} additionally holds with the roles of $x$ and $y$ reversed. We write $\Pi_{\operatorname{bc}}(\mu,\nu)$ for the set of bicausal transport plans.
\end{definition}

Equivalently to the above, $\gamma$ is bicausal if 
\begin{align}\label{eq:disintegration}
\gamma = \gamma^1 \otimes \gamma_{x_1, y_1} \otimes \ldots \otimes \gamma_{x_{1:N-1}, y_{1:N-1}}, \quad \mu = \mu^1 \otimes \mu_{x_1} \otimes \ldots \otimes \mu_{x_{1:N-1}}, \quad   \nu = \nu^1 \otimes \nu_{y_1} \otimes \ldots \otimes \nu_{y_{1:N-1}}
\end{align}
satisfy $\gamma^1\in \Pi(\mu^1, \nu^1)$ and \(\gamma_{x_{1:t}, y_{1:t}} \in \Pi(\mu_{x_{1:t}}, \nu_{y_{1:t}})\) for $t= 1, \ldots, N- 1$. Here \eqref{eq:disintegration} is short for the disintegration rule 
\[
\gamma(B_1 \times B_2 \times \ldots \times B_N) = \int_{B_1} \int_{B_2} \ldots \int_{B_N} \gamma_{x_{1:N-1}, y_{1:N-1}}(dx_N, dy_N) \ldots \gamma_{x_1, y_1}(dx_2,dy_2) \, \gamma^1(dx_1,dy_1)
\]
for any \(B_1, B_2, \ldots, B_N \in \mathcal{B}(\R^2)\),
and similarly for $\mu,\nu$. The key property of a bicausal plan $\gamma\in \Pi_{\operatorname{bc}}(\mu,\nu)$ is thus its non-anticipativity: at a given time $t$, it only ``sees" the conditional laws $\mu_{x_{1:t}}, \nu_{y_{1:t}}$ instead of the unconditional distributions.

Definition \ref{def:bicausal} gives rise to the so-called adapted Wasserstein distance.

\begin{definition}\label{def:aw}
For \(\mu, \nu \in \mathcal{P}_p(\mathbb{R}^N)\) the adapted Wasserstein distance is defined as
\[
\mathcal{AW}_{p}(\mu, \nu)^p := \inf_{\gamma \in \Pi_{\operatorname{bc}}(\mu, \nu)} \int \|x - y\|^p \gamma(dx, dy).
\]
\end{definition}

The adapted Wasserstein distance and the concept of (bi-)causality have been rediscovered many times throughout the disciplines. We refer to \cite{backhoff2020all} for a well-written overview and comparison of different concepts related to $\mathcal{AW}_p$. Most importantly, $\mathcal{AW}_{p}$ generates the coarsest topology which makes filtration-dependent optimization problems like optimal stopping continuous; see \cite[Theorem 1.3]{backhoff2020all}. To showcase the difference between $\mathcal{W}_p$ and $\mathcal{AW}_p$ we give the following example, which goes back at least to \cite{backhoff2020adapted}.

\begin{example}
Let $$\mu_n=\frac{1}{2}\delta_{(\epsilon_n, 1)}+\frac{1}{2}\delta_{(-\epsilon_n, -1)}\quad \text{and} \quad \mu=\frac{1}{2}\delta_{(0,1)}+\frac{1}{2}\delta_{(0,-1)}.$$ If $\epsilon_n\to 0$, then $\mathcal{W}_p(\mu, \mu_n)\to 0$ and $\mathcal{AW}_p(\mu, \mu_n)=(\epsilon^p_n+2^{p-1})^{1/p}\to 2^{(p-1)/p}>0$. 
\end{example}

\section{The adapted $(p,\infty)$--Wasserstein distance $\mathcal{AW}_p^\infty$} \label{sec:aw_inf}

As the adapted Wasserstein distance itself can be computed via a dynamic programming formulation (see e.g., \cite[Chapter 2]{pflug2014multistage}), it is natural to expect a similar result for $\mathcal{AW}_p$-distributionally robust optimization (DRO) problems of the form 
\begin{align*}
    V^{\AW_p}(\delta)=\inf_{\alpha \in \mathcal{A}} \sup_{\nu\in B^{\AW_p}_\delta(\mu)} \int f(y, \alpha)\,\nu(dy)
\end{align*}
defined in the Introduction. More precisely, given that $\mathcal{AW}_p(\mu,\nu)$ is essentially a Wasserstein distance between the conditional one-step kernels $\mu_{x_{1:t}}$ and $\nu_{x_{1:t}}$,
one would hope for $V^\delta_0=V^{\AW_p}(\delta)$, where one formally defines the iteration $V^\delta_N:=f$ and
\begin{align}\label{eq:easy_dpp}
    V_t^\delta(x_{1:t}, y_{1:t}):= \sup_{\gamma^{t+1}\in \Pi( \mu_{x_{1:t}}, \cdot): \int |x-y|^p\,d\gamma^{t+1}\le \delta^p } \int V^\delta_{t+1}(x_{1:t+1}, y_{1:t+1})\,\gamma^{t+1}(dx_{t+1}, dy_{t+1})
\end{align}
for $t=N-1, \dots, 0.$ Somewhat anticlimactically, a closer inspection of $V^{\AW_p}(\delta)$ and in particular of the balls $B^\AW_\delta(\mu)$ shows that the hope for a representation of type $V^\delta_0=V^{\AW_p}(\delta)$ is unfounded. This has been observed in previous literature, and \cite{jiang2024duality, han2022distributionally, aroradata} offer a remedy to this problem via a Lagrangian approach. While mathematically concise, their formulation does not reduce $V^{\AW_p}(\delta)$ to one-step DRO problems and the interpretation of the intermediate DPP steps is less obvious.

In this article we thus turn the problem of finding a ``nice" DPP for the adapted weak topology on its head: we define a metric, which is stronger than $\AW_p$ and whose balls are rectangular. We call it the adapted $(p,\infty)$--Wasserstein distance.

\begin{definition}\label{def:aw_inf}
For $t=0,1, \dots, N$ we recursively define the functional \(\mathcal{F}_{t, p}: \mathcal{P}_p(\mathbb{R}^{N-t} \times \mathbb{R}^{N-t}) \to \mathbb{R}\) via \(\mathcal{F}_{N, p} := 0\) and
\begin{align*}
\mathcal{F}_{t, p}(\gamma) := \mathcal{C}_p(\gamma^1) \vee \|\mathcal{F}_{t+1,p}( \bar{\gamma}_{x_{1}, y_{1}})\|_{L^\infty(\gamma^1)} \;\; \text{for} \;\; \gamma \in \mathcal{P}_p(\mathbb{R}^{N-t} \times \mathbb{R}^{N-t}),
\end{align*}
where 
\begin{align*}
\mathcal{C}_p(\pi):= \Big( \int |x_1-y_1|^p\,\pi(dx_1,dy_1) \Big)^{1/p} \qquad \forall \pi\in \mathcal{P}(\R^2),
\end{align*}
and
\begin{align*}
 \bar{\gamma}_{x_1, y_1}(dx_{2:s}, dy_{2:s}):= \gamma_{x_{1:s-1}, y_{1:s-1}}(dx_s, dy_s)\dots\gamma_{x_{1:2}, y_{1:2}}(dx_3, dy_3)\gamma_{x_1, y_1}(dx_2, dy_2) 
\end{align*}
for all $s= 2\dots, N.$
We set 
\[
\mathcal{AW}_{p}^\infty(\mu, \nu) := \inf_{\gamma \in \Pi_{\operatorname{bc}}(\mu, \nu)} \mathcal{F}_{0, p}(\gamma)
\]
and call $\mathcal{AW}^\infty_p$ the  adapted $(p,\infty)$--Wasserstein distance.
\end{definition}

In Section \ref{sec:dpp} we show that this definition indeed gives rise to a DPP as explained above. In the remainder of this section, we discuss basic properties of $\mathcal{AW}_p^\infty$: we show that $\mathcal{AW}_p^\infty$ is a metric, that dominates $\mathcal{AW}_p.$ We also identify assumptions under which $\mathcal{AW}_p\approx\mathcal{AW}_p^\infty$. 

Let us start with an easy reformulation of Definition \ref{def:aw_inf}, which is similar to \cite[(3.2)-(3.3)]{veraguas2020fundamental}.

\begin{lemma}[DPP formulation for $\AW_p^\infty$] \label{lem:adap_new}
For $t=1, \dots, N$ and $\mu, \nu \in \mathcal{P}_p(\mathbb{R}^{N})$ define $A_{N,p}:=0$ and
\begin{align}\label{eq:adap_inf_new}
A_{t,p}(x_{1:t}, y_{1:t}) = \inf_{\gamma^{t+1} \in \Pi(\mu_{x_{1:t}}, \nu_{y_{1:t}})} \mathcal{C}_p(\gamma^{t+1}) \vee \|A_{t + 1,p}(x_{1:t+1} ,y_{1:t+1})\|_{L^\infty(\gamma^{t+1})}. 
\end{align}
Then we have
\begin{align}\label{eq:a_infty_dpp}
\mathcal{AW}^\infty_{p}(\mu,\nu)=\inf_{\gamma^{1} \in \Pi(\mu^1, \nu^1 )} \mathcal{C}_p(\gamma^{1}) \vee \|A_{1,p}(x_{1} ,y_{1})\|_{L^\infty(\gamma^{1})}.
\end{align}
In particular
\begin{align}\label{eq:a_infty_dpp1}
\mathcal{AW}^\infty_{p}(\mu,\nu)=\inf_{\gamma^{1} \in \Pi(\mu^1, \nu^1 )} \mathcal{C}_p(\gamma^{1}) \vee \|\AW^\infty_p(\bar\mu_{x_1}, \bar\nu_{y_1})\|_{L^\infty(\gamma^{1})}.
\end{align}
\end{lemma}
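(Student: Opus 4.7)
The plan is to establish, by backward induction on $t\in\{1,\ldots,N\}$, the key identity
\begin{equation*}
A_{t,p}(x_{1:t},y_{1:t}) = \inf_{\tilde\gamma\in\Pi_{\operatorname{bc}}(\bar\mu_{x_{1:t}},\bar\nu_{y_{1:t}})}\mathcal{F}_{t,p}(\tilde\gamma), \qquad (\star)
\end{equation*}
where $\bar\mu_{x_{1:t}}$ (resp.\ $\bar\nu_{y_{1:t}}$) denotes the regular conditional law of $X_{t+1:N}$ given $X_{1:t}=x_{1:t}$ under $\mu$ (resp.\ $\nu$), viewed as a probability on $\R^{N-t}$. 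Once $(\star)$ is in hand, \eqref{eq:a_infty_dpp1} is exactly the case $t=1$ combined with \eqref{eq:a_infty_dpp}, since the functional $\mathcal{F}_{1,p}$ on $\mathcal{P}_p(\R^{N-1}\times\R^{N-1})$ coincides with the $\AW_p^\infty$-functional after a time shift. For \eqref{eq:a_infty_dpp} itself, I would decompose any $\gamma\in\Pi_{\operatorname{bc}}(\mu,\nu)$ via \eqref{eq:disintegration} as a pair $(\gamma^1,\{\bar\gamma_{x_1,y_1}\})$ with $\gamma^1\in\Pi(\mu^1,\nu^1)$ and $\bar\gamma_{x_1,y_1}\in\Pi_{\operatorname{bc}}(\bar\mu_{x_1},\bar\nu_{y_1})$, and pull the inner infimum over conditionals through the $L^\infty$-norm using $(\star)$ at $t=1$.

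For the ``$\geq$'' direction of $(\star)$, the base $t=N$ is trivial. For the induction step, fix $\tilde\gamma\in\Pi_{\operatorname{bc}}(\bar\mu_{x_{1:t}},\bar\nu_{y_{1:t}})$ and disintegrate it per \eqref{eq:disintegration}: its one-step marginal $\tilde\gamma^1$ is in $\Pi(\mu_{x_{1:t}},\nu_{y_{1:t}})$, and each conditional $\bar{\tilde\gamma}_{x_{t+1},y_{t+1}}$ is bicausal between $\bar\mu_{x_{1:t+1}}$ and $\bar\nu_{y_{1:t+1}}$. The induction hypothesis provides the pointwise bound $\mathcal{F}_{t+1,p}(\bar{\tilde\gamma}_{x_{t+1},y_{t+1}})\ge A_{t+1,p}(x_{1:t+1},y_{1:t+1})$, and monotonicity of $\vee$ and of $\|\cdot\|_{L^\infty(\tilde\gamma^1)}$ together with feasibility of $\tilde\gamma^1$ in \eqref{eq:adap_inf_new} give
\begin{equation*}
\mathcal{F}_{t,p}(\tilde\gamma)\ge \mathcal{C}_p(\tilde\gamma^1)\vee\|A_{t+1,p}\|_{L^\infty(\tilde\gamma^1)}\ge A_{t,p}(x_{1:t},y_{1:t}).
\end{equation*}

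For the ``$\leq$'' direction, fix $\epsilon>0$ and proceed from $t=N-1$ downward: apply a measurable selection theorem (e.g.\ Kuratowski--Ryll--Nardzewski) to the correspondence $(x_{1:t},y_{1:t})\twoheadrightarrow\Pi(\mu_{x_{1:t}},\nu_{y_{1:t}})$ to extract Borel kernels $(x_{1:t},y_{1:t})\mapsto\gamma^{t+1}_{x_{1:t},y_{1:t}}$ that are $\epsilon$-optimal in \eqref{eq:adap_inf_new}. Gluing these selected kernels successively according to \eqref{eq:disintegration} produces a bicausal plan $\tilde\gamma$ whose $\mathcal{F}_{t,p}$-value differs from $A_{t,p}(x_{1:t},y_{1:t})$ by a quantity vanishing as $\epsilon\downarrow 0$; the same induction also delivers the joint Borel-measurability of $A_{t,p}$ that is tacitly needed for the $L^\infty$-norms to make sense.

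The main obstacle is the measurable selection step: one must verify that the correspondence $(x_{1:t},y_{1:t})\twoheadrightarrow\Pi(\mu_{x_{1:t}},\nu_{y_{1:t}})$ is Borel with non-empty, weakly compact, convex values, and that the one-step objective in \eqref{eq:adap_inf_new} is jointly lower semicontinuous in the coupling and in the parameters $(x_{1:t},y_{1:t})$; this is what both guarantees existence of $\epsilon$-optimal Borel selectors and propagates Borel-measurability of $A_{t+1,p}$ to $A_{t,p}$. Once these technical ingredients are secured, the pointwise induction, the monotonicity of $\|\cdot\|_{L^\infty}$ under pointwise inequalities, and the stability of bicausal plans under gluing reduce the remainder to bookkeeping.
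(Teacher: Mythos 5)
Your overall architecture matches the paper's: backward induction showing that $A_{t,p}(x_{1:t},y_{1:t})$ coincides with the conditional $\AW_p^\infty$-value $\inf_{\tilde\gamma\in\Pi_{\operatorname{bc}}(\bar\mu_{x_{1:t}},\bar\nu_{y_{1:t}})}\mathcal{F}_{t,p}(\tilde\gamma)$, with one direction obtained by disintegrating an (almost) optimal bicausal plan and using monotonicity of $\vee$ and $\|\cdot\|_{L^\infty}$, and the other by selecting $\varepsilon$-optimal one-step kernels and gluing them into a bicausal plan. The paper proves the two inequalities of your identity $(\star)$ along the way rather than stating it explicitly, but that is organizational, not substantive.

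The genuine gap is in how you propose to handle the selection/measurability step. You assert that the one-step objective in \eqref{eq:adap_inf_new} is jointly lower semicontinuous in the coupling \emph{and} in the parameters $(x_{1:t},y_{1:t})$, and that this yields Borel measurability of $A_{t,p}$ together with $\varepsilon$-optimal Borel selectors via Kuratowski--Ryll-Nardzewski. But the lemma is stated for arbitrary $\mu,\nu\in\mathcal{P}_p(\mathbb{R}^N)$: the disintegrations $x_{1:t}\mapsto\mu_{x_{1:t}}$, $y_{1:t}\mapsto\nu_{y_{1:t}}$ are merely Borel kernels, with no continuity whatsoever, so joint lower semicontinuity in the parameters fails in general and the correspondence $(x_{1:t},y_{1:t})\twoheadrightarrow\Pi(\mu_{x_{1:t}},\nu_{y_{1:t}})$ only has an \emph{analytic} graph. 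Consequently the value function of the inner infimum is in general not Borel but only (lower semi)analytic, and Kuratowski--Ryll-Nardzewski (which selects from a measurable closed-valued correspondence, not from $\varepsilon$-argmins of a value function) is not the right tool. The paper's proof instead shows by backward induction that $A_{t,p}$ is lower semianalytic, handles the $L^\infty$-norm via $\sup_{q\in\mathbb{N}}\|\cdot\|_{L^q}=\|\cdot\|_{L^\infty}$ to stay inside the semianalytic calculus (\cite[Prop.\ 7.48, Lemma 7.30]{bertsekas1996stochastic}), extracts \emph{universally measurable} $\varepsilon$-optimizers via \cite[Prop.\ 7.50(b)]{bertsekas1996stochastic}, and only then passes to Borel versions of the kernels almost surely with respect to the already-constructed plan via \cite[Lemma 7.28(c)]{bertsekas1996stochastic}. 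Your route as written would require additional regularity (e.g.\ continuity of the disintegrations) that the statement does not assume; replacing it with the lower-semianalytic machinery repairs the argument and brings it in line with the paper's proof.
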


\begin{remark}
As a comparison, by \cite[(3.2)-(3.3)]{veraguas2020fundamental}  one has
\begin{align*}
\mathcal{AW}_{p}(\mu,\nu)^p=\inf_{\gamma^{1} \in \Pi(\mu^1, \nu^1 )} \mathcal{C}_p(\gamma^{1})^p + \|\AW_p(\bar\mu_{x_1}, \bar\nu_{y_1})^p\|_{L^1(\gamma^{1})}. 
\end{align*}
\end{remark}

\begin{lemma}\label{lem:easy}
The adapted $(p,\infty)$--Wasserstein distance is a metric satisfying $\mathcal{AW}_p\le N^{1/p} \mathcal{AW}^\infty_p.$
\end{lemma}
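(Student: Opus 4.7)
The plan is to dispatch the comparison $\AW_p \le N^{1/p}\AW_p^\infty$ first, use it to clean up three of the four metric axioms, and then concentrate on the triangle inequality as the main technical step.

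\textbf{The comparison and routine axioms.} For any $\gamma \in \Pi_{\operatorname{bc}}(\mu,\nu)$ with $M := \mathcal{F}_{0,p}(\gamma)$, unfolding Definition \ref{def:aw_inf} recursively shows that $\mathcal{C}_p(\gamma_{x_{1:t-1},y_{1:t-1}}) \le M$ for $\gamma^{1:t-1}$-a.e.\ $(x_{1:t-1},y_{1:t-1})$ and every $t \in \{1,\ldots,N\}$. Bicausality of $\gamma$ and Fubini give $\int |x_t - y_t|^p\,d\gamma = \int \mathcal{C}_p(\gamma_{x_{1:t-1},y_{1:t-1}})^p\,d\gamma^{1:t-1} \le M^p$, hence $\int \|x-y\|^p\,d\gamma \le N M^p$, and taking the infimum over $\gamma$ delivers $\AW_p(\mu,\nu) \le N^{1/p}\AW_p^\infty(\mu,\nu)$. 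Non-negativity of $\AW_p^\infty$ is immediate; the diagonal coupling $(\operatorname{id},\operatorname{id})_{\#}\mu \in \Pi_{\operatorname{bc}}(\mu,\mu)$ gives $\AW_p^\infty(\mu,\mu) = 0$; swapping $x$ and $y$ is an involution of $\Pi_{\operatorname{bc}}$ preserving $\mathcal{F}_{0,p}$, giving symmetry; and $\AW_p^\infty(\mu,\nu) = 0 \Rightarrow \mu = \nu$ follows from the comparison together with the known metric property of $\AW_p$.

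\textbf{Triangle inequality: setup.} Given $\eta \in \mathcal{P}_p(\R^N)$ and $\epsilon > 0$, pick near-minimizers $\pi \in \Pi_{\operatorname{bc}}(\mu,\eta)$ and $\sigma \in \Pi_{\operatorname{bc}}(\eta,\nu)$ with $\mathcal{F}_{0,p}(\pi) \le \AW_p^\infty(\mu,\eta) + \epsilon$ and $\mathcal{F}_{0,p}(\sigma) \le \AW_p^\infty(\eta,\nu) + \epsilon$, and form their bicausal glue $\rho \in \mathcal{P}(\R^{3N})$ step-by-step: given $(x_{1:t-1},y_{1:t-1},z_{1:t-1})$, glue the one-step kernels $\pi_{x_{1:t-1},y_{1:t-1}}(dx_t,dy_t)$ and $\sigma_{y_{1:t-1},z_{1:t-1}}(dy_t,dz_t)$ along the common $y_t$-marginal $\eta_{y_{1:t-1}}$. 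By the bicausal gluing lemma, the projection $\tau$ of $\rho$ onto the $(x,z)$-coordinates lies in $\Pi_{\operatorname{bc}}(\mu,\nu)$, and the triangle inequality reduces to
\[
\mathcal{F}_{0,p}(\tau) \le \mathcal{F}_{0,p}(\pi) + \mathcal{F}_{0,p}(\sigma),
\]
after which letting $\epsilon \to 0$ and taking the infimum over $\eta$ close the argument.

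\textbf{Triangle inequality: core bound.} Using the unfolded identity $\mathcal{F}_{0,p}(\tau) = \max_t \operatorname{ess\,sup}_{\tau^{1:t-1}} \mathcal{C}_p(\tau_{x_{1:t-1},z_{1:t-1}})$, fix $t$ and $(x_{1:t-1},z_{1:t-1})$, set $R := \rho_{x_{1:t-1},z_{1:t-1}}$, and apply Minkowski in $L^p(R)$ to $x_t - z_t = (x_t - y_t) + (y_t - z_t)$ to obtain $\mathcal{C}_p(\tau_{x_{1:t-1},z_{1:t-1}}) \le \|x_t - y_t\|_{L^p(R)} + \|y_t - z_t\|_{L^p(R)}$. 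The glue makes the $(x_t,y_t)$-marginal of $R$ an $R^{y_{1:t-1}}$-mixture of the one-step couplings $\pi_{x_{1:t-1},y_{1:t-1}}$, and causality of $\pi$ (which forces the $y_t$-marginal of $\pi_{x_{1:t-1},y_{1:t-1}}$ to equal $\eta_{y_{1:t-1}}$) reduces the $(y_t,z_t)$-marginal of $R$ to an analogous $R^{y_{1:t-1}}$-mixture of $\sigma_{y_{1:t-1},z_{1:t-1}}$. The main obstacle I expect is transferring the $\pi^{1:t-1}$-a.e.\ bound $\mathcal{C}_p(\pi_{x_{1:t-1},y_{1:t-1}}) \le \mathcal{F}_{0,p}(\pi)$ (and its $\sigma$-analogue) through these conditional mixtures; this reduces to the observation that the $(x_{1:t-1},y_{1:t-1})$- and $(y_{1:t-1},z_{1:t-1})$-marginals of $\rho$ equal $\pi^{1:t-1}$ and $\sigma^{1:t-1}$ respectively, so their null sets stay null under conditioning on $(x_{1:t-1},z_{1:t-1})$. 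Taking essential suprema and then $\max_t$ concludes.
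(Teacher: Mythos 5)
Your proposal is correct, but it takes a genuinely different route from the paper on both halves of the lemma. For the comparison $\AW_p \le N^{1/p}\AW_p^\infty$ you work directly with an arbitrary bicausal plan via the unfolded identity $\mathcal{F}_{0,p}(\gamma)=\max_t \operatorname{ess\,sup}_{\gamma^{1:t-1}}\mathcal{C}_p(\gamma_{x_{1:t-1},y_{1:t-1}})$ and simply sum the conditional one-step costs; the paper instead argues by induction on $N$ through the DPP formulation of Lemma \ref{lem:adap_new}, which forces it to re-run a measurable-selection argument to stitch near-optimal conditional kernels into a coupling, so your version is shorter and selection-free. For the triangle inequality the paper again inducts on $N$: it only glues the two first-step plans $\pi_1\in\Pi(\mu^1,\eta^1)$ and $\pi_2\in\Pi(\eta^1,\nu^1)$ with the ordinary gluing lemma, applies the induction hypothesis pointwise to the conditional measures, and uses \eqref{eq:a_infty_dpp1}. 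You instead build a full step-by-step composition $\rho$ of two near-optimal bicausal plans and bound $\mathcal{F}_{0,p}$ of its $(x,z)$-projection by conditional Minkowski; the transfer of the $\pi^{1:t-1}$-a.e.\ bounds $\mathcal{C}_p(\pi_{x_{1:t-1},y_{1:t-1}})\le\mathcal{F}_{0,p}(\pi)$ through the conditioning on $(x_{1:t-1},z_{1:t-1})$, which you correctly identify as the delicate point, is handled by your observation that the $(x,y)$- and $(y,z)$-marginals of $\rho$ are $\pi$ and $\sigma$. What your route buys is a non-inductive, global argument that never invokes Lemma \ref{lem:adap_new}; what it costs is the measurable kernel-wise gluing (the ``bicausal gluing lemma'' you cite is not established in the paper and needs jointly measurable disintegrations) and the identification of the nested $L^\infty$ norms with a joint essential supremum --- both standard facts, but they carry the technical weight that the paper's induction shifts onto its measurable-selection machinery, while keeping all of its gluing at the level of two fixed measures on $\R$. (Minor slip: no infimum over $\eta$ is needed at the end, since $\eta$ is the fixed intermediate measure.)
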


In general, $\mathcal{AW}_p$ and $\mathcal{AW}_p^\infty$ are not equivalent, as the following example shows:

\begin{example}
Define 
\begin{align*}
\mu=\delta_{(0,0)}\quad \text{and}\quad
\mu_n = \epsilon_n\delta_{(\epsilon_n(\epsilon_n-1), -\frac{1}{\sqrt{\epsilon_n}})} + (1-\epsilon_n) \delta_{(\epsilon_n^2, \frac{\sqrt{\epsilon_n}}{1 - \epsilon_n})}.
\end{align*}
Then 
\begin{align*}
\mathcal{AW}_1(\mu,\mu_n) &= 
\epsilon_n \Big(|\epsilon_n(\epsilon_n-1)| + \frac{1}{\sqrt{\epsilon_n}}\Big)  + (1-\epsilon_n) \Big(\epsilon_n^2 + \frac{\sqrt{\epsilon_n}}{1-\epsilon_n}\Big)\\
&=2 \epsilon_n^2 (1 - \epsilon_n) + 2 \sqrt{\epsilon_n} \to 0,
\end{align*}
if \(\epsilon_n \to 0\). However,
\[
\AW_{1}^\infty (\mu,\mu_n) = \max\Big(2 \epsilon_n^2 (1 - \epsilon_n), \mathcal{W}_1\Big(\delta_{-\frac{1}{\sqrt{\epsilon_n}}}, \delta_0\Big), \mathcal{W}_1\Big(\delta_{\frac{\sqrt{\epsilon_n}}{1 - \epsilon_n}}, \delta_0\Big)\Big) = \frac{1}{\sqrt{\epsilon_n}}
\]
for \(\varepsilon_n > 0\) small enough. Thus, $\AW_{1}^\infty (\mu,\mu_n)\to \infty$.
\end{example}

However, under strong regularity assumptions one can show that $\mathcal{AW}_p\approx \mathcal{AW}_p^\infty$:

\begin{proposition}[Equivalence of $\AW_p$ and $\AW_p^\infty$] \label{prop:equivalence}
Assume that 
\begin{itemize}
    \item $\operatorname{spt}(\mu), \operatorname{spt}(\nu) \subseteq B_R(0) \subset \mathbb{R}^N$ for some \(R > 0\),
    \item  \(\mu,\nu\) have \(L\)-Lipschitz disintegrations, i.e., \(x_{1:t} \mapsto \mu_{x_{1:t}}\) is \(L\)-Lipschitz with respect to \(\mathcal{W}_p\) and similarly for $\nu$,
    \item there exists an optimal coupling $\gamma\in \Pi_{\operatorname{bc}}(\mu, \nu)$ for $\AW_p(\mu,\nu)$, which satisfies $\gamma \ll \text{Leb}|_{B_R(0)}$ and has a density bounded from below by some constant $K>0$.
\end{itemize}
Then for any $\delta>0$ there exists a constant $C=C(\delta,K)$ such that $\mathcal{AW}_p^\infty(\mu,\nu)\le C\mathcal{AW}_p(\mu,\nu) +N\delta$.
\end{proposition}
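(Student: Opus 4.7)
The plan is to proceed by induction on the time horizon $N$, using the recursive DPP formulation of $\AW_p^\infty$ from Lemma~\ref{lem:adap_new}. The base case $N=1$ is immediate, since in dimension one both distances collapse to the ordinary Wasserstein distance $\mathcal{W}_p$.

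For the inductive step, I would first verify that the hypotheses of the proposition propagate to the conditional couplings $\bar{\gamma}_{x_1,y_1}$ of the optimal bicausal $\gamma$: Lipschitzness of the disintegrations of $\bar{\mu}_{x_1}$ and $\bar{\nu}_{y_1}$ is inherited directly from the assumption on $\mu$ and $\nu$, while a uniform lower bound on the density of $\bar{\gamma}_{x_1,y_1}$ (with a possibly reduced constant $K'$) follows from the density lower bound on $\gamma$ combined with compactness of its support in $B_R(0)$, which forces $\gamma^1$ to have a uniform upper bound on its density as well. The induction hypothesis then provides
\[
\AW_p^\infty(\bar{\mu}_{x_1},\bar{\nu}_{y_1}) \le C' \AW_p(\bar{\mu}_{x_1},\bar{\nu}_{y_1}) + (N-1)\delta \qquad \text{for a suitable } C' = C'(\delta, K').
\]

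By Lemma~\ref{lem:adap_new}, it then suffices to exhibit a single-step coupling $\tilde{\gamma}^1 \in \Pi(\mu^1,\nu^1)$ for which $\mathcal{C}_p(\tilde{\gamma}^1)$ stays close to $\mathcal{C}_p(\gamma^1) \le \AW_p(\mu,\nu)$, while the map $\psi(x_1,y_1) := C' \AW_p(\bar{\mu}_{x_1},\bar{\nu}_{y_1})$ is essentially bounded by $C\AW_p(\mu,\nu) + \delta$ on its support. Using the disintegration of $\gamma$, optimality yields $\int \psi^p\, d\gamma^1 \lesssim \AW_p(\mu,\nu)^p$, so Markov's inequality gives $\gamma^1\{\psi > \lambda\} \le \lambda^{-p} \int \psi^p\, d\gamma^1$. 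Choosing $\lambda$ of order $\AW_p(\mu,\nu)$ (this fixes the multiplicative constant $C$) makes the bad set $E_\lambda := \{\psi > \lambda\}$ have arbitrarily small $\gamma^1$-mass. I would then use the lower density bound of $\gamma^1$ (inherited from that of $\gamma$) together with the $L$-Lipschitzness of $\psi$ to construct a measure-preserving rearrangement of $\gamma^1$-mass from $E_\lambda$ onto a $\delta$-neighborhood of good points in $E_\lambda^c$, so that on the support of the modified coupling $\psi \le \lambda + L\delta$ and the additional $\mathcal{C}_p$-cost is at most $\delta$.

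The main obstacle is designing this rearrangement so that \emph{both} marginals $\mu^1$ and $\nu^1$ are preserved simultaneously: a nearby receiving point exists for every bad point by the density lower bound, but routing mass while respecting both marginals requires care. A natural construction is to cover $E_\lambda$ by cubes of side proportional to $\delta$, find for each such cube a $\delta$-nearby cube inside $E_\lambda^c$ that carries strictly more $\gamma^1$-mass (possible by the density lower bound once $\gamma^1(E_\lambda)$ has been made sufficiently small through the choice of $\lambda$), and glue together measure-preserving maps between the conditional laws of $\gamma^1$ on the source and receiving cubes. Telescoping the $N$ single-step errors of order $\delta$ then delivers the $N\delta$ additive term with $C = C(\delta, K)$, completing the induction.
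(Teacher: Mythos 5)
Your induction cannot be closed as written, for two reasons. First, the propagation of the density hypothesis to the conditional level fails: you assert that compactness of the support forces $\gamma^1$ to have a density bounded \emph{above}, whence the conditional couplings $\bar\gamma_{x_1,y_1}$ would inherit a uniform lower bound $K'$. But a lower bound $K$ on the density of $\gamma$ on a bounded set gives no upper bound on the density of its marginal $\gamma^1$ (the joint density may be arbitrarily large on thin sets), and since $\bar\gamma_{x_1,y_1}$ has density $\gamma(x_1,y_1,\cdot)/\gamma^1(x_1,y_1)$, no uniform $K'>0$ can be extracted; the induction hypothesis therefore cannot be invoked for the pairs $(\bar\mu_{x_1},\bar\nu_{y_1})$. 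Second, the rearrangement of $\gamma^1$ off the bad set $E_\lambda$ is precisely the delicate point and your sketch does not resolve it: transferring $\gamma^1$-mass from a cube $I\times J\subset E_\lambda$ to a nearby cube $I'\times J'$ changes both marginals unless you perform compensating (cyclic) exchanges through the cells $I\times J'$ and $I'\times J$, and nothing guarantees that these auxiliary cells avoid $E_\lambda$, carry enough mass, or keep the additional $\mathcal{C}_p$-cost of order $\delta$; moreover, for an $x_1$ whose entire slice lies in $E_\lambda$, marginal preservation forces you to move mass a distance that is not $O(\delta)$.

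The rearrangement is in fact unnecessary, and this is where the paper's argument diverges from yours: it keeps the optimal $\AW_p$-coupling $\gamma$ itself as the candidate for $\AW_p^\infty$ and, for each $t$, converts the $L^1$-bound $\|\mathcal{AW}_p(\bar\mu_{x_{1:t}},\bar\nu_{y_{1:t}})^p\|_{L^1(\gamma^{1:t})}\le \mathcal{AW}_p(\mu,\nu)^p$ (optimality of $\gamma$) into an $L^\infty$-bound via Lemma \ref{prop:linfty_bound}: the growth estimate of Lemma \ref{lem:aw_growth} (a multiplicative-constant substitute for your Lipschitz claim on $\psi$) together with the density lower bound of the \emph{projection} $\gamma^{1:t}$ yield $\|\cdot\|_{L^\infty(\gamma^{1:t})}\le C_{t,\delta,K}\|\cdot\|_{L^1(\gamma^{1:t})}+\delta$. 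Since $\mathcal{AW}_p^\infty(\mu,\nu)^p\le \mathcal{C}_p(\gamma^1)^p+\sum_{t=1}^{N-1}\|\mathcal{C}_p(\gamma_{x_{1:t},y_{1:t}})^p\|_{L^\infty(\gamma^{1:t})}$, summing over $t$ gives the claim with no induction on $N$ at the level of the hypotheses and no surgery on the coupling. If you wish to salvage your structure, replace the Markov-plus-rearrangement step by this $L^\infty$-versus-$L^1$ comparison applied directly to $\psi^p$ under $\gamma^1$ (and, more generally, under $\gamma^{1:t}$); note that the paper only ever needs density lower bounds for these projections of the single given coupling, which is exactly why it never faces the conditional-density issue that blocks your induction.
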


\begin{proof}
Fix an arbitrary $\delta > 0$. Since $\mu$ and $\nu$ have $L$-Lipschitz disintegrations, applying Lemma \ref{lem:aw_growth} for the function
$$
a_t(x_{1:t}, y_{1:t}) = \mathcal{AW}_p(\bar{\mu}_{x_{1:t}}, \bar{\nu}_{y_{1:t}})
$$
yields
$$
\mathcal{AW}_p(\bar{\mu}_{x'_{1:t}}, \bar{\nu}_{y'_{1:t}})^p \leq C_{t, L} (\|\Delta x_{1:t}\|^p + \|\Delta y_{1:t}\|^p) + D_{t, L} \mathcal{AW}_p(\bar{\mu}_{x_{1:t}}, \bar{\nu}_{y_{1:t}})^p
$$
for all $\Delta x_{1:t} := x'_{1:t} - x_{1:t}, \Delta y_{1:t} := y'_{1:t} - y_{1:t}$ and constants $C_{t, L}, D_{t, L} > 0$. Moreover, the density of $\gamma^{1:t} := (\operatorname{proj}^{1:t})_\# \gamma$ is bounded from below by $K > 0$ as a projection of the measure $\gamma$ with the same property. Therefore, by Lemma \ref{prop:linfty_bound} applied to $g(x_{1:t}, y_{1:t}) =\mathcal{AW}_p(\bar{\mu}_{x_{1:t}}, \bar{\nu}_{y_{1:t}})^p $ we have 
\begin{equation}\label{eq:bdd_bel1}
\|\mathcal{AW}_p(\bar{\mu}_{x_{1:t}}, \bar{\nu}_{y_{1:t}})^p\|_{L^\infty(\gamma^{1:t})} \leq C_{t, \delta, K} \|\mathcal{AW}_p(\bar{\mu}_{x_{1:t}}, \bar{\nu}_{y_{1:t}})^p\|_{L^1(\gamma^{1:t})} + \delta
\end{equation}
for a constant $C_{t, \delta, K}>0$. Combining this inequality with the bound
\begin{align}\label{eq:bdd_bel2}
\| \mathcal{AW}_p(\bar{\mu}_{x_{1:t}}, \bar{\nu}_{y_{1:t}})^p\|_{L^1(\gamma^{1:t})} \leq \mathcal{AW}_p(\mu, \nu)^p,
\end{align}
which follows by optimality of $\gamma$, we obtain
\begin{align*}
\mathcal{AW}^\infty_p(\mu, \nu)^p &\quad \le \mathcal{C}_p(\gamma^1)^p + \sum_{t = 1}^{N-1} \|\mathcal{C}_p(\gamma_{x_{1:t}, y_{1:t}})^p\|_{L^\infty(\gamma^{1:t})}\\
&\stackrel{\text{Def. }\AW_p}{\leq} \mathcal{AW}_p(\mu, \nu)^p + \sum_{t = 1}^{N-1} \|\mathcal{AW}_p(\bar{\mu}_{x_{1:t}}, \bar{\nu}_{y_{1:t}})^p\|_{L^\infty(\gamma^{1:t})}\\
&\quad \stackrel{\eqref{eq:bdd_bel1}}{\leq} \mathcal{AW}_p(\mu, \nu)^p + \sum_{t = 1}^{N-1} C_{t, \delta, K} \|\mathcal{AW}_p(\bar{\mu}_{x_{1:t}}, \bar{\nu}_{y_{1:t}})^p\|_{L^1(\gamma^{1:t})} + (N - 1) \delta\\
&\quad \stackrel{\eqref{eq:bdd_bel2}}{\leq} \left(1 + \sum_{t = 1}^{N-1} C_{t, \delta, K}\right) \mathcal{AW}_p(\mu, \nu)^p + (N - 1) \delta,
\end{align*}
which completes the proof.
\end{proof}


\section{A dynamic programming principle for $\mathcal{AW}_p^\infty$-DRO problems} \label{sec:dpp}

We set $B_\delta(\mu):=\{\nu\in \mathcal{P}(\R^N): \mathcal{AW}_p^\infty(\mu,\nu)< \delta\}$ 
 for the remainder of this note.

\subsection{Properties of $B_\delta(\mu)$} \label{sec:balls}

Let us first remark, that deriving a DPP for $V(\delta)$ is nontrivial, because $B_\delta(\mu)$ is neither convex nor precompact, as the following example shows:

\begin{example}
\begin{enumerate}
\item \(B_\delta(\mu)\) is not convex: consider \(\mu = \frac{1}{2} \delta_{1, 1} + \frac{1}{2} \delta_{0, 100}\), and \(\nu = \frac{1}{2} \delta_{1, 100} + \frac{1}{2} \delta_{0, 1}\). Then we have $\AW_p^\infty(\mu, \nu)\le 1 \vee \max(\mathcal{W}_p(\delta_1, \delta_1), \mathcal{W}_p(\delta_{100}, \delta_{100}))=1$, so that \(\nu \in B_{2}(\mu)\). However,
\[
\widehat{\nu} := \frac{1}{2} \mu + \frac{1}{2} \nu = \frac{1}{4} \delta_{1, 1} + \frac{1}{4} \delta_{1, 100} + \frac{1}{4} \delta_{0, 1} + \frac{1}{4} \delta_{0, 100}\notin B_2(\mu),
\]
as the conditional probabilities are \(\widehat{\nu}_{1} = \widehat{\nu}_{0} = \frac{1}{2} \delta_1 + \frac{1}{2} \delta_{100}\), and \(\mathcal{W}_p(\widehat{\nu}_0, \delta_1) = \mathcal{W}_p(\widehat{\nu}_0, \delta_{100}) = 99 \cdot (\frac{1}{2})^{\frac{1}{p}} > 2\).
\item \(B_\delta(\mu)\) is not precompact: consider
\[
\mu = \delta_{0, 0} \quad \mu_n = \left(1 - \frac{1}{n}\right)\delta_{0, 0} + \frac{1}{n} \delta_{1, 1}.
\]
Then \(\mu_n \in B_2(\mu)\), however \((\mu_n)_{n \in \mathbb{N}}\) does not have a convergent subsequence. Indeed, it should match the weak limit, which is equal to \(\mu\), but \(\mathcal{AW}^\infty_p(\mu_n, \mu) = 1\) for any \(n \in \mathbb{N}\), making  convergence in \((\mathcal{P}_p(\mathbb{R}^2), \mathcal{AW}^\infty_p) \) impossible.
\end{enumerate}
\end{example}

In conclusion, direct methods from calculus of variations cannot be applied to derive a DPP.
However, as hinted at in the Introduction, optimization over $B_\delta(\mu)$ can be achieved through a recursive construction, which is reminiscent of a DPP for robust utility maximization, see e.g., \cite{nutz2016utility}. To see this, we consider the following sets:
\[
\Pi_{\delta}(\mu_{x_{1:t}}, \cdot) := \{\pi \in \Pi(\mu_{x_{1:t}}, \cdot):\, \mathcal{C}_p(\pi) < \delta\} \;\; \text{for} \;\; x_{1:t} \in \mathbb{R}^t\;\; \text{and} \;\; t = 0, 1, \ldots, N.
\]
To make a connection to \cite{nutz2016utility} we set $\Omega_t:\R^t\times \R^t$, $\Omega:=\Omega_N= \R^N\times \R^N$ and $\mathcal{P}_t(\omega):=\Pi_{\delta}(\mu_{x_{1:t}}, \cdot) \in \mathcal{P}(\Omega_1)$ for $\omega=(x_{1:t}, y_{1:t})\in \Omega_t$. With these definitions $\mathcal{P}_t(\omega)$ is non-empty, convex, weakly pre-compact and one can show that the graph of $\omega \twoheadrightarrow\mathcal{P}_t(\omega)$ is analytic. In other words, $\Pi_{\delta}(\mu_{x_{1:t}}, \cdot)$ is simply a special instance of the robust single-step models of \cite{nutz2016utility} on the enlarged space $\Omega.$ 
Following again the convention in \cite{nutz2016utility} we define the set of models up to time \(t =1, \dots, N\) via
\begin{align*}
\Pi_\delta(\mu^{1:t}, \cdot) := \{  \gamma^1 \otimes \gamma_{x_1, y_1} \otimes \ldots \otimes \gamma_{x_{1:t-1}, y_{1:t-1}}:\, \gamma_{x_{1:s-1},y_{1:s-1}} \in \Pi_\delta(\mu_{x_{1:s-1}}, \cdot),\, s =1, 2, \ldots, t\},
\end{align*}
where the kernels $(x_{1:t}, y_{1:t})\mapsto \gamma_{x_{1:t},y_{1:t}}$ are Borel measurable, and we define $\gamma_{x_{1:0}, y_{1:0}}:=\gamma^1$.

Take $\gamma\in \Pi_\delta(\mu, \cdot)=\Pi_\delta(\mu^{1:N}, \cdot)$ and denote its second marginal by $\nu$, i.e., $\gamma\in \Pi(\mu,\nu).$ By \cite[Proposition 2.4, 2]{backhoff2017causal} $\gamma$ is causal, but \emph{not} necessarily bicausal. Similar issues have been observed e.g., in \cite{bartl2023sensitivity} and can be overcome by an additional approximation argument. For this we define 
$$\Pi^{\operatorname{T}}_\delta(\mu_{x_{1:t}}, \cdot) := \{\pi \in \Pi_{\delta}(\mu_{x_{1:t}}, \cdot):\, \pi = (T, \operatorname{Id})_\# \nu \; \text{for some}\; \nu \in \mathcal{P}_p(\mathbb{R}) \text{ and measurable }T:\R\to \R\},$$
and
\begin{equation*}
\Pi_\delta^{\operatorname{T}}(\mu^{1:t}, \cdot) := \{  \gamma^1 \otimes \gamma_{x_1, y_1} \otimes \ldots \otimes \gamma_{x_{1:t-1}, y_{1:t-1}}:\, \gamma_{x_{1:s-1},y_{1:s-1}} \in \Pi_\delta^{\operatorname{T}}(\mu_{x_{1:s-1}}, \cdot), s =1, 2, \ldots, t\},
\end{equation*}
where $(x_{1:t}, y_{1:t})\mapsto \gamma_{x_{1:t},y_{1:t}}$ is Borel measurable.
By definition it then follows for $(X,Y)\sim \gamma\in \Pi_\delta^{\operatorname{T}}(\mu, \cdot)$, that $X_t$ is $\sigma(Y_{1:t})$-measurable, and thus $\gamma$ is in fact bicausal. On the other hand $\Pi_\delta^{\operatorname{T}}(\mu_{x_{1:t}},\cdot)$ is weakly dense in $\Pi_\delta(\mu_{x_{1:t}},\cdot)$, see Lemma \ref{density_of_maps}, so that the difference between $\Pi_\delta(\mu,\cdot)$ and $\Pi^{\operatorname{T}}_\delta(\mu,\cdot)$ is often negligible. In fact it will turn out that
\[
\sup_{\nu \in B_\delta(\mu)} \int f(y)\, \nu(dy) = \sup_{\gamma \in \Pi_{\delta}(\mu, \cdot)} \int f(y)\, \gamma(dx, dy) = \sup_{\gamma \in \Pi_{\delta}^{\operatorname{T}}(\mu, \cdot)} \int f(y) \,\gamma(dx, dy)
\]
under mild regularity assumptions, see Lemma \ref{lem:supremum_closed}. 

\subsection{Main results}
We are now ready to state the main results of this section. We start with the DPP for $V(\delta)$ and defer proofs to Section \ref{sec:proofs}. Let us first consider the case with not controls. For this we make the following definition:
\begin{definition}\label{def:dpp}
For a Borel measurable function $f: \mathbb{R}^N \to \mathbb{R}$ we define
\begin{align}
\begin{split}\label{eq:dpp_def}
V^\delta_N(x, y) &:= f(y),\\
V_t^\delta(x_{1:t}, y_{1:t}) &:= \sup_{\gamma^{t+1} \in \Pi_\delta(\mu_{x_{1:t}}, \cdot)} \int V^\delta_{t+1}(x_{1:t+1}, y_{1:t+1})\,\gamma^{t+1}(dx_{t+1}, dy_{t+1}),
\end{split}
\end{align}
where we recall that \(\Pi_\delta(\mu_{x_{1:t}}, \cdot) = \{\pi \in \Pi(\mu_{x_{1:t}}, \cdot):\, \mathcal{C}_p(\pi) < \delta\}\), as well as the DRO problem
\[
V(\delta) := \sup_{\nu \in B_\delta(\mu)} \int f(y) 
\,\nu(dy).
\]
\end{definition}

\begin{theorem}[DPP for $V(\delta)$, uncontrolled case]\label{equiv:basic}
Let \(f: \mathbb{R}^N \to \mathbb{R}\) be a lower semicontinuous function, such that $f(x) \geq -C(1+\|x\|^{p - \varepsilon})$ for some $\varepsilon > 0$ and constant $C > 0$.
Then the dynamic programming principle
\[
V(\delta) = V_0^\delta
\]
holds.
\end{theorem}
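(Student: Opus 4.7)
My proof plan is to reduce $V(\delta)$ to a supremum over bicausal (or merely causal) couplings, disintegrate and apply Fubini, then alternate disintegration with measurable selection in a backwards induction, following the template of \cite{nutz2016utility} specialized to our setting.

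\textbf{Step 1: Reformulation as coupling problem.} Because the integrand $f(y)$ depends only on the second coordinate, and because $\Pi^{\operatorname{T}}_\delta(\mu,\cdot) \subset \Pi_\delta(\mu,\cdot)$ with $\Pi^{\operatorname{T}}_\delta$ producing bicausal couplings whose second marginal realises every $\nu \in B_\delta(\mu)$ (Lemma \ref{lem:supremum_closed}, applied below), I would first argue that
\[
V(\delta) \;=\; \sup_{\gamma \in \Pi_\delta(\mu, \cdot)} \int f(y)\,\gamma(dx,dy).
\]
Here I use that, by definition of $\AW_p^\infty$ and of the one-step sets $\Pi_\delta(\mu_{x_{1:t}},\cdot)$, every $\gamma \in \Pi^{\operatorname{T}}_\delta(\mu,\cdot)$ is bicausal and its second marginal lies in $B_\delta(\mu)$; conversely, any $\gamma \in \Pi_\delta(\mu,\cdot)$ projects to a valid $\nu \in B_\delta(\mu)$ by the same one-step argument applied to the (causal) disintegration.

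\textbf{Step 2: Backwards induction.} Writing $\gamma = \gamma^1 \otimes \gamma_{x_1,y_1} \otimes \dots \otimes \gamma_{x_{1:N-1},y_{1:N-1}}$ with each kernel in $\Pi_\delta(\mu_{x_{1:t}},\cdot)$, Fubini gives
\[
\int f(y)\,\gamma(dx,dy) \;=\; \int\!\!\cdots\!\!\int f(y_{1:N})\,\gamma_{x_{1:N-1},y_{1:N-1}}(dx_N,dy_N)\cdots \gamma^1(dx_1,dy_1).
\]
I will then prove inductively for $t = N-1,\dots,0$ that
\[
V_t^\delta(x_{1:t},y_{1:t}) \;=\; \sup \int\!\!\cdots\!\!\int f(y_{1:N})\,\gamma_{x_{1:N-1},y_{1:N-1}}(dx_N,dy_N)\cdots \gamma_{x_{1:t},y_{1:t}}(dx_{t+1},dy_{t+1}),
\]
where the sup is over tuples of Borel-measurable kernels with $\gamma_{x_{1:s-1},y_{1:s-1}} \in \Pi_\delta(\mu_{x_{1:s-1}},\cdot)$ for $s=t+1,\dots,N$. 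The $\le$ direction is routine, since freezing one outer coupling and taking the sup inside cannot exceed the joint sup. For $\ge$, given any feasible tuple, optimality of $V_{t+1}^\delta$ and Fubini give the pointwise bound, and integrating against $\gamma_{x_{1:t},y_{1:t}}$ yields the claim; supping then over $\gamma_{x_{1:t},y_{1:t}} \in \Pi_\delta(\mu_{x_{1:t}},\cdot)$ closes the induction.

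\textbf{Step 3: Measurable selection and analyticity.} The main obstacle is that combining the nested suprema with the outer sup requires measurable $\varepsilon$-optimal selectors. Here I would invoke the analytic-graph property of $(x_{1:t},y_{1:t}) \twoheadrightarrow \Pi_\delta(\mu_{x_{1:t}},\cdot)$ (noted in Section \ref{sec:balls}), together with the fact that $f$ is Borel (hence upper semianalytic), to show by downward induction that each $V_t^\delta$ is upper semianalytic. The Jankov--von Neumann selection theorem then yields, for any $\varepsilon > 0$, universally measurable kernels attaining $V_t^\delta$ up to $\varepsilon$, which paste together into a causal $\gamma \in \Pi_\delta(\mu,\cdot)$ with $\int f\,d\gamma \ge V_0^\delta - N\varepsilon$. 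This establishes $V_0^\delta \le V(\delta)$; the reverse inequality is immediate by disintegrating any candidate $\gamma$ and bounding each inner integral by $V_{t+1}^\delta$.

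\textbf{Integrability.} Throughout, I need the value functions to be well-defined. The growth bound $f(x) \ge -C(1+\|x\|^{p-\varepsilon})$ combined with $\mathcal{C}_p(\gamma_{x_{1:t},y_{1:t}}) < \delta$ controls $\int \|y_{t+1}\|^{p-\varepsilon}\,\gamma_{x_{1:t},y_{1:t}}(dx_{t+1},dy_{t+1})$ via Hölder applied to $|y_{t+1}| \le |x_{t+1}|+|x_{t+1}-y_{t+1}|$ and the $p$th-moment assumption on $\mu$. This lets me apply Fubini in Step 2 and rules out $-\infty$ values in the recursion. The lower semicontinuity of $f$ is used to legitimise the upward-measurability arguments and to ensure the value functions are well-behaved limits; it is not strictly needed for the DPP identity itself, only for the later continuity statements.
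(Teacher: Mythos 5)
There is a genuine gap, and it sits exactly at the point your Step 1 treats as routine: the claim that ``any $\gamma \in \Pi_\delta(\mu,\cdot)$ projects to a valid $\nu \in B_\delta(\mu)$ by the same one-step argument applied to the (causal) disintegration'' is false. A plan glued from one-step kernels $\gamma_{x_{1:t},y_{1:t}} \in \Pi_\delta(\mu_{x_{1:t}},\cdot)$ is causal but in general \emph{not} bicausal, and its second marginal $\nu$ then has conditional laws $\nu_{y_{1:t}}$ that are mixtures over $x_{1:t}$ of the kernels' second marginals; these need not be $\mathcal{W}_p$-close to any single $\mu_{x_{1:t}}$, so $\mathcal{AW}_p^\infty(\mu,\nu)$ can be of order $1$ even though every one-step cost is below $\delta$. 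Concretely, with $\mu=\tfrac12\delta_{(0,1)}+\tfrac12\delta_{(\epsilon,-1)}$, the causal plan that sends both first-step atoms to $y_1=0$ and then transports each conditional Dirac at zero cost has all one-step costs at most $\epsilon$, but its second marginal is $\tfrac12\delta_{(0,1)}+\tfrac12\delta_{(0,-1)}$, which is at $\mathcal{AW}_p^\infty$-distance of order $1$ from $\mu$. Hence the identity $\sup_{\nu\in B_\delta(\mu)}\int f\,d\nu=\sup_{\gamma\in\Pi_\delta(\mu,\cdot)}\int f\,d\gamma$ is not a reformulation one can assume at the outset; it is essentially the content of the theorem, and the inequality ``$\sup_{\Pi_\delta}\le\sup_{B_\delta}$'' is its hard half. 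Your Steps 2--3 (nested Fubini plus semianalyticity and $\varepsilon$-selection) only establish $V_0^\delta=\sup_{\gamma\in\Pi_\delta(\mu,\cdot)}\int f\,d\gamma$, i.e.\ the causal problem; the bridge to the bicausal ball is missing.

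The paper closes this gap by passing, at each step of the recursion, from $\Pi_\delta(\mu_{x_{1:t}},\cdot)$ to the Monge-type class $\Pi^{\operatorname{T}}_\delta(\mu_{x_{1:t}},\cdot)$ (whose glued plans are bicausal, so their second marginals do lie in $B_\delta(\bar\mu_{x_{1:t}})$), using the density result of Lemma \ref{density_of_maps} together with Lemma \ref{lem:supremum_closed}. But invoking Lemma \ref{lem:supremum_closed} inside the recursion requires lower semicontinuity of the value functions $V_t^\delta$, which is \emph{not} available here because Theorem \ref{equiv:basic} imposes no regularity on $x_{1:t}\mapsto\mu_{x_{1:t}}$; the paper therefore first reduces to bounded Lipschitz $f$, replaces the kernel by a continuous version via Lusin's theorem and Dugundji extension (controlling the error through boundedness of $f$), and uses the Lipschitz dependence of $V_t^\delta$ on $\delta$ (Lemma \ref{lem:cost_to_go:lipschitz}) to work with radius $\delta-\varepsilon$ when selecting measurable near-optimal Monge kernels, before extending to lower semicontinuous $f$ bounded below by monotone approximation and finally to the growth condition $f\ge -C(1+\|x\|^{p-\varepsilon})$ by truncation estimates. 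None of these ingredients appear in your proposal, and your closing remark that lower semicontinuity ``is not strictly needed for the DPP identity itself'' is also unsupported: it is used precisely to make the bounded-Lipschitz approximation and the passage to Monge-type couplings work.
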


Next we introduce the counterpart of Definition \ref{def:dpp}, that allows for controls.

\begin{definition}
For a Borel measurable function $f:\R^N\times \R^N\to \R$ and a compact set $K\subset \R$ we define
\begin{align}\label{dpp:control}
\begin{split}
V_N^\delta(x, y, \alpha) &:= f(y, \alpha),\\
V_t^\delta(x_{1:t}, y_{1:t}, \alpha_{1:t}) &:= \inf_{\alpha_{t+1} \in K} \sup_{\gamma^{t+1} \in \Pi_\delta(\mu_{x_{1:t}}, \cdot)} \int V_{t+1}^{\delta}(x_{1:t+1}, y_{1:t+1}, \alpha_{1:t+1}) \,\gamma^{t+1}(dx_{t+1}, dy_{t+1}),
\end{split}
\end{align}
as well as the DRO problem
\begin{equation}\label{dro:control}
V(\delta) := \inf_{\alpha \in \mathcal{A}} \sup_{\gamma \in \Pi_{\operatorname{bc}, \delta}(\mu, \cdot)} \int f(y, \alpha(x, y)) \,\gamma(dx, dy),
\end{equation}
where \(\mathcal{A}\) is the set of \textit{predictable} controls \((\alpha_1, \alpha_2, \ldots, \alpha_N) = \alpha: \mathbb{R}^N \times \mathbb{R}^N \to K^N\), meaning that \(\alpha_t(x,y)\) only depends on \((x_{1:t-1}, y_{1:t-1})\) and 
\begin{align*}
\Pi_{\operatorname{bc}, \delta}(\mu, \cdot):= \{\gamma \in \Pi_{ \delta}(\mu, \cdot):\,\gamma \text{ is bicausal}\}.
\end{align*}
\end{definition}

Before we can state the corresponding DPP, we also need the following definition:
\begin{definition}
The measure \(\mu \in \mathcal{P}_p(\mathbb{R}^N)\) is \emph{successively $\mathcal{W}_p$--continuous}, if
\[
x_{1:t} \mapsto \mu_{x_{1:t}} \;\; \text{is continuous with respect to \(\mathcal{W}_p\) for each } t=1,\dots, N.
\]
\end{definition}

This property of $\mu$ is needed for a DPP in the controlled case --- see Theorem \ref{equiv:control_open} below---, whereas the uncontrolled case in Theorem \ref{equiv:basic} did not require any kind of regularity of $\mu$.
In fact it turns out that, as soon as $\mu$ is successively $\mathcal{W}_p$--continuous, the cost-to-go functions $V_t$ inherit the regularity of $f$, and the Monge and Kantorovich formulations of \eqref{dpp:control} agree in the following sense:

\begin{lemma}[Regularity of $V_t^\delta$] \label{lem:cost_to_go_regularity}
Let $\mu \in \mathcal{P}_p(\mathbb{R}^N)$ be a successively $\mathcal{W}_p$--continuous probability measure. Let $f: \mathbb{R}^N \times \mathbb{R}^N \to \mathbb{R}$ be a Borel measurable function, and let $K \subset \mathbb{R}$ be a compact set. Then the following holds:
\begin{enumerate}
    \item If $f$ is lower semicontinuous and bounded from below, then $V^\delta_t$ is lower semicontinuous and bounded from below.
    \item Assume that $\mu$ satisfies $\int \|x_{t+1:N}\|^p \, \bar{\mu}_{x_{1:t}}(dx_{t+1:N}) \lesssim 1 + \|x_{1:t}\|^p$ and that the function $f$ is continuous and satisfies $|f(x, \alpha)| \lesssim 1 + \|x\|^{p - \varepsilon}$ for some $\varepsilon > 0$. Then $V^\delta_t$ is continuous and satisfies the same growth condition, i.e., $|V^\delta_t(x_{1:t}, y_{1:t}, \alpha_{1:t})| \lesssim 1 + \|x_{1:t}\|^{p - \varepsilon} + \|y_{1:t}\|^{p - \varepsilon}$.
\end{enumerate}
In both cases we have
$$
V_t^\delta(x_{1:t}, y_{1:t}, \alpha_{1:t}) = \inf_{\alpha_{t+1} \in K} \sup_{\gamma^{t+1} \in \Pi^{\operatorname{T}}_\delta(\mu_{x_{1:t}}, \cdot)} \int V^\delta_{t+1}(x_{1:t+1}, y_{1:t+1}, \alpha_{1:t+1}) \,\gamma^{t+1}(dx_{t+1}, dy_{t+1}).
$$
\end{lemma}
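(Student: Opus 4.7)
I argue by backward induction on $t \in \{N, N-1, \dots, 0\}$. The base case $V_N^\delta(x, y, \alpha) = f(y, \alpha)$ inherits the stated regularity (and growth bound in case~(2)) directly from $f$. Assuming $V_{t+1}^\delta$ has the claimed properties, the goal is to transfer them to $V_t^\delta$ via the parametric integral
$$
F_t(x_{1:t}, y_{1:t}, \alpha_{1:t+1}, \gamma^{t+1}) := \int V_{t+1}^\delta(x_{1:t+1}, y_{1:t+1}, \alpha_{1:t+1}) \, \gamma^{t+1}(dx_{t+1}, dy_{t+1}),
$$
so that $V_t^\delta = \inf_{\alpha_{t+1} \in K} \sup_{\gamma^{t+1} \in \Pi_\delta(\mu_{x_{1:t}}, \cdot)} F_t$.

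The first step is to establish (lower semi)continuity of $F_t$ with respect to weak convergence in $\gamma^{t+1}$ together with the remaining parameters. Any $\gamma^{t+1} \in \Pi_\delta(\mu_{x_{1:t}}, \cdot)$ has second marginal with $p$-th moment bounded by $\bigl(\int \|\cdot\|^p \, d\mu_{x_{1:t}}\bigr)^{1/p} + \delta$; the induction hypothesis combined with Fatou's lemma handles case~(1), while the $(p-\varepsilon)$-growth in case~(2) yields uniform integrability and hence continuity. The second step analyzes the correspondence $x_{1:t} \twoheadrightarrow \Pi_\delta(\mu_{x_{1:t}}, \cdot)$: the successive $\mathcal{W}_p$-continuity of $\mu$ yields $\mu_{x_{1:t}^n} \to \mu_{x_{1:t}}$ in $\mathcal{W}_p$ whenever $x_{1:t}^n \to x_{1:t}$, and I would approximate any target $\gamma^{t+1} \in \Pi_{\delta'}(\mu_{x_{1:t}}, \cdot)$ with $\delta' < \delta$ by gluing an optimal $\mathcal{W}_p$-coupling between $\mu_{x_{1:t}^n}$ and $\mu_{x_{1:t}}$ with the transport kernel induced by $\gamma^{t+1}$, producing admissible $\gamma^{t+1, n} \in \Pi_\delta(\mu_{x_{1:t}^n}, \cdot)$ that converge weakly to $\gamma^{t+1}$. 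Letting $\delta' \uparrow \delta$ and invoking tightness of the feasible set gives a Berge-type argument that preserves lower semicontinuity under the sup in case~(1) and continuity in case~(2); the final $\inf$ over the compact set $K$ then preserves these properties. The growth bound propagates by integrating the $(p-\varepsilon)$-growth of $V_{t+1}^\delta$ against $\gamma^{t+1}$ and invoking Jensen's inequality, since the moment hypothesis gives $\int \|\cdot\|^p \, d\mu_{x_{1:t}} \lesssim 1 + \|x_{1:t}\|^p$.

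For the Monge-Kantorovich identity in both cases, Lemma~\ref{density_of_maps} provides weak density of $\Pi_\delta^{\operatorname{T}}(\mu_{x_{1:t}}, \cdot)$ in $\Pi_\delta(\mu_{x_{1:t}}, \cdot)$, and the weak (semi)continuity of $\gamma^{t+1} \mapsto F_t$ just established upgrades the trivial inequality $\sup_{\Pi_\delta^{\operatorname{T}}} F_t \le \sup_{\Pi_\delta} F_t$ to equality.

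The main obstacle I anticipate is the inner semicontinuity of the correspondence $x_{1:t} \twoheadrightarrow \Pi_\delta(\mu_{x_{1:t}}, \cdot)$: the \emph{strict} inequality $\mathcal{C}_p < \delta$ makes this set open, so a naive gluing may produce plans whose cost slightly exceeds $\delta$. The remedy, approximating from inside via $\delta' \uparrow \delta$ while exploiting $\mathcal{W}_p(\mu_{x_{1:t}^n}, \mu_{x_{1:t}}) \to 0$ to keep the perturbed cost below $\delta$ for $n$ large, is the conceptually delicate point that the successive $\mathcal{W}_p$-continuity hypothesis is designed to enable.
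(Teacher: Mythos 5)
Your skeleton (backward induction, continuity of the parametric integral $F_t$, hemicontinuity of the feasible correspondence, Berge, density of Monge plans for the $\Pi^{\operatorname{T}}_\delta$-identity, growth propagation via the moment bound) matches the paper, and your treatment of case (1) and of the Monge--Kantorovich identity is essentially the paper's: lower semicontinuity of $\gamma\mapsto\int V^\delta_{t+1}\,d\gamma$ (Fatou / \cite[Lemma 4.3]{villani2009optimal}), lower hemicontinuity of the ball correspondence, the maximum theorem, and weak density of $\Pi^{\operatorname{T}}_\delta(\mu_{x_{1:t}},\cdot)$ in $\Pi_\delta(\mu_{x_{1:t}},\cdot)$ (Lemma \ref{density_of_maps}), with the infimum over the compact set $K$ preserving the regularity at the end.

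The gap is in case (2), where you need \emph{upper} semicontinuity of the supremum as well, and your proposed remedy for the open constraint set does not deliver it. The inner approximation $\delta'\uparrow\delta$ with gluing only repairs lower hemicontinuity; for the $\limsup$ direction you must take near-optimizers $\gamma^n\in\Pi_\delta(\mu_{x_{1:t}^n},\cdot)$, extract a limit, and control it --- but the limit plan can have cost exactly $\delta$, i.e., it lies on the boundary, outside the open ball, and ``tightness of the feasible set'' alone does not resolve this. The paper's route is to first replace $\Pi_\delta(\mu_{x_{1:t}},\cdot)$ by its closure $\overline\Pi_\delta(\mu_{x_{1:t}},\cdot)$ without changing the value of the supremum (Lemma \ref{lem:supremum_closed}, which rests on the density statement of Proposition \ref{prop:open_dense_in_closed} together with weak lower semicontinuity of the integral), and then to prove that $x_{1:t}\twoheadrightarrow\overline\Pi_\delta(\mu_{x_{1:t}},\cdot)$ is compact-valued and both upper and lower hemicontinuous --- crucially, compactness and continuity hold only in $(\mathcal{P}_{p-\varepsilon},\mathcal{W}_{p-\varepsilon})$, not in $\mathcal{W}_p$ (Proposition \ref{hemicontinuity}), which is exactly why the growth exponent $p-\varepsilon$ appears in the hypothesis: the $(p-\varepsilon)$-growth of $V^\delta_{t+1}$ makes $F_t$ continuous against $\mathcal{W}_{p-\varepsilon}$-convergence, so that Berge's maximum theorem applies on the closed ball. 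Your uniform-integrability remark is the right idea for the integrand, but without (a) the identification $\sup_{\Pi_\delta}=\sup_{\overline\Pi_\delta}$ and (b) compactness plus upper hemicontinuity of the \emph{closed}-ball correspondence in a topology compatible with the integrand, the continuity claim for $V^\delta_t$ in case (2) is not established. Filling these two points is precisely the content of the paper's Lemma \ref{lem:supremum_closed} and Propositions \ref{prop:open_dense_in_closed} and \ref{hemicontinuity}, and they are not routine: the upper hemicontinuity proof needs the uniform $\mathcal{W}_p$-bound \eqref{eqn:prop:hemicontinuity:subset} and the compactness result of \cite[Lemma 24]{bartl2021sensitivity}.
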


As a consequence of Lemma \ref{lem:cost_to_go_regularity} and an additional approximation argument, \eqref{dpp:control} is equal to \eqref{dro:control}:

\begin{theorem}[DPP for $V(\delta)$, controlled case] \label{equiv:control_open}
Let $\mu \in \mathcal{P}_p(\mathbb{R}^N)$ be successively $\mathcal{W}_p$--continuous and let $K \subset \mathbb{R}$ be a compact set. Assume that $f: \mathbb{R}^N \times \mathbb{R}^N \to \mathbb{R}$ is lower semicontinuous and  satisfies one of the following:
\begin{enumerate}[(a)]
    \item $f$ is bounded from below,
    \item $|f(x, \alpha)| \lesssim 1 + \|x\|^{p - \varepsilon}$ for some $\varepsilon > 0$. 
\end{enumerate} 
Then
\[
V(\delta) = V_0^\delta.
\]
Consequently \(V^{\delta}_t\) can be written as the cost-to-go function
\[
V^{\delta}_t(x_{1:t}, y_{1:t}, \alpha_{1:t}) = \inf_{\alpha \in \mathcal{A}(\alpha_{1:t})} \sup_{\gamma \in \Pi_{\operatorname{bc}, \delta}(\bar{\mu}_{x_{1:t}}, \cdot)} \int f(y, \alpha)\, \gamma(dx_{t+1:N}, dy_{t+1:N}),
\]
where \(\mathcal{A}(\alpha_{1:t})\) is the set of predictable controls with the first \(t\) values equal to \(\alpha_{1:t}\).
\end{theorem}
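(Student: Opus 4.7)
I would combine the regularity of the cost-to-go functions $V_t^\delta$ from Lemma \ref{lem:cost_to_go_regularity} with a two-sided backward induction, using measurable selection to translate between the pointwise dynamic programming formulation of $V_0^\delta$ and the global inf-sup formulation of $V(\delta)$. Throughout I fix $\epsilon > 0$ and seek only $\epsilon/N$-optimizers at each of the $N$ backward steps, eventually sending $\epsilon \downarrow 0$. Both directions split along the usual template for DPP-type identities, and the successive $\mathcal{W}_p$-continuity of $\mu$ enters precisely at the stage where the measurability and continuity hypotheses of these selection theorems have to be verified.

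\textbf{Direction $V(\delta) \leq V_0^\delta$.} Lemma \ref{lem:cost_to_go_regularity} guarantees that each $V_t^\delta$ is lower semicontinuous with the appropriate growth, so a Berge-type argument together with compactness of $K$ and the Kuratowski--Ryll-Nardzewski selection theorem produces, stage by stage, a predictable control $\alpha^\star = (\alpha_1^\star, \ldots, \alpha_N^\star) \in \mathcal{A}$ that is $\epsilon/N$-optimal in the DPP infimum at every step. For an arbitrary bicausal $\gamma \in \Pi_{\operatorname{bc}, \delta}(\mu, \cdot)$ the disintegration \eqref{eq:disintegration} yields one-step kernels $\gamma_{x_{1:t}, y_{1:t}} \in \Pi_\delta(\mu_{x_{1:t}}, \cdot)$, and writing $\int f(y, \alpha^\star) \, d\gamma$ as an iterated integral I peel off one step at a time, using at each step the elementary bound $\int V_{t+1}^\delta \, d\gamma_{x_{1:t}, y_{1:t}} \leq \sup_{\gamma^{t+1}} \int V_{t+1}^\delta \, d\gamma^{t+1} \leq V_t^\delta + \epsilon/N$. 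After $N$ iterations this gives $\int f(y, \alpha^\star) \, d\gamma \leq V_0^\delta + \epsilon$; taking $\sup_\gamma$ and then $\epsilon \downarrow 0$ yields $V(\delta) \leq V_0^\delta$.

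\textbf{Direction $V(\delta) \geq V_0^\delta$.} Conversely, fix any predictable $\alpha \in \mathcal{A}$. Since $\alpha_{t+1}(x_{1:t}, y_{1:t}) \in K$ is an admissible choice in the pointwise infimum defining $V_t^\delta$, iterating backwards gives
\[
V_0^\delta \leq \sup_{\gamma^1 \in \Pi_\delta(\mu^1, \cdot)}\int\cdots\sup_{\gamma^N \in \Pi_\delta(\mu_{x_{1:N-1}}, \cdot)}\int f(y, \alpha(x,y)) \, d\gamma^N \cdots d\gamma^1.
\]
By Lemma \ref{lem:cost_to_go_regularity} each inner supremum may be restricted to the Monge-type set $\Pi_\delta^{\operatorname{T}}(\mu_{x_{1:t}}, \cdot)$, which by Lemma \ref{density_of_maps} is weakly dense in $\Pi_\delta(\mu_{x_{1:t}}, \cdot)$. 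Extracting Borel measurable $\epsilon/N$-near-optimal transport maps $\hat T_{t+1}(x_{1:t}, y_{1:t}; \cdot)$ via a measurable selection theorem and gluing through \eqref{eq:disintegration} produces a bona fide bicausal coupling $\gamma^\star \in \Pi_{\operatorname{bc}, \delta}(\mu, \cdot)$ with $\int f(y, \alpha) \, d\gamma^\star \geq V_0^\delta - \epsilon$. Since $\alpha$ was arbitrary, $V(\delta) = \inf_\alpha \sup_\gamma \int f\,d\gamma \geq V_0^\delta - \epsilon$, and $\epsilon \downarrow 0$ concludes. The cost-to-go formula for $V_t^\delta$ then follows by applying the same identity to the tail problem starting at time $t$ over $\bar{\mu}_{x_{1:t}}$ with history $(x_{1:t}, y_{1:t}, \alpha_{1:t})$ fixed.

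\textbf{Main obstacle.} The principal difficulty is the non-compactness of $\Pi_\delta(\mu_{x_{1:t}}, \cdot)$: because the constraint $\mathcal{C}_p < \delta$ is strict, the inner suprema are typically not attained, yet their near-maximizers must be selected Borel measurably in $(x_{1:t}, y_{1:t})$ in order to be glued into a single bicausal coupling. I plan to resolve this by working with Monge kernels through Lemma \ref{density_of_maps} (so selection reduces to choosing a measurable transport map) and by absorbing the strict inequality into the $\epsilon$-slack, e.g.\ restricting to kernels of cost at most $\delta - \eta(\epsilon)$ for a suitable $\eta(\epsilon)>0$; successive $\mathcal{W}_p$-continuity of $\mu$ makes the relevant correspondences lower hemicontinuous, which is what powers the selection step.
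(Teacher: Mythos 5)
Your overall strategy coincides with the paper's: backward induction on the cost-to-go functions, measurable selection of ($\epsilon$-)optimal controls, peeling the disintegration of a bicausal plan for one inequality, and gluing measurably selected Monge-type kernels (automatically bicausal) for the other. Two steps, however, do not work as written.

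First, in the direction $V(\delta)\ge V_0^\delta$ you freeze an arbitrary predictable control $\alpha$, write the iterated suprema over $\Pi_\delta(\mu_{x_{1:t}},\cdot)$, and then assert that ``by Lemma \ref{lem:cost_to_go_regularity} each inner supremum may be restricted to $\Pi_\delta^{\operatorname{T}}(\mu_{x_{1:t}},\cdot)$''. Lemma \ref{lem:cost_to_go_regularity} (and the density argument of Lemma \ref{lem:supremum_closed} behind it) applies to the functions $V_t^\delta$, which are lower semicontinuous precisely because the control enters only as a constant minimized over the compact set $K$ at each stage. Once a merely Borel $\alpha$ is frozen, the intermediate value functions $W_t^\alpha(x_{1:t},y_{1:t}):=\sup_{\gamma^{t+1}}\int\cdots\sup_{\gamma^N}\int f(y,\alpha(x,y))\,d\gamma^N\cdots d\gamma^{t+1}$ are only semianalytic, not lower semicontinuous, so weak density of $\Pi^{\operatorname{T}}_\delta$ in $\Pi_\delta$ does not justify replacing the kernels except at the innermost step (where predictability makes $\alpha$ constant in $(x_N,y_N)$). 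The repair is to reverse the order of operations, as the paper does: use the Monge-form identity $V_t^\delta=\inf_{\alpha_{t+1}\in K}\sup_{\gamma^{t+1}\in\Pi^{\operatorname{T}}_\delta(\mu_{x_{1:t}},\cdot)}\int V_{t+1}^\delta\,d\gamma^{t+1}$ from Lemma \ref{lem:cost_to_go_regularity}, bound the infimum by plugging in $\alpha_{t+1}(x_{1:t},y_{1:t})$ (a constant given the past) at each step, and iterate; this gives $V_0^\delta\le\sup_{\gamma^1\in\Pi^{\operatorname{T}}_\delta}\int\cdots\sup_{\gamma^N\in\Pi^{\operatorname{T}}_\delta}\int f(y,\alpha)\,d\gamma^N\cdots d\gamma^1$ directly, after which your selection-and-gluing step (which needs the Borel/analytic graph of $x_{1:t}\twoheadrightarrow\Pi^{\operatorname{T}}_\delta(\mu_{x_{1:t}},\cdot)$, cf.\ Lemma \ref{lem:analytic_graph}) goes through.

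Second, your argument only covers hypothesis (a). Under hypothesis (b), $f$ is lower semicontinuous with $|f(x,\alpha)|\lesssim 1+\|x\|^{p-\varepsilon}$ but in general unbounded from below, and neither part of Lemma \ref{lem:cost_to_go_regularity} applies (part (1) requires boundedness from below, part (2) requires continuity), so the regularity of $V_t^\delta$ on which both of your directions rest is simply not available. The paper treats (b) by truncation: set $f_n:=f\vee(-n)$, apply the bounded-below case to $f_n$, and show that the truncation error vanishes uniformly over controls and admissible plans at rate $n^{-\varepsilon/(p-\varepsilon)}$, using $\int\|x-y\|^p\,d\gamma\le N\delta^p$ together with H\"older/Markov estimates, plus a dedicated measurable-selection argument to dominate the iterated error by a single causal plan. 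Some such limiting argument must be added to your plan; it also supplies the integrability of $f(y,\alpha)$ against the plans appearing in your peeling step, which under (b) is otherwise unaddressed.
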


As \eqref{dpp:control} naturally solves a causal DRO problem, an immediate consequence of Theorem \ref{equiv:control_open} is that bicausal and causal optimization problems have the same value:

\begin{corollary}[Causal problem] \label{cor:causal_bicausal}
In the setting of Theorem \ref{equiv:control_open} we have
\[
V(\delta) = \inf_{\alpha \in \mathcal{A}} \sup_{\gamma \in \Pi_{\delta}(\mu, \cdot)} \int f(y, \alpha(x, y)) \,\gamma(dx, dy).
\]
\end{corollary}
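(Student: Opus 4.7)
One inequality is immediate: since every bicausal coupling is in particular causal, $\Pi_{\operatorname{bc},\delta}(\mu,\cdot)\subseteq \Pi_\delta(\mu,\cdot)$ and hence
\[
V(\delta)\;=\;\inf_{\alpha\in\mathcal{A}}\sup_{\gamma\in\Pi_{\operatorname{bc},\delta}(\mu,\cdot)}\int f(y,\alpha(x,y))\,d\gamma\;\le\;\inf_{\alpha\in\mathcal{A}}\sup_{\gamma\in\Pi_\delta(\mu,\cdot)}\int f(y,\alpha(x,y))\,d\gamma.
\]
Together with $V(\delta)=V_0^\delta$ from Theorem \ref{equiv:control_open}, it therefore remains to show that the right-hand side is bounded above by $V_0^\delta$.

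The plan is to replicate the measurable-selection and backward-induction argument already used in the proof of Theorem \ref{equiv:control_open}, only now carried out against general causal couplings. By Lemma \ref{lem:cost_to_go_regularity} each cost-to-go $V_t^\delta(\cdot,\cdot,\alpha_{1:t})$ is lower semicontinuous (with the appropriate polynomial growth in case (b)), so a Jankov--von Neumann-type selection in the spirit of \cite{nutz2016utility} produces, for every $\epsilon>0$, a predictable control $\alpha^\epsilon=(\alpha_1^\epsilon,\dots,\alpha_N^\epsilon)\in\mathcal{A}$ that is stage-wise $(\epsilon/N)$-optimal for the DPP \eqref{dpp:control}, i.e.,
\[
\sup_{\gamma^{t+1}\in\Pi_\delta(\mu_{x_{1:t}},\cdot)}\int V_{t+1}^\delta(x_{1:t+1},y_{1:t+1},\alpha_{1:t+1}^\epsilon)\,d\gamma^{t+1}\;\le\;V_t^\delta(x_{1:t},y_{1:t},\alpha_{1:t}^\epsilon)+\frac{\epsilon}{N}
\]
for every $t=0,\dots,N-1$ and all $(x_{1:t},y_{1:t})$.

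Now fix any $\gamma\in\Pi_\delta(\mu,\cdot)$ with disintegration $\gamma=\gamma^1\otimes\gamma_{x_1,y_1}\otimes\cdots\otimes\gamma_{x_{1:N-1},y_{1:N-1}}$, where by the definition of $\Pi_\delta(\mu,\cdot)$ each conditional $\gamma_{x_{1:t},y_{1:t}}$ lies in $\Pi_\delta(\mu_{x_{1:t}},\cdot)$. Starting from the innermost layer $t=N-1$ and applying the stage-wise bound above, I iterate backwards in $t$, at each step collapsing one factor of the disintegration into a bound by $V_t^\delta$. After $N$ such reductions one obtains
\[
\int f(y,\alpha^\epsilon(x,y))\,d\gamma\;\le\;V_0^\delta+\epsilon,
\]
so that taking $\sup_\gamma$, then $\inf_\alpha$, and finally $\epsilon\downarrow 0$ yields the missing inequality.

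The main obstacle I foresee is technical rather than conceptual: verifying that the Borel/analytic selection of $\alpha^\epsilon_{t+1}$ remains valid across all $N$ stages when $V_t^\delta$ is merely lower semicontinuous, and that the growth bounds from Lemma \ref{lem:cost_to_go_regularity}(2) render every iterated integral finite in case (b). Both points are handled by exactly the same machinery already employed in the proof of Theorem \ref{equiv:control_open} and in \cite{nutz2016utility}; beyond the key observation that the one-step set $\Pi_\delta(\mu_{x_{1:t}},\cdot)$ appearing in the DPP coincides with the set of admissible conditionals of a causal coupling, no genuinely new idea is required.
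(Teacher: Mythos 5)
Your proposal is correct and follows essentially the same route as the paper: both use the trivial inclusion $\Pi_{\operatorname{bc},\delta}(\mu,\cdot)\subseteq\Pi_{\delta}(\mu,\cdot)$ for one inequality, and for the other exploit that any $\gamma\in\Pi_{\delta}(\mu,\cdot)$ factors by definition into one-step kernels in $\Pi_\delta(\mu_{x_{1:t}},\cdot)$, so that plugging a measurably selected (near-)optimal predictable control from the DPP of Theorem \ref{equiv:control_open} into the backward recursion dominates the integral against every causal plan. The only cosmetic difference is that you use stage-wise $\epsilon/N$-optimal selectors while the paper selects exact minimizers over the compact set $K$ via \cite[Proposition 7.33]{bertsekas1996stochastic}; this changes nothing of substance.
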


This corollary is useful when determining the sensitivity of the map $\delta \mapsto V(\delta)$ in Section \ref{sec:sens}.

Instead of considering the ball $B_\delta(\mu)$, our results can also be stated for the subset of $B_\delta(\mu)$, that satisfies an additional martingale constraint:

\begin{corollary}[DPP for $V(\delta)$ with martingale constraint]\label{cor:linear}
Let \(\mu \in \mathcal{P}_p(\mathbb{R}^N)\) be a successively $\mathcal{W}_p$--continuous martingale measure, i.e.,
\[
\int x_{t+1} \,\mu_{x_{1:t}}(dx_{t+1}) = x_t \quad \mu\text{-a.s.}
\] Let \(f: \mathbb{R}^N \times \mathbb{R}^N \to \mathbb{R}\) be a lower semicontinuous function, that is bounded from below. 
Define
\[
\Pi^\mathcal{M}_{\delta}(\mu_{x_{1:t}}, \cdot) := \Big\{\pi \in \Pi(\mu_{x_{1:t}}, \cdot)\;:\; \mathcal{C}_p(\pi) < \delta, \int (x-y)\,\pi(dx,dy) =0  \Big\} 
\]
for $x_{1:t} \in \mathbb{R}^t$, $t = 0, 1, \ldots, N$, and set
\begin{align*}
\Pi^\mathcal{M}_{ \delta}(\mu, \cdot):= \big\{  \gamma^1 \otimes \gamma_{x_1, y_1} \otimes \ldots \otimes \gamma_{x_{1:t-1}, y_{1:t-1}}:\ \gamma_{x_{1:s-1},y_{1:s-1}} \in \Pi^{\mathcal{M}}_\delta(\mu_{x_{1:s-1}}, \cdot),\, s =1, 2, \ldots, t\big\},
\end{align*}
where $(x_{1:t}, y_{1:t})\mapsto \gamma_{x_{1:t},y_{1:t}}$ are Borel measurable functions,
as well as 
\begin{align*}
\Pi^\mathcal{M}_{\operatorname{bc}, \delta}(\mu, \cdot):= \{\gamma \in \Pi^\mathcal{M}_{ \delta}(\mu, \cdot):\,\gamma \text{ is bicausal}\}.  
\end{align*}
Next let 
\begin{align}\label{dro:control_martingale}
V^{\mathcal{M}}(\delta):= \inf_{\alpha \in \mathcal{A}} \sup_{\gamma \in \Pi^\mathcal{M}_{\operatorname{bc}, \delta}(\mu, \cdot)} \int f(y, \alpha(x, y)) \,\gamma(dx, dy),
\end{align}
and define 
\begin{align}\label{eq:martingale}
\begin{split}
V_N^{\mathcal{M},\delta}(x, y, \alpha) &:= f(y, \alpha),\\
V_t^{\mathcal{M},\delta}(x_{1:t}, y_{1:t}, \alpha_{1:t}) &:= \inf_{\alpha_{t+1} \in K} \sup_{\gamma^{t+1} \in \Pi^\mathcal{M}_\delta(\mu_{x_{1:t}}, \cdot)} \int V_{t+1}^{\mathcal{M},\delta}(x_{1:t+1}, y_{1:t+1}, \alpha_{1:t+1}) \,\gamma^{t+1}(dx_{t+1}, dy_{t+1}).
\end{split}
\end{align}
Then
\[
V^{\mathcal{M}}(\delta) = V^{\mathcal{M},\delta}_0.
\]
\end{corollary}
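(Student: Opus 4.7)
The plan is to follow the blueprint of Theorem \ref{equiv:control_open}, adding the linear martingale constraint $\int (x-y)\,\pi(dx,dy)=0$ to every one-step set and verifying that each structural ingredient of the earlier proof survives this extra restriction. The scaffolding of the dynamic programming argument carries through with only minor modifications, so the bulk of the work is to check that $\Pi^{\mathcal{M}}_\delta(\mu_{x_{1:t}}, \cdot)$ remains a well-behaved single-step model in the sense of \cite{nutz2016utility}.

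First I would verify that $\Pi^{\mathcal{M}}_\delta(\mu_{x_{1:t}}, \cdot)$ is non-empty, convex, weakly precompact, and has an analytic graph in $(x_{1:t}, y_{1:t})$. Non-emptiness uses the martingale hypothesis on $\mu$ in an essential way: since $\mu_{x_{1:t}}$ has mean $x_t$, the diagonal coupling $(\operatorname{Id}, \operatorname{Id})_\# \mu_{x_{1:t}}$ lies in $\Pi^{\mathcal{M}}_\delta(\mu_{x_{1:t}}, \cdot)$ and gives zero $L^p$-cost. Convexity is immediate because $\pi \mapsto \int(x-y)\,d\pi$ is linear; precompactness and analyticity of the graph follow because this map is continuous on sets satisfying a uniform $L^p$-cost bound. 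Granted these properties, the analog of Lemma \ref{lem:cost_to_go_regularity} for $V^{\mathcal{M},\delta}_t$ follows after minor modifications, and the inequality $V^{\mathcal{M},\delta}_0 \le V^{\mathcal{M}}(\delta)$ is obtained by disintegrating any $\gamma \in \Pi^{\mathcal{M}}_{\operatorname{bc},\delta}(\mu,\cdot)$ into bicausal one-step kernels---each of which inherits the martingale property by the tower rule---and iterating against an arbitrary predictable control.

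The main obstacle is the reverse inequality $V^{\mathcal{M},\delta}_0 \geq V^{\mathcal{M}}(\delta)$, which requires constructing a near-optimal plan $\gamma \in \Pi^{\mathcal{M}}_{\operatorname{bc},\delta}(\mu,\cdot)$ that is \emph{bicausal}. A measurable selection argument as in \cite{nutz2016utility} produces near-optimal Borel kernels $\gamma^{t+1}_{x_{1:t}, y_{1:t}} \in \Pi^{\mathcal{M}}_\delta(\mu_{x_{1:t}}, \cdot)$ that glue into a causal martingale plan, but bicausality is not automatic: unlike in the proof of Theorem \ref{equiv:control_open}, we cannot approximate via transport maps from Lemma \ref{density_of_maps}, since Monge martingale couplings essentially degenerate to the identity. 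Instead I would resort to a bespoke approximation procedure which symmetrizes each causal kernel by averaging it against the conditional law of $x_{1:t}$ given $y_{1:t}$, and then verifies that this projection preserves both the martingale constraint (via the tower property of conditional expectations) and the one-step $L^p$-cost bound, while leaving the $y$-only integrand unchanged. Combining this with the approximation scheme used in the proof of Theorem \ref{equiv:control_open} closes the inequality and yields $V^{\mathcal{M}}(\delta) = V^{\mathcal{M},\delta}_0$.
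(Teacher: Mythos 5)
Your high-level plan (extend the proof of Theorem \ref{equiv:control_open} after checking that the one-step sets $\Pi^{\mathcal{M}}_\delta(\mu_{x_{1:t}},\cdot)$ remain well-behaved) is the right one, and the easy direction via disintegration of bicausal plans is fine (though note you have the two inequalities labelled the wrong way round: disintegration gives $V^{\mathcal{M}}(\delta)\le V^{\mathcal{M},\delta}_0$, while the construction of bicausal near-optimizers is needed for $V^{\mathcal{M},\delta}_0\le V^{\mathcal{M}}(\delta)$; also, nonemptiness of $\Pi^{\mathcal{M}}_\delta(\mu_{x_{1:t}},\cdot)$ via the diagonal coupling does not actually use the martingale property of $\mu$). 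The genuine gap is in how you handle the crux, namely producing bicausal near-optimal plans. Your claimed obstruction --- that ``Monge martingale couplings essentially degenerate to the identity'' so that Lemma \ref{density_of_maps} is unavailable --- is based on a misreading of the constraint: $\Pi^{\mathcal{M}}_\delta$ only imposes the \emph{aggregate} barycenter condition $\int(x-y)\,\pi(dx,dy)=0$, not a conditional martingale condition on the coupling, and moreover the couplings in $\Pi^{\operatorname{T}}_\delta$ are of the form $\pi=(T,\operatorname{Id})_\#\nu$, i.e.\ maps from $y$ to $x$. The approximation of Lemma \ref{density_of_maps} (Gaussian smoothing of the second marginal, then Lemma \ref{lem:gangbo}) preserves this constraint exactly: the added noise has mean zero, and Lemma \ref{lem:gangbo} keeps both marginals fixed, hence their first moments. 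This is precisely the paper's Lemma \ref{density_of_maps_martingale}, which shows $\Pi^{\mathcal{M},\operatorname{T}}_\delta(\mu_{x_{1:t}},\cdot)$ is $\mathcal{W}_p$-dense in $\Pi^{\mathcal{M}}_\delta(\mu_{x_{1:t}},\cdot)$; with that, the proof of Theorem \ref{equiv:control_open} goes through line by line and no new construction is needed.

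Your substitute construction does not repair the (nonexistent) obstruction: averaging the causal kernel $\gamma_{x_{1:t},y_{1:t}}$ against the conditional law of $x_{1:t}$ given $y_{1:t}$ produces a kernel whose first marginal is $\int \mu_{x'_{1:t}}\,\gamma(dx'_{1:t}\mid y_{1:t})$ rather than $\mu_{x_{1:t}}$, so the symmetrized kernels leave $\Pi^{\mathcal{M}}_\delta(\mu_{x_{1:t}},\cdot)$ and the glued plan no longer has first marginal $\mu$; bicausality requires the \emph{second} conditional marginal to depend only on $y_{1:t}$ while the first stays equal to $\mu_{x_{1:t}}$, and your projection destroys the latter. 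In addition, the claim that the integrand is ``$y$-only'' and hence unaffected is not accurate in this setting: the cost-to-go functions $V^{\mathcal{M},\delta}_{t+1}(x_{1:t+1},y_{1:t+1},\alpha_{1:t+1})$ and the controls $\alpha(x,y)$ depend on the $x$-coordinates, so the value is not invariant under averaging out the $x$-dependence of the kernels. Replacing this step by the density argument of Lemma \ref{density_of_maps_martingale} closes the gap.
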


At the level of generality of Theorem \ref{equiv:control_open}, it is not clear to us if the infimum and supremum in \eqref{dro:control} can be interchanged. We will return to this question in Section \ref{sec:sens}, when we discuss first-order approximations. If we assume more structure on $f$ however, this is indeed true, even though the balls $B_\delta(\mu)$ are neither convex nor compact as seen in Section \ref{sec:balls}. Before stating this result, we first need the following definition:

\begin{definition}
We say that a function \(f: \mathbb{R}^N \times \mathbb{R}^N \to \mathbb{R}\) is \emph{semi-separable}, if the decomposition 
\[
f(y, \alpha) = \sum_{t = 1}^N f_t(y_{1:t}, \alpha_t)
\]
holds for Borel functions \(f_t: \mathbb{R}^t \times \mathbb{R} \to \mathbb{R}\).
\end{definition}

\begin{theorem}[Minimax theorem for $V(\delta)$] \label{minimax}
Let $p > 1$ and let $\mu \in \mathcal{P}_p(\mathbb{R}^N)$ be a successively $\mathcal{W}_p$--continuous probability measure. Let $f: \mathbb{R}^N \times \mathbb{R}^N \to \mathbb{R}$ be a semi-separable Borel function, such that $\alpha \mapsto f(x, \alpha)$ is convex for any $x \in \mathbb{R}^N$, and let $K \subset \mathbb{R}$ be a compact set. Moreover, assume that one of the following holds:
\begin{enumerate}[(a)]
    \item $f$ is continuous and satisfies $|f(x, \alpha)| \lesssim 1 + \|x\|^{p - \varepsilon}$ for some $\varepsilon > 0$, and $$\int \|x_{t+1:N}\| \, \bar{\mu}_{x_{1:t}}(dx_{t+1:N}) \lesssim 1 + \|x_{1:t}\|^p,$$
    \item $f$ is lower semicontinuous and bounded from below.
\end{enumerate}
Then
\[
V(\delta) = \inf_{\alpha \in \mathcal{A}} \sup_{\gamma \in \Pi_{\operatorname{bc}, \delta}(\mu, \cdot)} \int f(y, \alpha(x, y)) \,\gamma(dx, dy) = \sup_{\gamma \in \Pi_{\operatorname{bc}, \delta}(\mu, \cdot)} \inf_{\alpha \in \mathcal{A}} \int f(y, \alpha(x, y)) \,\gamma(dx, dy).
\]
\end{theorem}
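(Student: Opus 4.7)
\smallskip

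\noindent\textbf{Proof proposal.} The plan is to start from the DPP $V(\delta)=V_0^\delta$ supplied by Theorem~\ref{equiv:control_open} and, using semi-separability, perform a one-step minimax swap at each layer of the recursion; a final ``gluing'' step reassembles the local swaps into the global identity. Writing $f(y,\alpha)=\sum_{t=1}^N f_t(y_{1:t},\alpha_t)$ and proceeding by backward induction on $t$, I would first verify the decomposition
\begin{align*}
V_t^\delta(x_{1:t},y_{1:t},\alpha_{1:t})=\sum_{s=1}^t f_s(y_{1:s},\alpha_s)+W_t^\delta(x_{1:t},y_{1:t}),
\end{align*}
where $W_N^\delta\equiv 0$ and
\begin{align*}
W_t^\delta(x_{1:t},y_{1:t})=\inf_{a\in K}\sup_{\gamma\in \Pi_\delta(\mu_{x_{1:t}},\cdot)}\int\bigl[f_{t+1}(y_{1:t+1},a)+W_{t+1}^\delta(x_{1:t+1},y_{1:t+1})\bigr]\,\gamma(dx_{t+1},dy_{t+1}).
\end{align*}
This is possible because the past contribution $\sum_{s\le t}f_s(y_{1:s},\alpha_s)$ is constant in the one-step variables and factors out of the sup/inf in the recursion \eqref{dpp:control}.

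The heart of the argument is an application of Sion's minimax theorem to the one-step problem defining $W_t^\delta$. The constraint set $\Pi_\delta(\mu_{x_{1:t}},\cdot)$ is convex, since $\pi\mapsto\mathcal{C}_p(\pi)^p=\int|x-y|^p\,d\pi$ is linear and the first marginal is fixed; taking the convex hull of $K$ (which by compactness and one-dimensionality is a compact interval) does not affect the infimum thanks to convexity of $a\mapsto f_{t+1}(\cdot,a)$. The integrand is linear in $\gamma$ (hence concave) and convex in $a$; the only remaining point is the joint semicontinuity. In case (a), Lemma~\ref{lem:cost_to_go_regularity}(2) gives that $W_{t+1}^\delta$ is continuous with growth $\lesssim 1+\|x\|^{p-\epsilon}+\|y\|^{p-\epsilon}$, which together with $p$-uniform integrability across $\overline{\Pi_\delta(\mu_{x_{1:t}},\cdot)}$ yields weak continuity of $\gamma\mapsto\int[\cdot]\,d\gamma$, while continuity in $a$ is inherited from $f_{t+1}$. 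In case (b), $W_{t+1}^\delta$ is only lower semicontinuous and bounded below; I would approximate it monotonically from below by bounded continuous functions, apply the case~(a) argument, and pass to the monotone limit on both sides (the inf over the compact set $K$ commutes with monotone increasing limits of lsc functions). To close off the use of Sion I would also replace the open ball $\Pi_\delta$ by its weakly compact closure $\overline{\Pi_\delta}$: a radial-shrinking argument (contracting each plan toward the identity-on-$\mu_{x_{1:t}}$ coupling) shows that the sup over the open ball equals the sup over its closure.

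Given the pointwise swap $W_t^\delta=\sup_\gamma\inf_a\int[\cdot]\,d\gamma$ at every stage, I would then unfold the recursion. Using the Gluing Lemma to concatenate one-step kernels $\gamma^{t+1}\in\Pi_\delta(\mu_{x_{1:t}},\cdot)$, the iterated suprema collapse to a single supremum over bicausal plans $\gamma\in\Pi_{\operatorname{bc},\delta}(\mu,\cdot)$; symmetrically, a measurable selection of the one-step minimizers $a_t=a_t(x_{1:t-1},y_{1:t-1})$ assembles into a predictable control $\alpha\in\mathcal{A}$. Combined with the decomposition of $V_0^\delta$ from the first paragraph, this gives
\begin{align*}
V(\delta)=V_0^\delta=\sup_{\gamma\in\Pi_{\operatorname{bc},\delta}(\mu,\cdot)}\inf_{\alpha\in\mathcal{A}}\int f(y,\alpha(x,y))\,\gamma(dx,dy),
\end{align*}
the reverse inequality being the standard max-min bound.

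The main obstacle I anticipate is handling case (b), where $W_{t+1}^\delta$ is only lsc and $\gamma\mapsto\int W_{t+1}^\delta\,d\gamma$ is therefore lsc (not usc) in the weak topology, so Sion's hypotheses fail directly. Executing the monotone approximation cleanly --- in particular, keeping the one-step minimax swap stable under the approximation and then commuting the limit with the outer $\inf_a$ --- is the delicate point. The measurable selection/gluing needed to promote the pointwise swap to the global identity, while standard, also requires care to ensure the resulting $\alpha$ is genuinely predictable and the resulting $\gamma$ actually bicausal with $\mathcal{C}_p$ below $\delta$ at each stage.
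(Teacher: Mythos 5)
Your overall architecture — the DPP from Theorem~\ref{equiv:control_open}, the semi-separable decomposition $V_t^\delta=\sum_{s\le t}f_s+W_t^\delta$ (which plays exactly the role that, in the paper, the observation ``the $\varepsilon$-optimal kernels do not depend on $\alpha_{1:t}$'' plays), a one-step minimax swap, and a selection/gluing step — is essentially the paper's strategy. The genuine gap is in case (b), and you have located it yourself but your patch does not work as described. Sion's theorem requires, besides compactness of one of the two sets, upper semicontinuity of $\gamma\mapsto\int[f_{t+1}(\cdot,a)+W_{t+1}^\delta]\,d\gamma$ on the sup side \emph{and} lower semicontinuity/quasi-convexity in $a$ on the inf side; in case (b) both $f_{t+1}$ and $W_{t+1}^\delta$ are only lsc, so the usc requirement fails. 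Your monotone approximation cannot repair this cleanly: if you regularize only $W_{t+1}^\delta$, the lsc term $f_{t+1}(y_{1:t+1},a)$ remains and usc in $\gamma$ still fails; if you regularize the whole integrand in $(y,a)$ (e.g.\ by Lipschitz/inf-convolution approximants), the approximants are infima over $y'$ of convex functions of $a$ and need not be convex in $a$, so the one-step minimax no longer applies to them. The resolution used in the paper is to choose a minimax theorem whose topological hypotheses sit entirely on the control side — $K$ compact, $\alpha_{t+1}\mapsto\int V_{t+1}^\delta\,d\gamma^{t+1}$ convex and lsc by Fatou — and which asks only concavity (here linearity) in $\gamma$ with $\overline\Pi_\delta(\mu_{x_{1:t}},\cdot)$ convex, namely \cite[Corollary 2]{terkelsen1972some}; with that choice case (b) needs no approximation at all (and case (a) follows as well, using Lemma~\ref{lem:cost_to_go_regularity} for the regularity of $V^\delta_{t+1}$).

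The second gap is in the unfolding step, which you flag only as ``requires care.'' Gluing arbitrary measurably selected one-step kernels $\gamma_{x_{1:t},y_{1:t}}\in\Pi_\delta(\mu_{x_{1:t}},\cdot)$ produces a \emph{causal} plan, not in general a bicausal one (see Section \ref{sec:balls} and \cite{backhoff2017causal}), so the glued plan need not belong to $\Pi_{\operatorname{bc},\delta}(\mu,\cdot)$; you would then only obtain $V_0^\delta\le\sup_{\gamma\in\Pi_\delta(\mu,\cdot)}\inf_\alpha$, a supremum over the larger causal class, which does not yield the claimed inequality. The paper repairs this by first showing — via lower semicontinuity of the inner infimum over the compact set $K$ and the density results (Lemma~\ref{density_of_maps}, Proposition~\ref{prop:open_dense_in_closed}, Lemma~\ref{lem:supremum_closed}) — that the one-step suprema can be restricted to the Monge-type kernels $\Pi^{\operatorname{T}}_\delta(\mu_{x_{1:t}},\cdot)$, whose gluings are automatically bicausal, and then performing the $\varepsilon$-optimal universally measurable selection (with Borel modification) inside $\Pi^{\operatorname{T}}_\delta$. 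Finally, a small correction of emphasis: you do not need to select minimizing controls $a_t$ at all — after selecting the $\gamma$-kernels (independent of $\alpha_{1:t}$, which is exactly what your $W$-decomposition guarantees), the nested pointwise infima are bounded above by the value at any predictable $\alpha\in\mathcal{A}$, which directly produces $\inf_{\alpha\in\mathcal{A}}\int f(y,\alpha)\,d\gamma$ for the glued $\gamma$, and the reverse inequality is the trivial $\inf\sup\ge\sup\inf$ bound.
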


\begin{corollary}\label{cor:minimax}
In the setting of Theorem \ref{minimax} we have
\[
V(\delta) = \sup_{\nu \in B_\delta(\mu)} \inf_{\alpha \in \widetilde{\mathcal{A}}} \int f(y, \alpha(y)) \,\nu(dy),
\]
where $\widetilde{\mathcal{A}}$ is the set of predictable controls $\alpha:\R^N\to \R^N.$
\end{corollary}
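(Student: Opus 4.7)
The strategy is to derive the corollary directly from the inner-outer interchange established in Theorem \ref{minimax}, namely
\[
V(\delta) = \sup_{\gamma \in \Pi_{\operatorname{bc}, \delta}(\mu, \cdot)} \inf_{\alpha \in \mathcal{A}} \int f(y, \alpha(x, y))\, \gamma(dx, dy),
\]
and to show that both the outer supremum and the inner infimum can be simplified: the outer sup over bicausal couplings $\gamma$ can be replaced by a sup over second marginals $\nu \in B_\delta(\mu)$, and the inner inf over controls depending on $(x,y)$ can be replaced by an inf over controls depending on $y$ alone. Each reduction uses semi-separability of $f$ together with the bicausal structure of $\gamma$.

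For the control reduction, I would write $f(y,\alpha) = \sum_{t=1}^{N} f_t(y_{1:t}, \alpha_t)$ and, for a fixed bicausal $\gamma \in \Pi_{\operatorname{bc}, \delta}(\mu, \cdot)$ with $\nu = (\operatorname{proj}^{N+1:2N})_{\#}\gamma$, disintegrate each summand as
\[
\int f_t(y_{1:t}, \alpha_t(x_{1:t-1}, y_{1:t-1}))\, \gamma(dx,dy) = \int \!\!\int f_t(y_{1:t}, \alpha_t(x_{1:t-1}, y_{1:t-1}))\, \gamma^{y_t}_{x_{1:t-1}, y_{1:t-1}}(dy_t)\, \gamma^{1:t-1, 1:t-1}(dx, dy).
\]
Bicausality of $\gamma$ forces $\gamma^{y_t}_{x_{1:t-1}, y_{1:t-1}} = \nu_{y_{1:t-1}}$, so the inner integral is a function $g_t(y_{1:t-1}, \alpha_t)$ that does not depend on $x_{1:t-1}$. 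Since the $t$-th term in the decomposition only sees $\alpha_t$, a pointwise minimization $h_t(y_{1:t-1}) := \inf_{a \in K} g_t(y_{1:t-1}, a)$ (and a standard measurable selection argument, using lsc/continuity of $g_t$ in $a$ inherited from the regularity of $f$ in Theorem \ref{minimax}) shows that the optimal predictable $\alpha_t$ can be chosen to depend only on $y_{1:t-1}$. Summing over $t$ and observing that $\int h_t\, d\gamma^{1:t-1,1:t-1} = \int h_t\, d\nu^{1:t-1}$ yields
\[
\inf_{\alpha \in \mathcal{A}} \int f(y, \alpha(x,y))\, \gamma(dx,dy) = \inf_{\alpha \in \widetilde{\mathcal{A}}} \int f(y, \alpha(y))\, \nu(dy).
\]

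Taking the supremum over $\gamma$ on both sides and noting that the right-hand side depends on $\gamma$ only through its second marginal $\nu$, it remains to identify the image of $\Pi_{\operatorname{bc}, \delta}(\mu, \cdot)$ under the second-marginal projection with $B_\delta(\mu)$. One direction is immediate from Definition \ref{def:aw_inf}: if $\gamma \in \Pi_{\operatorname{bc}, \delta}(\mu, \cdot)$ then $\mathcal{F}_{0,p}(\gamma) < \delta$, so $\nu := \pi^2(\gamma) \in B_\delta(\mu)$. Conversely, if $\nu \in B_\delta(\mu)$ then Definition \ref{def:aw_inf} provides a bicausal coupling $\gamma$ with $\mathcal{F}_{0,p}(\gamma) < \delta$, whose one-step kernels are then automatically in $\Pi_\delta(\mu_{x_{1:t}}, \cdot)$, i.e., $\gamma \in \Pi_{\operatorname{bc}, \delta}(\mu, \cdot)$. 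Combining this with the control reduction gives the claimed identity.

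The main obstacle I anticipate is the measurable selection / pointwise minimization step ensuring that the optimal $\alpha_t$ really lies in $\widetilde{\mathcal{A}}$ rather than merely existing in some weaker sense; this is handled by exploiting compactness of $K$ together with the regularity of $f$ built into the hypotheses of Theorem \ref{minimax}. A minor technical point is reconciling the strict versus non-strict inequalities implicit in the definitions of $\Pi_{\operatorname{bc}, \delta}(\mu, \cdot)$ and $B_\delta(\mu)$, but this is resolved by the essential-supremum formulation of $\mathcal{F}_{0,p}$ in Definition \ref{def:aw_inf}.
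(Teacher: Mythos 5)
Your route is genuinely different from the paper's, and the contrast is instructive. The paper's entire proof is a two-line Jensen argument: for any $\gamma\in\Pi_{\operatorname{bc},\delta}(\mu,\cdot)$ and $\alpha\in\mathcal{A}$ one replaces $\alpha(x,y)$ by its conditional average $\bar\alpha(y):=\int\alpha(x,y)\,\gamma_y(dx)$, which is predictable in $y$ by bicausality and takes values in the convex hull of $K$ (hence in $\R^N$, i.e., $\bar\alpha\in\widetilde{\mathcal{A}}$), and convexity of $\alpha\mapsto f(y,\alpha)$ gives $\int f(y,\alpha(x,y))\,\gamma(dx,dy)\ge\int f(y,\bar\alpha(y))\,\nu(dy)$; combined with Theorem \ref{minimax} this is the whole ``$\geq$''--direction, and ``$\leq$'' is trivial. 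Your proposal never uses the convexity hypothesis, and this is where the genuine gap sits: your pointwise minimization $h_t(y_{1:t-1})=\inf_{a\in K}g_t(y_{1:t-1},a)$ only shows that $\inf_{\alpha\in\mathcal{A}}\int f(y,\alpha(x,y))\,\gamma(dx,dy)$ equals the infimum over \emph{$K^N$-valued} $y$-predictable controls of $\int f(y,\tilde\alpha(y))\,\nu(dy)$. The set $\widetilde{\mathcal{A}}$ in the statement consists of $\R^N$-valued predictable controls, over which the infimum can be strictly smaller, so the identity you assert at the end of your control-reduction paragraph is not what your argument proves. Bridging $K^N$-valued and $\R^N$-valued controls is exactly the role of the Jensen/convexity step in the paper (the conditional average generally leaves $K$ but stays in its convex hull), so some version of that argument cannot be avoided if the corollary is to be proved with $\widetilde{\mathcal{A}}$ as stated.

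Two further points need care. First, the identification of the second-marginal image of $\Pi_{\operatorname{bc},\delta}(\mu,\cdot)$ with $B_\delta(\mu)$ is not exact: the infimum in Definition \ref{def:aw_inf} need not be attained, the $L^\infty$-bounds in $\mathcal{F}_{0,p}(\gamma)<\delta$ control the kernels only $\gamma$-a.s.\ (so one must modify them on a null set, e.g.\ by the identity coupling, to obtain Borel kernels in $\Pi_\delta(\mu_{x_{1:t}},\cdot)$), and conversely a plan whose kernels have cost $<\delta$ pointwise may have second marginal only in the closed ball, since an essential supremum of quantities each below $\delta$ can equal $\delta$. What your argument actually needs are the two one-sided containments (near-optimal selections via Lemma \ref{lem:adap_new} and the semianalytic machinery of Section \ref{sec:proofs} for one direction; a density/approximation argument in the spirit of Proposition \ref{prop:open_dense_in_closed} and Lemma \ref{lem:supremum_closed} for the boundary issue in the other), and your closing sentence does not resolve this. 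Second, the regularity you invoke for $g_t$ in $a$ is not ``inherited'' automatically: the hypotheses of Theorem \ref{minimax} constrain $f$, not the individual summands $f_t$, so the measurable-selection step should be run through the lower-semianalytic/near-optimizer machinery (as in Proposition \ref{prop:epsilon_optimizers}) rather than via exact argmins of $g_t$. With these repairs your approach would yield the statement for $K^N$-valued predictable controls, but the corollary as stated still requires the paper's convexity argument.
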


\begin{proof}
The ``$\leq$"--inequality is trivial. In order to establish the ``$\geq$"--inequality, it suffices to notice that
\[
\int f(y, \alpha(x, y)) \, \gamma(dx, dy) \geq \int f\left(y, \int \alpha(x, y) \, \gamma_y(dx)\right) \gamma(dx, dy)
\]
for any \(\gamma \in \Pi_{\operatorname{bc}, \delta}(\mu, \cdot)\) and any \(\alpha \in \mathcal{A}\) by convexity.
\end{proof}

\section{First-order sensitivity for $\mathcal{AW}_p^\infty$-DRO problems} \label{sec:sens}

Theorems \ref{equiv:basic} and \ref{equiv:control_open} allow to reduce the $\AW_p^\infty$-DRO problem $V(\delta)$ to a  sequence of simpler DRO problems. In fact, using convex duality, \eqref{eq:dpp_def} can be restated as a finite-dimensional optimization problem, see \cite{blanchet2019quantifying, gao2023distributionally}:
\begin{equation}\label{weak_transport:duality}
\sup_{\gamma \in \Pi_\delta(\mu, \cdot)} \int f(y)\, \gamma(dx, dy) = \inf_{\lambda \geq 0} \left[\lambda \delta^p + \int f_\lambda(x)\, \mu(dx) \right],
 \end{equation}
where \(f_\lambda(x) = \sup_{y \in \mathbb{R}} [f(y) - \lambda |x - y|^p]\) is the \(\lambda|\cdot|\)-transform of \(f \in C(\mathbb{R})\). Albeit being much more tractable than its multiperiod counterpart, \eqref{weak_transport:duality} is still computationally heavy: one has to compute the \(\lambda|\cdot|\)-transform and numerically integrate it for each optimization step. An alternative to this procedure, at least for small $\delta>0$, is the following approximation: as in \cite{bartl2021sensitivity, bartl2023sensitivity} we write
\[
V(\delta) = V(0) + \delta\cdot  \Upsilon + o(\delta), \quad \delta \to 0,
\]
where the sensitivity \(\Upsilon\) is defined as 
\[
\Upsilon: = \lim_{\delta \to 0+} \frac{V(\delta) - V(0)}{\delta}.
\]

As in the case for $\AW_p$ one can derive an explicit formula for \(\Upsilon\) under regularity assumptions on \(f\).

\begin{theorem}[Sensitivity of $V(\delta)$]\label{dro:control_sensitivity}
Let \(p > 1\) and let \(\mu \in \mathcal{P}_p(\mathbb{R}^N)\) be a successively $\mathcal{W}_p$--continuous probability measure, which satisfies \(\int \|x_{t+1:N}\|^p \,\bar{\mu}_{x_{1:t}}(dx_{t+1:N}) \lesssim 1 + \|x_{1:t}\|^p\). Let \(f: \mathbb{R}^N \times K^N \to \mathbb{R}\) be a lower semicontinuous function, which satisfies the following:
\begin{enumerate}
    \item The mapping \(x \mapsto f(x, \alpha)\) is differentiable, $(x,\alpha)\mapsto \nabla_x f(x,\alpha)$ is continuous, and
    \[
    \|\nabla_x f(x, \alpha)\| \leq C(1 + \|x\|^{p - 1 - \varepsilon})
    \]
    for some \(\varepsilon > 0\) and a constant \(C > 0\).
    \item The mapping \(\alpha \mapsto f(x, \alpha)\) is \(\varepsilon(x)\)-strongly convex for all \(x \in \operatorname{spt}(\mu)\), where \(\mu(\varepsilon(x) > 0) = 1\), i.e.,
    \[
    f(x, \widetilde{\alpha}) - f(x, \alpha) \geq \langle \nabla_\alpha f(x, \alpha), \widetilde{\alpha} - \alpha \rangle + \frac{1}{2} \varepsilon(x) \|\widetilde{\alpha} - \alpha\|^2
    \]
    for any \(x \in \mathbb{R}^N\) and \(\alpha, \widetilde{\alpha} \in K^N\).
\end{enumerate}
Then the sensitivity $\Upsilon$ of the robust optimal control problem
$$
V(\delta) := \inf_{\alpha \in \mathcal{A}} \sup_{\gamma \in \Pi_{\operatorname{bc}, \delta}(\mu, \cdot)} \int f(y, \alpha) \, \gamma(dx, dy)
$$
is given by
\begin{align*}
\Upsilon = \inf_{\alpha \in \mathcal{A}^*} &\left[ \left\|\int\partial_{1} f(x, \alpha) \,\bar{\mu}_{x_{1}}(dx_{2:N})\right\|_{L^q(\mu^1(dx_{1}))} \right.\\
&\left.+ \sum_{t = 2}^{N} \int \left\|\int\partial_{t} f(x, \alpha) \,\bar{\mu}_{x_{1:t}}(dx_{t+1:N})\right\|_{L^q(\mu_{x_{1:t-1}}(dx_{t}))} \,\mu(dx_{1:t-1}) \right],
\end{align*}
where $\mathcal{A}^* = \operatorname{argmin}(V^\mathcal{M}(0))$.
\end{theorem}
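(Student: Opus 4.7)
My strategy is to combine the backward dynamic programming principle of Theorem \ref{equiv:control_open} with the classical first-order sensitivity formula for one-step Wasserstein-DRO problems (see \cite{bartl2021sensitivity, blanchet2019quantifying, gao2023distributionally}), propagating the expansion recursively through the $N$ time steps and using an envelope argument to commute the infimum over controls with the Taylor expansion.

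First, I would apply Theorem \ref{equiv:control_open} to write $V(\delta) = V_0^\delta$ with $V_t^\delta$ defined by the backward recursion \eqref{dpp:control}. At $\delta = 0$, the only element of $\Pi_0(\mu_{x_{1:t}}, \cdot)$ is the diagonal coupling $(\mathrm{id}, \mathrm{id})_{\#}\mu_{x_{1:t}}$, so that
$$V_t^0(x_{1:t}, y_{1:t}, \alpha_{1:t}) = \inf_{\alpha_{t+1:N} \in K^{N-t}} \int f\bigl((y_{1:t}, x_{t+1:N}), \alpha\bigr) \, \bar{\mu}_{x_{1:t}}(dx_{t+1:N}),$$
and strong convexity of $\alpha \mapsto f(x, \alpha)$ ensures that the pointwise minimizer $\alpha^* \in \mathcal{A}^*$ is $\mu$-a.s.\ unique. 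This is the representative appearing in the formula for $\Upsilon$.

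Next, I would prove by backward induction on $t$ that $V_t^\delta = V_t^0 + \delta\, S_t + o(\delta)$, where $S_t$ depends only on $f$, $\alpha^*$, and the conditionals of $\mu$. The inductive step applies the one-step Wasserstein-DRO sensitivity
$$\sup_{\gamma \in \Pi_\delta(\mu_{x_{1:t}}, \cdot)} \!\int\! g(x_{t+1}, y_{t+1})\, \gamma(dx_{t+1}, dy_{t+1}) = \!\int\! g(x_{t+1}, x_{t+1})\, \mu_{x_{1:t}}(dx_{t+1}) + \delta \bigl\|\partial_{y_{t+1}} g(\cdot, \cdot)\bigr\|_{L^q(\mu_{x_{1:t}})} + o(\delta)$$
to $g = V_{t+1}^\delta$, while a Danskin-type argument (justified by strong convexity of $\alpha \mapsto f(x,\alpha)$) allows the inf over $\alpha_{t+1}$ to be evaluated at $\alpha^*$. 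Computing $\partial_{y_{t+1}} V_{t+1}^0 = \int \partial_{t+1} f(\cdot, \alpha^*)\, \bar{\mu}_{x_{1:t+1}}(dx_{t+2:N})$ at the diagonal $y = x$ yields the recursion
$$S_t(x_{1:t}) = \int S_{t+1}(x_{1:t+1})\, \mu_{x_{1:t}}(dx_{t+1}) + \Bigl\|\textstyle\int \partial_{t+1} f(x, \alpha^*)\, \bar{\mu}_{x_{1:t+1}}(dx_{t+2:N})\Bigr\|_{L^q(\mu_{x_{1:t}}(dx_{t+1}))}.$$
Unrolling this recursion and integrating against the successive conditionals of $\mu$ from $t = 0$ down to $t = N-1$ produces $\Upsilon = S_0$, which matches the claimed expression after re-indexing.

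The main obstacle will be twofold: uniformly controlling the $o(\delta)$ remainders across the $N$ iterated infima and suprema, and rigorously justifying that the first-order expansion commutes with the inf over $\alpha$. The second issue is addressed by the envelope/Danskin argument, where strong convexity of $f$ in $\alpha$ yields $O(\sqrt{\delta})$-stability of the optimizers, reducing the first-order contribution to an evaluation at the unperturbed optimum. The first issue is controlled by the growth hypothesis $\|\nabla_x f\| \lesssim 1 + \|x\|^{p-1-\varepsilon}$ together with the moment bound $\int \|x_{t+1:N}\|^p\, \bar{\mu}_{x_{1:t}}(dx_{t+1:N}) \lesssim 1 + \|x_{1:t}\|^p$, which ensure that all $L^q$-norms appearing in the formula are finite and provide a $\delta$-independent integrable majorant for the remainders propagated through the recursion.
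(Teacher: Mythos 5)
Your plan takes a genuinely different route from the paper (which does not expand step by step, but telescopes the full $N$-step difference $V(\delta)-V(0)=\sum_t S_t(\gamma^\delta)$ along a near-optimal coupling, proving the upper and lower bounds by separate arguments), and the route is plausible in outline, but it has a genuine gap at exactly the point you flag and then dismiss. Your inductive step applies a one-step Wasserstein-DRO sensitivity to $g=V_{t+1}^\delta$, i.e.\ to a function that is itself $\delta$-dependent and, being built from nested suprema and infima, is generically not differentiable in $y_{t+1}$; substituting $V_{t+1}^\delta=V_{t+1}^0+\delta S_{t+1}+o(\delta)$ only helps if the $o(\delta)$ remainder is uniform (or at least uniformly integrable in a $\delta$-independent way) over the state variables $(x_{1:t},y_{1:t},\alpha_{1:t})$ \emph{and} over all couplings in the $\delta$-ball, since it gets integrated against measures that move with $\delta$. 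A pointwise $o(\delta)$ plus an integrable majorant for the \emph{size} of the terms does not yield $o(\delta)$ after integration and after $N$ nested $\sup/\inf$ operations; this uniformity is precisely where the paper invests its machinery: continuity and $\mathcal{W}_{p-\varepsilon}$-compactness of the correspondence $x_{1:t-1}\twoheadrightarrow\overline\Pi_\delta(\mu_{x_{1:t-1}},\cdot)$ (Propositions \ref{prop:correcpondence_lhs} and \ref{hemicontinuity}), Berge plus Dini to get uniform-on-compacts convergence of the conditional $L^q$-functionals $F_\delta\to F_0$ (Lemma \ref{uniform_convergence_dini}), and Prokhorov tightness of the near-optimal couplings $\gamma^\delta$ to control the complement of compacts. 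You would also need continuity in $x_{1:t-1}$ of the functions $S_{t+1}$ (which are $L^q(\mu_{x_{1:t}})$-norms of conditional expectations of $\partial f$) to replace $\int S_{t+1}\,d\gamma^{t+1}$ by $\int S_{t+1}\,d\mu_{x_{1:t}}$ with negligible error; none of this is automatic and it is not supplied by the growth hypotheses alone.

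A second, related gap concerns the lower bound and measurability. The ``$\geq$'' half of your per-step expansion requires exhibiting admissible perturbations attaining the first-order gain and pasting them across stages into a single causal element of $\Pi_\delta(\mu,\cdot)$; the paper does this explicitly via $L^p$--$L^q$ duality, constructing Monge-type maps $\gamma^\delta_{x_{1:t-1},y_{1:t-1}}=(x_t,x_t+\bar\delta T_t(x_{1:t}))_\#\mu_{x_{1:t-1}}$, invoking causality (\cite[Proposition 2.4]{backhoff2017causal}) and Corollary \ref{cor:causal_bicausal}, and with Borel measurable selections throughout. In the controlled case the paper additionally has to prove that the near-optimal controls $\alpha^\delta$ converge to $\alpha^\star$ in $\mu$-measure, using the $\varepsilon(x)$-strong convexity, before passing to the limit; your Danskin/envelope remark points in this direction but the stability argument, the swap $\inf\sup\geq\sup\inf$ needed to even reduce to a fixed control, and the measurable selection of per-step near-optimizers are not worked out. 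As it stands the proposal identifies the right first-order terms and the right recursion, but the analytic core of the theorem --- uniformity of the remainders and the constructive lower bound --- is missing rather than merely deferred.
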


In order to find the first-order approximation for \(V(\delta)\) it thus suffices to solve a sequence of  convex optimization problems for $V(0)$ to determine $\alpha^*$ and then integrate wrt. $\mu$.

\begin{remark}[Comparison to sensitivity of $\AW_p$-DRO problems]
In general the sensitivity $\Upsilon$ computed in Theorem \ref{dro:control_sensitivity} does not align with the sensitivity for $V^{\AW_p}(\delta)$ computed in \cite{bartl2023sensitivity}. In fact, for $N=2$,
\begin{align*}
\Upsilon= \Big\|\int \partial_1f(x, \alpha^*)\,\mu_{x_1}(dx_2)\Big\|_{L^q(\mu)} + \| \|\partial_{2} f(x,\alpha^*)\|_{L^q(\mu_{x_{1}})} \|_{L^1(\mu)},
\end{align*}
while the one computed in \cite{bartl2023sensitivity} yields
\begin{align*}
\tilde \Upsilon = \Big(\Big\| \int \partial_1 f(x,\alpha^*) \,\mu_{x_1}(dx_2) \Big\|_{L^q(\mu)}^q  +\| \partial_2 f(x,\alpha^*)\|_{L^q(\mu)}^q\Big)^{1/q}
\end{align*}
and thus $\Upsilon \le \tilde \Upsilon,$ as expected from the inequality $\mathcal{AW}_p\le 2^{1/p} \mathcal{AW}^\infty_p$. Furthermore $|\Upsilon-\tilde\Upsilon|\to 0$ for $p\to \infty$, which is reflected by $\mathcal{AW}_p^\infty \asymp \mathcal{AW}_\infty $ for $p \to \infty.$
\end{remark}

\subsection{Sensitivity for $\mathcal{AW}_p^\infty$-DRO problems with martingale constraint}

As an extension of our results above, we consider the sensitivity of $V^\mathcal{M}(\delta).$ As in Section \ref{sec:dpp} we assume that $\mu$ is a martingale measure and set up the DPP as in \eqref{eq:martingale}. Moreover we recall that the corresponding distributionally robust problem is 
\begin{equation*}
V^{\mathcal{M}} (\delta) = \inf_{\alpha \in \mathcal{A}} \sup_{\gamma \in \Pi^{\mathcal{M}}_{\operatorname{bc}, \delta}(\mu, \cdot)} \int f(y, \alpha(x, y)) \,\gamma(dx, dy)
\end{equation*}
and set
\[
\Upsilon^{\mathcal{M}} := \lim_{\delta \to 0+} \frac{V^{\mathcal{M}}(\delta) - V^{ \mathcal{M}}(0)}{\delta}.
\]

Then Theorem \ref{dro:control_sensitivity} can be extended as follows.

\begin{corollary}[Sensitivity for $V(\delta)$ with martingale constraint] \label{dro:control_martingale_sensitivity}
Let \(p > 1\) and let \(\mu \in \mathcal{P}_p(\mathbb{R}^N)\) be a successively $\mathcal{W}_p$--continuous probability measure, which satisfies \(\int \|x_{t+1:N}\|^p \bar{\mu}_{x_{1:t}}(dx_{t+1:N}) \lesssim 1 + \|x_{1:t}\|^p\). Let \(f: \mathbb{R}^N \times K^N \to \mathbb{R}\) be a lower semicontinuous function, which satisfies the following:
\begin{enumerate}
    \item The map \(x \mapsto f(x, \alpha)\) is differentiable, $(x,\alpha)\mapsto \nabla_x f(x, \alpha)$ is continuous and
    \[
    \|\nabla_x f(x, \alpha)\| \leq C(1 + \|x\|^{p - 1 - \varepsilon})
    \]
    for some \(\varepsilon > 0\) and a constant \(C > 0\).
    \item The mapping \(\alpha \mapsto f(x, \alpha)\) is \(\varepsilon(x)\)-strongly convex for all \(x \in \operatorname{spt}(\mu)\), where \(\mu(\varepsilon(x) > 0) = 1\), meaning that
    \[
    f(x, \widetilde{\alpha}) - f(x, \alpha) \geq \langle \nabla_\alpha f(x, \alpha), \widetilde{\alpha} - \alpha \rangle + \frac{1}{2} \varepsilon(x) \|\widetilde{\alpha} - \alpha\|^2
    \]
    for any \(x \in \operatorname{spt}(\mu)\) and \(\alpha, \widetilde{\alpha} \in K^N\).
\end{enumerate}
Then the sensitivity $\Upsilon^{\mathcal{M}}$ of
$$
V^\mathcal{M}(\delta) := \inf_{\alpha \in \mathcal{A}} \sup_{\gamma \in \Pi^{\mathcal{M}}_{\operatorname{bc}, \delta}(\mu, \cdot)} \int f(y, \alpha) \, \gamma(dx, dy)
$$
is given by
\begin{align*}
\Upsilon^\mathcal{M} = \inf_{\alpha \in \mathcal{A}^*} &\left[ \inf_{\lambda_1 \in \mathbb{R}} \left\|\int\partial_{1} f(x, \alpha) \,\bar{\mu}_{x_{1}}(dx_{2:N}) + \lambda_1\right\|_{L^q(\mu^1(dx_{1}))} \right.\\
&\left.+ \sum_{t = 2}^{N} \int \inf_{\lambda_t \in \mathbb{R}} \left\|\int\partial_{t} f(x, \alpha) \,\bar{\mu}_{x_{1:t}}(dx_{t+1:N}) + \lambda_t \right\|_{L^q(\mu_{x_{1:t-1}}(dx_{t}))} \,\mu(dx_{1:t-1})\right],
\end{align*}
where $\mathcal{A}^* = \operatorname{argmin}(V^\mathcal{M}(0))$.
\end{corollary}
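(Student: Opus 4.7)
The strategy is to parallel the proof of Theorem \ref{dro:control_sensitivity}, using the DPP of Corollary \ref{cor:linear} to decompose $V^{\mathcal{M}}(\delta)$ into a cascade of one-step martingale-constrained Wasserstein-DRO problems for the value functions $V_t^{\mathcal{M},\delta}$ defined in \eqref{eq:martingale}. The only genuinely new ingredient, compared with the unconstrained case, is the one-step sensitivity with a martingale constraint: by Lagrangian duality for a regular test function $g$ and $\eta \in \mathcal{P}_p(\R)$,
\[
\sup_{\pi \in \Pi^{\mathcal{M}}_\delta(\eta,\cdot)} \int g(y)\,\pi(dx,dy) = \int g(x)\,\eta(dx) + \delta \inf_{\lambda \in \R} \|g'(x) + \lambda\|_{L^q(\eta)} + o(\delta),
\]
where the free parameter $\lambda$ encodes the relaxation of the constraint $\int (y-x)\,\pi(dx,dy) = 0$ (cf.\ \cite{sauldubois2024first, bartl2021sensitivity}). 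Applied with $\eta = \mu_{x_{1:t}}$ and $g$ the appropriate slice of $V^{\mathcal{M},\delta}_{t+1}$, this gives a step-wise first-order expansion of each DPP step in \eqref{eq:martingale}, and yields one Lagrangian parameter $\lambda_t \in \R$ per time step.

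Next, I would propagate regularity through the backward induction: starting from $f$ (differentiable with polynomial gradient growth and continuous mixed derivatives), I show inductively that each $V^{\mathcal{M},\delta}_t$ is differentiable in $x_{1:t}$ with continuous gradient converging, as $\delta \to 0$, to $\int \nabla_x f(\cdot, \alpha)\,\bar\mu_{x_{1:t}}(dx_{t+1:N})$. This is an envelope-theorem style argument that relies on the successive $\mathcal{W}_p$--continuity of $\mu$, the moment hypothesis $\int \|x_{t+1:N}\|^p \,\bar\mu_{x_{1:t}}(dx_{t+1:N}) \lesssim 1 + \|x_{1:t}\|^p$, and Lemma \ref{lem:cost_to_go_regularity} extended to the martingale setting. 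The $\varepsilon(x)$-strong convexity in $\alpha$ is then used in the outer infimum exactly as in \cite{bartl2021sensitivity, bartl2023sensitivity}: any sequence of $\delta$-optimal controls must concentrate on $\mathcal{A}^* = \operatorname{argmin}(V^{\mathcal{M}}(0))$, so the envelope argument gives $V^{\mathcal{M}}(\delta) = V^{\mathcal{M}}(0) + \delta \cdot \inf_{\alpha \in \mathcal{A}^*} \Theta^{\mathcal{M}}(\alpha) + o(\delta)$, with $\Theta^{\mathcal{M}}$ the sum of the step-wise contributions.

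Finally, I would assemble the one-step expansions via the tower property: the gradient of $V_{t+1}^{\mathcal{M},\delta}$ at an optimal control is, to leading order, $\int \partial_t f(x,\alpha)\,\bar\mu_{x_{1:t}}(dx_{t+1:N})$, and integrating the Lagrangian $L^q$-norms iteratively against $\mu^{1:t-1}$ with Fubini produces exactly the formula stated for $\Upsilon^{\mathcal{M}}$.

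The principal obstacle I anticipate is uniformity of the $o(\delta)$ remainder across the nested DPP: one must show that the step-$t$ remainder, after being integrated against the optimal transport plan up to time $t$ and summed over $t = 1,\ldots,N$, still contributes $o(\delta)$ globally. This requires uniform-in-$x_{1:t}$ control, which I expect to obtain from the polynomial growth of $V_{t+1}^{\mathcal{M},\delta}$ and its gradient (propagated inductively from $f$) combined with the moment condition on $\bar\mu_{x_{1:t}}$. A secondary subtlety is that each Lagrangian infimum over $\lambda_t \in \R$ must be checked to be attained (or at least be finite with suitable continuity in $\delta$) so that the step-wise expansions remain consistent in the limit $\delta \to 0$; this follows from the integrability of $\partial_t f(\cdot,\alpha)$ under $\bar\mu_{x_{1:t}}$ and the fact that $\lambda \mapsto \|g'+\lambda\|_{L^q}$ is a proper convex coercive function on $\R$.
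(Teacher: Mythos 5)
Your high-level plan (DPP from Corollary \ref{cor:linear}, Lagrangian treatment of the martingale constraint, strong convexity to reduce to $\mathcal{A}^*$) matches the paper's, but the technical core of your argument has a genuine gap. You want to apply a one-step expansion $\sup_{\pi \in \Pi^{\mathcal{M}}_\delta(\eta,\cdot)} \int g\,d\pi = \int g\,d\eta + \delta \inf_\lambda \|g'+\lambda\|_{L^q(\eta)} + o(\delta)$ with $g$ a slice of $V^{\mathcal{M},\delta}_{t+1}$, and to make this work you assert that each $V^{\mathcal{M},\delta}_t$ is differentiable in $x_{1:t}$ with a continuous gradient. Under the standing assumptions this is not available: the map $x_{1:t}\mapsto \mu_{x_{1:t}}$ is only successively $\mathcal{W}_p$--continuous, so even $V^0_t$ need not be differentiable in $x_{1:t}$, and the paper deliberately proves only continuity and polynomial growth of $V^{\mathcal{M},\delta}_t$ (Lemma \ref{lem:control_martingale}). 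Moreover, the integrand in each DPP step depends on \emph{both} $x_{t+1}$ and $y_{t+1}$ and on $\delta$ itself (through $V^{\mathcal{M},\delta}_{t+1}$), so your quoted one-step formula for a function of $y$ alone does not apply as stated, and your proposed remedy for the $o(\delta)$-uniformity issue again invokes ``the gradient of $V^{\mathcal{M},\delta}_{t+1}$'', i.e.\ the very object that is missing. The paper's proof avoids all of this: it telescopes the global difference $f(y)-f(x)=\sum_t [f(y_{1:t},x_{t+1:N})-f(y_{1:t-1},x_{t:N})]$ so that only derivatives of $f$ (never of value functions) appear, and controls the $\delta\to 0$ limit via the Dini-type uniform convergence of Lemma \ref{uniform_convergence_dini} combined with Prokhorov tightness.

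A second omission concerns measurability and selection, which in this constrained setting is where most of the work lies. For the upper bound one must choose the Lagrange multipliers as a \emph{continuous} function $x_{1:t-1}\mapsto\lambda^\star_t(x_{1:t-1})$ so that the relaxed cost $f(y)+\sum_t \lambda^\star_t(x_t-y_t)$ is a legitimate continuous cost to which the unconstrained sensitivity argument applies (Proposition \ref{prop:martingale_sensitivity_lagrange_multiplier_continuity}); your attainment remark for a fixed $\lambda$-problem (``proper convex coercive'') does not give this joint regularity. For the lower bound one must produce a Borel measurable dual optimizer $x_{1:t}\mapsto T^\star_t(x_{1:t})$ with $\|T^\star_t\|_{L^p(\mu_{x_{1:t-1}})}\le 1$ and $\int T^\star_t\,d\mu_{x_{1:t-1}}=0$, realizing $\inf_\lambda\|\int\partial_t f\,\bar\mu_{x_{1:t}}+\lambda\|_{L^q}$, in order to build the admissible martingale perturbation $\gamma^\delta_{x_{1:t-1},y_{1:t-1}}=(x_t,x_t+\bar\delta T^\star_t(x_{1:t}))_\#\mu_{x_{1:t-1}}$; this is exactly the content of Proposition \ref{prop:martingale_sensitivity_duality} (a minimax argument in $L^p$/$L^q$ plus a uniqueness-of-argmax and Berge selection step), and your proposal does not address how the primal perturbation is constructed at all. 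Without these two ingredients, and without replacing the differentiability-of-value-functions step by the telescoping/uniform-convergence mechanism, the proposed proof does not go through.
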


\begin{corollary}
In the case \(p = 2\) we obtain
\begin{align*}
\Upsilon^\mathcal{M} = \inf_{\alpha \in \mathcal{A}^*} &\left[ \sqrt{\operatorname{Var}_{\mu^1(dx_1)} \left(\int\partial_{1} f(x, \alpha) \,\bar{\mu}_{x_{1}}(dx_{2:N})\right)}\right.\\
&\left.+\sum_{t = 2}^{N} \int \sqrt{\operatorname{Var}_{\mu_{x_{1:t-1}}(dx_t)} \left(\int\partial_{t} f(x, \alpha) \,\bar{\mu}_{x_{1:t}}(dx_{t+1:N})\right)} \,\mu(dx_{1:t-1}) \right],
\end{align*}
where $\operatorname{Var}_{\mu_{x_{1:t}}}(\cdot)$ denotes the variance wrt. $\mu_{x_{1:t}}.$
\end{corollary}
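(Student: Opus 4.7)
The plan is to invoke Corollary \ref{dro:control_martingale_sensitivity} and simplify each of the inner infima over the Lagrange multipliers $\lambda_t \in \mathbb{R}$ using the Hilbert-space characterization of variance. With $p = 2$ we have $q = 2$, so every term appearing in the formula for $\Upsilon^{\mathcal{M}}$ is of the form
\[
\inf_{\lambda_t \in \mathbb{R}} \left\| g_t(x_{1:t}) + \lambda_t \right\|_{L^2(\mu_{x_{1:t-1}}(dx_t))}, \qquad g_t(x_{1:t}) := \int \partial_t f(x,\alpha)\,\bar{\mu}_{x_{1:t}}(dx_{t+1:N}).
\]

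First I would square the $L^2$-norm and use the elementary identity
\[
\inf_{c \in \mathbb{R}} \int \bigl(h(u) + c\bigr)^2 \rho(du) = \int h(u)^2 \rho(du) - \Bigl(\int h(u) \rho(du)\Bigr)^2 = \operatorname{Var}_\rho(h),
\]
which is attained at $c^* = -\int h\,d\rho$ (this is the standard projection onto the orthogonal complement of the constants in $L^2(\rho)$). Applying this with $\rho = \mu_{x_{1:t-1}}(dx_t)$ and $h = g_t(x_{1:t-1}, \cdot)$ yields
\[
\inf_{\lambda_t \in \mathbb{R}} \left\| g_t + \lambda_t \right\|_{L^2(\mu_{x_{1:t-1}}(dx_t))} = \sqrt{\operatorname{Var}_{\mu_{x_{1:t-1}}(dx_t)}\bigl(g_t(x_{1:t-1}, \cdot)\bigr)}.
\]
The same computation handles the initial term with $\rho = \mu^1$. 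Substituting these identifications termwise into the expression for $\Upsilon^{\mathcal{M}}$ provided by Corollary \ref{dro:control_martingale_sensitivity} produces exactly the claimed formula.

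There is no real obstacle here: the only point requiring mild care is that the infimum over $\lambda_t$ in Corollary \ref{dro:control_martingale_sensitivity} is taken pointwise in $x_{1:t-1}$ (the integrand being $\mu(dx_{1:t-1})$-measurable thanks to Fubini together with the measurability of $x_{1:t}\mapsto g_t(x_{1:t})$ inherited from the continuity hypotheses on $\partial_t f$ and the successive $\mathcal{W}_p$--continuity of $\mu$). Since $\lambda_t^*(x_{1:t-1}) = -\int g_t(x_{1:t-1}, x_t)\,\mu_{x_{1:t-1}}(dx_t)$ is a measurable selection, the exchange of infimum and integral is justified and the derivation is complete.
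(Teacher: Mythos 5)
Your proposal is correct and matches the intended argument: the paper states this corollary without proof precisely because it follows from Corollary \ref{dro:control_martingale_sensitivity} by the termwise identity $\inf_{\lambda\in\mathbb{R}}\|g+\lambda\|_{L^2(\rho)}=\sqrt{\operatorname{Var}_\rho(g)}$ with $q=2$, which is exactly what you use. Your added remark on the measurable selection $\lambda_t^*(x_{1:t-1})=-\int g_t\,d\mu_{x_{1:t-1}}$ is harmless but not strictly needed, since the infimum in Corollary \ref{dro:control_martingale_sensitivity} already sits inside the $\mu(dx_{1:t-1})$-integral and is only being evaluated pointwise.
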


\section{Remaining proofs} \label{sec:proofs}

\subsection{Proofs of Lemma \ref{lem:adap_new} and Lemma \ref{lem:easy}}

\begin{proof}[Proof of Lemma \ref{lem:adap_new}]

We start with the proof of the ``$\ge$"--inequality of \eqref{eq:a_infty_dpp}. Fix an arbitrary \(\varepsilon > 0\) and let \(\eta \in \Pi_{\operatorname{bc}}(\mu, \nu)\) be a near-optimal transport plan for \(\mathcal{AW}^\infty_p(\mu, \nu)\), i.e.,
\begin{align}\label{eq:near_optimal}
\mathcal{F}_{0, p}(\eta) \leq \mathcal{AW}^\infty_p(\mu, \nu) + \varepsilon.
\end{align}
Note that  \((x_{1:t}, y_{1:t}) \mapsto \eta_{x_{1:t}, y_{1:t}}\) is Borel and $\eta_{x_{1:t}, y_{1:t}} \in \Pi(\mu_{x_{1:t}}, \nu_{y_{1:t}})$ for all $t = 1, \ldots, N-1$ by bicausality. Furthermore, $A_{N,p}=\mathcal{F}_{N,p}=0$. Hence, for any $t = 1, \ldots, N-1$ we obtain 
\begin{align}\label{eq:aw_dynamic}
\begin{split}
A_{t, p}(x_{1:t}, y_{1:t}) &\leq \mathcal{C}_p(\eta_{x_{1:t}, y_{1:t}}) \vee \|A_{t+1, p}(x_{1:t+1}, y_{1:t+1})\|_{L^\infty(\eta_{x_{1:t}, y_{1:t}})} \\
&\le  \mathcal{C}_p(\eta_{x_{1:t}, y_{1:t}}) \vee \|\mathcal{F}_{t+1, p}(\bar{\eta}_{x_{1:t+1}, y_{1:t+1}})\|_{L^\infty(\eta_{x_{1:t}, y_{1:t}})}\\
&= \mathcal{F}_{t, p}(\bar{\eta}_{x_{1:t}, y_{1:t}})
\end{split}
\end{align}
by backward induction and by definition of $\mathcal{F}_{t, p}$. Thus,
\begin{align*}
\inf_{\gamma^{1} \in \Pi(\mu^1, \nu^1 )} \mathcal{C}_p(\gamma^{1}) \vee \|A_{1,p}(x_{1} ,y_{1})\|_{L^\infty(\gamma^{1})} &\leq 
\mathcal{C}_p(\eta^{1}) \vee \|A_{1,p}(x_{1} ,y_{1})\|_{L^\infty(\eta^{1})}\\
&\stackrel{\eqref{eq:aw_dynamic}}{\le} \mathcal{C}_p(\eta^1) \vee \|\mathcal{F}_{1, p}(\bar{\eta}_{x_1, y_1})\|_{L^\infty(\eta^1)} \\
&= \mathcal{F}_{0, p}(\eta)\\
&\stackrel{\eqref{eq:near_optimal}}{\le}
\mathcal{AW}^\infty_p(\mu, \nu) + \varepsilon.
\end{align*}
Taking \(\varepsilon \to 0\) we obtain $$\inf_{\gamma^{1} \in \Pi(\mu^1, \nu^1 )} \mathcal{C}_p(\gamma^{1}) \vee \|A_{1,p}(x_{1} ,y_{1})\|_{L^\infty(\gamma^{1})} \leq \mathcal{AW}^\infty_p(\mu, \nu).$$

For the ``$\le$"--inequality we follow  \cite[proof of Theorem 4.2]{backhoff2017causal} and first show that $(x_{1:t}, y_{1:t}) \mapsto A_{t, p}(x_{1:t}, y_{1:t})$ is lower semianalytic by backward induction. The case $t = N$ is straightforward, so suppose that $t < N$. As in \cite[proof of Theorem 4.2, Step 1]{backhoff2017causal}, the set
$$
\{(x_{1:t}, y_{1:t}, \pi):\, \pi \in \Pi(\mu_{x_{1:t}}, \nu_{y_{1:t}})\} \subset (\mathbb{R}^t \times \mathbb{R}^t) \times \mathcal{P}_p(\mathbb{R}^2)
$$
is analytic. Moreover, the map $\pi \mapsto \mathcal{C}_p(\pi)$ is Borel. Recall that $(x_{1:t+1}, y_{1:t+1}) \mapsto A_{t+1, p}(x_{1:t+1}, y_{1:t+1})$ is lower semianalytic by the induction hypothesis.

Note that for fixed $\pi \in \mathcal{P}(\R^2)$ 
we have
\begin{align}\label{eq:essential_sup}
\sup_{q\in \N} \|g\|_{L^q(\pi)} =\|g\|_{L^\infty(\pi)};
\end{align}
see e.g., \cite[Lemma 13.1]{guide2006infinite}. Following \cite[proof of Theorem 4.2, Step 2]{backhoff2017causal}, $$(x_{1:t}, y_{1:t}, \pi) \mapsto \|A_{t+1, p}(x_{1:t+1}, y_{1:t+1})\|_{L^q(\pi)}$$ is lower semianalytic for any $q\in \N$ by \cite[Proposition 7.48]{bertsekas1996stochastic} as the integration of lower semianalytic functions against Borel kernels, and so is 
\begin{align*}
(x_{1:t}, y_{1:t}, \pi) \mapsto \|A_{t+1, p}(x_{1:t+1}, y_{1:t+1})\|_{L^\infty(\pi)}   
\end{align*}
using \eqref{eq:essential_sup} and \cite[Lemma 7.30.(2)]{bertsekas1996stochastic}. In conclusion, 
\begin{align*}
(x_{1:t}, y_{1:t}, \pi) \mapsto \mathcal{C}_p(\pi)\vee \|A_{t+1, p}(x_{1:t+1}, y_{1:t+1})\|_{L^\infty(\pi)}
\end{align*}
is lower semianalytic. Lastly we fix an arbitrary $\varepsilon > 0$ and apply \cite[Proposition 7.50.(b)]{bertsekas1996stochastic} to obtain a universally measurable selection of near-optimizers $(x_{1:t}, y_{1:t}) \mapsto \gamma_{x_{1:t}, y_{1:t}} \in \Pi(\mu_{x_{1:t}}, \nu_{y_{1:t}})$, i.e.,
\[
\mathcal{C}_p(\gamma_{x_{1:t}, y_{1:t}}) \vee \|A_{t+1, p}(x_{1:t+1}, y_{1:t+1})\|_{L^\infty(\gamma_{x_{1:t}, y_{1:t}})} \leq A_{t, p}(x_{1:t}, y_{1:t}) + \varepsilon.
\]
Using \cite[Lemma 7.28.(c)]{bertsekas1996stochastic} iteratively for $t=1, \dots, N-1$, one can actually choose Borel measurable versions of $(x_{1:t}, y_{1:t})\mapsto \gamma_{x_{1:t}, y_{1:t}}$. This allows to construct a bicausal transport plan $\Pi_{\operatorname{bc}}(\mu, \nu) \ni \gamma := \gamma^1 \otimes \gamma_{x_1, y_1} \otimes \ldots \otimes \gamma_{x_{1:N-1}, y_{1:N-1}}$ as a concatenation of Borel measurable kernels by \cite[Proposition 7.28]{bertsekas1996stochastic}. By backward induction
\begin{align}\label{eq:details}
\begin{split}
\mathcal{F}_{t,p}(\bar{\gamma}_{x_{1:t}, y_{1:t}}) &= \mathcal{C}_p(\gamma_{x_{1:t}, y_{1:t}}) \vee  \|\mathcal{F}_{t+1,p}(\bar\gamma_{x_{1:t+1}, y_{1:t+1}}) \|_{L^\infty(\gamma_{x_{1:t}, y_{1:t}})} \\
&\le \mathcal{C}_p(\gamma_{x_{1:t}, y_{1:t}}) \vee  \|A_{t+1,p}(x_{1:t+1}, y_{1:t+1}) \|_{L^\infty(\gamma_{x_{1:t}, y_{1:t}})} +(N-t-1)\epsilon\\
&\le A_{t, p}(x_{1:t}, y_{1:t}) +(N-t)\epsilon.
\end{split}
\end{align}
Thus 
\begin{align*}
\mathcal{AW}^\infty_p(\mu, \nu) = \inf_{\gamma \in \Pi_{\text{bc}} (\mu, \nu )}\mathcal{F}_{0, p}(\gamma) &\le\inf_{\gamma^{1} \in \Pi(\mu^1, \nu^1 )} \mathcal{C}_p(\gamma^1) \vee \|\mathcal{F}_{1,p}( \bar{\gamma}_{x_{1}, y_{1}})\|_{L^\infty(\gamma^1)}\\ &\stackrel{\eqref{eq:details}}{\le} 
\inf_{\gamma^{1} \in \Pi(\mu^1, \nu^1 )} \mathcal{C}_p(\gamma^{1}) \vee \|A_{1,p}(x_{1} ,y_{1})\|_{L^\infty(\gamma^{1})} +(N-1)\varepsilon,
\end{align*}
and hence the result follows by taking $\varepsilon \to 0$.
\end{proof}

\begin{proof}[Proof of Lemma \ref{lem:easy}]
We first show the inequality $\mathcal{AW}_p\le N^{1/p} \mathcal{AW}^\infty_p$ by induction. The case \(N = 1\) is trivial, so we assume \(N > 1\) and set $m:=\mathcal{AW}^\infty_p(\mu, \nu)$. Fix $\epsilon > 0$. By definition there exists a transport plan \(\gamma^1 \in \Pi(\mu^1, \nu^1)\), which satisfies
\begin{equation*}
\mathcal{C}_p(\gamma^1) \vee \|\mathcal{AW}^\infty_p(\bar{\mu}_{x_1}, \bar{\nu}_{y_1})\|_{L^\infty(\gamma^1)} \le m + \varepsilon/2.
\end{equation*}
The induction hypothesis implies that for $\gamma^1$-every $(x_1, y_1)$ we have
\[
\mathcal{AW}_p(\bar{\mu}_{x_1}, \bar{\nu}_{y_1}) \leq (N - 1)^{1/p} \cdot \mathcal{AW}^\infty_p(\bar{\mu}_{x_1}, \bar{\nu}_{y_1}).
\]
Combining these two inequalities and using  an argument similar to the proof of Lemma \ref{lem:adap_new}, there exists a universally measurable kernel \((x_1, y_1) \mapsto \bar{\gamma}_{x_1, y_1} \in \Pi_{\operatorname{bc}}(\bar{\mu}_{x_1}, \bar{\nu}_{y_1})\) satisfying 
\begin{equation}\label{inequality_residual}
\Big(\int \|x_{2:N} - y_{2:N}\|^p \, \bar{\gamma}_{x_1, y_1}(dx_{2:N}, dy_{2:N})\Big)^{1/p} \le (N - 1 )^{1/p} \cdot \left(\mathcal{AW}^\infty_p(\bar{\mu}_{x_1}, \bar{\nu}_{y_1}) +\epsilon \right).
\end{equation}
To conclude the proof of the inequality, note that \(\gamma(dx_{1:N}, dy_{1:N}) := \gamma^1(dx_1, dy_1) \otimes \bar{\gamma}_{x_1, y_1}(dx_{2:N}, dy_{2:N}) \in \Pi_{\operatorname{bc}}(\mu, \nu)\), and 
\[ \int \|x-y\|^p\,\gamma(dx,dy)\le 
\mathcal{C}_p(\gamma^1)^p + \int \|x_{2:N} - y_{2:N}\|^p \gamma(dx_{1:N}, dy_{1:N}) < N (m + \varepsilon)^p,
\]
which implies \(\mathcal{AW}_p(\mu, \nu) \leq N^{1/p} (m + \varepsilon)\) for any \(\varepsilon > 0\).

By the inequality $\mathcal{AW}_p\le N^{1/p} \mathcal{AW}^\infty_p$ and the fact that $\AW_p$ is a metric, positive definiteness of $\AW_p^\infty$ follows immediately. As symmetry of $\AW_p^\infty$ is obvious from Definition \ref{def:aw_inf}, we only need to prove the triangle inequality for $\AW_p^\infty.$ We proceed by induction: the case \(N = 1\) is simply the triangle inequality for $\mathcal{W}_p$, so we take \(N > 1\). Fixing measures $\mu,\nu,\eta\in \mathcal{P}(\R^N)$, the induction hypothesis yields
\begin{equation}\label{triangle}
|\mathcal{AW}_{p}^\infty (\bar{\mu}_{x_1}, \bar{\nu}_{y_1})| \leq |\mathcal{AW}_{p}^\infty (\bar{\mu}_{x_1}, \bar{\eta}_{z_1})| + |\mathcal{AW}_{p}^\infty (\bar{\eta}_{z_1}, \bar{\nu}_{y_1})|
\end{equation}
for any \(x_1 \in \text{spt}(\mu^1)\), \(y_1 \in \text{spt}(\nu^1)\), \(z_1 \in \text{spt}(\eta^1)\). Take any transport plans \(\pi_1 \in \Pi(\mu^1, \eta^1)\) and \(\pi_2 \in \Pi(\eta^1, \nu^1)\) and define
\begin{equation}\label{proj}
\widehat{\pi} := \pi_1 \dot{\oplus} \pi_2 \in \Pi(\mu^1, \eta^1, \nu^1), \;\; \pi_3 := (\operatorname{proj}^{1, 3})_\# \widehat \pi \in \Pi(\mu^1, \nu^1).
\end{equation}
From the pointwise inequality \eqref{triangle} together with \eqref{proj} we obtain
\begin{align}
\begin{split}\label{triangular_first}
\|\mathcal{AW}_{p}^\infty (\bar{\mu}_{x_1}, \bar{\nu}_{y_1})\|_{L^\infty(\pi_3)} &= \|\mathcal{AW}^\infty_{p}(\bar{\mu}_{x_1}, \bar{\nu}_{z_1})\|_{L^\infty(\widehat{\pi})}\ \\
&\leq \|\mathcal{AW}_{p}^\infty (\bar{\mu}_{x_1}, \bar{\eta}_{y_1})\|_{L^\infty(\widehat{\pi})} + \|\mathcal{AW}_{p}^\infty (\bar{\eta}_{y_1}, \bar{\nu}_{z_1})\|_{L^\infty(\widehat{\pi})} \\
&= \|\mathcal{AW}_{p}^\infty (\bar{\mu}_{x_1}, \bar{\eta}_{y_1})\|_{L^\infty(\pi_1)} + \|\mathcal{AW}_{p}^\infty (\bar{\eta}_{y_1}, \bar{\nu}_{z_1})\|_{L^\infty(\pi_2)}.
\end{split}
\end{align}
Furthermore, by Minkowski's inequality for \(L^p(\widehat{\pi})\) and \eqref{proj} we obtain
\begin{align}
\begin{split} 
\mathcal{C}_{p}(\pi_3) 
&= \left(\int |x_1 - y_1|^p \,\pi_3(dx_1, dy_1)\right)^\frac{1}{p} 
= \left(\int |x_1 - z_1|^p \,\widehat{\pi}(dx_1, dy_1, dz_1)\right)^\frac{1}{p} \\ 
&\leq \left(\int |x_1 - y_1|^p \,\widehat{\pi}(dx_1, dy_1, dz_1)\right)^\frac{1}{p} + \left(\int |y_1 - z_1|^p \,\widehat{\pi}(dx_1, dy_1, dz_1)\right)^\frac{1}{p} \\
&= \left(\int |x_1 - y_1|^p \,\pi_1(dx_1, dy_1)\right)^\frac{1}{p} + \left(\int |x_1 - y_1|^p \,\pi_2(dx_1, dy_1)\right)^\frac{1}{p} \\ &= \mathcal{C}_{p}(\pi_1) + \mathcal{C}_{p}(\pi_2).
\end{split}\label{triangular_second}
\end{align}
Combining \eqref{eq:a_infty_dpp1} in Lemma \ref{lem:adap_new} with \eqref{triangular_first}, \eqref{triangular_second} we obtain
\begin{align*}
\mathcal{AW}^\infty_{p}(\mu, \nu) &\leq 
 \mathcal{C}_{p}(\pi_3) \vee \|\mathcal{AW}_{p}^\infty (\bar{\mu}_{x_1}, \bar{\nu}_{y_1})\|_{L^\infty(\pi_3)}\\
&\leq \mathcal{C}_{p}(\pi_1) \vee \|\mathcal{AW}_{p}^\infty(\bar{\nu}_{y_1}, \bar{\eta}_{z_1})\|_{L^\infty(\pi_1)} + \mathcal{C}_{p}(\pi_2) \vee \|\mathcal{AW}_{p}^\infty (\bar{\eta}_{z_1}, \bar{\nu}_{y_1})\|_{L^\infty(\pi_2)}.
\end{align*}
Lastly, taking the infimum over transport plans \(\pi_1\in \Pi(\mu^1,\eta^1)\) and \(\pi_2\in \Pi(\eta^1, \nu^1)\) we conclude that
\[
\mathcal{AW}_{p}^\infty (\mu, \nu) \leq \mathcal{AW}_{p}^\infty (\mu, \eta) + \mathcal{AW}^\infty_{p}(\eta, \nu)
\]
as claimed. This concludes the proof.
\end{proof}

\subsection{Auxiliary results for the proof of Proposition \ref{prop:equivalence}} 
We need the following two technical results:

\begin{lemma}\label{lem:aw_growth}
Assume that $\mu, \nu \in \mathcal{P}_p(\mathbb{R}^N)$ have $L$-Lipschitz disintegrations and let $1\le t\le N-1$. Then there exist constants $C_{t, L}, D_{t, L} > 0$ depending only on \(t, L\), such that the function $$a_t(x_{1:t}, y_{1:t}) := \mathcal{AW}_p(\bar\mu_{x_{1:t}}, \bar\nu_{y_{1:t}})$$ satisfies
$$
a_t(x'_{1:t}, y'_{1:t})^p \leq C_{t, L} (\|\Delta x_{1:t}\|^p + \|\Delta y_{1:t}\|^p) + D_{t, L} a_t(x_{1:t}, y_{1:t})^p$$
for all $\Delta x_{1:t} := x'_{1:t} - x_{1:t}$ and $\Delta y_{1:t} := y'_{1:t} - y_{1:t}$.
\end{lemma}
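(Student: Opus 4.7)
The plan is to bound $a_t(x'_{1:t}, y'_{1:t})$ via the triangle inequality for $\mathcal{AW}_p$ (which holds because $\AW_p$ is a metric),
$$a_t(x'_{1:t},y'_{1:t}) \le \mathcal{AW}_p(\bar\mu_{x'_{1:t}},\bar\mu_{x_{1:t}}) + a_t(x_{1:t},y_{1:t}) + \mathcal{AW}_p(\bar\nu_{y_{1:t}},\bar\nu_{y'_{1:t}}),$$
and then apply the elementary inequality $(a+b+c)^p\le 3^{p-1}(a^p+b^p+c^p)$. With this reduction in place, the main task is to prove that there exists a constant $C(t,L)$, depending only on $t$ and $L$, such that
$$\mathcal{AW}_p(\bar\mu_{x_{1:t}},\bar\mu_{x'_{1:t}})^p \le C(t,L)\,\|x_{1:t}-x'_{1:t}\|^p,$$
and similarly for $\nu$. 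Once this is shown, the desired inequality follows with $C_{t,L}=3^{p-1}C(t,L)$ and $D_{t,L}=3^{p-1}$.

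To establish the above bound, I would construct an explicit bicausal coupling between $\bar\mu_{x_{1:t}}$ and $\bar\mu_{x'_{1:t}}$ by recursive gluing. At time $t+1$, the $L$-Lipschitz assumption gives $\mathcal{W}_p(\mu_{x_{1:t}},\mu_{x'_{1:t}})^p\le L^p\|x_{1:t}-x'_{1:t}\|^p$; pick an optimal coupling $\pi^{t+1}$. Given $(X_{1:s},X'_{1:s})$ for some $s\ge t+1$, the Lipschitz hypothesis yields
$$\mathcal{W}_p(\mu_{x_{1:s}},\mu_{x'_{1:s}})^p \le L^p\,\|x_{1:s}-x'_{1:s}\|^p = L^p\bigl(\|x_{1:t}-x'_{1:t}\|^p+\|x_{t+1:s}-x'_{t+1:s}\|^p\bigr),$$
so we can select a measurable family of optimal one-step couplings $\pi^{s+1}_{x_{1:s},x'_{1:s}}\in\Pi(\mu_{x_{1:s}},\mu_{x'_{1:s}})$ and glue the kernels together (as in the standard construction of bicausal plans) to obtain a bicausal coupling $\gamma\in\Pi_{\operatorname{bc}}(\bar\mu_{x_{1:t}},\bar\mu_{x'_{1:t}})$.

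Setting $\Delta_s:=\int|x_s-x'_s|^p\,\gamma(dx_{t+1:N},dx'_{t+1:N})$, the recursive construction gives $\Delta_{t+1}\le L^p\|x_{1:t}-x'_{1:t}\|^p$ and, by the tower property,
$$\Delta_{s+1} \le L^p\,\|x_{1:t}-x'_{1:t}\|^p + L^p\sum_{r=t+1}^{s}\Delta_r$$
for $t+1\le s\le N-1$. A straightforward induction yields $\Delta_s\le L^p(1+L^p)^{s-t-1}\|x_{1:t}-x'_{1:t}\|^p$, hence
$$\mathcal{AW}_p(\bar\mu_{x_{1:t}},\bar\mu_{x'_{1:t}})^p \le \sum_{s=t+1}^N\Delta_s \le \Bigl(\sum_{s=t+1}^N L^p(1+L^p)^{s-t-1}\Bigr)\|x_{1:t}-x'_{1:t}\|^p,$$
so one may take $C(t,L):=(1+L^p)^{N-t}-1$.

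The measurable selection required at each level is the only mildly delicate point; this can be handled exactly as in the proof of Lemma \ref{lem:adap_new} via \cite[Proposition 7.50(b)]{bertsekas1996stochastic}, applied to the lower-semianalytic (in fact, by continuity of $\mathcal{W}_p$, Borel) correspondence $(x_{1:s},x'_{1:s})\twoheadrightarrow\operatorname{argmin}$ of optimal couplings, followed by passing to Borel versions via \cite[Lemma 7.28(c)]{bertsekas1996stochastic}. The analogous bound holds for $\nu$, and combining the two with the triangle inequality completes the proof.
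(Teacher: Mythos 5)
Your proof is correct, but it takes a genuinely different route from the paper. You first invoke the triangle inequality for the metric $\AW_p$ to split $a_t(x'_{1:t},y'_{1:t})$ into $\AW_p(\bar\mu_{x'_{1:t}},\bar\mu_{x_{1:t}})+a_t(x_{1:t},y_{1:t})+\AW_p(\bar\nu_{y_{1:t}},\bar\nu_{y'_{1:t}})$, and then reduce everything to the statement that $L$-Lipschitz one-step disintegrations make $x_{1:t}\mapsto\bar\mu_{x_{1:t}}$ Lipschitz in $\AW_p$; this you prove by an explicit forward (synchronous) recursive coupling whose one-step kernels are ($\varepsilon$-)optimal Wasserstein couplings of the conditional laws, together with a discrete Gr\"onwall-type recursion $\Delta_{s+1}\le L^p\|\Delta x_{1:t}\|^p+L^p\sum_{r=t+1}^s\Delta_r$, yielding the clean constants $C(t,L)=(1+L^p)^{N-t}-1$ and $D_{t,L}=3^{p-1}$. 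The paper instead proves the mixed inequality directly by backward induction on $t$: it takes an optimal adapted coupling $\bar\pi$ for $\AW_p(\bar\mu_{x_{1:t}},\bar\nu_{y_{1:t}})$, glues its first-step projection with the optimal Wasserstein plans $\Pi(\mu_{x'_{1:t}},\mu_{x_{1:t}})$ and $\Pi(\nu_{y_{1:t}},\nu_{y'_{1:t}})$, and applies the induction hypothesis inside the integral, producing recursively defined constants $C_{t,L}=3^{p-1}L^p+C_{t+1,L}+D_{t+1,L}L^pC_{t+1,L}$, $D_{t,L}=3^{p-1}\vee D_{t+1,L}^2$. Your argument is more modular (the $\AW_p$-Lipschitz dependence of the conditional tail laws is of independent interest) and gives sharper, more explicit constants, at the cost of relying on the metric property of $\AW_p$ on $\mathcal{P}_p(\mathbb{R}^{N-t})$ and on a measurable selection of (near-)optimal one-step couplings; the latter is indeed handled exactly as you indicate, by the Bertsekas--Shreve selection argument already used in Lemma \ref{lem:adap_new} (or by $\varepsilon$-optimal selectors with $\varepsilon\to0$), so this is not a gap.
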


\begin{proof}
We prove the claim via backward induction.
The case $t = N - 1$ is straightforward: indeed, $a_{N-1}(x_{1:N-1}, y_{1:N-1}) = \mathcal{W}_p(\mu_{x_{1:N-1}}, \nu_{y_{1:N-1}})$, and by $L$-Lipschitz continuity of disintegrations and the triangle inequality for $\mathcal{W}_p$ we have
\begin{align*}
a_{N-1}(x'_{1:N-1}, y'_{N-1}) &\leq a_{N-1}(x_{1:N-1}, y_{1:N-1}) + |a_{N-1}(x_{1:N-1}, y'_{1:N-1}) - a_{N-1}(x_{1:N-1}, y_{1:N-1})|\\
&+ |a_{N-1}(x_{1:N-1}, y'_{1:N-1}) - a_{N-1}(x_{1:N-1}', y_{1:N-1}')|\\
&\leq a_{N-1}(x_{1:N-1}, y_{1:N-1}) + L(\|\Delta y_{1:N-1}\| + \|\Delta x_{1:N-1}\|).
\end{align*}
Suppose now that $t < N - 1$ and fix an arbitrary \((x_{1:t}, y_{1:t}, x'_{1:t}, y'_{1:t}) \in (\R^{t})^4\). Let $\bar\pi \in \Pi(\bar\mu_{x_{1:t}},\bar\nu_{y_{1:t}})$ be optimal for $\mathcal{AW}_p(\bar{\mu}_{x_{1:t}}, \bar{\nu}_{y_{1:t}}).$ Define $\pi: = (\operatorname{proj}^{1,1})_{\#}\bar\pi \in \Pi(\mu_{x_{1:t}}, \nu_{y_{1:t}})$ and let $\pi_1 \in \Pi(\mu_{x'_{1:t}}, \mu_{x_{1:t}})$ and $\pi_2 \in \Pi(\nu_{y_{1:t}}, \nu_{y'_{1:t}})$ be $\mathcal{W}_p$-optimal transport plans. Take $\widehat \pi := \pi_1 \dot{\oplus} \pi \dot{\oplus} \pi_2 \in \Pi(\mu_{x'_{1:t}}, \mu_{x_{1:t}}, \nu_{y_{1:t}}, \nu_{y'_{1:t}})$, and define $\tilde \pi := (\operatorname{proj}^{1, 4})_\# \widehat{\pi} \in \Pi(\mu_{x'_{1:t}}, \nu_{y'_{1:t}})$. Then by Minkowski's inequality for $L^p(\widehat{\pi})$ we have
\begin{align}\label{eqn:aw_lipschitz:one_step_cost}
\begin{split}
\mathcal{C}_p(\tilde \pi) &\leq \mathcal{C}_p(\pi_1) + \mathcal{C}_p(\pi) + \mathcal{C}_p(\pi_2)\\
&= \mathcal{W}_p(\mu_{x'_{1:t}}, \mu_{x_{1:t}}) + \mathcal{C}_p(\pi) + \mathcal{W}_p(\nu_{y_{1:t}}, \nu_{y'_{1:t}})\\
&\leq L(\|\Delta x_{1:t}\| + \|\Delta y_{1:t}\|) + \mathcal{C}_p(\pi).
\end{split}
\end{align}
Moreover, applying the induction hypothesis to $x'_{1:t+1}=(x'_{1:t}, \tilde x_{t+1})$, $y'_{1:t+1}=(y'_{1:t}, \tilde y_{t+1})$, $x_{1:t+1} = (x_{1:t} , \tilde x_{t+1})$ and $y_{1:t+1}=(y_{1:t}, \tilde y_{t+1})$
we obtain
\begin{align*}
&\int a_{t+1}(x'_{1:t}, \tilde x_{t+1}, y'_{1:t}, \tilde y_{t+1})^p \,\tilde \pi(d\tilde x_{t+1}, d\tilde y_{t+1})\\
&\leq C_{t+1, L} (\|\Delta x_{1:t}\|^p + \|\Delta y_{1:t}\|^p) + D_{t+1, L} \int a_{t+1}(x_{1:t}, \tilde x_{t+1}, y_{1:t}, \tilde y_{t+1})^p \,\tilde \pi(d\tilde x_{t+1}, d\tilde y_{t+1}),
\end{align*}
where the integral on the right-hand side is estimated as
\begin{align}\label{eqn:aw_lipschitz:induction_hypothesis}
\begin{split}
&\int a_{t+1}(x_{1:t}, \tilde x_{t+1}, y_{1:t}, \tilde y_{t+1})^p \,\tilde \pi(d\tilde x_{t+1}, d\tilde y_{t+1})\\
&= \int a_{t+1}(x_{1:t}, \tilde x_{t+1}, y_{1:t}, \tilde y_{t+1})^p \,\hat \pi(d\tilde x_{t+1}, dx_{t+1}, dy_{t+1}, d\tilde y_{t+1})\\
&\stackrel{\text{(IH)}}{\le} \int [C_{t+1,L} (|x_{t+1}-\tilde x_{t+1}|^p +|y_{t+1}-\tilde y_{t+1}|^p) + D_{t+1,L} a_{t+1}(x_{1:t+1}, y_{1:t+1})^p]\,\hat \pi(d\tilde x_{t+1}, dx_{t+1}, dy_{t+1}, d\tilde y_{t+1})\\
&= C_{t+1, L} (\mathcal{W}_p(\mu_{x'_{1:t}}, \mu_{x_{1:t}})^p + \mathcal{W}_p(\nu_{y_{1:t}}, \nu_{y'_{1:t}})^p) + D_{t+1, L} \int a_{t+1}(x_{1:t+1}, y_{1:t+1})^p\, \pi(dx_{t+1}, dy_{t+1})\\
&\leq L^p \cdot C_{t+1, L} (\|\Delta x_{1:t}\|^p + \|\Delta y_{1:t}\|^p) + D_{t+1, L} \int a_{t+1}(x_{1:t+1}, y_{1:t+1})^p \,\pi(dx_{t+1}, dy_{t+1}).
\end{split}
\end{align}
Combining \eqref{eqn:aw_lipschitz:one_step_cost} and \eqref{eqn:aw_lipschitz:induction_hypothesis}, applying the inequality $(|a|+|b|+|c|)^p \leq 3^{p-1} (|a|^p + |b|^p +|c|^p)$ and using optimality of $\bar\pi$ we conclude
\begin{align*}
a_t(x'_{1:t}, y'_{1:t})^p &\leq \mathcal{C}_p(\tilde \pi)^p + \int a_{t+1}(x'_{1:t}, \tilde x_{t+1}, y'_{1:t}, \tilde x_{t+1})^p\, \tilde \pi(d\tilde x_{t+1}, d\tilde y_{t+1})\\
&\leq 3^{p-1} \mathcal{C}_p(\pi)^p+ (3^{p-1} L^p + C_{t+1, L} + D_{t+1, L} L^p C_{t+1, L}) (\|\Delta x_{1:t}\|^p + \|\Delta y_{1:t}\|^p) \\
&+ D_{t+1, L}^2 \int a_{t+1}(x_{1:t+1}, y_{1:t+1})^p\, \pi(dx_{t+1}, dy_{t+1})\\
&\leq C_{t, L} (\|\Delta x_{1:t}\|^p + \|\Delta y_{1:t}\|^p) + D_{t, L} a_t(x_{1:t}, y_{1:t})^p,
\end{align*}
where \(C_{t, L} := 3^{p-1} L^p + C_{t+1, L} + D_{t+1, L} L^p C_{t+1, L}\) and \(D_{t, L} := 3^{p-1} \vee D_{t+1, L}^2\). The proof is complete.
\end{proof}

\begin{lemma}\label{prop:linfty_bound}
Let $R>0$ and $g: B_R(0) \to \mathbb{R}$ be a function satisfying
\begin{equation}\label{eqn:growth_condition_for_linfty_bound}
g(x') \leq C_1 g(x) + C_2 \|x - x'\|^p
\end{equation}
for all $x,x'\in B_R(0)$ and some constants $C_1, C_2, p > 0$. Let $\pi \in \mathcal{P}(B_R(0))$ satisfy $\pi\ll \text{Leb}|_{B_R(0)}$ with a density bounded from below by a constant $K > 0$.
Then for any $\delta > 0$ there exists a constant $C = C(\delta, K)$, such that
$$
\|g\|_{L^\infty(\pi)} \leq C \|g\|_{L^1(\pi)} + \delta.
$$

\end{lemma}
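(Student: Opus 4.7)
The plan is to turn the growth hypothesis \eqref{eqn:growth_condition_for_linfty_bound} into a pointwise lower bound on $g$ in a small ball around any near-maximizer of $g$, and then integrate against $\pi$, using the density bound $\pi \ge K\cdot\text{Leb}|_{B_R(0)}$ to convert $L^\infty$-mass into $L^1$-mass. Set $M := \|g\|_{L^\infty(\pi)}$ and fix $\delta > 0$. For each $\varepsilon > 0$, the essential supremum definition guarantees that the set $A_\varepsilon := \{g \geq M - \varepsilon\}$ has strictly positive $\pi$-measure, hence is non-empty; pick any $x^* \in A_\varepsilon \subset B_R(0)$. Rearranging \eqref{eqn:growth_condition_for_linfty_bound} yields the pointwise lower bound
\[ g(x) \;\geq\; \frac{1}{C_1}\bigl(g(x^*) - C_2\|x - x^*\|^p\bigr) \;\geq\; \frac{1}{C_1}\bigl(M - \varepsilon - C_2 r^p\bigr), \qquad x \in B_r(x^*)\cap B_R(0), \]
valid for any $r > 0$.

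Next, choose $r := (\delta / C_2)^{1/p} \wedge R$, so that $C_2 r^p \le \delta$. The density bound on $\pi$, combined with the elementary geometric fact that $\text{Leb}(B_r(x^*) \cap B_R(0)) \geq c_d\,r^d$ for a dimensional constant $c_d > 0$ uniform over $x^* \in \overline{B_R(0)}$ (even a boundary point retains at least a half-ball of radius $r$ inside $B_R(0)$), gives $\pi(B_r(x^*) \cap B_R(0)) \geq K c_d r^d$. Integrating the pointwise lower bound above,
\[ \|g\|_{L^1(\pi)} \;\geq\; \int_{B_r(x^*)\cap B_R(0)} g\, d\pi \;\geq\; \frac{K c_d r^d}{C_1}\bigl(M - \varepsilon - \delta\bigr), \]
where we implicitly use that $g \geq 0$ on $B_R(0)$ (as is the case in the application inside the proof of Proposition \ref{prop:equivalence}) so that $\int_A g\, d\pi \leq \|g\|_{L^1(\pi)}$; without this, the inequality would need to be read as vacuous whenever the right-hand side is negative, which gives no loss.

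Rearranging and sending $\varepsilon \downarrow 0$ produces $M \leq \frac{C_1}{K c_d r^d}\|g\|_{L^1(\pi)} + \delta$, which is the claim with $C(\delta, K) := C_1 / (K c_d r^d)$, depending only on $\delta$ (through $r$) and $K$, besides the fixed constants $C_1, C_2, p, R$ and the ambient dimension $d$. The only subtlety to check is the boundary behavior: near-maximizers $x^* \in A_\varepsilon$ might lie close to $\partial B_R(0)$, in which case the ball $B_r(x^*)$ partially escapes $B_R(0)$; this is resolved by the uniform geometric lower bound on $\text{Leb}(B_r(x^*) \cap B_R(0))$, and beyond that the argument is essentially a one-step rearrangement.
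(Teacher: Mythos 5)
Your proof is correct and follows essentially the same route as the paper: localize around an (essential) maximizer of $g$, use \eqref{eqn:growth_condition_for_linfty_bound} to lower-bound $g$ on a ball of radius of order $\delta^{1/p}$, and integrate the density lower bound $K$ to turn this into an $L^1$-bound. The only differences are cosmetic --- you are in fact slightly more careful than the paper about the boundary intersection $B_r(x^*)\cap B_R(0)$ and about the sign convention $g\ge 0$ implicit in reading $\|g\|_{L^\infty(\pi)}$ as the essential supremum of $g$, which only affects constants.
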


\begin{proof}
For any $\delta > 0$ we can find $x_\delta \in B_R(0)$, such that
$$
\|g\|_{L^\infty(\pi)} = \|g\|_{L^\infty(\pi_\delta)},
$$
where $\pi_\delta:=\pi|_{B_{\delta/2}(x_\delta)}$. Taking the $L^\infty(\pi_\delta)$-norm in \eqref{eqn:growth_condition_for_linfty_bound} we obtain
$$
\|g\|_{L^\infty(\pi)} = \|g\|_{L^\infty(\pi_\delta)} \leq C_1 g(x) + C_2 \delta^p \quad \text{for any } x \in B_{\delta/2}(x_\delta).
$$
We rewrite this inequality as $C_1 g(x) \geq \|g\|_{L^\infty(\pi)} - C_2 \delta^p$ and by boundedness  of the density of $\pi$ we obtain
\begin{align*}
C_1 \|g\|_{L^1(\pi)} &\geq C_1 \int_{B_{\delta/2}(x_\delta)} |g(x)|\, \pi(dx) \geq (\|g\|_{L^\infty(\pi)} -C_2 \delta^p) \cdot \pi(B_{\delta/2}(x_\delta))\\
&\geq (\|g\|_{L^\infty(\pi)} - C_2 \delta^p) \cdot K \cdot \text{Leb}(B_{\delta/2}(0)),
\end{align*}
where $\text{Leb}(B_{\delta/2}(0))$ is the Lebesgue measure of $B_{\delta/2}(0).$
In conclusion we have
\begin{align*}
\|g\|_{L^\infty(\pi)} \leq \frac{C_1}{K \cdot \text{Leb}(B_{\delta/2}(0))} \|g\|_{L^1(\pi)} + C_2 \delta^p.
\end{align*}
The claim now follows from a re-normalization of constants.
\end{proof}

\subsection{Auxiliary results for the proofs in Sections \ref{sec:dpp} and \ref{sec:sens}}

We start with a number of density results, which help to conclude the equivalence of the Kantorovich and Monge formulations of the transportation problems we consider.
\begin{lemma}\label{lem:density_two_ac}
Let $\mu, \nu \in \mathcal{P}_p(\mathbb{R})$, where $\mu, \nu$ are atomless. Then the set of one-to-one transport maps between $\nu$ and $\mu$ is dense in $\Pi(\mu, \nu)$ with respect to $\mathcal{W}_p$.
\end{lemma}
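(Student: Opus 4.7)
My plan is to approximate any $\pi \in \Pi(\mu, \nu)$ in $\mathcal{W}_p$ by plans of the form $(T_n, \operatorname{Id})_\# \nu$, where $T_n \colon \mathbb{R} \to \mathbb{R}$ is a measurable injection satisfying $(T_n)_\# \nu = \mu$. The construction is a two-scale discretization of $\mathbb{R}$ followed by monotone rearrangement on each piece.

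First I fix $n \in \mathbb{N}$ and $R_n \to \infty$ with $R_n/n \to 0$ to be chosen later. I partition $[-R_n, R_n]$ into $n$ consecutive intervals $A_1, \dots, A_n$ of length $2R_n/n$, and adjoin the tails $A_0 := (-\infty, -R_n)$ and $A_{n+1} := (R_n, \infty)$. Let $m_{ij} := \pi(A_i \times A_j)$. Since $\nu$ is atomless, I further subdivide each $A_j$ into consecutive subintervals $B_{1j}, B_{2j}, \dots$ with $\nu(B_{ij}) = m_{ij}$; since $\mu$ is atomless, I subdivide each $A_i$ into consecutive subintervals $C_{i1}, C_{i2}, \dots$ with $\mu(C_{ij}) = m_{ij}$. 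These refinements exist precisely because the CDFs of $\mu$ and $\nu$ are continuous. On each pair with $m_{ij} > 0$, the one-dimensional monotone rearrangement $T_{ij} \colon B_{ij} \to C_{ij}$ pushes $\nu|_{B_{ij}}$ onto $\mu|_{C_{ij}}$ and is $\nu|_{B_{ij}}$-a.e.\ strictly increasing. Gluing yields $T_n \colon \mathbb{R} \to \mathbb{R}$ with $T_n|_{B_{ij}} = T_{ij}$. Global $\nu$-a.e.\ injectivity is automatic: for fixed $i$, the images $\{C_{ij}\}_j$ partition $A_i$ into disjoint pieces, while different $i$ correspond to disjoint $A_i$. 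By construction $(T_n)_\# \nu = \mu$, so $\pi_n := (T_n, \operatorname{Id})_\# \nu \in \Pi(\mu, \nu)$ puts the correct mass $m_{ij}$ on each $A_i \times A_j$, matching $\pi$.

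To bound $\mathcal{W}_p(\pi_n, \pi)$, I couple $\pi_n$ and $\pi$ cell-by-cell: on each $A_i \times A_j$ with $m_{ij} > 0$, take the product of the normalized restrictions of $\pi_n$ and $\pi$ as a coupling of $\pi_n|_{A_i \times A_j}$ and $\pi|_{A_i \times A_j}$. For bounded cells with $i, j \in \{1, \dots, n\}$, the pointwise bound $\|(x_1, y_1) - (x_2, y_2)\| \le 2\sqrt{2}\, R_n/n$ contributes $O((R_n/n)^p)$ in total. For tail cells (those touching $A_0$ or $A_{n+1}$), I use $\|(x_1, y_1) - (x_2, y_2)\|^p \le C_p(|x_1|^p + |x_2|^p + |y_1|^p + |y_2|^p)$ and bound the contribution by
\[
\int_{|x| > R_n} |x|^p \, \mu(dx) + \int_{|y| > R_n} |y|^p \, \nu(dy) + R_n^p\, \mu(|x| > R_n) + R_n^p\, \nu(|y| > R_n),
\]
which tends to $0$ by the $p$-integrability of $\mu$ and $\nu$ (Markov's inequality handles the last two terms). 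Combining the two estimates and choosing $R_n$ growing slowly enough yields $\mathcal{W}_p(\pi_n, \pi) \to 0$.

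The main technical point is guaranteeing that the globally glued map $T_n$ is $\nu$-a.e.\ injective, which is precisely why the two-scale refinement into the $B_{ij}$ and $C_{ij}$ is needed: once the inner subpartitions are ordered consistently inside each $A_j$ and $A_i$, distinct $B_{ij}$ are automatically sent to disjoint $C_{ij}$. Beyond this bookkeeping, the remainder is a routine discretization argument combined with a standard $p$-integrability tail bound.
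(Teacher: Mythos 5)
Your argument is correct in substance, but it takes a genuinely different route from the paper. The paper's proof is essentially a citation argument: it invokes \cite[Proposition A.3]{gangbo1999monge} to obtain one-to-one maps $T_n$ with $T_{n\#}\nu=\mu$ and $(T_n,\operatorname{Id})_\#\nu\to\gamma$ weakly, and then upgrades weak convergence to $\mathcal{W}_p$-convergence for free, because every plan in the sequence has the same marginals $\mu,\nu$, hence constant $p$-th moments, so \cite[Theorem 6.9]{villani2009optimal} applies. You instead give a self-contained construction: a two-scale partition into cells $A_i\times A_j$, exact mass-splitting of each $A_j$ (resp.\ $A_i$) into subintervals $B_{ij}$ (resp.\ $C_{ij}$) using atomlessness of $\nu$ (resp.\ $\mu$), block-wise monotone rearrangement, and a direct estimate of $\mathcal{W}_p(\pi_n,\pi)$ via a block-diagonal coupling. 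The key consistency check works: $\pi_n(A_i\times A_j)=\nu(B_{ij})=m_{ij}=\pi(A_i\times A_j)$, so the product-within-blocks coupling is admissible, the bounded blocks contribute $O\bigl((R_n/n)^p\bigr)$, and the tail blocks are controlled by the truncated $p$-th moments of $\mu,\nu$ (both for $\pi$ and, after checking the marginals of $\pi_n$ on tail blocks, for $\pi_n$), so any choice such as $R_n=\sqrt n$ gives $\mathcal{W}_p(\pi_n,\pi)\to 0$. What your approach buys is independence from the external density result and an explicit, quantitative error in terms of $R_n/n$ and the tails of $\mu,\nu$; what the paper's approach buys is brevity and the structural observation that on a fixed pair of marginals weak convergence already implies $\mathcal{W}_p$-convergence.

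One caveat: your glued map is injective only off a $\nu$-null set (collisions may occur at subinterval endpoints and on flat pieces of the conditional CDFs where $\operatorname{spt}(\nu)$ has gaps), whereas the lemma, like the cited result of Gangbo--McCann, asserts genuinely one-to-one maps; calling global injectivity ``automatic'' overstates what the construction gives. This is a fixable technicality rather than a gap in the density statement: the coupling $(T_n,\operatorname{Id})_\#\nu$ is unchanged by modifying $T_n$ on a $\nu$-null set, and the later applications (in particular Lemma \ref{lem:gangbo}) only use that the plans are induced by transport maps, not that the maps are injective; but you should either add the modification argument or weaken the claim accordingly.
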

\begin{proof}
Take any transport plan $\gamma \in \Pi(\mu, \nu)$. According to \cite[Proposition A.3]{gangbo1999monge} there exists a sequence of one-to-one maps \(T_n: \mathbb{R} \to \mathbb{R}\), such that
\begin{equation}\label{weak_property}
    T_{n \#} \nu = \mu, \;\; \text{and} \;\; \gamma_n := (x \mapsto (T_n(x), x))_{\#} \nu \to \gamma \quad \text{weakly}.
\end{equation}
Moreover, since \(\mu\) and \(\nu\) are \(p\)-integrable and \(\gamma_n \in \Pi(\mu, \nu)\),
\begin{align}
\int [\|x\|^p + \|y\|^p]\, \gamma_n(dx, dy) &= \int \|x\|^p\, \mu(dx) + \int \|y\|^p\, \nu(dy) < +\infty.\label{moments}
\end{align}
Combining \eqref{weak_property} and \eqref{moments}, we refer to \cite[Theorem 6.9]{villani2009optimal} to conclude that
\[
\gamma_n \to \gamma \quad \text{in} \;\; (\mathcal{P}_p(\mathbb{R} \times \mathbb{R}), \mathcal{W}_p),
\]
as \(n \to \infty\), which completes the proof.
\end{proof}

\begin{lemma}\label{lem:gangbo}
Let \(\mu_{x_{1:t}}, \nu \in \mathcal{P}_p(\mathbb{R})\), where \(\nu\) is atomless. Then the set of transport maps between \(\nu\) and \(\mu_{x_{1:t}}\) is dense in \(\Pi(\mu_{x_{1:t}}, \nu)\) with respect to \(\mathcal{W}_p\).
\end{lemma}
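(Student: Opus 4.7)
The plan is to approximate any transport plan $\gamma \in \Pi(\mu_{x_{1:t}}, \nu)$ by Monge couplings $(T_n, \operatorname{Id})_\# \nu$ using a block-averaging / quantile construction that exploits only the atomless property of $\nu$ (not $\mu_{x_{1:t}}$), in contrast to Lemma \ref{lem:density_two_ac} where both measures were assumed atomless. The essential idea is that since $\nu$ is atomless, its restriction to any set with positive $\nu$-mass remains atomless, so the quantile transform can always be used locally to push $\nu$-mass onto arbitrary target measures on $\R$.

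First, I would disintegrate $\gamma(dx, dy) = \gamma_y(dx)\, \nu(dy)$. For each $n$, partition $\R$ into countably many intervals $\{J_i^n\}_{i}$ such that $\sup_i \operatorname{diam}(J_i^n \cap [-n, n]) \to 0$ (e.g., dyadic intervals of length $2^{-n}$ inside $[-n,n]$ together with two unbounded tail intervals). Setting $p_i^n := \nu(J_i^n)$ and $\mu_i^n := \gamma(\cdot \times J_i^n)/p_i^n$ whenever $p_i^n > 0$, the measures $\mu_i^n$ are probability measures on $\R$ satisfying $\sum_i p_i^n \mu_i^n = \mu_{x_{1:t}}$. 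Since $\nu|_{J_i^n}/p_i^n$ is still atomless, the quantile transform provides a Borel map $T_n^{(i)}: J_i^n \to \R$ with $(T_n^{(i)})_\# (\nu|_{J_i^n}/p_i^n) = \mu_i^n$; patching these yields a Borel $T_n: \R \to \R$ with $T_{n\#}\nu = \mu_{x_{1:t}}$.

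To show $\pi_n := (T_n, \operatorname{Id})_\# \nu \to \gamma$ in $\mathcal{W}_p$, I would first establish weak convergence: for $\phi \in C_b(\R \times \R)$ and any $\epsilon > 0$, tightness of $\nu$ restricts matters to a compact set $K_\epsilon \subset \R$ where $\phi$ is uniformly continuous in the $y$-coordinate. Choosing representatives $y_i^* \in J_i^n$, I would estimate
\begin{align*}
\int \phi\, d\pi_n &= \sum_i \int_{J_i^n} \phi(T_n(y), y)\,\nu(dy) \approx \sum_i \int \phi(x, y_i^*)\, p_i^n\,\mu_i^n(dx)\\
&= \sum_i \int \phi(x, y_i^*)\,\gamma(dx \times J_i^n) \approx \int \phi\, d\gamma
\end{align*}
as $n \to \infty$, where both approximations are controlled by the modulus of continuity of $\phi$ on $K_\epsilon$ plus an $\epsilon$-tail. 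Since the marginals of $\pi_n$ are fixed at $\mu_{x_{1:t}}$ and $\nu$ for all $n$, the $p$-th moments are uniformly controlled, so \cite[Theorem 6.9]{villani2009optimal} upgrades weak convergence to $\mathcal{W}_p$-convergence, exactly as in the proof of Lemma \ref{lem:density_two_ac}.

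The main obstacle is the uniform handling of $\phi$ near infinity: since $\phi \in C_b(\R^2)$ need not be uniformly continuous globally, a separate tightness argument (using the fixed marginals $\mu_{x_{1:t}}$ and $\nu$) is required to localize to a compact set before invoking uniform continuity. Once localization is achieved, the block-averaging identity $p_i^n \mu_i^n = \gamma(\cdot \times J_i^n)$ matches the coupling $\pi_n$ to the target $\gamma$ up to an error of size $\sup_i \operatorname{diam}(J_i^n \cap K_\epsilon)$, which vanishes by construction of the partition.
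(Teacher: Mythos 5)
Your construction is correct, but it takes a genuinely different route from the paper. The paper's proof is a reduction: it mollifies the target, picking $\mu_\varepsilon \ll \operatorname{Leb}$ with $\mathcal{W}_p(\mu_{x_{1:t}},\mu_\varepsilon)<\varepsilon$, transports $\pi$ to a nearby plan $\pi_\varepsilon\in\Pi(\mu_\varepsilon,\nu)$ by gluing with the monotone map $T_{\mu_{x_{1:t}}}$ from $\mu_\varepsilon$ to $\mu_{x_{1:t}}$, invokes Lemma \ref{lem:density_two_ac} (hence the Gangbo--McCann approximation by one-to-one maps, which needs \emph{both} marginals atomless) to approximate $\pi_\varepsilon$ by a Monge plan $(T_\nu,\operatorname{Id})_\#\nu$, and finally composes, $\pi^T=(T_{\mu_{x_{1:t}}}\circ T_\nu,\operatorname{Id})_\#\nu$, with three triangle-inequality estimates. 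You instead give a self-contained block/quantile construction: partition the $y$-axis into small cells, push the normalized restriction $\nu|_{J_i^n}/p_i^n$ (atomless, so its CDF composed with the quantile of $\mu_i^n=\gamma(\cdot\times J_i^n)/p_i^n$ works) onto the conditional block measures, and verify $\mathcal{W}_p$-convergence via weak convergence plus the fixed marginals, exactly as in the paper's moment argument. What your approach buys is independence from Lemma \ref{lem:density_two_ac} and from \cite{gangbo1999monge}, and it makes transparent that only atomlessness of $\nu$ is used; the paper's route is shorter given those ingredients and avoids the discretization bookkeeping. One point to write carefully: as stated, "restrict to a compact $K_\epsilon$ where $\phi$ is uniformly continuous in the $y$-coordinate" is not quite right for a general $\phi\in C_b(\mathbb{R}^2)$, since uniform continuity in $y$ need not hold uniformly over unbounded $x$ (e.g.\ $\phi(x,y)=\sin(xy)$); this is repaired either by localizing \emph{both} coordinates using tightness of the two fixed marginals (as you indicate in your last paragraph, noting that $\sum_i\nu\{y\in J_i^n: T_n(y)\notin K'\}=\mu_{x_{1:t}}((K')^{\mathrm c})$ controls the mismatch in the block identity), or more simply by testing only against bounded Lipschitz $\phi$, which is a convergence-determining class and reduces the error on bounded cells to $L\sup_i\operatorname{diam}(J_i^n)$ plus a tail term $\lesssim\|\phi\|_\infty\,\nu(|y|>n)$. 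With that adjustment the argument is complete.
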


\begin{proof}
Take any transport plan $\pi \in \Pi(\mu_{x_{1:t}}, \nu)$ and fix an  arbitrary $\varepsilon > 0$. Let $\mu_\varepsilon \ll \operatorname{Leb}$ be a measure, which satisfies $\mathcal{W}_p(\mu_{x_{1:t}}, \mu_\varepsilon) < \varepsilon$, and denote by $\pi_\varepsilon^T := (\operatorname{Id}, T_{\mu_{x_{1:t}}})_\# \mu_\varepsilon \in \Pi(\mu_\varepsilon, \mu_{x_{1:t}})$ the optimal transport plan. Define
$$
\widehat \pi := \pi_\varepsilon^T \dot{\oplus} \pi \in \Pi(\mu_\epsilon,\mu_{x_{1:t}}, \nu), \;\; \pi_\varepsilon := (\operatorname{proj}^{1, 3})_\# \widehat \pi \in \Pi(\mu_\epsilon,\nu),
$$
and note that
\begin{align}\label{eqn:lem_gangbo_conv}
\mathcal{W}_p(\pi_\epsilon, \pi) \le \Big(\int |x-y|^p+|z-z|^p \,\widehat{\pi}(dx,dy,dz) \Big)^{1/p} =  \mathcal{C}_p(\pi^T_\varepsilon) < \varepsilon.
\end{align}
Noting that both $\nu, \mu_\epsilon$ are atomless, we can use Lemma \ref{lem:density_two_ac} to find a transport plan $\pi^{T}_\nu: = (T_\nu, \operatorname{Id})_\# \nu \in \Pi(\mu_\varepsilon, \nu)$, which satisfies $\mathcal{W}_p(\pi^T_\nu, \pi_\epsilon) < \varepsilon$. Define
$$
\pi^T := (T_{\mu_{x_{1:t}}}(x), y)_\# \pi^T_\nu(dx, dy) = (T_{\mu_{x_{1:t}}} \circ T_\nu, \operatorname{Id})_\# \nu \in \Pi(\mu_{x_{1:t}}, \nu).
$$
Similarly to \eqref{eqn:lem_gangbo_conv} we have $\mathcal{W}_p(\pi^T, \pi^T_\nu) < \varepsilon$, where the upper bound is achieved by the transport plan $(T_{\mu_{x_{1:t}}} \circ T_\nu, \operatorname{Id}, T_\nu, \operatorname{Id})_\#\nu\in \Pi(\mu_{x_{1:t}}, \nu,\mu_\epsilon,\nu)$. Lastly we estimate 
\begin{align*}
\mathcal{W}_p(\pi^T, \pi) \leq \mathcal{W}_p(\pi^T, \pi^T_\nu) + \mathcal{W}_p(\pi^T_\nu, \pi_\varepsilon) + \mathcal{W}_p(\pi_\varepsilon, \pi) < 3 \varepsilon
\end{align*}
using the  triangle inequality, which completes the proof since $\varepsilon > 0$ was arbitrary.
\end{proof}


\begin{lemma}\label{density_of_maps}
The set \(\Pi_\delta^{\operatorname{T}}(\mu_{x_{1:t}}, \cdot)\) is dense in \(\Pi_\delta(\mu_{x_{1:t}}, \cdot)\) with respect to \(\mathcal{W}_p\).
\end{lemma}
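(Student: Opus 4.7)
The plan is to reduce to the atomless case already handled by Lemma \ref{lem:gangbo}. Fix $\pi \in \Pi_\delta(\mu_{x_{1:t}}, \cdot)$ and denote its second marginal by $\nu$; the obstacle is that $\nu$ may have atoms, so Lemma \ref{lem:gangbo} does not apply directly. First I would approximate $\nu$ by an atomless, $p$-integrable measure $\nu_\varepsilon$ with $\mathcal{W}_p(\nu, \nu_\varepsilon) < \varepsilon$ — for instance, by convolving $\nu$ with a centred Gaussian of small variance, which is automatically atomless and keeps the $p$-th moment finite.

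Next I would transfer $\pi$ from $\Pi(\mu_{x_{1:t}}, \nu)$ to $\Pi(\mu_{x_{1:t}}, \nu_\varepsilon)$ via a gluing argument analogous to the one in the proof of Lemma \ref{lem:gangbo}. Letting $\eta \in \Pi(\nu, \nu_\varepsilon)$ be $\mathcal{W}_p$-optimal and setting
\[
\widehat{\pi} := \pi \dot{\oplus} \eta \in \Pi(\mu_{x_{1:t}}, \nu, \nu_\varepsilon), \qquad \pi_\varepsilon := (\operatorname{proj}^{1,3})_\# \widehat{\pi} \in \Pi(\mu_{x_{1:t}}, \nu_\varepsilon),
\]
Minkowski's inequality in $L^p(\widehat{\pi})$ yields $\mathcal{W}_p(\pi_\varepsilon, \pi) \le \mathcal{W}_p(\nu, \nu_\varepsilon) < \varepsilon$. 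Since $\nu_\varepsilon$ is atomless, Lemma \ref{lem:gangbo} produces a transport map $T_\varepsilon$ with $(T_\varepsilon)_\# \nu_\varepsilon = \mu_{x_{1:t}}$ such that the Monge plan $\tilde{\pi}_\varepsilon := (T_\varepsilon, \operatorname{Id})_\# \nu_\varepsilon$ satisfies $\mathcal{W}_p(\tilde{\pi}_\varepsilon, \pi_\varepsilon) < \varepsilon$. Combining the two estimates via the triangle inequality gives $\mathcal{W}_p(\tilde{\pi}_\varepsilon, \pi) < 2\varepsilon$.

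Finally, I have to verify the constraint $\mathcal{C}_p(\tilde{\pi}_\varepsilon) < \delta$, so that $\tilde{\pi}_\varepsilon$ actually lies in $\Pi_\delta^{\operatorname{T}}(\mu_{x_{1:t}}, \cdot)$. Since $\mathcal{W}_p$-convergence on $\mathbb{R}\times\mathbb{R}$ entails convergence of $p$-th moments (see \cite[Theorem 6.9]{villani2009optimal}) and $(x,y) \mapsto |x-y|^p$ is continuous with $|x-y|^p \lesssim |x|^p + |y|^p$, we deduce $\mathcal{C}_p(\tilde{\pi}_\varepsilon) \to \mathcal{C}_p(\pi)$ as $\varepsilon \to 0$. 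Because $\mathcal{C}_p(\pi) < \delta$ strictly, the strict inequality $\mathcal{C}_p(\tilde{\pi}_\varepsilon) < \delta$ holds for all sufficiently small $\varepsilon$, proving that $\tilde{\pi}_\varepsilon \in \Pi^{\operatorname{T}}_\delta(\mu_{x_{1:t}}, \cdot)$ and concluding the density claim. The main technical step is the gluing argument to match the first marginal exactly while controlling the $\mathcal{W}_p$-distance; the strictness of the inequality $\mathcal{C}_p < \delta$ in the definition of $\Pi_\delta$ is essential for the final step to go through.
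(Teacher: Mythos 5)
Your proposal is correct and follows essentially the same route as the paper: regularize the second marginal by Gaussian smoothing so it becomes atomless, invoke Lemma \ref{lem:gangbo} to approximate by a Monge plan, and use the strictness of $\mathcal{C}_p(\pi)<\delta$ together with $\mathcal{W}_p$-continuity of $\mathcal{C}_p$ to keep the approximants inside $\Pi^{\operatorname{T}}_\delta(\mu_{x_{1:t}},\cdot)$. The only cosmetic difference is that the paper convolves the $y$-coordinate of the coupling itself (which automatically preserves the first marginal), whereas you smooth the marginal and re-attach it via gluing with the optimal plan; your explicit check of the cost constraint is a detail the paper leaves implicit.
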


\begin{proof}
Take any coupling \(\pi \in \Pi_\delta(\mu_{x_{1:t}}, \cdot)\). For $\sigma>0$ define
\begin{align}\label{eq:smooth}
\pi_\sigma(A \times B) := \int_{A\times \R \times B} \varphi_\sigma(z - y) \,\pi(dx,dy)dz \quad \text{for all } A, B \in \mathcal{B}(\mathbb{R}),
\end{align}
where \(\varphi_\sigma(x) := \frac{1}{\sigma \sqrt{2\pi}} \exp(-\frac{x^2}{2 \sigma^2})\) is the pdf of a normal distribution with mean zero and variance $\sigma^2$, and call its second marginal $\nu_\sigma$. In probabilistic terms, $\pi_\sigma$ corresponds to \((X, Y + \sigma \eta)\), where \((X, Y) \sim \pi\) and \(\eta \sim \mathcal{N}(0, 1)\) is independent of \((X, Y)\). Note that $\nu_\sigma\ll \text{Leb}$ by definition. We also have
\[
\lim_{\sigma\to 0} \mathcal{W}_p(\pi, \pi_\sigma) \le \limsup_{\sigma\to 0} \Big(\int |y-z|^p \, \varphi_\sigma(z - y) \,\pi(dx,dy)dz\Big)^{1/p} =0.
\]
The claim thus follows from Lemma \ref{lem:gangbo}.
\end{proof}

\begin{proposition}\label{prop:open_dense_in_closed}
The set $\Pi_\delta(\mu_{x_{1:t}}, \cdot)$ is dense in $\overline \Pi_\delta(\mu_{x_{1:t}}, \cdot) := \{\pi \in \Pi(\mu_{x_{1:t}}, \cdot):\, \mathcal{C}_p(\pi)\le \delta\}$ with respect to $\mathcal{W}_p$.
\end{proposition}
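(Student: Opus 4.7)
The plan is to reduce to the boundary case $\mathcal{C}_p(\pi) = \delta$ and then perturb toward a ``zero-cost'' coupling in a convex combination. If $\pi \in \overline{\Pi}_\delta(\mu_{x_{1:t}}, \cdot)$ already satisfies $\mathcal{C}_p(\pi) < \delta$, the sequence $\pi_n := \pi$ trivially works, so the content is in the boundary case $\mathcal{C}_p(\pi) = \delta$.

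For that case I would introduce the diagonal coupling $\pi_0 := (\operatorname{Id}, \operatorname{Id})_\# \mu_{x_{1:t}}$, which has first marginal $\mu_{x_{1:t}}$ and $\mathcal{C}_p(\pi_0) = 0$, and then form the interpolation
\[
\pi_\lambda := (1-\lambda)\pi + \lambda \pi_0, \qquad \lambda \in (0,1].
\]
Since only the first marginal is fixed in $\Pi(\mu_{x_{1:t}}, \cdot)$ and both $\pi$ and $\pi_0$ have first marginal $\mu_{x_{1:t}}$, so does $\pi_\lambda$. By linearity of the map $\sigma \mapsto \int |x-y|^p\, \sigma(dx,dy)$ in the measure argument,
\[
\mathcal{C}_p(\pi_\lambda)^p = (1-\lambda)\mathcal{C}_p(\pi)^p + \lambda \cdot 0 = (1-\lambda)\delta^p < \delta^p,
\]
so $\pi_\lambda \in \Pi_\delta(\mu_{x_{1:t}}, \cdot)$ for every $\lambda > 0$.

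It remains to show that $\mathcal{W}_p(\pi_\lambda, \pi) \to 0$ as $\lambda \downarrow 0$, and I would do this by writing down an explicit coupling $\Gamma_\lambda \in \Pi(\pi_\lambda, \pi)$ on $\mathbb{R}^2 \times \mathbb{R}^2$. Concretely: with probability $1-\lambda$ draw $(X,Y) \sim \pi$ and take $((X,Y),(X,Y))$ (zero cost); with probability $\lambda$ draw $X \sim \mu_{x_{1:t}}$ independently of $(X',Y') \sim \pi$ and take $((X,X),(X',Y'))$. The marginals are $\pi_\lambda$ and $\pi$ by construction, and the cost integral is bounded above by
\[
\lambda \int \bigl(|x - x'|^p + |x - y'|^p\bigr)\, \mu_{x_{1:t}}(dx)\, \pi(dx', dy') \lesssim \lambda \Bigl(\int |x|^p\, \mu_{x_{1:t}}(dx) + \int \|(x',y')\|^p\, \pi(dx',dy')\Bigr),
\]
which vanishes as $\lambda \downarrow 0$ since both moments are finite (recall $\mu_{x_{1:t}} \in \mathcal{P}_p(\mathbb{R})$ and $\pi \in \mathcal{P}_p(\mathbb{R}^2)$).

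There is no genuine obstacle here; the only thing to be careful about is that $\mathcal{W}_p$ in the statement is the Wasserstein distance between general probability measures on $\mathbb{R}^2$ (the second marginal of $\pi_\lambda$ is $(1-\lambda)\pi^2 + \lambda \mu_{x_{1:t}}$, so it varies with $\lambda$), which is exactly why the explicit coupling $\Gamma_\lambda$ rather than a coupling of second marginals is the right gadget.
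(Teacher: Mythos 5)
Your proposal is correct and follows essentially the same route as the paper: interpolate with the diagonal coupling $(\operatorname{Id},\operatorname{Id})_\#\mu_{x_{1:t}}$, note that the $p$-cost drops strictly below $\delta$ by linearity, and bound $\mathcal{W}_p(\pi_\lambda,\pi)$ via an explicit coupling. The only cosmetic difference is that the paper glues the diagonal mass back to the \emph{same} pair $(x,y)\sim\pi$, yielding the moment-free bound $\mathcal{W}_p(\pi_\lambda,\pi)^p\le\lambda\,\mathcal{C}_p(\pi)^p\le\lambda\delta^p$, whereas your independent coupling needs (the readily available) finiteness of the $p$-th moments of $\mu_{x_{1:t}}$ and $\pi$.
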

\begin{proof}
Take any transport plan $\pi \in \overline \Pi_\delta(\mu_{x_{1:t}}, \cdot)$. Define the transport plans
$$
\pi_n := \frac{1}{n} (\operatorname{Id}, \operatorname{Id})_\# \mu_{x_{1:t}} + \left(1 - \frac{1}{n}\right) \pi.
$$
By construction we have $\mathcal{C}_p(\pi_n) = (1 - \frac{1}{n}) \mathcal{C}_p(\pi) < \delta.$ Thus, $\pi_n \in \Pi_\delta(\mu_{x_{1:t}}, \cdot)$. Moreover, $\mathcal{W}_p(\pi_n, \pi) \leq \frac{1}{n} \delta$ with the bound being achieved by the coupling $\frac{1}{n}(x, x, x, y)_\# \pi(dx, dy) + (1 - \frac{1}{n}) (\operatorname{Id}, \operatorname{Id})_\# \pi$. Therefore, $\pi_n \to \pi$ in $(\mathcal{P}_p(\mathbb{R}^2),\mathcal{W}_p)$, and the density follows.
\end{proof}

\begin{corollary}\label{cor:open_martingale_dense_in_closed}
The set $\Pi_\delta^{\mathcal{M}}(\mu_{x_{1:t}}, \cdot)$ is dense in $\overline \Pi_\delta^{\mathcal{M}}(\mu_{x_{1:t}}, \cdot) := \{\pi \in \Pi(\mu_{x_{1:t}}, \cdot)\;:\; \mathcal{C}_p(\pi) \le \delta, \int (x-y)\,\pi(dx,dy) = 0 \}$ with respect to $\mathcal{W}_p$.
\end{corollary}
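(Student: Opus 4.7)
The plan is to recycle the construction from Proposition \ref{prop:open_dense_in_closed}, observing that the convex combination used there automatically preserves the mean-matching (martingale) constraint. Concretely, given $\pi \in \overline\Pi_\delta^{\mathcal{M}}(\mu_{x_{1:t}}, \cdot)$, I would define
\[
\pi_n := \frac{1}{n}(\operatorname{Id}, \operatorname{Id})_\# \mu_{x_{1:t}} + \left(1 - \frac{1}{n}\right)\pi,
\]
and then verify the three required properties in turn: membership in $\Pi(\mu_{x_{1:t}}, \cdot)$, strict cost bound $\mathcal{C}_p(\pi_n) < \delta$, and the martingale constraint $\int(x-y)\,\pi_n(dx,dy) = 0$.

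The first point is immediate since both summands have first marginal $\mu_{x_{1:t}}$. For the cost, the same calculation as in the proof of Proposition \ref{prop:open_dense_in_closed} gives $\mathcal{C}_p(\pi_n)^p = (1-\tfrac{1}{n})\mathcal{C}_p(\pi)^p \le (1 - \tfrac{1}{n})\delta^p < \delta^p$, where the diagonal measure contributes zero to the cost. For the martingale constraint, the diagonal $(\operatorname{Id}, \operatorname{Id})_\# \mu_{x_{1:t}}$ satisfies $\int (x-y)\,d(\operatorname{Id}, \operatorname{Id})_\# \mu_{x_{1:t}} = \int (x-x)\,\mu_{x_{1:t}}(dx) = 0$ trivially, and $\pi$ satisfies it by assumption, so linearity gives $\int (x - y)\,\pi_n(dx,dy) = 0$. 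Hence $\pi_n \in \Pi_\delta^{\mathcal{M}}(\mu_{x_{1:t}}, \cdot)$.

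Finally, $\mathcal{W}_p$ convergence $\pi_n \to \pi$ follows exactly as in Proposition \ref{prop:open_dense_in_closed}: the explicit coupling $\tfrac{1}{n}(x,x,x,y)_\# \pi(dx,dy) + (1-\tfrac{1}{n})(\operatorname{Id}, \operatorname{Id})_\# \pi$ on $(\R^2)^2$ transports $\pi_n$ to $\pi$ at cost at most $\tfrac{1}{n}\delta$. Since no step in the argument presents any obstacle beyond what was already handled in Proposition \ref{prop:open_dense_in_closed}, there is no real difficulty; the only observation worth emphasizing is that the martingale-type linear constraint is preserved under convex combinations with the diagonal plan, which is what makes the same approximation work verbatim in this setting.
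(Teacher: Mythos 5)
Your proposal is correct and is essentially the paper's own argument: the paper also deduces the corollary by noting that the convex combination $\pi_n = \tfrac{1}{n}(\operatorname{Id},\operatorname{Id})_\#\mu_{x_{1:t}} + (1-\tfrac{1}{n})\pi$ from Proposition \ref{prop:open_dense_in_closed} preserves the linear constraint $\int(x-y)\,\pi(dx,dy)=0$. Your verification of the cost bound and the $\mathcal{W}_p$ convergence matches the paper's reasoning verbatim.
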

\begin{proof}
Follows from Proposition \ref{prop:open_dense_in_closed}, as
$$
\pi \in \overline \Pi_\delta^{\mathcal{M}}(\mu_{x_{1:t}}, \cdot) \Rightarrow \pi_n := \frac{1}{n} (\operatorname{Id}, \operatorname{Id})_\# \mu_{x_{1:t}} + \left(1 - \frac{1}{n}\right) \pi \in \Pi_\delta^{\mathcal{M}}(\mu_{x_{1:t}}, \cdot).
$$
\end{proof}

\begin{lemma}\label{lem:supremum_closed}
Let \(g: \mathbb{R} \times \mathbb{R} \to \mathbb{R}\) be a lower semicontinuous function, which satisfies
$$
g(x_{t+1}, y_{t+1}) \geq -C (1 + |x_{t+1}|^p + |y_{t+1}|^p)
$$
for some constant $C > 0$. Then
\begin{align*}
\sup_{\pi \in \Pi_\delta^{\operatorname{T}}(\mu_{x_{1:t}}, \cdot)} \int g(x_{t+1}, y_{t+1})\, \pi(dx_{t+1}, dy_{t+1}) &= \sup_{\pi \in \Pi_\delta(\mu_{x_{1:t}}, \cdot)} \int g(x_{t+1}, y_{t+1})\, \pi(dx_{t+1}, dy_{t+1})\\
\sup_{\pi \in \Pi_\delta(\mu_{x_{1:t}}, \cdot)} \int g(x_{t+1}, y_{t+1})\, \pi(dx_{t+1}, dy_{t+1}) &= \sup_{\pi \in \overline \Pi_\delta(\mu_{x_{1:t}}, \cdot)} \int g(x_{t+1}, y_{t+1})\, \pi(dx_{t+1}, dy_{t+1})\\
\sup_{\pi \in \Pi_\delta^\mathcal{M}(\mu_{x_{1:t}}, \cdot)} \int g(x_{t+1}, y_{t+1})\, \pi(dx_{t+1}, dy_{t+1}) &= \sup_{\pi \in \overline \Pi_\delta^\mathcal{M}(\mu_{x_{1:t}}, \cdot)} \int g(x_{t+1}, y_{t+1})\, \pi(dx_{t+1}, dy_{t+1}).
\end{align*}
\end{lemma}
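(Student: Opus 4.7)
My plan is to combine the three density results already established---Lemma \ref{density_of_maps}, Proposition \ref{prop:open_dense_in_closed}, and Corollary \ref{cor:open_martingale_dense_in_closed}---with a Portmanteau-type limit for the functional $\pi \mapsto \int g \, d\pi$ along $\mathcal{W}_p$-convergent sequences. In each of the three equalities the ``$\leq$''-direction is immediate from the inclusion of feasible sets ($\Pi_\delta^{\operatorname{T}} \subseteq \Pi_\delta$, $\Pi_\delta \subseteq \overline{\Pi}_\delta$, and $\Pi_\delta^{\mathcal{M}} \subseteq \overline{\Pi}_\delta^{\mathcal{M}}$). For the reverse direction I would fix an arbitrary $\pi$ in the larger set, construct a sequence $\pi_n$ in the smaller set with $\mathcal{W}_p(\pi_n, \pi) \to 0$, and then establish $\int g \, d\pi \leq \liminf_n \int g \, d\pi_n$; taking the supremum over $\pi$ in the larger set then completes the argument.

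The approximations are supplied directly by the earlier lemmas. For the first equality, given $\pi \in \Pi_\delta(\mu_{x_{1:t}}, \cdot)$, Lemma \ref{density_of_maps} produces $\pi_n \in \Pi_\delta^{\operatorname{T}}(\mu_{x_{1:t}}, \cdot)$ with $\mathcal{W}_p(\pi_n, \pi) \to 0$. For the second, Proposition \ref{prop:open_dense_in_closed} furnishes the explicit convex combination $\pi_n := \tfrac{1}{n}(\operatorname{Id}, \operatorname{Id})_\# \mu_{x_{1:t}} + (1 - \tfrac{1}{n})\pi$, which lies in $\Pi_\delta(\mu_{x_{1:t}}, \cdot)$ and satisfies $\mathcal{W}_p(\pi_n, \pi) \leq \delta/n$. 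For the third, the same convex combination additionally preserves the martingale constraint by linearity of the barycenter condition, as noted in Corollary \ref{cor:open_martingale_dense_in_closed}, so that $\pi_n \in \Pi_\delta^{\mathcal{M}}(\mu_{x_{1:t}}, \cdot)$.

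To pass to the liminf in $\int g \, d\pi_n$ I would exploit the growth bound $g(x,y) \geq -C(1 + |x|^p + |y|^p)$. Setting $h(x, y) := g(x, y) + C(1 + |x|^p + |y|^p)$, the function $h$ is non-negative and lower semicontinuous, so the Portmanteau theorem applied to the weakly convergent sequence $\pi_n \to \pi$ yields
\[
\int h \, d\pi \leq \liminf_n \int h \, d\pi_n.
\]
On the other hand, convergence in $\mathcal{W}_p$ upgrades weak convergence with convergence of $p$-th moments of the two marginals, so $\int (1 + |x|^p + |y|^p) \, d\pi_n \to \int (1 + |x|^p + |y|^p) \, d\pi$. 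Subtracting this convergent quantity delivers $\int g \, d\pi \leq \liminf_n \int g \, d\pi_n$, which is the required inequality.

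The main technical point to watch is that the passage to the liminf requires $\mathcal{W}_p$-convergence rather than merely weak convergence, because $g$ is only controlled from below by a $p$-th order polynomial and weak convergence alone does not suffice to offset the auxiliary moment term. Since all three density results deliver convergence in $\mathcal{W}_p$ (and in the convex-combination cases $\pi_n$ has exactly the same marginals as $\pi$, hence the same moments), this causes no obstruction and the three equalities follow uniformly from the same argument.
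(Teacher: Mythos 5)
Your proof is correct and follows essentially the same route as the paper: the paper likewise combines the density results of Lemma \ref{density_of_maps}, Proposition \ref{prop:open_dense_in_closed} and Corollary \ref{cor:open_martingale_dense_in_closed} with lower semicontinuity of $\pi \mapsto \int g\,d\pi$ along $\mathcal{W}_p$-convergent sequences, simply citing \cite[Lemma 4.3]{villani2009optimal} where you re-derive that semicontinuity by hand via Portmanteau plus convergence of $p$-th moments. (Only your parenthetical claim that the convex combinations have exactly the same marginals as $\pi$ is slightly inaccurate---the second marginal of $\pi_n$ is $\tfrac{1}{n}\mu_{x_{1:t}} + (1-\tfrac{1}{n})\pi^2$---but this is immaterial, since the $\mathcal{W}_p$-convergence you invoke already yields the required moment convergence.)
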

\begin{proof}
Recall that the mapping \(\pi \mapsto \int fd\pi\) is weakly lower semicontinuous by \cite[Lemma 4.3]{villani2009optimal} applied with $h(x, y) = -C (1 + |x|^p + |y|^p)$. The result then follows from density established in Lemma \ref{density_of_maps}, Proposition \ref{prop:open_dense_in_closed} and Corollary \ref{cor:open_martingale_dense_in_closed}.    
\end{proof}

\begin{lemma}\label{lem:cost_to_go:lipschitz}
Let $f: \mathbb{R}^N \to \mathbb{R}$ be a Lipschitz function with constant $L > 0$. Then $(\delta, y_{1:t}) \mapsto V^\delta_t(x_{1:t}, y_{1:t})$ is Lipschitz with constant $2^{N - t} L$.
\end{lemma}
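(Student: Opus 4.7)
The plan is to proceed by backward induction on $t$, proving the sharper statement
\[
\bigl| V_t^\delta(x_{1:t}, y_{1:t}) - V_t^{\delta'}(x_{1:t}, y'_{1:t}) \bigr| \;\leq\; 2^{N-t} L \bigl( |\delta - \delta'| + \|y_{1:t} - y'_{1:t}\| \bigr)
\]
for every fixed $x_{1:t}$, where the norm on the product space is the sum norm. The base case $t = N$ is immediate since $V_N^\delta(x, y) = f(y)$ is $L$-Lipschitz in $y$ and independent of $\delta$, so the constant $L = 2^0 L$ does the job.

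For the inductive step, I would split via the triangle inequality into a ``$y$-contribution'' and a ``$\delta$-contribution''. The $y$-contribution is straightforward: changing $y_{1:t}$ does not affect the feasible set $\Pi_\delta(\mu_{x_{1:t}}, \cdot)$, so the same near-optimal $\gamma^{t+1}$ serves both suprema, and the inductive hypothesis applied pointwise inside the integral yields the bound $2^{N-t-1} L \, \|y_{1:t} - y'_{1:t}\|$.

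The main obstacle is the $\delta$-contribution, since the ball $\Pi_\delta$ varies with $\delta$. Assume WLOG $\delta' > \delta$ and let $\gamma^{t+1} \in \Pi_{\delta'}(\mu_{x_{1:t}}, \cdot)$ be an $\varepsilon$-near-optimizer for $V_t^{\delta'}(x, y')$. The key construction is the shrinkage map $T_\lambda(u, v) := (u, \lambda v + (1-\lambda) u)$: a direct calculation shows that $T_\lambda \# \gamma^{t+1}$ still has first marginal $\mu_{x_{1:t}}$ and satisfies $\mathcal{C}_p(T_\lambda \# \gamma^{t+1}) = \lambda \, \mathcal{C}_p(\gamma^{t+1})$. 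Choosing $\lambda$ slightly less than $\delta/\delta'$ places $T_\lambda \# \gamma^{t+1}$ in the open ball $\Pi_\delta(\mu_{x_{1:t}}, \cdot)$, producing the admissible lower bound
\[
V_t^\delta(x, y') \;\geq\; \int V_{t+1}^\delta\bigl(x_{1:t+1}, y'_{1:t}, \lambda v + (1-\lambda) u\bigr) \, \gamma^{t+1}(du, dv).
\]
Applying the inductive Lipschitz bound pointwise,
\[
\bigl| V_{t+1}^{\delta'}(x_{1:t+1}, y'_{1:t}, v) - V_{t+1}^\delta(x_{1:t+1}, y'_{1:t}, \lambda v + (1-\lambda) u) \bigr| \;\leq\; 2^{N-t-1} L \bigl( |\delta - \delta'| + (1-\lambda)|v - u| \bigr),
\]
and integrating against $\gamma^{t+1}$ using Jensen's inequality together with $\int |v - u|\, \gamma^{t+1} \leq \mathcal{C}_p(\gamma^{t+1}) \leq \delta'$ gives the bound $2^{N-t-1} L \bigl( |\delta - \delta'| + (1-\lambda) \delta' \bigr)$. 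Sending $\lambda \uparrow \delta/\delta'$ converts $(1-\lambda)\delta'$ into $\delta' - \delta = |\delta - \delta'|$, and then $\varepsilon \to 0$ produces the desired $\delta$-contribution $2^{N-t} L \, |\delta - \delta'|$.

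The crux of the argument, and the only nontrivial step, is the shrinkage construction $T_\lambda$: it simultaneously scales the $L^p$-cost of a coupling by the factor $\lambda$ and displaces its second marginal by an amount controlled by $(1-\lambda) \mathcal{C}_p(\gamma^{t+1})$, which is exactly the width $\delta' - \delta$ of the annulus between the two Wasserstein balls. It is precisely this balance that produces the doubling $2^{N-t-1} L \mapsto 2^{N-t} L$ at each induction step.
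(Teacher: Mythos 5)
Your proof is correct and follows essentially the same route as the paper: a near-optimal coupling is dilated radially about the diagonal ($y_{t+1}\mapsto x_{t+1}+c\,(y_{t+1}-x_{t+1})$) to pass between the balls of radii $\delta$ and $\delta'$, and then the inductive Lipschitz bound is integrated using $\int|y_{t+1}-x_{t+1}|\,d\gamma^{t+1}\le\mathcal{C}_p(\gamma^{t+1})$, which is exactly where the doubling $2^{N-t-1}L\mapsto 2^{N-t}L$ arises in the paper as well. The only cosmetic differences are that you shrink the $\delta'$-optimizer into the smaller ball (the paper expands the $\delta$-optimizer into the larger one) and that you separate the $y$- and $\delta$-increments by the triangle inequality rather than treating them in one chain of inequalities.
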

\begin{proof}
The statement holds for $t = N$, as $V^\delta_N(x, y) = f(y)$ is Lipschitz with constant $L > 0$. Suppose now that it holds for $t \leq N$, and fix $x_{1:t-1} \in \operatorname{spt}(\mu^{1:t-1})$. Take any $(y_{1:t-1}, y'_{1:t-1}, \delta, \delta')\in \R^{t-1}\times \R^{t-1}\times [0,\infty)\times [0,\infty)$ and define $\Delta \delta := \delta' - \delta,\; \Delta y_{1:t-1} := y'_{1:t-1} - y_{1:t-1}$. Let $\gamma^t\in \Pi_\delta(\mu_{x_{1:t-1}}, \cdot)$ be $\varepsilon$-optimizer for $V^\delta_{t-1}(x_{1:t-1}, y_{1:t-1})$, i.e.,
\begin{equation}\label{eqn:lem:cost_to_go:lipschitz:near_opt}
V^\delta_{t-1}(x_{1:t-1}, y_{1:t-1}) \leq \int V^\delta_t(x_{1:t}, y_{1:t}) \, \gamma^t(dx_t, dy_t) + \varepsilon.
\end{equation}
We observe that $(x_t, y_t + \frac{\Delta \delta}{\delta} (y_t - x_t))_\# \gamma^t \in \Pi_{\delta'}(\mu_{x_{1:t-1}}, \cdot)$, as 
\begin{align*}
\Big(\int \Big|y_t+ \frac{\Delta \delta}{\delta} (y_t - x_t) -x_t\Big|^p\,\gamma_t(dx_{t}, dy_t)\Big)^{1/p} &=  \Big|\frac{\Delta \delta} {\delta}+1\Big| \Big(\int |y_t-x_t|^p\,\gamma_t(dx_{t}, dy_t)\Big)^{1/p}\\
&\le |\Delta \delta +\delta| = \delta'.
\end{align*}
Hence,
\begin{align*}
V^{\delta'}_{t-1}(x_{1:t-1}, y_{1:t-1}) &\geq \int V^{\delta'}_t\left(x_{1:t}, y'_{1:t-1}, y_t + \frac{\Delta \delta}{\delta} (y_t - x_t)\right) \,\gamma^t(dx_t, dy_t)\\
&\geq \int V^\delta_t(x_{1:t}, y_{1:t}) \,\gamma^t(dx_t, dy_t) - L \left(\|\Delta y_{1:t-1}\| + \frac{\Delta \delta}{\delta} \int |y_t - x_t| \, \gamma^t(dx_t, dy_t)+|\Delta\delta|\right)\\
&\geq V^\delta_{t-1}(x_{1:t-1}, y_{1:t-1}) - \varepsilon - L (\|\Delta y_{1:t-1}\| + 2 |\Delta \delta|),
\end{align*}
where the second inequality follows from induction hypothesis and third inequality is true because of \eqref{eqn:lem:cost_to_go:lipschitz:near_opt} and $\int |y_t - x_t| \, \gamma^t(dx_t, dy_t) \leq \mathcal{C}_p(\gamma^t) \leq \delta$ by Jensen's inequality. As $\varepsilon > 0$ was arbitrary, the proof is complete.
\end{proof}

We now show that the cost-to-go functions \(V^{\delta}_t\) are lower semicontinuous. For this we need a number of technical lemmas. These will also be used in the proof of Lemma \ref{lem:cost_to_go_regularity} and Theorem \ref{dro:control_sensitivity}.

\begin{lemma}\label{diff_inf_control_continuity}
Let \(K \subset \mathbb{R}\) be  a compact and convex set and let \(F: \mathbb{R} \times K \to \mathbb{R}\) be a continuous function. Assume furthermore that 
\begin{itemize}
\item $x\mapsto F(x,\alpha)$ is differentiable with derivative $\nabla_x F(x,\alpha)$ for all $\alpha\in K$,
\item $(x,\alpha)\mapsto \nabla_x F(x,\alpha)$ is continuous,
\item \(\alpha \mapsto F(x, \alpha)\) is strictly convex for all $x\in \R.$ 
\end{itemize}
Define \(\alpha(x) := \operatorname{argmin}_{\alpha \in K} F(x, \alpha)\) and $F(x):= F(x, \alpha(x)).$ Then $x\mapsto \alpha(x)$ is continuous and $x\mapsto F(x)$ is differentiable with derivative  
\[
F'(x) = \nabla_x F(x, \alpha(x)).
\]
\end{lemma}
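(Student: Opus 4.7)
The plan is to handle continuity of $\alpha(\cdot)$ first and then derive differentiability of $F$ by an envelope-theorem-style squeeze.

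For continuity of $x\mapsto\alpha(x)$, the key points are that $K$ is compact and $\alpha\mapsto F(x,\alpha)$ is strictly convex, so the $\arg\min$ is a well-defined single point. Given $x_n\to x$, compactness of $K$ lets us extract a subsequence $\alpha(x_{n_k})\to \alpha^\star$. Joint continuity of $F$ yields $F(x_{n_k},\alpha(x_{n_k}))\to F(x,\alpha^\star)$, and for every $\alpha\in K$ the inequality $F(x_{n_k},\alpha(x_{n_k}))\le F(x_{n_k},\alpha)$ passes to the limit to give $F(x,\alpha^\star)\le F(x,\alpha)$. By the uniqueness of the minimizer $\alpha^\star=\alpha(x)$, so every convergent subsequence has the same limit, and hence $\alpha(x_n)\to\alpha(x)$.

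For the differentiability of $F(x):=F(x,\alpha(x))$, I would sandwich the difference quotient between two expressions that both converge to $\nabla_x F(x,\alpha(x))$. Fix $x\in\mathbb{R}$ and $h>0$ (the case $h<0$ being symmetric). The optimality of $\alpha(x+h)$ and of $\alpha(x)$ yields
\begin{align*}
F(x+h,\alpha(x+h))-F(x,\alpha(x+h))\ \le\ F(x+h)-F(x)\ \le\ F(x+h,\alpha(x))-F(x,\alpha(x)).
\end{align*}
By the one-dimensional mean value theorem applied to $t\mapsto F(t,\alpha)$ (which is differentiable in $t$ by hypothesis), there exist $\xi_h,\eta_h\in[x,x+h]$ with
\begin{align*}
F(x+h,\alpha(x+h))-F(x,\alpha(x+h)) \,=\, h\,\nabla_x F(\xi_h,\alpha(x+h)),\qquad F(x+h,\alpha(x))-F(x,\alpha(x)) \,=\, h\,\nabla_x F(\eta_h,\alpha(x)).
\end{align*}
Dividing by $h$ and letting $h\downarrow 0$, we have $\xi_h,\eta_h\to x$ and $\alpha(x+h)\to\alpha(x)$ by the continuity step above, so by the joint continuity of $\nabla_x F$ both bounds converge to $\nabla_x F(x,\alpha(x))$. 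The same argument for $h\uparrow 0$ gives the matching left-derivative, and we conclude $F'(x)=\nabla_x F(x,\alpha(x))$.

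The only subtle point is making sure the sandwich is valid for both signs of $h$, which amounts to just swapping the two outer inequalities. Everything else is a direct application of compactness, strict convexity, continuity of the $\arg\min$, and the mean value theorem combined with joint continuity of $\nabla_x F$; no appeal to $\nabla_\alpha F$ or to any implicit function theorem is needed, which is why strict convexity (rather than strong convexity) suffices here.
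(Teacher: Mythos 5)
Your proof is correct and follows essentially the same route as the paper: the same two-sided optimality sandwich on the difference quotient, combined with continuity of the argmin. The only cosmetic differences are that you prove continuity of $x\mapsto\alpha(x)$ directly by compactness and uniqueness of the minimizer instead of invoking Berge's maximum theorem, and you close the squeeze with the mean value theorem rather than the paper's fundamental-theorem-of-calculus representation plus dominated convergence; both substitutions are valid under the stated hypotheses.
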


\begin{proof}
As \(\alpha \mapsto F(x, \alpha)\) is strictly convex, the correspondence $x \twoheadrightarrow \operatorname{argmin}_{\alpha \in K} F(x, \alpha)$ is single-valued. It follows from Berge's maximum theorem \cite[Theorem 17.31]{guide2006infinite} that 
\(x\mapsto \alpha(x) = \operatorname{argmin}_{\alpha \in K} F(x, \alpha)\) is continuous (as any single-valued upper hemicontinuous correspondence is continuous).

We now prove that
\begin{align}
    \lim_{h\to 0 } \frac{F(x+h)-F(x)}{h} = \nabla_x F(x, \alpha(x)).
\end{align}
We start with the upper bound: by definition of $\alpha(x+h)$ we have
\[
\limsup_{h \to 0} \frac{F(x + h, \alpha(x + h)) - F(x, \alpha(x))}{h} \leq \limsup_{h \to 0} \frac{F(x + h, \alpha(x)) - F(x, \alpha(x))}{h} = \nabla_x F(x, \alpha(x)).
\]
For the lower bound we again use the definition of $\alpha(x)$ to conclude
\begin{align*}
\liminf_{h \to 0} \frac{F(x + h, \alpha(x + h)) - F(x, \alpha(x))}{h} &\ge \liminf_{h \to 0} \frac{F(x + h, \alpha(x + h)) - F(x, \alpha(x + h))}{h}\\
&= \liminf_{h \to 0} \int_0^1 \nabla_x F(x + ht, \alpha(x + h)) dt = \nabla_x F(x, \alpha(x)),
\end{align*}
where the last equality follows from continuity of $x\mapsto \alpha(x)$, continuity of $(x,\alpha)\mapsto \nabla_x F(x, \alpha)$ and the dominated convergence theorem, noting that $(x, \alpha)\mapsto \nabla_x F(x,\alpha)$ is bounded on the compact set $\{(x+ht, \alpha): t,h\in [0,1], \alpha\in K\}$.
\end{proof}

\begin{lemma}\label{sliced_continuity}
Let \((\mathcal{X},\|\cdot\|_\mathcal{X}), (\mathcal{Y}, \|\cdot\|_{\mathcal{Y}})\) be two normed spaces, and define \(F: \mathcal{X} \times \mathcal{P}(\mathcal{Y}) \to \mathbb{R}\) via
\[
F(x, \gamma) := \int g(x, y)\, \gamma(dy)
\]
for a Borel function \(g: \mathcal{X} \times \mathcal{Y} \to \mathbb{R}\). Then the following hold:
\begin{enumerate}
\item If \(g\) is continuous and satisfies \(|g(x, y)| \leq C(x) (1 + \|y\|_{\mathcal{Y}}^p)\) for a locally bounded function $x\mapsto C(x)$, then \(F\) is continuous wrt.\,\(\|\cdot\|_{\mathcal{X}}+ \mathcal{W}_p(\cdot)\).
\item If \(g\) is lower semicontinuous and bounded from below, then \(F\) is lower semicontinuous wrt. $\|\cdot\|_{\mathcal{X}}+\mathcal{W}_p(\cdot) $.
\end{enumerate}
\end{lemma}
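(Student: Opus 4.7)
The plan is to prove (1) directly via a decomposition argument, and then deduce (2) by approximating a lower semicontinuous $g$ from below by Lipschitz functions and applying (1) to each approximant.

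For (1), take a sequence $(x_n, \gamma_n) \to (x, \gamma)$ in $\|\cdot\|_\mathcal{X} + \mathcal{W}_p$ and write
\[
F(x_n, \gamma_n) - F(x, \gamma) = \int [g(x_n, y) - g(x, y)]\,\gamma_n(dy) + \left[\int g(x, y)\,\gamma_n(dy) - \int g(x, y)\,\gamma(dy)\right].
\]
The second bracket vanishes by \cite[Theorem 6.9]{villani2009optimal}, which characterises $\mathcal{W}_p$-convergence as weak convergence together with $\int \phi\,d\gamma_n \to \int \phi\,d\gamma$ for every continuous $\phi$ satisfying $|\phi(y)| \lesssim 1 + \|y\|_{\mathcal{Y}}^p$; apply this with $\phi(\cdot) = g(x, \cdot)$. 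For the first bracket, fix $\epsilon > 0$. Since $\int (1 + \|y\|_{\mathcal{Y}}^p)\,\gamma_n(dy) \to \int (1 + \|y\|_{\mathcal{Y}}^p)\,\gamma(dy)$ (again by \cite[Theorem 6.9]{villani2009optimal}), combined with weak convergence and a standard cut-off argument using a continuous compactly supported bump, one can select $R > 0$ so that $\int_{\|y\|_{\mathcal{Y}} > R} (1 + \|y\|_{\mathcal{Y}}^p)\,\gamma_n(dy) < \epsilon$ for all large $n$. On $\{\|y\|_{\mathcal{Y}} \leq R\}$, joint continuity of $g$ and compactness of $\overline{B_1(x)} \times \overline{B_R(0)}$ yield uniform continuity of $g$, which together with $x_n \to x$ gives $\sup_{\|y\|_{\mathcal{Y}} \leq R}|g(x_n, y) - g(x, y)| \to 0$. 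Together with the uniform bound $|g(x_n, y) - g(x, y)| \lesssim 1 + \|y\|_{\mathcal{Y}}^p$ coming from local boundedness of $C$, these estimates force the first bracket to zero.

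For (2), since $g$ is lower semicontinuous and bounded below, introduce the Pasch-Hausdorff envelope
\[
g_k(x, y) := \inf_{(x', y') \in \mathcal{X} \times \mathcal{Y}} \bigl[g(x', y') + k(\|x - x'\|_{\mathcal{X}} + \|y - y'\|_{\mathcal{Y}})\bigr].
\]
Each $g_k$ is $k$-Lipschitz, bounded below by $\inf g$, and $g_k \nearrow g$ pointwise. The Lipschitz bound gives $|g_k(x, y)| \leq |g_k(0, 0)| + k\|x\|_{\mathcal{X}} + k\|y\|_{\mathcal{Y}}$, which (since $\|y\|_{\mathcal{Y}} \leq 1 + \|y\|_{\mathcal{Y}}^p$ for $p \geq 1$) fits the growth hypothesis of (1) with locally bounded $C(x) = |g_k(0,0)| + k\|x\|_{\mathcal{X}} + k$. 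Hence $F_k(x, \gamma) := \int g_k\,d\gamma$ is continuous by part (1). Monotone convergence then yields $F(x, \gamma) = \sup_k F_k(x, \gamma)$, and a supremum of continuous functions is lower semicontinuous.

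The main technical step is the uniform tail estimate in (1): controlling $\int_{\|y\|_{\mathcal{Y}}>R} (1 + \|y\|_{\mathcal{Y}}^p)\,\gamma_n(dy)$ uniformly in $n$. This fails under mere weak convergence and is the precise place where the strength of $\mathcal{W}_p$-convergence---namely the convergence of $p$-moments, which enforces uniform integrability of $\|y\|_{\mathcal{Y}}^p$ along the sequence---is needed.
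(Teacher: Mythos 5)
Your plan is sound and most of it works, but one step does not survive the generality in which the lemma is stated. The lemma is formulated for arbitrary normed spaces, and your treatment of the first bracket hinges on compactness of $\overline{B_1(x)}\times\overline{B_R(0)}$: by Riesz's theorem closed balls are compact only in finite dimensions, so the deduction of uniform continuity, and with it $\sup_{\|y\|_{\mathcal{Y}}\le R}|g(x_n,y)-g(x,y)|\to 0$, is unjustified when $\mathcal{Y}$ (or $\mathcal{X}$) is infinite-dimensional; joint continuity alone does not give uniform convergence on norm balls there (e.g.\ on $\mathbb{R}\times\ell^2$ a continuous $g$ built from bumps centered at the points $(1/k,e_k)$ keeps this supremum equal to $1$ along $x_n=1/n\to0$). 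The auxiliary cut-off ``via a continuous compactly supported bump'' has the same finite-dimensional flavour, although the tail bound itself is not the issue: it follows directly from the uniform-integrability part of the characterization of $\mathcal{W}_p$-convergence in \cite[Theorem 6.9]{villani2009optimal}, no bump needed. So as written your argument proves the lemma only for finite-dimensional $\mathcal{X},\mathcal{Y}$ --- which happens to cover every application in this paper --- but not the statement as given.

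The gap is easy to close, and the comparison with the paper's proof shows how: there one encodes the pair $(x_n,\gamma_n)$ as the product measure $\delta_{x_n}\otimes\gamma_n$, checks that $\delta_{x_n}\otimes\gamma_n\to\delta_{x_0}\otimes\gamma_0$ in $\mathcal{W}_p$ on $\mathcal{X}\times\mathcal{Y}$, and applies \cite[Theorem 6.9]{villani2009optimal} once on the product space with the test function $g$ itself (using local boundedness of $C$ to get the $p$-growth on $B_r(x_0)\times\mathcal{Y}$); no uniform continuity or local compactness enters. Alternatively, you could keep your decomposition and replace the uniform-continuity step by truncating $g$ at level $M$ and using uniform integrability of $1+\|y\|_{\mathcal{Y}}^p$ along $(\gamma_n)$. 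Your part (2) is fine and genuinely different from the paper: the Pasch--Hausdorff regularization $g_k\uparrow g$ combined with part (1) and monotone convergence is a correct, self-contained derivation, whereas the paper simply invokes \cite[Lemma 4.3]{villani2009optimal} (whose proof is essentially your approximation argument) together with the same product-measure device.
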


\begin{proof}
For \textit{(1)} take a sequence $(x_n, \gamma_n)_{n\in \N}$ converging to $(x_0, \gamma_0)$ with respect to in $\|\cdot\|_{\mathcal{X}} + \mathcal{W}_p(\cdot)$ and fix $B_r(x_0)\subseteq \mathcal{X}$ for some $r>0$. Then
\[
F(x_n, \gamma_n) = \int g(x, y) \,(\delta_{x_n} \otimes \gamma_n)(dx, dy) \to \int g(x, y) \,(\delta_{x_0} \otimes \gamma_0)(dx, dy) = F(x_0, \gamma_0)
\]
as \(\delta_{x_n} \otimes \gamma_n \to \delta_{x_0} \otimes \gamma_0\) in \((\mathcal{P}_p(\mathcal{X} \times \mathcal{Y}), \mathcal{W}_p)\), $g$ is continuous and \(|g(x, y)| \lesssim 1 + \|y\|_{\mathcal{Y}}^p\) on \(B_r(x_0) \times \mathcal{Y}\). For \textit{(2)} we use the same arguments together with lower semi-continuity of 
 \(\tilde \gamma \mapsto \int g \,d\tilde\gamma\) wrt.\,\(\mathcal{W}_p\), see \cite[Lemma 4.3]{villani2009optimal}.
\end{proof}

We recall the standard definitions of continuity for set-valued mappings, see \cite[Definition 17.2, Theorem 17.16, Theorem 17.19]{guide2006infinite}.

\begin{definition}\label{def:lower_hemicontinuous}
Let $A,B$ be two sets. A correspondence \(g: A \twoheadrightarrow B\) is \emph{lower hemi-continuous} at \(a \in A\), if \(g(a)\neq \emptyset\) and for every \(b \in g(a)\) and every sequence \(a_n \to a\) there exists a subsequence $(a_{n_k})$ and a sequence \(b_{n_k} \to b\), where \(b_{n_k} \in g(a_{n_k})\) for all \(k \in \mathbb{N}\).
\end{definition}
\begin{definition}\label{def:uhc}
Let $A,B$ be two sets. A compact-valued correspondence \(g: A \twoheadrightarrow B\) is \emph{upper hemi-continuous} at \(a \in A\), if \(g(a)\neq\emptyset\) and for every sequence \(a_n \to a\) and \(b_n \in g(a_n)\) there exists a convergent subsequence \(b_{n_k} \to b \in g(a)\).
\end{definition}

\begin{definition}
Let $A,B$ be two sets.  A compact-valued correspondence \(g: A \twoheadrightarrow B\) is \textit{continuous} at a point \(a \in A\) if it is both upper and lower hemi-continuous at $a.$
\end{definition}

\begin{corollary}\label{sup_continuity_maximum_th}
Let \((\mathcal{X},\|\cdot\|_\mathcal{X}), (\mathcal{Y}, \|\cdot\|_{\mathcal{Y}})\) be two normed spaces, and define $F:\mathcal{X}\to \R$ via
$$F(x) := \sup_{\gamma \in K(x)} \int g(x, y) \,\gamma(dy),$$ 
where
\begin{itemize}
\item \(g: \mathcal{X} \times \mathcal{Y} \to \mathbb{R}\) is continuous and satisfies \(|g(x, y)| \leq C(x) (1 + \|y\|_{\mathcal{Y}}^p)\) for a locally bounded function \(x\mapsto C(x)\). 
\item the correspondence $K:\mathcal{X} \twoheadrightarrow \mathcal{P}(\mathcal{Y})$ is continuous, where $K(x)$ is non-empty and compact in \((\mathcal{P}_p(\mathcal{Y}), \mathcal{W}_p)\) for each $x \in \mathcal{X}.$
\end{itemize}
Then \(F\) is continuous.
\end{corollary}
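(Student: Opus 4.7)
The plan is to read this off Berge's maximum theorem once we combine it with the joint continuity already established in Lemma \ref{sliced_continuity}. Concretely, I would introduce the auxiliary function $\tilde F: \mathcal{X}\times \mathcal{P}_p(\mathcal{Y})\to \mathbb{R}$ defined by
$$
\tilde F(x,\gamma) := \int g(x,y)\,\gamma(dy),
$$
so that $F(x) = \sup_{\gamma \in K(x)} \tilde F(x,\gamma)$.

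The first step is to observe that Lemma \ref{sliced_continuity} (1) applies verbatim: $g$ is continuous and satisfies $|g(x,y)|\leq C(x)(1+\|y\|_{\mathcal{Y}}^p)$ with $x\mapsto C(x)$ locally bounded, so $\tilde F$ is continuous on $\mathcal{X}\times \mathcal{P}_p(\mathcal{Y})$ with respect to $\|\cdot\|_{\mathcal{X}}+\mathcal{W}_p(\cdot,\cdot)$.

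The second step is to apply Berge's maximum theorem \cite[Theorem 17.31]{guide2006infinite} on the metric space $(\mathcal{P}_p(\mathcal{Y}),\mathcal{W}_p)$: by assumption the correspondence $K: \mathcal{X}\twoheadrightarrow \mathcal{P}_p(\mathcal{Y})$ is non-empty, compact-valued and continuous (both upper and lower hemi-continuous in the sense of Definitions \ref{def:lower_hemicontinuous}--\ref{def:uhc}), and $\tilde F$ is jointly continuous by the previous step. The maximum theorem then yields that the value function
$$
F(x) \;=\; \sup_{\gamma\in K(x)} \tilde F(x,\gamma)
$$
is continuous on $\mathcal{X}$ (and that the supremum is attained, though this is not required here).

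There is no real obstacle: both hypotheses of Berge's maximum theorem are supplied by the assumptions of the corollary together with Lemma \ref{sliced_continuity}. The only point worth double-checking is that Berge's theorem in the form of \cite[Theorem 17.31]{guide2006infinite} is applied in the metric space $(\mathcal{P}_p(\mathcal{Y}),\mathcal{W}_p)$ rather than in a topological vector space, but this is unproblematic since compactness and (upper/lower) hemi-continuity in the sense of Definitions \ref{def:lower_hemicontinuous}--\ref{def:uhc} are metric-space notions.
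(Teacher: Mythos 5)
Your proposal is correct and follows exactly the paper's own argument: continuity of $(x,\gamma)\mapsto\int g(x,y)\,\gamma(dy)$ via Lemma \ref{sliced_continuity}, followed by Berge's maximum theorem \cite[Theorem 17.31]{guide2006infinite} applied to the continuous, non-empty compact-valued correspondence $K$. Nothing further is needed.
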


\begin{proof}[Proof.]
The function \(F(x, \gamma) := \int g(x, y) \,\gamma(dy)\) is continuous by Lemma \ref{sliced_continuity}. By Berge's maximum theorem \cite[Theorem 17.31]{guide2006infinite} we conclude that \(F(x)\) is continuous as well.
\end{proof}

\begin{lemma}\label{uniform_convergence_dini}
Let \((\mathcal{X},\|\cdot\|_\mathcal{X}), (\mathcal{Y}, \|\cdot\|_{\mathcal{Y}})\) be two normed spaces. For each $\delta \ge 0$ we define $F_\delta:\mathcal{X}\to \R$ via
$$F_\delta(x) := \sup_{\gamma \in K_\delta(x)} \int g(x, y)\, \gamma(dy),$$ where
\begin{itemize}
\item \(g: \mathcal{X} \times \mathcal{Y} \to \mathbb{R}\) is continuous and \(|g(x, y)| \leq C(x) (1 + \|y\|_{\mathcal{Y}}^p)\) for a locally bounded function  \(x\mapsto C(x)\),
\item for any \(\delta \ge 0\) the correspondence $K_\delta:\mathcal{X} \twoheadrightarrow \mathcal{P}(\mathcal{Y})$ is continuous, where $K_\delta(x)$ is non-empty and compact in \((\mathcal{P}_p(\mathcal{Y}), \mathcal{W}_p)\) for each $x \in \mathcal{X},$
\item for any $x\in \mathcal{X}$ the sequence $(K_\delta(x))_\delta$ is decreasing and $K_0$ is single-valued.
\end{itemize}
Then \(x \mapsto F_\delta(x)\) is continuous for each \(\delta \geq 0\), and \(F_\delta(x) \downarrow F_0(x)\) uniformly on any compact subset of $\mathcal{X}$ as \(\delta \to 0\).
\end{lemma}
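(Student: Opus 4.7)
The plan is to establish the conclusion in three steps: (i) continuity of each $F_\delta$, (ii) pointwise monotone convergence $F_\delta(x) \downarrow F_0(x)$, and (iii) upgrading this to local uniform convergence via Dini's theorem.

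For step (i), I would simply invoke Corollary \ref{sup_continuity_maximum_th} for each fixed $\delta \ge 0$: all hypotheses (continuity of $g$ with polynomial growth in $y$, and continuity of the compact-valued correspondence $K_\delta$) are present. This immediately yields that every $F_\delta$, and in particular $F_0$, is continuous.

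For step (ii), fix $x \in \mathcal{X}$. The decreasing property of $K_\delta(x)$ in $\delta$ yields that $\delta \mapsto F_\delta(x)$ is non-decreasing, and from $K_0(x) \subseteq K_\delta(x)$ we immediately get $F_\delta(x) \ge F_0(x)$. For the reverse inequality on the limit, fix a sequence $\delta_n \downarrow 0$ and near-optimizers $\gamma_n \in K_{\delta_n}(x)$ with $F_{\delta_n}(x) \le \int g(x,y)\,\gamma_n(dy) + 1/n$. All $\gamma_n$ eventually lie in the compact set $K_{\delta_1}(x)$, so along a subsequence $\gamma_{n_k} \to \tilde\gamma$ in $\mathcal{W}_p$. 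For any fixed $\delta > 0$ we have $\gamma_{n_k} \in K_\delta(x)$ for large $k$ by the decreasing hypothesis; closedness of $K_\delta(x)$ then gives $\tilde\gamma \in K_\delta(x)$, and hence $\tilde\gamma \in \bigcap_{\delta > 0} K_\delta(x) = K_0(x) = \{\gamma_0(x)\}$. Applying Lemma \ref{sliced_continuity} to the continuous functional $\gamma \mapsto \int g(x,y)\,\gamma(dy)$ then gives $\int g(x,y)\,\gamma_{n_k}(dy) \to F_0(x)$, so $F_{\delta_{n_k}}(x) \to F_0(x)$; combined with monotonicity this shows $F_\delta(x) \downarrow F_0(x)$ as $\delta \to 0$.

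For step (iii), once we have a monotonically decreasing net of continuous functions $F_\delta$ converging pointwise to the continuous function $F_0$, Dini's theorem (applied along any sequence $\delta_n \downarrow 0$) delivers uniform convergence on every compact subset of $\mathcal{X}$. The main subtlety lies in step (ii), specifically the identification $\bigcap_{\delta > 0} K_\delta(x) = K_0(x)$, which I read as the intended meaning of the phrase "decreasing" combined with "$K_0$ is single-valued"; without it, there is no reason for a subsequential limit of near-optimizers to land in $K_0(x)$. The remaining ingredients are off-the-shelf: Berge's maximum theorem (packaged in Corollary \ref{sup_continuity_maximum_th}) and Dini's theorem.
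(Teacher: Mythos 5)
Your proposal is correct and follows essentially the same route as the paper: continuity of each $F_\delta$ via Corollary \ref{sup_continuity_maximum_th} (Berge's maximum theorem), monotonicity of $\delta \mapsto F_\delta(x)$ from monotonicity of $K_\delta(x)$, and Dini's theorem for uniform convergence on compacts. Your step (ii) — extracting near-optimizers, using compactness of $K_{\delta_1}(x)$ and the identification $\bigcap_{\delta>0}K_\delta(x)=K_0(x)$ to obtain the pointwise convergence $F_\delta(x)\downarrow F_0(x)$ — spells out a step the paper's proof leaves implicit (it is indeed required before Dini's theorem can be invoked), and your reading of the ``decreasing'' hypothesis is the one consistent with how the lemma is applied to $\overline\Pi_\delta(\mu_{x_{1:t-1}},\cdot)$.
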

\begin{proof}[Proof.]
Continuity of \(x \mapsto F_\delta(x)\) for $\delta \geq 0$ follows from Corollary \ref{sup_continuity_maximum_th}. Monotonicity of \(\delta \mapsto F_\delta(x)\) follows from monotonicity of \(K_\delta(x)\). Therefore, we conclude the second part by Dini's Theorem \cite[3.2.18]{engelking1989general}.
\end{proof}

Recall $$\overline\Pi_\delta(\mu_{x_{1:t}}, \cdot):= \{\pi \in \Pi(\mu_{x_{1:t}}, \cdot):\, \mathcal{C}_p(\pi)\le \delta\}.$$
We now show that \(\overline \Pi_\delta(\mu_{x_{1:t-1}}, \cdot)\) depends continuously on \((x_{1:t-1}, y_{1:t-1})\).

\begin{proposition}\label{prop:correcpondence_lhs}
Let \(\mu \in \mathcal{P}_p(\mathbb{R}^N)\) be a successively $\mathcal{W}_p$--continuous probability measure. Then the correspondence \(\mathbb{R}^{t-1}  \ni x_{1:t-1} \twoheadrightarrow \overline\Pi_\delta(\mu_{x_{1:t-1}}, \cdot) \subseteq (\mathcal{P}_p(\mathbb{R}^2), \mathcal{W}_p)\) is lower hemicontinuous. Consequently, this correspondence is lower hemicontinuous with respect to $(\mathcal{P}_{p - \varepsilon}(\mathbb{R}^2), \mathcal{W}_{p - \varepsilon})$ for any $\varepsilon > 0$.
\end{proposition}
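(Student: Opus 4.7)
The plan is to verify non-emptiness and then carry out a direct two-stage approximation. Non-emptiness is immediate: the diagonal coupling $(\mathrm{Id},\mathrm{Id})_\#\mu_{x_{1:t-1}}$ belongs to $\overline\Pi_\delta(\mu_{x_{1:t-1}},\cdot)$ for every $x_{1:t-1}$ since it has zero transport cost. For lower hemicontinuity, I fix $\pi\in\overline\Pi_\delta(\mu_{x_{1:t-1}},\cdot)$ and a sequence $x^n_{1:t-1}\to x_{1:t-1}$ and aim to construct $\pi^{n_k}\in\overline\Pi_\delta(\mu_{x^{n_k}_{1:t-1}},\cdot)$ with $\pi^{n_k}\to\pi$ in $\mathcal{W}_p$.

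The step I expect to be the main obstacle is keeping the $p$-cost at most $\delta$: the naive approach of gluing an optimal $\mathcal{W}_p$-coupling $\tau^n\in\Pi(\mu_{x^n_{1:t-1}},\mu_{x_{1:t-1}})$ with $\pi$ and projecting yields a plan $\pi^n\in\Pi(\mu_{x^n_{1:t-1}},\nu)$ (with $\nu$ the second marginal of $\pi$) whose cost is only controlled by Minkowski as $\mathcal{C}_p(\pi^n)\le \mathcal{W}_p(\mu_{x^n_{1:t-1}},\mu_{x_{1:t-1}})+\mathcal{C}_p(\pi)$, which can exceed $\delta$ when $\mathcal{C}_p(\pi)=\delta$. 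To fix this, I first strictly shrink the cost of $\pi$: set
\[
\pi_m:=\Big(1-\tfrac{1}{m}\Big)\pi+\tfrac{1}{m}(\mathrm{Id},\mathrm{Id})_\#\mu_{x_{1:t-1}},
\]
which has first marginal $\mu_{x_{1:t-1}}$, satisfies $\mathcal{C}_p(\pi_m)^p=(1-1/m)\mathcal{C}_p(\pi)^p\le(1-1/m)\delta^p$, and (via the obvious coupling $(X,Y)\mapsto(X,(1-I)Y+IX)$ with $I\sim\mathrm{Bernoulli}(1/m)$ independent of $(X,Y)\sim\pi$) obeys $\mathcal{W}_p(\pi_m,\pi)\le(1/m)^{1/p}\mathcal{C}_p(\pi)\to 0$.

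Next, successive $\mathcal{W}_p$-continuity of $\mu$ gives $\mathcal{W}_p(\mu_{x^n_{1:t-1}},\mu_{x_{1:t-1}})\to 0$. Pick an optimal $\tau^n\in\Pi(\mu_{x^n_{1:t-1}},\mu_{x_{1:t-1}})$, glue with $\pi_m$ via \cite[Gluing lemma]{villani2009optimal} to obtain $\hat\pi^{n,m}\in\mathcal{P}(\mathbb{R}^3)$, and define $\pi^{n,m}:=(\operatorname{proj}^{1,3})_\#\hat\pi^{n,m}$. Minkowski in $L^p(\hat\pi^{n,m})$ yields
\[
\mathcal{C}_p(\pi^{n,m})\le \mathcal{W}_p(\mu_{x^n_{1:t-1}},\mu_{x_{1:t-1}})+(1-1/m)^{1/p}\delta,
\]
and the projection $(\operatorname{proj}^{1,3},\operatorname{proj}^{2,3})_\#\hat\pi^{n,m}$ shows $\mathcal{W}_p(\pi^{n,m},\pi_m)\le \mathcal{W}_p(\mu_{x^n_{1:t-1}},\mu_{x_{1:t-1}})$. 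A diagonal choice $n_m$ with $\mathcal{W}_p(\mu_{x^{n_m}_{1:t-1}},\mu_{x_{1:t-1}})\le(1-(1-1/m)^{1/p})\delta\wedge(1/m)$ then ensures $\pi^{n_m,m}\in\overline\Pi_\delta(\mu_{x^{n_m}_{1:t-1}},\cdot)$ and $\mathcal{W}_p(\pi^{n_m,m},\pi)\to 0$ by the triangle inequality, which is exactly the lower hemicontinuity requirement of Definition~\ref{def:lower_hemicontinuous}.

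Finally, the $\mathcal{W}_{p-\varepsilon}$ statement follows immediately: applying Hölder's inequality to an optimal $\mathcal{W}_p$-coupling between $\pi^{n_m,m}$ and $\pi$ gives $\mathcal{W}_{p-\varepsilon}(\pi^{n_m,m},\pi)\le\mathcal{W}_p(\pi^{n_m,m},\pi)\to 0$, so the same approximating sequence works in the weaker topology.
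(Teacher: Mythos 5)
Your argument is correct, and at its core it follows the same strategy as the paper: glue an optimal $\mathcal{W}_p$-coupling of the conditional marginals $\mu_{x^{n}_{1:t-1}}$ and $\mu_{x_{1:t-1}}$ onto the given plan $\pi$, and then correct the resulting cost overshoot so as to stay inside the radius-$\delta$ ball. The difference lies in the correction mechanism. The paper contracts the displacement multiplicatively, pushing forward by $(x_t^{(n)}, y_t+\lambda_n(x_t^{(n)}-y_t))$ with $1-\lambda_n=\delta/(\delta+\mathcal{C}_p(\eta_n))$, which yields an admissible approximant $\pi_n\in\overline\Pi_\delta(\mu_{x^{(n)}_{1:t-1}},\cdot)$ for \emph{every} $n$ and gives the convergence bound $(1+\lambda_n)\mathcal{C}_p(\eta_n)+\lambda_n\mathcal{C}_p(\pi)\to 0$ in one step. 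You instead first pull $\pi$ strictly inside the ball by mixing with the diagonal coupling (the same device as Proposition \ref{prop:open_dense_in_closed}) and then glue, which forces a diagonal extraction over $(n,m)$; this is perfectly legitimate here because Definition \ref{def:lower_hemicontinuous} (the sequential characterization of lower hemicontinuity) only requires a subsequence, but you should note explicitly that the indices $n_m$ can be chosen strictly increasing, and your construction tacitly assumes $\delta>0$ (as does the paper's). Your cost estimates $\mathcal{C}_p(\pi_m)^p=(1-\tfrac1m)\mathcal{C}_p(\pi)^p$, $\mathcal{W}_p(\pi_m,\pi)\le (1/m)^{1/p}\mathcal{C}_p(\pi)$, $\mathcal{C}_p(\pi^{n,m})\le\mathcal{W}_p(\mu_{x^n_{1:t-1}},\mu_{x_{1:t-1}})+(1-\tfrac1m)^{1/p}\delta$ and $\mathcal{W}_p(\pi^{n,m},\pi_m)\le\mathcal{W}_p(\mu_{x^n_{1:t-1}},\mu_{x_{1:t-1}})$ are all correct, as is the passage to $\mathcal{W}_{p-\varepsilon}$ (up to the harmless equivalence of the $\ell^p$ and $\ell^{p-\varepsilon}$ norms on $\mathbb{R}^2$). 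In short: same skeleton, a slightly less direct cost correction; the paper's contraction buys a full-sequence approximation without diagonalization, while your mixture argument reuses an already established density trick.
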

\begin{proof}[Proof.]
Take any sequence $(x_{1:t-1}^{(n)})_{n\in \N}$ converging to some $x_{1:t-1}\in \R^{t-1}$, and a probability measure \(\pi \in \overline \Pi_\delta(\mu_{x_{1:t-1}}, \cdot)\). Consider the optimal transport plan \(\eta_n \in \Pi(\mu_{x_{1:t-1}^{(n)}}, \mu_{x_{1:t-1}})\) for \(\mathcal{W}_p\left(\mu_{x_{1:t-1}^{(n)}}, \mu_{x_{1:t-1}}\right)\), and define
\[
\pi_n := (x_t^{(n)}, y_t + \lambda_n(x_t^{(n)} - y_t))_\# (\eta_n \dot{\oplus} \pi) (dx_t^{(n)}, dx_t, dy_t),
\]
where \(\dot{\oplus}\) represents the gluing operation and \(1 - \lambda_n := \frac{\delta}{\delta + \mathcal{C}_p(\eta_n)}\). To prove lower hemicontinuity, it suffices to show that \(\pi_n \in \overline \Pi_\delta(\mu_{x_{1:t-1}^{(n)}}, \cdot)\) and \(\pi_n \to \pi\) in \((\mathcal{P}_p(\mathbb{R}^2), \mathcal{W}_p)\):
\begin{itemize}
\item  \(\pi_n \in \overline \Pi_\delta(\mu_{x_{1:t-1}^{(n)}}, \cdot)\): By \cite[Gluing lemma, p.12] {villani2009optimal} we have \(\pi_n \in \Pi(\mu_{x_{1:t-1}^{(n)}}, \cdot)\). Moreover,
    \begin{align*}
    \mathcal{C}_p(\pi_n) &= \left(\int |x_t^{(n)} - (y_t + \lambda_n (x_t^{(n)} - y_t))|^p \,(\eta_n \dot{\oplus} \pi) (dx_t^{(n)}, dx_t, dy_t)\right)^\frac{1}{p}\\
    &= (1 - \lambda_n) \left(\int |x_t^{(n)} - y_t|^p (\eta_n \dot{\oplus} \pi)(dx_t^{(n)}, dx_t, dy_t)\right)^\frac{1}{p}\\
    &\leq (1 - \lambda_n) (\mathcal{C}_p(\eta_n) + \mathcal{C}_p(\pi)) \leq \delta,
    \end{align*}
    where we have used Minkowski's inequality for \(L^p(\eta_n \dot{\oplus} \pi)\) and the definition of \(\lambda_n\). This confirms \(\pi_n \in \overline \Pi_\delta(\mu_{x_{1:t-1}^{(n)}}, \cdot)\).
    \item \(\pi_n \to \pi\) in \((\mathcal{P}_p(\mathbb{R}^2), \mathcal{W}_p)\): we bound \(\mathcal{W}_p(\pi_n, \pi)\) from above using the transport plan
    \[
    (x_t^{(n)}, y_t + \lambda_n(x_t^{(n)} - y_t), x_t, y_t)_\# (\eta_n \dot{\oplus} \pi) (dx_t^{(n)}, dx_t, dy_t) \in \Pi(\pi_n, \pi),
    \]
    yielding
    \begin{align*}
    &\mathcal{W}_p(\pi_n, \pi) \leq \left(\int \left(|x_t^{(n)} - x_t|^p + |\lambda_n (x_t^{(n)} - y_t)|^p\right) (\eta_n \dot{\oplus} \pi)(dx_t^{(n)}, dx_t, dy_t)\right)^\frac{1}{p}\\
    &\leq \mathcal{C}_p(\eta_n) + \lambda_n \left(\int |x_t^{(n)} - y_t|^p (\eta_n \dot{\oplus} \pi)(dx_t^{(n)}, dx_t, dy_t)\right)^\frac{1}{p}\\
    &\leq (1 + \lambda_n) \mathcal{C}_p(\eta_n) + \lambda_n \mathcal{C}_p(\pi) \to 0 \quad \text{for }n\to \infty;
    \end{align*}
    the last statement follows from  $\mathcal{C}_p(\eta_n) = \mathcal{W}_p(\mu_{x_{1:t-1}^{(n)}}, \mu_{x_{1:t-1}}) \to 0$ by successive weak continuity of $\mu$, which in turn implies $\lambda_n=\frac{\mathcal{C}_p(\eta_n)}{\delta+\mathcal{C}_p(\eta_n)} \to 0$. 
\end{itemize}
\end{proof}

\begin{proposition}\label{hemicontinuity}
Let \(\mu \in \mathcal{P}_p(\mathbb{R}^N)\) be a successively $\mathcal{W}_p$--continuous probability measure. Then for any $\varepsilon > 0$ and all $x_{1:t}\in \R^{t-1}$ the set $\overline\Pi_\delta(\mu_{x_{1:t-1}}, \cdot)$ is compact in $(\mathcal{P}_{p - \varepsilon}(\mathbb{R}^2),\mathcal{W}_{p - \varepsilon})$. Furthermore the correspondence \(\mathbb{R}^{t-1}  \ni x_{1:t-1} \twoheadrightarrow \overline\Pi_\delta(\mu_{x_{1:t-1}}, \cdot) \subseteq (\mathcal{P}_{p - \varepsilon}(\mathbb{R}^2),\mathcal{W}_{p - \varepsilon})\) is continuous, i.e., lower and upper hemicontinuous.
\end{proposition}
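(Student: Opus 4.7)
The plan splits into three pieces: (i) compactness of $\overline\Pi_\delta(\mu_{x_{1:t-1}}, \cdot)$ in $(\mathcal{P}_{p-\varepsilon}(\mathbb{R}^2), \mathcal{W}_{p-\varepsilon})$, (ii) upper hemicontinuity of the correspondence in this topology, and (iii) lower hemicontinuity, which I would inherit directly from Proposition \ref{prop:correcpondence_lhs}.

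For (i), fix $x_{1:t-1}$ and observe that every $\pi \in \overline\Pi_\delta(\mu_{x_{1:t-1}}, \cdot)$ has first marginal $\mu_{x_{1:t-1}} \in \mathcal{P}_p(\mathbb{R})$ and satisfies $\mathcal{C}_p(\pi) \le \delta$. Minkowski's inequality in $L^p(\pi)$ yields
\[
\left(\int |y|^p\,\pi(dx,dy)\right)^{1/p} \le \left(\int |x|^p\,\mu_{x_{1:t-1}}(dx)\right)^{1/p} + \delta,
\]
so the family has uniformly bounded $p$-th moments and is in particular tight. Weak closedness follows from two observations: the first-marginal projection is weakly continuous, and $\pi \mapsto \int |x-y|^p\,d\pi$ is weakly lower semicontinuous by \cite[Lemma 4.3]{villani2009optimal}. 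Prokhorov's theorem then yields weak compactness. To upgrade to $\mathcal{W}_{p-\varepsilon}$-compactness I appeal to \cite[Theorem 6.9]{villani2009optimal}, which characterizes $\mathcal{W}_{p-\varepsilon}$-convergence as weak convergence plus convergence of $(p-\varepsilon)$-th moments. Uniform $p$-boundedness gives uniform integrability of $\|\cdot\|^{p-\varepsilon}$ via the bound $\int_{\|(x,y)\|>R}\|(x,y)\|^{p-\varepsilon}\,d\pi \le R^{-\varepsilon}\int\|(x,y)\|^p\,d\pi$, and Vitali closes the argument.

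For (ii), take $x_{1:t-1}^{(n)} \to x_{1:t-1}$ and $\pi_n \in \overline\Pi_\delta(\mu_{x_{1:t-1}^{(n)}}, \cdot)$. Successive $\mathcal{W}_p$-continuity of $\mu$ gives $\mu_{x_{1:t-1}^{(n)}} \to \mu_{x_{1:t-1}}$ in $\mathcal{W}_p$, so the $p$-th moments of the first marginals are uniformly bounded in $n$. The same Minkowski bound then yields uniform $p$-boundedness of the full family $(\pi_n)$, hence tightness and the existence of a weakly convergent subsequence $\pi_{n_k} \to \pi$. The first marginal of $\pi$ equals $\mu_{x_{1:t-1}}$ by weak continuity of projection, and $\mathcal{C}_p(\pi) \le \liminf_k \mathcal{C}_p(\pi_{n_k}) \le \delta$ by lower semicontinuity, so $\pi \in \overline\Pi_\delta(\mu_{x_{1:t-1}}, \cdot)$. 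The same uniform integrability argument from (i) upgrades the weak convergence of $\pi_{n_k}$ to convergence in $\mathcal{W}_{p-\varepsilon}$. Part (iii) is immediate from the second sentence of Proposition \ref{prop:correcpondence_lhs}.

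The main obstacle is really the $\varepsilon$-loss in the exponent: neither compactness nor upper hemicontinuity can hold in $\mathcal{W}_p$ itself, because uniform boundedness of $p$-th moments does not imply uniform $p$-integrability (a vanishing amount of mass can escape to infinity just fast enough to keep the $p$-moments bounded). Replacing $p$ by $p-\varepsilon$ is the standard device to convert weak compactness into Wasserstein compactness, and everything else is essentially bookkeeping built on the Minkowski bound above.
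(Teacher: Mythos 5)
Your proof is correct, and its overall skeleton matches the paper's: lower hemicontinuity is inherited from Proposition \ref{prop:correcpondence_lhs}, and upper hemicontinuity (plus compactness, via the constant sequence) is obtained by extracting a $\mathcal{W}_{p-\varepsilon}$-convergent subsequence from a uniform $p$-moment bound and identifying the limit through weak continuity of the marginal projection and lower semicontinuity of $\mathcal{C}_p$ (\cite[Lemma 4.3]{villani2009optimal}). The one place where you genuinely diverge is the precompactness step: the paper shows that the union $\bigcup_n \overline\Pi_\delta(\mu_{x_{1:t-1}^{(n)}},\cdot)$ is contained in a single $\mathcal{W}_p$-ball around a reference plan $\pi^*$ (via a gluing/triangle-inequality estimate) and then invokes \cite[Lemma 24]{bartl2021sensitivity}, which asserts that $\mathcal{W}_p$-balls are compact in $(\mathcal{P}_{p-\varepsilon},\mathcal{W}_{p-\varepsilon})$; you instead derive a uniform $p$-moment bound directly from Minkowski's inequality and the constraint $\mathcal{C}_p(\pi)\le\delta$, get tightness and a weak limit by Prokhorov, and upgrade to $\mathcal{W}_{p-\varepsilon}$-convergence through the elementary uniform-integrability estimate $\int_{\|(x,y)\|>R}\|(x,y)\|^{p-\varepsilon}\,d\pi\le R^{-\varepsilon}\int\|(x,y)\|^p\,d\pi$ together with \cite[Theorem 6.9]{villani2009optimal}. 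Your route is more self-contained (it avoids both the ball-containment computation and the external compactness lemma), while the paper's is shorter on the page because the compactness is outsourced; the mathematical content is the same, and your closing remark about why the $\varepsilon$-loss in the exponent is unavoidable is accurate.
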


\begin{proof}[Proof.]
Lower hemicontinuity follows from Proposition \ref{prop:correcpondence_lhs}. It remains to prove upper hemicontinuity. Take any sequence $(x_{1:t-1}^{(n)})_{n\in \N}$ converging to $x_{1:t-1}\in \R^{t-1}$ as \(n \to \infty\), and probability measures \(\pi_n \in \overline \Pi_\delta(\mu_{x_{1:t-1}^{(n)}}, \cdot)\). Take any transport plan $\pi^* \in \overline \Pi_\delta(\mu_{x_{1:t-1}^{(1)}}, \cdot)$, and denote $\sup_n \mathcal{W}_p(\mu_{x_{1:t-1}^{(n)}}, \mu_{x_{1:t-1}^{(1)}})$ by $D$. We claim that
\begin{equation}\label{eqn:prop:hemicontinuity:subset}
\overline \Pi_\delta := \bigcup_{n \in \mathbb{N}} \overline \Pi_\delta(\mu_{x_{1:t-1}^{(n)}}, \cdot) \subseteq B^{\mathcal{W}_p}_{2 (\delta + D)}(\pi^*),
\end{equation}
where we recall that $B^{\mathcal{W}_p}_{\delta}(\pi^*)$ denotes the $\mathcal{W}_p$-ball of radius $\delta$ around $\pi^*.$
Indeed, take any other plan $\pi' \in \overline\Pi_\delta$ and let $\widehat{\pi} := [(y, x)_\# \pi'] \dot{\oplus} \eta \dot{\oplus} \pi^* \in \Pi(\cdot, \mu_{x_{1:t-1}^{(n)}}, \mu_{x_{1:t-1}^{(1)}}, \cdot)$, where $\eta$ is the optimal transport plan between $(\operatorname{proj}^1)_\# \pi'$ and $\mu_{x_{1:t-1}^{(1)}}$. Then $( x', y',  x, y)_\# \widehat{\pi}(dy', dx', dx, dy)$ is a transport plan between $\pi'$ and $\pi^*$. This yields the upper bound
\begin{align*}
&\mathcal{W}_p(\pi^*, \pi')\\
&\leq \left(\int [|x - x'|^p + |y - y'|^p ]\,\widehat{\pi}(dy', dx', dx, dy)\right)^\frac{1}{p}\\
&\leq 2 \left(\int |x - x'|^p \,d\widehat{\pi}\right)^\frac{1}{p} + \left(\int |x - y|^p \,d\widehat{\pi}\right)^\frac{1}{p} + \left(\int |x' - y'|^p \,d\widehat{\pi}\right)^\frac{1}{p}\\
&\leq 4 D + \mathcal{C}_p(\pi^*) + \mathcal{C}_p(\pi') \leq 2 (D+\delta ),
\end{align*}
where we have used the inequality $|y-y'|\le |y-x|+|x-x'|+|x'-y'|$ for the second inequality. In consequence, \eqref{eqn:prop:hemicontinuity:subset} follows. Since $B^{\mathcal{W}_p}_{2 (\delta + D)}(\pi^*)$ is compact in $(\mathcal{P}_{p - \varepsilon}(\mathbb{R}^2), \mathcal{W}_{p - \varepsilon})$ by \cite[Lemma 24]{bartl2021sensitivity}, then $\overline \Pi_\delta$ is precompact and $\pi_n \to \pi \in \mathcal{P}_{p - \varepsilon}(\mathbb{R}^2)$ after taking a subsequence if necessary. Recalling that $\mu_{x_{1:t-1}^{(n)}} \to \mu_{x_{1:t-1}}$ in $\mathcal{W}_p$ as $\mu$ is successively $\mathcal{W}_p$--continuous, we have \(\pi \in \Pi(\mu_{x_{1:t-1}}, \cdot)\). Moreover, by \cite[Lemma 4.3]{villani2009optimal}, 
$$
\mathcal{C}_p(\pi) \le \liminf_{n\to \infty} \mathcal{C}_p(\pi_n) \le \delta \Rightarrow \pi \in \overline \Pi_\delta(\mu_{x_{1:t-1}}, \cdot).
$$
Using the above argument with the constant sequence $x_{1:t}^{(n)} = x_{1:t}$ in particular shows that $\overline{\Pi}_\delta(\mu_{x_{1:t-1}}, \cdot)$ is compact. Thus we conclude from Definition \ref{def:uhc}  that $x_{1:t-1} \twoheadrightarrow \overline\Pi_\delta(\mu_{x_{1:t-1}}, \cdot)$ is upper hemicontinuous.
\end{proof}

To show measurability of the graph of $x_{1:t} \twoheadrightarrow \Pi^{\operatorname{T}}_\delta(\mu_{x_{1:t}}, \cdot)$ we recall the following definition from \cite{eder2019compactness}:

\begin{definition}
Let $\pi \in \mathcal{P}_p(\R^2)$ and $\delta>0$. Then the modulus of continuity $\omega_p(\pi, 
\delta)$ is defined as
$$
\omega_p(\pi, \delta) = \sup_{\substack{\gamma \in \Pi(\pi, \pi)\\ \mathcal{C}_p((\operatorname{proj}^{1, 3})_\# \gamma) \leq \delta}} \mathcal{C}_p((\operatorname{proj}^{2, 4})_\# \gamma).
$$
\end{definition}

\begin{lemma} \label{lem:meas}
For any $\mu \in \mathcal{P}_p(\mathbb{R}^N)$ the graph of $x_{1:t} \twoheadrightarrow \Pi_\delta (\mu_{x_{1:t}}, \cdot)$ is Borel measurable.
\end{lemma}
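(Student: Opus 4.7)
The plan is to decompose the graph as the intersection of two sets whose Borel measurability can be verified directly. Concretely, I would write
\[
\{(x_{1:t}, \pi) \in \R^t \times \mathcal{P}_p(\R^2) : \pi \in \Pi_\delta(\mu_{x_{1:t}}, \cdot)\} = A \cap B,
\]
where $A := \{(x_{1:t}, \pi) : \pi^1 = \mu_{x_{1:t}}\}$ encodes the marginal constraint and $B := \{(x_{1:t}, \pi) : \mathcal{C}_p(\pi) < \delta\}$ encodes the transport-cost constraint. Since $B$ depends only on $\pi$ and the map $\pi \mapsto \mathcal{C}_p(\pi)^p = \int |x-y|^p\,\pi(dx,dy)$ is continuous on $(\mathcal{P}_p(\R^2), \mathcal{W}_p)$ by \cite[Theorem 6.9]{villani2009optimal} (the integrand has $p$-growth), $B$ is open in the product topology, hence Borel.

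For $A$, I would fix a countable family $(f_k)_{k \in \N} \subset C_b(\R)$ that determines weak convergence of probability measures on $\R$, and rewrite
\[
A = \bigcap_{k \in \N} \{(x_{1:t}, \pi) : \Phi_k(x_{1:t}, \pi) = 0\}, \qquad \Phi_k(x_{1:t}, \pi) := \int f_k(z)\,\pi^1(dz) - \int f_k(z)\,\mu_{x_{1:t}}(dz).
\]
For each $k$, the map $\pi \mapsto \int f_k\,d\pi^1$ is continuous on $(\mathcal{P}_p(\R^2), \mathcal{W}_p)$ since $f_k$ is bounded and continuous, and the map $x_{1:t} \mapsto \int f_k\,d\mu_{x_{1:t}}$ is a Borel version of the conditional expectation $\E[f_k(X_1) \mid X_{1:t} = x_{1:t}]$ and hence Borel by construction of the regular conditional probability $x_{1:t} \mapsto \mu_{x_{1:t}}$. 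Thus every $\Phi_k$ is Borel, every equality set is Borel, and $A$ is a countable intersection of Borel sets.

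The only subtle point, which I would address briefly, is that $\mathcal{W}_p$ is strictly stronger than weak convergence on $\mathcal{P}_p(\R^2)$, so one needs to check that the measurability assertions transfer to the $\mathcal{W}_p$-Borel structure. This is harmless here because the only functionals involved are of the form $\pi \mapsto \int f\,d\pi$ with $f$ either bounded continuous or of $p$-growth, both of which remain continuous under $\mathcal{W}_p$. I do not anticipate any genuine obstacle beyond this bookkeeping.
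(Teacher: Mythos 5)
Your proposal is correct and follows essentially the same route as the paper: both split the graph into the marginal-constraint set and the open cost set $\{\pi : \mathcal{C}_p(\pi) < \delta\}$, and both reduce the marginal constraint to Borel measurability of the disintegration kernel $x_{1:t} \mapsto \mu_{x_{1:t}}$ (the paper pulls back the closed set $\{(\mu, \gamma) : \gamma^1 = \mu\}$ under the Borel map $(x_{1:t}, \gamma) \mapsto (\mu_{x_{1:t}}, \gamma)$, whereas you test against a countable separating family in $C_b(\mathbb{R})$, which is only a cosmetic difference). One small slip worth fixing: the conditional expectation you invoke should be $\E[f_k(X_{t+1}) \mid X_{1:t} = x_{1:t}]$ rather than $\E[f_k(X_1) \mid X_{1:t} = x_{1:t}]$, since $\mu_{x_{1:t}}$ is the conditional law of the next coordinate.
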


\begin{proof}
First, we check that the set $\{(\mu, \gamma)\,:\, \gamma \in \Pi(\mu, \cdot)\}$ is closed with respect to $\mathcal{W}_p$. We have
$$
\mu_n \to \mu, \; \Pi(\mu_n, \cdot) \ni \gamma_n \to \gamma \;\; \text{in} \; \mathcal{W}_p \Rightarrow \gamma^1 = \lim_{n \to \infty} \gamma_n^1 = \lim_{n \to \infty} \mu_n = \mu \Rightarrow \gamma \in \Pi(\mu, \cdot).
$$
The mapping $\Psi: (x_{1:t}, \gamma) \mapsto (\mu_{x_{1:t}}, \gamma)$ is Borel measurable, and hence
$$
\Psi^{-1}\left(\left\{(\mu, \gamma)\,:\, \gamma \in \Pi(\mu, \cdot)\right\}\right) = \operatorname{graph}(x_{1:t} \twoheadrightarrow \Pi(\mu_{x_{1:t}}, \cdot))
$$
is Borel. Moreover, the set $\{\gamma \in \mathcal{P}(\mathbb{R}^2)\,:\, \mathcal{C}_p(\gamma) < \delta\}$ is open and in particular Borel, because its complement is closed:
$$
\gamma_n \to \gamma \; \text{and} \; \mathcal{C}_p(\gamma_n) \geq \delta \Rightarrow \mathcal{C}_p(\gamma) \geq \delta
$$
by the characterisation of convergence in $\mathcal{W}_p$ from \cite[Theorem 6.9]{villani2009optimal}. Finally,
\begin{align*}
&\operatorname{graph}(x_{1:t} \twoheadrightarrow \Pi_\delta(\mu_{x_{1:t}}, \cdot))\\
&= \operatorname{graph}(x_{1:t} \twoheadrightarrow \Pi(\mu_{x_{1:t}}, \cdot)) \cap \left[\mathbb{R}^t \times \{\gamma \in \mathcal{P}(\mathbb{R}^2)\,:\, \mathcal{C}_p(\gamma) < \delta\}\right]
\end{align*}
is Borel, since it is an intersection of Borel sets.
\end{proof}

\begin{lemma}\label{lem:analytic_graph}
For any $\mu \in \mathcal{P}_p(\mathbb{R}^N)$ the graph of $x_{1:t} \twoheadrightarrow \Pi^{\operatorname{T}}_\delta(\mu_{x_{1:t}}, \cdot)$ is Borel measurable.
\end{lemma}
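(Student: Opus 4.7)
The plan is to reduce the claim to showing that the set of ``Monge-type'' couplings
\[
\mathcal{M} := \{\pi \in \mathcal{P}(\mathbb{R}^2) : \pi = (T, \operatorname{Id})_\# \pi^2 \text{ for some Borel } T:\mathbb{R}\to\mathbb{R}\}
\]
is Borel in the weak topology on $\mathcal{P}(\mathbb{R}^2)$. This reduction is legitimate since
\[
\operatorname{graph}(x_{1:t} \twoheadrightarrow \Pi^{\operatorname{T}}_\delta(\mu_{x_{1:t}}, \cdot)) = \operatorname{graph}(x_{1:t} \twoheadrightarrow \Pi_\delta(\mu_{x_{1:t}}, \cdot)) \cap (\mathbb{R}^t \times \mathcal{M}),
\]
and the graph on the right is Borel by Lemma \ref{lem:meas}.

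To characterize $\mathcal{M}$ as a Borel set, I would use Fenchel duality to re-cast the statement ``the first coordinate is a Borel function of the second under $\pi$'' as the vanishing of a Borel functional. Fix a bounded continuous Borel-injective function $\phi:\mathbb{R}\to\mathbb{R}$, for instance $\phi(x):=\arctan(x)$, and a countable uniform-dense subfamily $\{\psi_n\}_{n\in\mathbb{N}}\subset C_c(\mathbb{R})$ with controlled supports (e.g., piecewise linear functions with rational breakpoints and values supported in $[-M,M]$ for $M\in\mathbb{N}$). Set
\[
V_\phi(\pi) := \sup_{n \in \mathbb{N}}\left[2\int \phi(x)\psi_n(y)\,\pi(dx,dy) - \int \psi_n(y)^2\,\pi^2(dy)\right].
\]
Each summand is weakly continuous in $\pi$, so $V_\phi$ is lower semicontinuous and hence Borel on $\mathcal{P}(\mathbb{R}^2)$. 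A Fenchel-duality computation, together with the density of $C_c(\mathbb{R})$ in $L^2(\pi^2)$ for every $\pi^2 \in \mathcal{P}(\mathbb{R})$, then identifies $V_\phi(\pi)$ with $\|\mathbb{E}_\pi[\phi(X)\mid Y]\|_{L^2(\pi^2)}^2$.

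Consequently, $\int \phi^2\,d\pi - V_\phi(\pi) \ge 0$ is the expected conditional variance of $\phi(X)$ given $Y$, and it vanishes precisely when $\phi(X)$ is $\sigma(Y)$-measurable $\pi$-a.s. Since $\phi$ is a Borel injection, the Lusin--Souslin theorem supplies a Borel inverse on $\phi(\mathbb{R})$, giving $\sigma(\phi(X))=\sigma(X)$; the vanishing condition therefore characterizes exactly the couplings $\pi\in\mathcal{M}$. Hence
\[
\mathcal{M} = \left\{\pi\in\mathcal{P}(\mathbb{R}^2) : \int\phi(x)^2\,\pi(dx,dy)=V_\phi(\pi)\right\}
\]
is the zero set of a nonnegative upper-semicontinuous function (continuous minus lower semicontinuous), and thus Borel. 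The main subtlety will be verifying that restricting the Fenchel supremum to the countable subfamily $\{\psi_n\}$ does not shrink its value: this reduces to a routine dominated-convergence argument once the supports of the $\psi_n$'s are uniformly controlled within each scale $M$, so that uniform-norm approximations in $C_c(\mathbb{R})$ automatically produce $L^2(\pi^2)$-approximations simultaneously for every $\pi$.
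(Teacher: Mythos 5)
Your proposal is correct, but it takes a genuinely different route from the paper. You perform the same first reduction (intersecting the Borel graph of $x_{1:t}\twoheadrightarrow\Pi_\delta(\mu_{x_{1:t}},\cdot)$ from Lemma \ref{lem:meas} with $\mathbb{R}^t\times\mathcal{M}$), but then you prove Borel measurability of the Monge-type set $\mathcal{M}$ by hand: your functional $V_\phi$ is a countable supremum of weakly continuous functionals, and the identity $V_\phi(\pi)=\|\mathbb{E}_\pi[\phi(X)\mid Y]\|_{L^2(\pi^2)}^2$ (which is really the $L^2$-projection characterization of conditional expectation rather than Fenchel duality in any essential way) exhibits $\mathcal{M}$ as the zero set of the Borel function $\pi\mapsto\int\phi^2\,d\pi-V_\phi(\pi)$, i.e.\ of the averaged conditional variance; the passage from vanishing conditional variance to an exact Monge representation $\pi=(T,\operatorname{Id})_\#\pi^2$ works because a.s.\ equality $X=T(Y)$ already implies the push-forward identity, provided you fix a Borel version of the conditional expectation before inverting $\phi$. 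The paper instead invokes the modulus of continuity $\omega_p(\pi,\delta)$ of Eder and the characterization of Monge-type plans via $\lim_{\delta\downarrow 0}\omega_p(\pi,\delta)=0$, together with a (rather terse) measurability claim for $(\pi,\delta)\mapsto\omega_p(\pi,\delta)$. What your route buys is self-containedness: it avoids the external citation and the envelope-measurability step, and it addresses the set $\{\pi=(T,\operatorname{Id})_\#\nu\}$ exactly as defined (first coordinate a Borel function of the second), whereas the cited modulus as written detects maps in the opposite direction and would need the coordinates swapped. Two small points to make explicit in a write-up: (i) you argue in the weak topology, while the ambient measurability structure in Lemmas \ref{lem:meas} and \ref{lem:analytic_graph} is that of $(\mathcal{P}_p(\mathbb{R}^2),\mathcal{W}_p)$; this is harmless since your building blocks are weakly continuous, hence $\mathcal{W}_p$-continuous, so $V_\phi$ is lower semicontinuous for $\mathcal{W}_p$ as well; (ii) the restriction of the supremum to the countable family $\{\psi_n\}$ should be justified exactly as you indicate, via uniform approximation with uniformly bounded supports, which gives $L^2(\pi^2)$-density simultaneously for all second marginals and hence does not decrease the supremum.
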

\begin{proof}
First, we the mapping $(\pi, \delta) \mapsto \omega_p(\pi, \delta)$ is Borel as an envelope of a continuous functional over continuous correspondence. Moreover, for any $\pi \in \mathcal{P}_p(\R^2)$ we have
$$
\lim_{\delta \downarrow 0} \omega_p(\pi, \delta) = 0 \quad \Leftrightarrow\quad \pi = (x, T(x))_\# \mu
$$
for some $\mu \in \mathcal{P}_p(\R)$ and Borel mapping $T: \R \to \R$, see \cite[Lemma 2.7]{eder2019compactness}. Since the mapping $\pi \mapsto \lim_{\delta \downarrow 0} \omega_p(\pi, \delta)$ is Borel measurable as a pointwise limit of Borel functions $(\pi, \delta) \mapsto \omega_p(\pi, \delta)$, we conclude that
\begin{align*}
\{\pi \in \mathcal{P}_p(\R^2): \lim_{\delta \downarrow 0} \omega_p(\pi, \delta)=0\}
\end{align*}
is Borel as the preimage of a Borel measurable function. Combining these two items with Lemma \ref{lem:meas},
\begin{align*}
&\operatorname{graph}(x_{1:t} \twoheadrightarrow \Pi^{\operatorname{T}}_\delta(\mu_{x_{1:t}}, \cdot))\\
&= \operatorname{graph}(x_{1:t} \twoheadrightarrow \Pi_\delta(\mu_{x_{1:t}}, \cdot)) \cap \left[\R^t \times \{\pi \in \mathcal{P}_p(\R^2): \lim_{\delta \downarrow 0} \omega_p(\pi, \delta)=0\}\right]
\end{align*}
is Borel.
\end{proof}

\subsection{Proof of Theorem \ref{equiv:basic}}
We now give the proof of Theorem \ref{equiv:basic}. Recall that we separate the control-free case from the controlled case stated in Theorem \ref{equiv:control_open}, since the former holds without any regularity assumption on the reference measure \(\mu \in \mathcal{P}_p(\mathbb{R}^N)\). The proof uses an approximation argument via Lusin's Theorem.

\begin{proof}[Proof of Theorem \ref{equiv:basic}.]
Let us first assume that $f$ is bounded and Lipschitz. We aim to prove that
\begin{equation}\label{proof_equiv:basic_cost_to_go}
V_t^\delta(x_{1:t}, y_{1:t}) = \sup_{\nu \in B_\delta(\bar{\mu}_{x_{1:t}})} \int f(y) \,\nu(dy_{t+1:N})
\end{equation}
holds for all $t=0, \dots, N$. We proceed via backward induction. The statement holds for $t=N$ as \(V_N^\delta(x, y) = f(y)\) by definition. Suppose now that \eqref{proof_equiv:basic_cost_to_go} holds for some \(t \leq N\), and fix $(x_{1:t-1}, y_{1:t-1}) \in \operatorname{spt}(\mu) \times \mathbb{R}^{t-1}$ throughout the rest of the proof.

We start with the proof of the ``$\leq$"--inequality of \eqref{proof_equiv:basic_cost_to_go}. The proof has two steps:
\begin{enumerate}
    \item First, we prove that
    \begin{align}\label{eqn:thm:basic:map}
    V^\delta_{t-1}(x_{1:t-1}, y_{1:t-1}) &= \sup_{\gamma^t \in \Pi_\delta(\mu_{x_{1:t-1}}, \cdot)} \int V^\delta_t(x_{1:t}, y_{1:t}) \, \gamma^t(dx_t, dy_t)\nonumber\\
    &= \sup_{\gamma^t \in \Pi^{\operatorname{T}}_\delta(\mu_{x_{1:t-1}}, \cdot)} \int V^\delta_t(x_{1:t}, y_{1:t}) \, \gamma^t(dx_t, dy_t).
    \end{align}
    \item The second step is to argue that
    \begin{equation}\label{eqh:thm:basic:less_or_equal}
    \sup_{\gamma^t \in \Pi^{\operatorname{T}}_\delta(\mu_{x_{1:t-1}}, \cdot)} \int V^\delta_t(x_{1:t}, y_{1:t}) \, \gamma^t(dx_t, dy_t) \leq \sup_{\nu \in B_\delta(\bar{\mu}_{x_{1:t-1}})} \int f(y) \,\nu(dy_{t:N})
    \end{equation}
    using the induction hypothesis.
\end{enumerate}

\begin{proof}[Proof of step (1).]
Fix $\varepsilon > 0$. By Lusin's Theorem \cite[Theorem 7.14.25]{bogachev2007measure} and Dugundji's Extension Theorem \cite[Theorem 4.1]{dugundji1951extension} we obtain a function $\bar{\mu}^\epsilon:\R \to \mathcal{P}_p(\mathbb{R}^{N - t})$ which satisfies
\begin{align}\label{eq:lusin}
(|\cdot|, \mathbb{R}) \ni x_{t} \mapsto \bar{\mu}^\varepsilon_{x_{1:t}} \in (\mathcal{AW}^\infty_p, \mathcal{P}_p(\mathbb{R}^{N - t})) \;\; \text{is continuous and}\;\;\mu_{x_{1:t-1}} \{x_{t}:\; \bar{\mu}_{x_{1:t}} \neq \bar{\mu}^\varepsilon_{x_{1:t}}\} < \varepsilon. 
\end{align}
This allows to define
\[
V^{\delta, \varepsilon}_t(x_{1:t}, y_{1:t}) := \sup_{\nu \in B_\delta(\bar{\mu}^\varepsilon_{x_{1:t}})} \int f(y)\, \nu(dy_{t+1:N}),
\]
which is a lower semicontinuous function by the maximum theorem \cite[Lemma 17.29]{guide2006infinite}, since $x_t \mapsto \bar{\mu}^\epsilon_{x_{1:t}}$ is continuous by construction (and thus lower hemi-continuity of $x_{1:t}\twoheadrightarrow B_\delta(\bar{\mu}^\epsilon_{x_{1:t}})$ follows, see Definition \ref{def:lower_hemicontinuous}) and $f$ is lower semicontinuous. Moreover, for any transport plan $\gamma^t \in \Pi(\mu_{x_{1:t-1}}, \cdot)$ we have
\begin{align*}
\gamma^t\{(x_{t}, y_{t}):\;V_{t}^\delta(x_{1:t}, y_{1:t}) \neq V^{\delta, \varepsilon}_t(x_{1:t}, y_{1:t})\} &\leq \gamma^t\{(x_{t}, y_{t}):\; \bar{\mu}_{x_{1:t}} \neq \bar{\mu}^\varepsilon_{x_{1:t}}\}\\
&= \mu_{x_{1:t-1}}\{x_{t}:\;\bar{\mu}_{x_{1:t}} \neq \bar{\mu}^\varepsilon_{x_{1:t}}\} < \varepsilon.
\end{align*}
Hence, using $|\sup A - \sup B| \leq \sup |A - B|$ and boundedness of $f$ we conclude
\begin{align*}
&\left|\sup_{\gamma^t \in \Pi_\delta(\mu_{x_{1:t-1}}, \cdot)} \int V_t^\delta \,d\gamma^t - \sup_{\gamma^t \in \Pi_\delta(\mu_{x_{1:t-1}}, \cdot)} \int V_t^{\delta, \varepsilon}\,d\gamma^t\right| \leq 2 \max_{y \in \mathbb{R}^N} |f(y)| \cdot \varepsilon,\\
&\left|\sup_{\gamma^t \in \Pi^{\operatorname{T}}_\delta(\mu_{x_{1:t-1}}, \cdot)} \int V_t^\delta \,d\gamma^t - \sup_{\gamma^t \in \Pi^{\operatorname{T}}_\delta(\mu_{x_{1:t-1}}, \cdot)} \int V_t^{\delta, \varepsilon}\,d\gamma^t\right| \leq 2 \max_{y \in \mathbb{R}^N} |f(y)| \cdot \varepsilon.
\end{align*}
Crucially, we have $\sup_{\gamma^t \in \Pi_\delta(\mu_{x_{1:t-1}}, \cdot)} \int V_t^{\delta, \varepsilon}\,d\gamma^t = \sup_{\gamma^t \in \Pi^{\operatorname{T}}_\delta(\mu_{x_{1:t-1}}, \cdot)} \int V_t^{\delta, \varepsilon}\,d\gamma^t$ by Lemma \ref{lem:supremum_closed}, because $V^{\delta, \varepsilon}_t$ is lower semicontinuous and bounded below. Hence, by the triangle inequality we conclude
\begin{equation*}
\left|\sup_{\gamma^t \in \Pi_\delta(\mu_{x_{1:t-1}}, \cdot)} \int V_t^\delta \,d\gamma^t - \sup_{\gamma^t \in \Pi^{\operatorname{T}}_\delta(\mu_{x_{1:t-1}}, \cdot)} \int V_t^\delta \,d\gamma^t\right| \leq 4 \max_{y \in \mathbb{R}^N} |f(y)| \cdot \varepsilon.
\end{equation*}
As $\varepsilon > 0$ was arbitrary, we conclude that \eqref{eqn:thm:basic:map} holds.
\end{proof}
\begin{proof}[Proof of step (2).]
Recall that according to the induction hypothesis,
$$
V^\delta_t(x_{1:t}, y_{1:t}) = \sup_{\nu \in B_\delta(\bar{\mu}_{x_{1:t}})} \int f(y) \, \nu(dy_{t+1:N}),
$$
and our goal is to establish
\begin{align*}
\sup_{\gamma^t \in \Pi_\delta^{\operatorname{T}}(\mu_{x_{1:t-1}}, \cdot)} \int V^\delta_t(x_{1:t}, y_{1:t}) \, \gamma^t (dx_t, dy_t)=V^\delta_{t-1}(x_{1:t-1}, y_{1:t-1}) \leq \sup_{\nu \in B_\delta(\bar{\mu}_{x_{1:t-1}})} \int f(y) \,\nu(dy_{t:N}).
\end{align*}
Fix $\varepsilon \in (0,\delta)$. According to Lemma \ref{lem:cost_to_go:lipschitz}, we have
\begin{equation}\label{eqn:dpp_basic:lipschitz}
V^\delta_t(x_{1:t}, y_{1:t}) \leq V^{\delta - \varepsilon}_t(x_{1:t}, y_{1:t}) + 2^{N - t} L \varepsilon,
\end{equation}
where $L > 0$ is a Lipschitz constant for $f$. Now we aim to choose universally measurable $\varepsilon$-selectors for $V^{\delta - \varepsilon}_t(x_{1:t}, y_{1:t})$. We use a standard measurable selection result for this, so we shall be short: note that the graph of the correspondence $x_{1:t-1} \twoheadrightarrow \Pi_{\delta - \varepsilon}^{\operatorname{T}}(\mu_{x_{1:t-1}}, \cdot)$ is analytic by Lemma \ref{lem:analytic_graph}. Then by the representation
\begin{align*}
V_t^{\delta-\epsilon}(x_{1:t},y_{1:t})=\sup_{\gamma^t \in \Pi_\delta^{\operatorname{T}}(\mu_{x_{1:t-1}}, \cdot)} \int V^{\delta-\epsilon}_t(x_{1:t}, y_{1:t}) \, \gamma^t (dx_t, dy_t)    
\end{align*}
established in Step 1 and
a  backward induction argument, $V_{t-1}^{\delta - \varepsilon}$ is lower semianalytic (see \cite[Proposition 7.47, Proposition 7.48]{bertsekas1996stochastic}). Hence, by \cite[Proposition 7.50.(b)]{bertsekas1996stochastic} we can choose
\begin{equation}\label{eqn:thm:basic:less_or_equal:near_optimal_hypothesis}
\Pi_{\delta - \varepsilon}^{\operatorname{T}}(\mu_{x_{1:t-1}}, \cdot) \ni \gamma_{x_{1:t-1}, y_{1:t-1}} \in \varepsilon-\operatorname{argmax} \int V_{t}^{\delta - \varepsilon}(x_{1:t}, y_{1:t}) \, \gamma_{x_{1:t-1}, y_{1:t-1}}(dx_{t}, dy_{t}),
\end{equation}
where $(x_{1:t-1}, y_{1:t-1}) \mapsto \gamma_{x_{1:t-1}, y_{1:t-1}}$ is universally measurable. 
By \cite[Lemma 7.28.(c)]{bertsekas1996stochastic}, the map $(x_{1:t-1}, y_{1:t-1}) \mapsto \gamma_{x_{1:t-1}, y_{1:t-1}}$ can actually be chosen to be Borel measurable. By a simple backward induction argument we have
\begin{align*}
\sup_{\gamma^t \in \Pi_\delta^{\operatorname{T}}(\mu_{x_{1:t-1}}, \cdot)} \int V_{t}^{\delta - \varepsilon}(x_{1:t}, y_{1:t}) \, \gamma(dx_t, dy_t) \leq \int f(y) \, \bar{\gamma}_{x_{1:t-1}, y_{1:t-1}}(dy_{t:N}) + (N - t) \varepsilon,
\end{align*}
where $\bar{\gamma}_{x_{1:t-1}, y_{1:t-1}} := \gamma_{x_{1:t-1}, y_{1:t-1}} \otimes \ldots \otimes \gamma_{x_{1:N-1}, y_{1:N-1}}$ and $\gamma_{x_{1:t}, y_{1:t}}, \dots, \gamma_{x_{1:N-1}, y_{1:N-1}}$ have been constructed in previous steps of the backward induction. By construction, $\bar{\gamma}_{x_{1:t-1}, y_{1:t-1}}(dy_{t:N}) \in B_\delta(\bar{\mu}_{x_{1:t-1}})$, and accounting for the Lipschitz continuity \eqref{eqn:dpp_basic:lipschitz} we obtain
\begin{align*}
V_{t-1}^\delta(x_{1:t-1}, y_{1:t-1}) &= \sup_{\gamma^t \in \Pi_\delta^{\operatorname{T}}(\mu_{x_{1:t-1}}, \cdot)} \int V_t^\delta(x_{1:t}, y_{1:t}) \, \gamma^t(dx_t, dy_t)\\
&\leq \sup_{\gamma^t \in \Pi_\delta^{\operatorname{T}}(\mu_{x_{1:t-1}}, \cdot)} \int V_t^{\delta - \varepsilon}(x_{1:t}, y_{1:t}) \, \gamma^t(dx_t, dy_t) + 2^{N - t} L \varepsilon\\
&\leq \int f(y) \, \bar{\gamma}_{x_{1:t-1}, y_{1:t-1}}(dy_{t:N}) + (N - t + 2^{N - t} L) \varepsilon\\
&\leq \sup_{\nu \in B_\delta(\bar{\mu}_{x_{1:t-1}})} \int f(y) \, \nu(dy_{t:N}) + (N - t + 2^{N - t} L) \varepsilon.
\end{align*}
This completes the proof of Step 2.
\end{proof}

Combining the two steps, we obtain
\begin{align*}
V^\delta_{t-1}(x_{1:t-1}, y_{1:t-1}) &= \sup_{\gamma^t \in \Pi_\delta(\mu_{x_{1:t-1}}, \cdot)} \int V^\delta_t(x_{1:t}, y_{1:t}) \, \gamma^t(dx_t, dy_t)\nonumber\\
&= \sup_{\gamma^t \in \Pi^{\operatorname{T}}_\delta(\mu_{x_{1:t-1}}, \cdot)} \int V^\delta_t(x_{1:t}, y_{1:t}) \, \gamma^t(dx_t, dy_t)\\
&\leq \sup_{\nu \in B_\delta(\bar{\mu}_{x_{1:t-1}})} \int f(y) \,\nu(dy_{t:N}),
\end{align*}
which proves ``$\leq$"--inequality of \eqref{proof_equiv:basic_cost_to_go}.

\bigskip

\noindent To prove the ``$\geq$"--inequality we again fix $(x_{1:t-1}, y_{1:t-1})\in \R^{t-1}\times\R^{t-1}$, an arbitrary \(\varepsilon > 0\) and  a measure \(\eta \in B_\delta(\bar{\mu}_{x_{1:t-1}})\) that satisfies
\begin{equation}\label{sup_def}
\sup_{ \nu \in B_\delta(\bar{\mu}_{x_{1:t-1}})} \int f(y)\,  \nu(dy_{t:N}) <  \int f(y) \,\eta(dy_{t:N})+ \varepsilon.
\end{equation}
Now we disintegrate the measure \(\eta(dy_{t:N}) = \eta(dy_t) \otimes \bar{\eta}_{y_t}(dy_{t+1:N})\) and use Lemma \ref{lem:adap_new} to find a transport plan \(\gamma^t \in \Pi(\mu_{x_{1:t-1}}, \eta^t )\) that satisfies
\[
\mathcal{C}_p(\gamma^t) \vee \|\mathcal{AW}^\infty_p(\bar{\mu}_{x_{1:t}}, \bar{\eta}_{y_t})\|_{L^\infty(\gamma^t)} < \delta.
\]
Then we rewrite the right-hand side of \eqref{sup_def} as follows:
\begin{align*}
\int f(y) \,\eta(dy_{t:N}) &= \int f(y) \,\bar{\eta}_{y_t} (dy_{t+1:N})\, \gamma^t(dx_t, dy_t)\\
&\leq \int  \sup_{\nu \in B_\delta(\bar{\mu}_{x_{1:t}})} \int f(y) \,\nu(dy_{t+1:N}) \,\gamma^t(dx_t, dy_t)\\
&\stackrel{\eqref{proof_equiv:basic_cost_to_go}}{=} \int V_t^\delta(x_{1:t}, y_{1:t}) \,\gamma^t(dx_t, dy_t)\\
&\leq V^\delta_{t-1}(x_{1:t-1}, y_{1:t-1}).
\end{align*}
Combining this with \eqref{sup_def} finishes the proof of the ``$\ge$"--inequality.

We now generalize the statement from bounded Lipschitz continuous function to the lower semicontinuous \(f\), which is bounded from below, by taking an approximating sequence
\[
f_n \uparrow f, \;\; f_n \;\text{bounded and Lipschitz continuous};
\]
see \cite[Lemma 7.14.(a)]{bertsekas1996stochastic}.
Indeed, in this case we have
\begin{align}\label{eqn:dpp_basic:mon}
V_t^\delta(x_{1:t}, y_{1:t}) &= \sup_{\gamma^{t+1} \in \Pi_\delta(\mu_{x_{1:t}}, \cdot)} \int \ldots \sup_{\gamma^N \in \Pi_\delta(\mu_{x_{1:N-1}}, \cdot)} \int f(y) \,d\gamma^N d\gamma^{N-1} \ldots d\gamma^{t+1}\nonumber\\
&= \sup_{n\in \N} \sup_{\gamma^{t+1} \in \Pi_\delta(\mu_{x_{1:t}}, \cdot)} \int \ldots \sup_{\gamma^N \in \Pi_\delta(\mu_{x_{1:N-1}}, \cdot)} \int f_n(y) \,d\gamma^N d\gamma^{N-1} \ldots d\gamma^{t+1}\nonumber\\
&= \sup_{n\in \N} \sup_{\nu \in B_\delta(\bar{\mu}_{x_{1:t}})} \int f_n(y) \,\nu(dy_{t+1:N})\nonumber\\
&= \sup_{\nu \in B_\delta(\bar{\mu}_{x_{1:t}})} \int f(y) \,\nu(dy_{t+1:N})
\end{align}
by successive applications of the Monotone Convergence Theorem.

To finish the proof of the theorem, it remains to generalize the statement to lower semicontinuous $f$, which satisfies $|f(x)| \geq -C(1 + \|x\|^{p - \varepsilon})$ for some $\varepsilon > 0$ and constant $C > 0$. To that end, assume that $f$ is bounded from above at first, and let
$$
f_n(x) := f(x) \vee (-n).
$$
By definition $f_n$ is lower semicontinuous as a maximum of two lower semicontinuous functions, and bounded from below. Hence, the dynamic programming principle holds for $f_n$. It remains to prove that
\begin{align}\label{eqn:dpp_basic:extension_dro}
\lim_{n \to \infty} \sup_{\nu \in B_\delta(\mu)} \int f_n(y) \, \nu(dy) = \sup_{\nu \in B_\delta(\mu)} \int f(y) \, \nu(dy),
\end{align}
and
\begin{align}\label{eqn:dpp_basic:extension_dpp}
&\lim_{n\to \infty} \sup_{\gamma^{1} \in \Pi_\delta(\mu^1, \cdot)} \int \ldots \sup_{\gamma^N \in \Pi_\delta(\mu_{x_{1:N-1}}, \cdot)} \int f_n(y) \,d\gamma^N \ldots d\gamma^{1}\nonumber\\
&= \sup_{\gamma^{1} \in \Pi_\delta(\mu^1, \cdot)} \int \ldots \sup_{\gamma^N \in \Pi_\delta(\mu_{x_{1:N-1}}, \cdot)} \int f(y) \,d\gamma^N \ldots d\gamma^{1}.
\end{align}
\end{proof}

\begin{proof}[Proof of \eqref{eqn:dpp_basic:extension_dro}]
By $|\sup A - \sup B| \leq \sup |A - B|$ we conclude
$$
\left|\sup_{\nu \in B_\delta(\mu)} \int f_n(y) \, \nu(dy) - \sup_{\nu \in B_\delta(\mu)} \int f(y) \, \nu(dy)\right| \le \sup_{\nu \in B_\delta(\mu)} \left|\int f_n(y) - f(y)\,\nu(dy)\right|.$$
Take now any $\nu \in B_\delta(\mu)$. By Lemma \ref{lem:easy} we have
$$
\mathcal{W}_p(\mu, \nu) \leq \mathcal{AW}_p(\mu, \nu) \leq N^{1/p} \mathcal{AW}_p^\infty(\mu, \nu) < N^{1/p} \delta,
$$
hence there exists a transport plan $\gamma \in \Pi(\mu, \nu)$, which satisfies
$$
\left(\int \|x - y\|^p \, \gamma(dx, dy)\right)^{1/p} < N^{1/p} \delta.
$$
Therefore,
\begin{align}\label{eqn:dpp_basic:extension_dro_first}
\int |f_n(y) - f(y)|\, \nu(dy) &= \int_{\{f < -n\}} |f(y) + n|\, \nu(dy)\nonumber\\
&\lesssim \int_{\{f < -n\}} (1 + \|y\|^{p - \varepsilon}) \, \nu(dy)\nonumber\\
&\lesssim \|1 + \|y\|^{p - \varepsilon}\|_{L^{\frac{p}{p - \varepsilon}}(\gamma)} \cdot \left(\nu\{y\,:\, f(y) < -n\}\right)^\frac{\varepsilon}{p}
\end{align}
by H\"older's inequality. The set $\nu\{y\,:\,f(y) < -n\}$ can be uniformly bounded from above by
\begin{align}\label{eqn:dpp_basic:extension_dro_measure_bound}
\nu\{y\,:\,f(y) < -n\} &\leq \nu\{y\,:\,C(1 + \|y\|^{p - \varepsilon}) > n\}\nonumber\\
&\lesssim n^{-\frac{p}{p - \varepsilon}} \left(\int (1 + \|y\|^{p - \varepsilon})\, \nu(dy)\right)^\frac{p}{p - \varepsilon}\nonumber\\
&\lesssim n^{-\frac{p}{p - \varepsilon}} \int (1 + \|y\|^p)\, \gamma(dx, dy)\nonumber\\
&\lesssim n^{-\frac{p}{p - \varepsilon}} \left(1 + \int (\|x\|^p + \|x - y\|^p)\, \gamma(dx, dy)\right)\nonumber\\
&\lesssim n^{-\frac{p}{p - \varepsilon}} \left(1 + \int \|x\|^p \, \mu(dx) + N \delta^p\right)\nonumber\\
&\lesssim n^{-\frac{p}{p - \varepsilon}},
\end{align}
where the second inequality follows from Markov's inequality, third one is the application of Jensen's inequality and the next ones follow from $(|a| + |b|)^p \leq 2^{p-1}(|a|^p + |b|^p)$ and the definition of $\gamma$. Similarly,
\begin{align}\label{eqn:dpp_basic:extension_dro_norm_bound}
\|1 + \|y\|^{p - \varepsilon}\|_{L^{\frac{p}{p - \varepsilon}}(\gamma)} \lesssim \left(\int (1 + \|y\|^p) \, \gamma(dx, dy)\right)^\frac{p - \varepsilon}{p} \lesssim 1.
\end{align}
Combining \eqref{eqn:dpp_basic:extension_dro_first}, \eqref{eqn:dpp_basic:extension_dro_measure_bound} and \eqref{eqn:dpp_basic:extension_dro_norm_bound} we obtain
\begin{align}\label{eqn:dpp_basic:extension_dro_final_estimate}
\int |f_n(y) - f(y)|\, \nu(dy) \lesssim n^{-\frac{\varepsilon}{p - \varepsilon}}.
\end{align}
This completes the proof.

\begin{proof}[Proof of \eqref{eqn:dpp_basic:extension_dpp}]
Applying the inequalities $|\sup A - \sup B| \leq \sup |A - B|$ and $|\int f| \leq \int |f|$ repeatedly, we obtain
\begin{align}\label{eqn:dpp_basic:extension_dpp_objective}
&\left|\sup_{\gamma^{1} \in \Pi_\delta(\mu^1, \cdot)} \int \ldots \sup_{\gamma^N \in \Pi_\delta(\mu_{x_{1:N-1}}, \cdot)} \int f_n(y) \,d\gamma^N \ldots d\gamma^{1}\right.\nonumber\\
&\left.- \sup_{\gamma^{1} \in \Pi_\delta(\mu^1, \cdot)} \int \ldots \sup_{\gamma^N \in \Pi_\delta(\mu_{x_{1:N-1}}, \cdot)} \int f(y) \,d\gamma^N \ldots d\gamma^{1}\right|\nonumber\\
&\leq \sup_{\gamma^{1} \in \Pi_\delta(\mu^1, \cdot)} \int \ldots \sup_{\gamma^N \in \Pi_\delta(\mu_{x_{1:N-1}}, \cdot)} \int |f_n(y) - f(y)| \, d\gamma^N \ldots d\gamma^1.
\end{align}
Throughout the rest of the proof we shall estimate \eqref{eqn:dpp_basic:extension_dpp_objective} from above. Applying Proposition \ref{prop:epsilon_optimizers} with $g = -|f_n - f|$ and recalling Step 1 of the proof, we get
\begin{align*}
\sup_{\gamma \in \Pi_\delta(\mu, \cdot)} \int |f_n(y) - f(y)| \, \gamma(dx, dy) = \sup_{\gamma^{1} \in \Pi_\delta(\mu^1, \cdot)} \int \ldots \sup_{\gamma^N \in \Pi_\delta(\mu_{x_{1:N-1}}, \cdot)} \int |f_n(y) - f(y)| \, d\gamma^N \ldots d\gamma^1.
\end{align*}
By definition of $\Pi_\delta(\mu, \cdot)$, for any transport plan $\gamma \in \Pi_\delta(\mu, \cdot)$ we have
$$
\int \|x - y\|^p \, \gamma(dx, dy) \leq N \delta^p,
$$
and using the same estimate as in \eqref{eqn:dpp_basic:extension_dro_final_estimate} we obtain
$$
\sup_{\gamma \in \Pi_\delta(\mu, \cdot)} \int |f_n(y) - f(y)| \, \gamma(dx, dy) \lesssim n^{-\frac{\varepsilon}{p - \varepsilon}} \to 0, \;\; n \to +\infty,
$$
which completes the proof.
\end{proof}

Using \eqref{eqn:dpp_basic:extension_dro} and \eqref{eqn:dpp_basic:extension_dpp} and the previous result for lower semicontinuous functions bounded from below, we conclude
\begin{align*}
&\sup_{\gamma^{1} \in \Pi_\delta(\mu^1, \cdot)} \int \ldots \sup_{\gamma^N \in \Pi_\delta(\mu_{x_{1:N-1}}, \cdot)} \int f(y) \,d\gamma^N \ldots d\gamma^{1}\\
&= \lim_{n \to +\infty} \sup_{\gamma^{1} \in \Pi_\delta(\mu^1, \cdot)} \int \ldots \sup_{\gamma^N \in \Pi_\delta(\mu_{x_{1:N-1}}, \cdot)} \int f_n(y) \,d\gamma^N \ldots d\gamma^{1}\\
&= \lim_{n \to +\infty} \sup_{\nu \in B_\delta(\mu)} \int f_n(y) \, \nu(dy)\\
&= \sup_{\nu \in B_\delta(\mu)} \int f(y) \, \nu(dy).
\end{align*}

To relax the boundedness from above we follow \eqref{eqn:dpp_basic:mon} line by line by considering $f_n(x) := f(x) \wedge n$, hence the proof of the first part of the theorem is complete.
\end{proof}

\subsection{Proofs of Lemma \ref{lem:cost_to_go_regularity}, Theorem  \ref{equiv:control_open}, Corollary \ref{cor:linear} and Theorem \ref{minimax}}

\begin{proof}[Proof of Lemma \ref{lem:cost_to_go_regularity}.]
We proceed by induction. Evidently, the claim is true for \(V^{\delta}_N(x, y, \alpha) = f(y, \alpha)\) by assumption. Suppose that the statement holds for \(V^{\delta}_{t+1}\), where \(2\le t + 1 \leq N\). Recall that by \eqref{dpp:control} we have
\begin{align*}
V_t^\delta(x_{1:t}, y_{1:t}, \alpha_{1:t}) &= \inf_{\alpha_{t+1} \in K} \sup_{\gamma^{t+1} \in \Pi_\delta(\mu_{x_{1:t}}, \cdot)} \int V^\delta_{t+1}(x_{1:t+1}, y_{1:t+1}, \alpha_{1:t+1})\, \gamma^{t+1}(dx_{t+1}, dy_{t+1})\\
&= \inf_{\alpha_{t+1} \in K} \sup_{\gamma^{t+1} \in \overline \Pi_\delta(\mu_{x_{1:t}}, \cdot)} \int V^\delta_{t+1}(x_{1:t+1}, y_{1:t+1}, \alpha_{1:t+1})\, \gamma^{t+1}(dx_{t+1}, dy_{t+1}),
\end{align*}
where the final equality follows from Lemma \ref{lem:supremum_closed}. The plan is to prove the statement for the mapping \((x_{1:t}, y_{1:t}, \alpha_{1:t+1}) \mapsto \sup_{\gamma^{t+1} \in \Pi_\delta(\mu_{x_{1:t}}, \cdot)} \int V^\delta_{t+1} d\gamma^{t+1}\) and then use compactness of $K$ to extend the regularity to \((x_{1:t}, y_{1:t}, \alpha_{1:t}) \mapsto V_t^\delta(x_{1:t}, y_{1:t}, \alpha_{1:t})\). We now consider two cases:

\bigskip

Suppose that $V^\delta_{t+1}$ is lower semicontinuous and bounded from below. Then by \cite[Lemma 4.3]{villani2009optimal} applied with $c = V^\delta_{t+1}$ and $h= 0$, and Proposition \ref{prop:correcpondence_lhs} we have
\begin{align*}
    &(x_{1:t}, y_{1:t}, \alpha_{1:t+1}, \gamma) \mapsto \int V^\delta_{t+1}(x_{1:t+1}, y_{1:t+1}, \alpha_{1:t+1}) \,\gamma(dx_{t+1}, dy_{t+1}) \;\; \text{is lsc. and bounded below,}\\
    &(x_{1:t}, y_{1:t}, \alpha_{1:t+1}) \twoheadrightarrow \overline{\Pi}_\delta(\mu_{x_{1:t}}, \cdot) \;\; \text{is lower hemicontinuous in} \; (\mathcal{W}_{p}, \mathcal{P}_{p}(\mathbb{R}^2)).
\end{align*}
Then the mapping
$$
(x_{1:t}, y_{1:t}, \alpha_{1:t+1}) \mapsto \sup_{\gamma^{t+1} \in \overline{\Pi}_\delta(\mu_{x_{1:t}}, \cdot)} \int V^\delta_{t+1}(x_{1:t+1}, y_{1:t+1}, \alpha_{1:t+1}) \,\gamma^{t+1}(dx_{t+1}, dy_{t+1})
$$
is lower semicontinuous and bounded from below by the maximum theorem \cite[Lemma 17.29]{guide2006infinite}. Applying \cite[Proposition 7.32.(a)]{bertsekas1996stochastic} we conclude that $V^\delta_t$ is also lower semicontinuous and bounded from below as an envelope over compact set.

\bigskip

Suppose now that $V^\delta_{t+1}$ is continuous and
$$
|V^\delta_{t+1}(x_{1:t+1}, y_{1:t+1}, \alpha_{1:t+1})| \lesssim 1 + \|x_{1:t+1}\|^{p - \varepsilon} + \|y_{1:t+1}\|^{p - \varepsilon}.
$$
Then for any $\gamma^{t+1} \in \overline \Pi_\delta(\mu_{x_{1:t}}, \cdot)$ and $\alpha_{t+1} \in K$ we have
\begin{align*}
&\int |V_{t+1}^{\delta}(x_{1:t+1}, y_{1:t+1}, \alpha_{1:t+1})| \,\gamma^{t+1}(dx_{t+1}, dy_{t+1})\\
&\lesssim 1 + \int \|x_{1:t+1}\|^{p - \varepsilon} + \|y_{1:t+1}\|^{p - \varepsilon}\, \gamma^{t+1}(dx_{t+1}, dy_{t+1})\\
&\lesssim 1 + \|x_{1:t}\|^{p - \varepsilon} + \|y_{1:t}\|^{p - \varepsilon} + \int |x_{t+1}|^{p - \varepsilon} + |y_{t+1}|^{p - \varepsilon} \,\gamma^{t+1}(dx_{t+1}, dy_{t+1})\\
&\lesssim 1 + \|x_{1:t}\|^{p - \varepsilon} + \|y_{1:t}\|^{p - \varepsilon} + \int |x_{t+1}|^{p - \varepsilon} \mu_{x_{1:t}}(dx_{t+1}) + \mathcal{C}_{p - \varepsilon}(\gamma^{t+1})^{p - \varepsilon}\\
&\lesssim 1 + \|x_{1:t}\|^{p - \varepsilon} + \|y_{1:t}\|^{p - \varepsilon},
\end{align*}
because of the definition of $\overline \Pi_\delta(\mu_{x_{1:t}}, \cdot)$ and the growth assumption on $\mu$. Therefore, the claimed growth for $V_{t}^{\delta}$ is proven, and it remains to show continuity. For this we conclude from Lemma \ref{sliced_continuity} and Proposition \ref{hemicontinuity}
\begin{align*}
    &(x_{1:t}, y_{1:t}, \alpha_{1:t+1}, \gamma) \mapsto \int V^\delta_{t+1}(x_{1:t+1}, y_{1:t+1}, \alpha_{1:t+1}) \,\gamma(dx_{t+1}, dy_{t+1}) \;\; \text{is continuous,}\\
    &(x_{1:t}, y_{1:t}, \alpha_{1:t+1}) \twoheadrightarrow \overline{\Pi}_\delta(\mu_{x_{1:t}}, \cdot) \;\; \text{is continuous in} \; (\mathcal{P}_{p - \varepsilon}(\mathbb{R}^2),\mathcal{W}_{p-\epsilon}),
\end{align*}
hence by Berge's maximum theorem \cite[Theorem 17.31]{guide2006infinite} the mapping
$$
(x_{1:t}, y_{1:t}, \alpha_{1:t+1}) \mapsto \sup_{\gamma^{t+1} \in \overline{\Pi}_\delta(\mu_{x_{1:t}}, \cdot)} \int V^\delta_{t+1}(x_{1:t+1}, y_{1:t+1}, \alpha_{1:t+1}) \,\gamma^{t+1}(dx_{t+1}, dy_{t+1})
$$
is continuous. Hence, $V^\delta_t$ is continuous as an envelope of a continuous function over a compact set (see \cite[Theorem 7.30]{guide2006infinite}). The proof is complete.
\end{proof}

The proof of Theorem \ref{equiv:control_open} is a consequence of topological properties of \(\mathcal{AW}^\infty_p\) and Lemma \ref{lem:cost_to_go_regularity}. In particular, we use the nested construction pointed out in the Introduction. Before proceeding with the proof, we state the following measurable selection argument:

\begin{proposition}\label{prop:epsilon_optimizers}
Let $\mu \in \mathcal{P}_p(\mathbb{R}^N)$ and $g: \mathbb{R}^N \times \mathbb{R}^N \to \mathbb{R}$ be a lower semianalytic function. Then for any $\varepsilon > 0$ the optimization problem
$$
\inf_{\gamma^1 \in \Pi_\delta^{\operatorname{T}}(\mu^1, \cdot)} \int \ldots \inf_{\gamma^N \in \Pi_\delta^{\operatorname{T}}(\mu_{x_{1:N-1}}, \cdot)} \int g(x, y) \, d\gamma^N \ldots d\gamma^1
$$
admits a selection of universally measurable $\varepsilon$-optimizers $(x_{1:t}, y_{1:t}) \mapsto \gamma_{x_{1:t}, y_{1:t}} \in \Pi^{\operatorname{T}}_\delta(\mu_{x_{1:t}}, \cdot)$, meaning that
\begin{align*}
&\int g(x, y) \, \gamma^{N}_{x_{1:N-1}, y_{1:N-1}}(dx_N, dy_N) \ldots \gamma^1(dx_1, dy_1)\\
&\leq \inf_{\gamma^1 \in \Pi_\delta^{\operatorname{T}}(\mu^1, \cdot)} \int \ldots \inf_{\gamma^N \in \Pi_\delta^{\operatorname{T}}(\mu_{x_{1:N-1}}, \cdot)} \int g(x, y) \, d\gamma^N \ldots d\gamma^1 + \varepsilon.
\end{align*}
Moreover, $(x_{1:t}, y_{1:t}) \mapsto \gamma_{x_{1:t}, y_{1:t}}$ can be chosen to be Borel measurable. The same statement holds if one replaces $\Pi^{\operatorname{T}}_\delta(\mu_{x_{1:t}}, \cdot)$ with $\Pi_\delta(\mu_{x_{1:t}}, \cdot)$.
\end{proposition}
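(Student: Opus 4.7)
The plan is to proceed by backward induction on $t$, combining the standard measurable selection machinery from \cite{bertsekas1996stochastic} with the Borel-measurability of the graphs of the correspondences $x_{1:t} \twoheadrightarrow \Pi_\delta^{\operatorname{T}}(\mu_{x_{1:t}}, \cdot)$ and $x_{1:t} \twoheadrightarrow \Pi_\delta(\mu_{x_{1:t}}, \cdot)$ established in Lemmas \ref{lem:analytic_graph} and \ref{lem:meas}. Throughout, I will describe the argument for $\Pi_\delta^{\operatorname{T}}$; the case of $\Pi_\delta$ is identical, with Lemma \ref{lem:meas} substituted for Lemma \ref{lem:analytic_graph}.

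First I would define the cost-to-go functions backwards: set $V_N := g$ and, for $t = N - 1, \ldots, 0$,
\begin{align*}
V_t(x_{1:t}, y_{1:t}) := \inf_{\gamma^{t+1} \in \Pi_\delta^{\operatorname{T}}(\mu_{x_{1:t}}, \cdot)} \int V_{t+1}(x_{1:t+1}, y_{1:t+1}) \, \gamma^{t+1}(dx_{t+1}, dy_{t+1}).
\end{align*}
Lower semianalyticity of $V_t$ propagates through the induction: assuming $V_{t+1}$ is lower semianalytic, \cite[Proposition 7.48]{bertsekas1996stochastic} yields that $(x_{1:t}, y_{1:t}, \gamma^{t+1}) \mapsto \int V_{t+1}\, d\gamma^{t+1}$ is lower semianalytic as the integration of a lower semianalytic function against a Borel kernel. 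Since the graph of $x_{1:t} \twoheadrightarrow \Pi_\delta^{\operatorname{T}}(\mu_{x_{1:t}}, \cdot)$ is Borel (hence analytic) by Lemma \ref{lem:analytic_graph}, \cite[Proposition 7.47]{bertsekas1996stochastic} implies that $V_t$ is lower semianalytic.

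Next, fix $\varepsilon > 0$ and apply \cite[Proposition 7.50.(b)]{bertsekas1996stochastic} at each level to select a universally measurable $(\varepsilon/N)$-optimizer
\begin{align*}
(x_{1:t}, y_{1:t}) \mapsto \gamma_{x_{1:t}, y_{1:t}} \in \Pi_\delta^{\operatorname{T}}(\mu_{x_{1:t}}, \cdot),
\end{align*}
which satisfies
\begin{align*}
\int V_{t+1}(x_{1:t+1}, y_{1:t+1}) \, \gamma_{x_{1:t}, y_{1:t}}(dx_{t+1}, dy_{t+1}) \le V_t(x_{1:t}, y_{1:t}) + \varepsilon/N.
\end{align*}
Applying \cite[Lemma 7.28.(c)]{bertsekas1996stochastic} iteratively, these maps can be replaced by Borel measurable versions without affecting the near-optimality bound. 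A straightforward telescoping argument (gluing the kernels via \cite[Proposition 7.28]{bertsekas1996stochastic}) then shows that the concatenated plan $\gamma^1 \otimes \gamma_{x_1, y_1} \otimes \ldots \otimes \gamma_{x_{1:N-1}, y_{1:N-1}}$ is $\varepsilon$-optimal for the nested problem.

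The only nontrivial point is the propagation of lower semianalyticity and the fact that the graphs of the relevant correspondences are Borel, which is precisely where Lemmas \ref{lem:analytic_graph} and \ref{lem:meas} enter; once these are in place, the rest of the argument is a routine application of the measurable selection framework of \cite[Chapter 7]{bertsekas1996stochastic}.
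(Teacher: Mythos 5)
Your proposal is correct and follows essentially the same route as the paper's proof: backward induction propagating lower semianalyticity via \cite[Propositions 7.47, 7.48]{bertsekas1996stochastic} together with the Borel graphs from Lemmas \ref{lem:analytic_graph} and \ref{lem:meas}, then $\varepsilon$-optimal universally measurable selectors from \cite[Proposition 7.50.(b)]{bertsekas1996stochastic}, upgraded to Borel versions by \cite[Lemma 7.28.(c)]{bertsekas1996stochastic} and concatenated. The only cosmetic difference is that you use $\varepsilon/N$ per step where the paper accumulates $N\varepsilon$; both yield the stated bound.
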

\begin{proof}[Proof.]
Define
\begin{align*}
A_N(x, y) &:= g(x, y)\\
A_t(x_{1:t}, y_{1:t}) &:= \inf_{\gamma^{t+1} \in \Pi_\delta^{\operatorname{T}}(\mu_{x_{1:t}}, \cdot)} \int A_{t+1}(x_{1:t+1}, y_{1:t+1}) \, \gamma^{t+1}(dx_{t+1}, dy_{t+1}).
\end{align*}

We start by showing that $A_t$ is lower semianalytic by backward induction. Indeed, this clearly holds for $A_N = g$. Now suppose that $A_{t+1}$ is lower semianalytic. Then the mapping $(x_{1:t}, y_{1:t}, \gamma^{t+1}) \mapsto \int A_{t+1} \, d\gamma^{t+1}$ is lower semianalytic by \cite[Proposition 7.48]{bertsekas1996stochastic} applied with $f = A_{t+1}$ and $q = \gamma^{t+1}$. Hence, $A_t$ is lower semianalytic by \cite[Proposition 7.47]{bertsekas1996stochastic} applied with $f = \int A_{t+1} \, d\gamma^{t+1}$ and $D= \text{graph}(x_{1:t} \twoheadrightarrow \Pi_\delta^{\operatorname{T}}(\mu_{x_{1:t}}, \cdot))$, which is analytic by Lemma \ref{lem:analytic_graph}. 

Take $\varepsilon > 0$. Since $A_t$ is lower semianalytic, \cite[Proposition 7.50]{bertsekas1996stochastic} guarantees existence of  a universally measurable $\varepsilon$-optimizer $\gamma_{x_{1:t}, y_{1:t}} \in \Pi_\delta^{\operatorname{T}}(\mu_{x_{1:t}}, \cdot)$ for $A_t$, i.e.,
\begin{equation}\label{eqn:prop:epsilon_optimizers:epsilon_optimizer_step}
\int A_{t+1}(x_{1:t+1}, y_{1:t+1}) \, \gamma_{x_{1:t}, y_{1:t}}(dx_{t+1}, dy_{t+1}) \leq A_t(x_{1:t}, y_{1:t}) + \varepsilon.
\end{equation}
Moreover, iteratively applying \cite[Lemma 7.28.(c)]{bertsekas1996stochastic} with $p = \gamma^1 \otimes \ldots \otimes \gamma_{x_{1:t-1}, y_{1:t-1}}$ and $q = \gamma_{x_{1:t}, y_{1:t}}$ for $t=1, \dots, N-1,$ we obtain Borel measurable versions of $(x_{1:t}, y_{1:t}) \mapsto \gamma_{x_{1:t}, y_{1:t}}$. With this modification, \eqref{eqn:prop:epsilon_optimizers:epsilon_optimizer_step} holds $(\gamma^1 \otimes \ldots \otimes \gamma_{x_{1:t-1}, y_{1:t-1}})$-almost surely. Thus
\begin{equation}\label{eqn:prop:epsilon_optimizers:epsilon_optimizer_total}
\int g(x, y) \, \gamma_{x_{1:N-1}, y_{1:N-1}}(dx_N, dy_N) \ldots \gamma^1(dx_1, dy_1) \leq A_0 + N \varepsilon,
\end{equation}
which completes the proof of the claim for $\Pi_\delta^{\operatorname{T}}(\mu_{x_{1:t}}, \cdot)$. The proof for $\Pi_\delta(\mu_{x_{1:t}}, \cdot)$ follows the same arguments, except that analyticity of $\text{graph}(x_{1:t} \twoheadrightarrow\Pi_\delta(\mu_{x_{1:t}}, \cdot))$ is given by Lemma \ref{lem:meas}.
\end{proof}

\begin{proposition}\label{prop:measurable_selection}
Let $\mu \in \mathcal{P}_p(\mathbb{R}^N)$ and $g: \mathbb{R}^N \times \mathbb{R}^N \to \mathbb{R}$ be a lower semianalytic function. Then
\begin{align*}
\inf_{\gamma \in \Pi_\delta^{\operatorname{T}}(\mu, \cdot)} \int g(x, y) \, \gamma(dx, dy) = \inf_{\gamma^1 \in \Pi_\delta^{\operatorname{T}}(\mu^1, \cdot)} \int \ldots \inf_{\gamma^N \in \Pi_\delta^{\operatorname{T}}(\mu_{x_{1:N-1}}, \cdot)} \int g(x, y) \, d\gamma^N \ldots d\gamma^1
\end{align*}
Similarly,
\begin{align*}
\inf_{\gamma \in \Pi_\delta(\mu, \cdot)} \int g(x, y) \, \gamma(dx, dy) = \inf_{\gamma^1 \in \Pi_\delta(\mu^1, \cdot)} \int \ldots \inf_{\gamma^N \in \Pi_\delta(\mu_{x_{1:N-1}}, \cdot)} \int g(x, y) \, d\gamma^N \ldots d\gamma^1.
\end{align*}
\end{proposition}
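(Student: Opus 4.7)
The plan is to establish both inequalities by exploiting the canonical disintegration of every element of \(\Pi_\delta^{\operatorname{T}}(\mu, \cdot)\) together with the measurable selection machinery that has already been packaged into Proposition \ref{prop:epsilon_optimizers}. The core observation is that \(\Pi_\delta^{\operatorname{T}}(\mu, \cdot)\) is, by definition, exactly the set of transport plans obtained by concatenating Borel measurable choices of one-step kernels taken from \(\Pi_\delta^{\operatorname{T}}(\mu_{x_{1:t}}, \cdot)\), and symmetrically for \(\Pi_\delta(\mu, \cdot)\).

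For the ``\(\geq\)''--direction I would fix an arbitrary \(\gamma \in \Pi_\delta^{\operatorname{T}}(\mu, \cdot)\) and write it in its canonical disintegrated form
\[
\gamma = \gamma^1 \otimes \gamma_{x_1, y_1} \otimes \ldots \otimes \gamma_{x_{1:N-1}, y_{1:N-1}},
\]
where each kernel is Borel and belongs to \(\Pi_\delta^{\operatorname{T}}(\mu_{x_{1:t}}, \cdot)\). Fubini's theorem then rewrites \(\int g\, d\gamma\) as the corresponding nested integral. For each fixed \((x_{1:t}, y_{1:t})\) the innermost infimum is pointwise dominated by the integral against the specific kernel \(\gamma_{x_{1:t}, y_{1:t}}\), and propagating this pointwise domination outward via monotonicity of integration shows that the nested infimum is \(\leq \int g\, d\gamma\). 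Taking the infimum over \(\gamma\) closes this direction.

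For the ``\(\leq\)''--direction I would invoke Proposition \ref{prop:epsilon_optimizers} directly: for any \(\varepsilon > 0\) it produces Borel measurable kernels \((x_{1:t}, y_{1:t}) \mapsto \gamma_{x_{1:t}, y_{1:t}} \in \Pi_\delta^{\operatorname{T}}(\mu_{x_{1:t}}, \cdot)\) whose concatenation \(\tilde{\gamma} := \gamma^1 \otimes \gamma_{x_1, y_1} \otimes \ldots \otimes \gamma_{x_{1:N-1}, y_{1:N-1}}\) is, by Borel measurability, a bona fide element of \(\Pi_\delta^{\operatorname{T}}(\mu, \cdot)\) and satisfies \(\int g\, d\tilde{\gamma} \leq (\text{nested inf}) + N\varepsilon\). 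Sending \(\varepsilon \downarrow 0\) yields the reverse inequality. The argument for \(\Pi_\delta(\mu, \cdot)\) is essentially verbatim, using the second assertion of Proposition \ref{prop:epsilon_optimizers} and the Borel measurability of the corresponding graph supplied by Lemma \ref{lem:meas}.

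The genuinely delicate point, and the reason I expect to lean on Proposition \ref{prop:epsilon_optimizers} rather than a direct argument, is the upgrade from \emph{universally} measurable \(\varepsilon\)-selectors (which is what standard lower-semianalytic selection \cite[Proposition 7.50.(b)]{bertsekas1996stochastic} gives us) to \emph{Borel} ones; without this upgrade the concatenation would not even lie in \(\Pi_\delta^{\operatorname{T}}(\mu, \cdot)\), and the ``\(\leq\)''--inequality would stall. Fortunately this upgrade is already incorporated into Proposition \ref{prop:epsilon_optimizers} via the iterated use of \cite[Lemma 7.28.(c)]{bertsekas1996stochastic}, so the proof essentially reduces to combining Fubini with an application of that proposition in each of the two cases.
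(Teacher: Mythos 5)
Your proposal is correct and follows essentially the same route as the paper: the ``$\geq$''--direction via disintegration of $\gamma \in \Pi_\delta^{\operatorname{T}}(\mu,\cdot)$ into one-step kernels and pointwise domination of the nested infima, and the ``$\leq$''--direction via the Borel measurable $\varepsilon$-optimizers supplied by Proposition \ref{prop:epsilon_optimizers}, with the $\Pi_\delta(\mu,\cdot)$ case handled verbatim using Lemma \ref{lem:meas}. The delicate point you flag — upgrading universally measurable selectors to Borel ones so that the concatenation lies in $\Pi_\delta^{\operatorname{T}}(\mu,\cdot)$ — is exactly where the paper leans on \cite[Lemma 7.28.(c)]{bertsekas1996stochastic} inside Proposition \ref{prop:epsilon_optimizers}, so nothing is missing.
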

\begin{proof}[Proof]
We start with ``$\geq$"--inequality. Take any transport plan $\gamma \in \Pi^{\operatorname{T}}_\delta(\mu, \cdot)$ and consider its disintegration:
$$
\gamma = \gamma^1 \otimes \ldots \otimes \gamma_{x_{1:N-1}, y_{1:N-1}}, \qquad  \gamma_{x_{1:t}, y_{1:t}} \in \Pi_\delta^{\operatorname{T}}(\mu_{x_{1:t}}, \cdot).
$$
By the disintegration theorem we have
\begin{align*}
\int g(x, y) \, \gamma(dx, dy) &= \int g(x, y) \, \gamma_{x_{1:N-1}, y_{1:N-1}}(dx_N, dy_N) \ldots \gamma^1(dx_1, dy_1)\\
&\geq \inf_{\gamma^1 \in \Pi_\delta^{\operatorname{T}}(\mu^1, \cdot)} \int \ldots \inf_{\gamma^N \in \Pi_\delta^{\operatorname{T}}(\mu_{x_{1:N-1}}, \cdot)} \int g(x, y) \, d\gamma^N \ldots d\gamma^1.
\end{align*}
Taking the supremum over $\gamma \in \Pi_\delta^{\operatorname{T}}(\mu, \cdot)$ on the left-hand side, we conclude ``$\geq$"--inequality. To prove the opposite inequality, we take $\varepsilon > 0$ and use Proposition \ref{prop:epsilon_optimizers} to obtain Borel measurable $\varepsilon$-optimizers $(x_{1:t}, y_{1:t}) \mapsto \gamma_{x_{1:t}, y_{1:t}}$ satisfying
\begin{align*}
\int g(x, y) \, \gamma(dx, dy) &= \int g(x, y) \, \gamma_{x_{1:N-1}, y_{1:N-1}}(dx_N, dy_N) \ldots \gamma^1(dx_1, dy_1)\\
&\leq \inf_{\gamma^1 \in \Pi_\delta^{\operatorname{T}}(\mu^1, \cdot)} \int \ldots \inf_{\gamma^N \in \Pi_\delta^{\operatorname{T}}(\mu_{x_{1:N-1}}, \cdot)} \int g(x, y) \, d\gamma^N \ldots d\gamma^1 + \varepsilon,
\end{align*}
where we set $\gamma := \gamma^1 \otimes \ldots \otimes \gamma_{x_{1:N-1}, y_{1:N-1}}$. By definition, $\gamma \in \Pi_\delta^{\operatorname{T}}(\mu, \cdot)$, and taking the infimum over $\Pi_\delta^{\operatorname{T}}(\mu, \cdot)$ on the left-hand side we obtain
$$
\inf_{\gamma \in \Pi_\delta^{\operatorname{T}}(\mu, \cdot)} \int g(x, y) \, \gamma(dx, dy) \leq A_0 + N \varepsilon = \inf_{\gamma^1 \in \Pi_\delta^{\operatorname{T}}(\mu^1, \cdot)} \int \ldots \inf_{\gamma^N \in \Pi_\delta^{\operatorname{T}}(\mu_{x_{1:N-1}}, \cdot)} \int g(x, y) \, d\gamma^N \ldots d\gamma^1 + N\varepsilon.
$$
As $\varepsilon > 0$ was arbitrary, this shows the claim for $\Pi_\delta^{\operatorname{T}}(\mu, \cdot)$. The proof for $\Pi_\delta(\mu, \cdot)$ follows the same arguments line by line.
\end{proof}

\begin{proof}[Proof of Theorem \ref{equiv:control_open} for $f$ bounded from below]
By the lower semi-continuity of $$ (x_{1:t}, y_{1:t}, \alpha_{1:t+1}) \mapsto \sup_{\gamma^{t+1} \in \overline\Pi_\delta(\mu_{x_{1:t}}, \cdot)} \int V^\delta_{t+1}(x_{1:t+1}, y_{1:t+1}, \alpha_{1:t+1}) \,\gamma^{t+1}(dx_{t+1}, dy_{t+1})$$ established in the proof of Lemma \ref{lem:cost_to_go_regularity} and recalling that $K$ is compact, \cite[Proposition 7.33]{bertsekas1996stochastic} yields a Borel measurable selector \((x_{1:t}, y_{1:t}, \alpha_{1:t}) \mapsto \alpha^\delta_{t+1}(x_{1:t}, y_{1:t}, \alpha_{1:t})\) for \(V^\delta_{t}\) , i.e.,
\begin{align*}
V_t^\delta(x_{1:t}, y_{1:t}, \alpha_{1:t}) &\stackrel{\text{Lemma } \ref{lem:supremum_closed}}{=} \sup_{\gamma^{t+1} \in \overline \Pi_\delta(\mu_{x_{1:t}}, \cdot)} \int V^\delta_{t+1}(x_{1:t+1}, y_{1:t+1}, \alpha^\delta_{1:t+1}(x_{1:t}, y_{1:t}, \alpha_{1:t})) \,\gamma^{t+1}(dx_{t+1}, dy_{t+1})\\
&\stackrel{\text{Lemma } \ref{lem:supremum_closed}}{=} \sup_{\gamma^{t+1} \in \Pi^{\operatorname{T}}_\delta(\mu_{x_{1:t}}, \cdot)} \int V^\delta_{t+1}(x_{1:t+1}, y_{1:t+1}, \alpha^\delta_{1:t+1}(x_{1:t}, y_{1:t}, \alpha_{1:t})) \,\gamma^{t+1}(dx_{t+1}, dy_{t+1}).
\end{align*}
By backward induction we obtain a predictable control $\alpha^\delta := (\alpha^\delta_1, \alpha^\delta_2, \ldots, \alpha^\delta_N) \in \mathcal{A}$. Thus
\begin{align}\label{eqn:thm:control_open:leq}
V^\delta_0 &= \sup_{\gamma^1 \in \Pi^{\operatorname{T}}_\delta(\mu^1, \cdot)} \int \sup_{\gamma^2 \in \Pi^{\operatorname{T}}_\delta(\mu_{x_1}, \cdot)} \int \ldots \sup_{\gamma^N \in \Pi^{\operatorname{T}}_\delta(\mu_{x_{1:N-1}}, \cdot)} \int f(y, \alpha^\delta) \,d\gamma^N \ldots d\gamma^1\nonumber \\
&\geq \inf_{\alpha \in \mathcal{A}} \sup_{\gamma^1 \in \Pi^{\operatorname{T}}_\delta(\mu^1, \cdot)} \int \sup_{\gamma^2 \in \Pi^{\operatorname{T}}_\delta(\mu_{x_1}, \cdot)} \int \ldots \sup_{\gamma^N \in \Pi^{\operatorname{T}}_\delta(\mu_{x_{1:N-1}}, \cdot)} \int f(y, \alpha) \,d\gamma^N \ldots d\gamma^1.
\end{align}
On the other hand, for any predictable control $\alpha = (\alpha_1, \alpha_2(x_1, y_1), \ldots, \alpha_N(x_{1:N-1}, y_{1:N-1})) \in \mathcal{A}$ we have
$$
V_t^\delta(x_{1:t}, y_{1:t}, \alpha_{1:t}) \leq \sup_{\gamma^{t+1} \in \Pi^{\operatorname{T}}_\delta(\mu_{x_{1:t}}, \cdot)} \int V^\delta_{t+1}(x_{1:t+1}, y_{1:t+1}, \alpha_{1:t}, \alpha_{t+1}(x_{1:t}, y_{1:t})) \,\gamma^{t+1}(dx_{t+1}, dy_{t+1})
$$
by definition of $V^\delta_t$. Iterating over the previous inequality yields
$$
V^\delta_0 \leq \sup_{\gamma^1 \in \Pi^{\operatorname{T}}_\delta(\mu^1, \cdot)} \int \sup_{\gamma^2 \in \Pi^{\operatorname{T}}_\delta(\mu_{x_1}, \cdot)} \int \ldots \sup_{\gamma^N \in \Pi^{\operatorname{T}}_\delta(\mu_{x_{1:N-1}}, \cdot)} \int f(y, \alpha) \,d\gamma^N \ldots d\gamma^1.
$$
Taking the infimum over $\alpha \in \mathcal{A}$, we obtain
\begin{align}\label{eqn:thm:control_open:geq}
V^\delta_0 \leq \inf_{\alpha \in \mathcal{A}} \sup_{\gamma^1 \in \Pi^{\operatorname{T}}_\delta(\mu^1, \cdot)} \int \sup_{\gamma^2 \in \Pi^{\operatorname{T}}_\delta(\mu_{x_1}, \cdot)} \int \ldots \sup_{\gamma^N \in \Pi^{\operatorname{T}}_\delta(\mu_{x_{1:N-1}}, \cdot)} \int f(y, \alpha) \,d\gamma^N \ldots d\gamma^1.
\end{align}
Combining \eqref{eqn:thm:control_open:leq} and \eqref{eqn:thm:control_open:geq},
\begin{align*}
V^\delta_0 &= \inf_{\alpha \in \mathcal{A}} \sup_{\gamma^1 \in \Pi^{\operatorname{T}}_\delta(\mu^1, \cdot)} \int \sup_{\gamma^2 \in \Pi^{\operatorname{T}}_\delta(\mu_{x_1}, \cdot)} \int \ldots \sup_{\gamma^{N} \in \Pi^{\operatorname{T}}_\delta(\mu_{x_{1:N-1}}, \cdot)} \int f(y, \alpha(x, y))\, d\gamma^N \ldots d\gamma^1\\
&= \inf_{\alpha \in \mathcal{A}} \sup_{\gamma \in \Pi^{\operatorname{T}}_{\delta}(\mu, \cdot)} \int f(y, \alpha(x, y))\, \gamma(dx, dy)\\
&\leq \inf_{\alpha \in \mathcal{A}} \sup_{\gamma \in \Pi_{\operatorname{bc}, \delta}(\mu, \cdot)} \int f(y, \alpha(x, y))\, \gamma(dx, dy)\\
&= V(\delta),
\end{align*}
where the second equality holds by Proposition \ref{prop:measurable_selection} applied with $g = -f$
and the inequality holds since \(\Pi^{\operatorname{T}}_{\delta}(\mu, \cdot) \subseteq \Pi_{\operatorname{bc}, \delta}(\mu, \cdot)\). On the other hand,
\[
\Pi_{\operatorname{bc}, \delta}(\mu, \cdot) \subseteq \{  \gamma^1 \otimes \gamma_{x_1, y_1} \otimes \ldots \otimes \gamma_{x_{1:N-1}, y_{1:N-1}}:\ \gamma_{x_{1:t-1},y_{1:t-1}} \in \Pi_\delta(\mu_{x_{1:t-1}}, \cdot), \;t =1,\dots, N\},
\]
where $(x_{1:t}, y_{1:t})\mapsto \gamma_{x_{1:t}, y_{1:t}}$ are Borel measurable functions, and hence we have the opposite bound
\begin{align*}
V(\delta) &= \inf_{\alpha \in \mathcal{A}} \sup_{\gamma \in \Pi_{\operatorname{bc}, \delta}(\mu, \cdot)} \int f(y, \alpha(x, y))\, \gamma(dx, dy)\\
&\leq \inf_{\alpha \in \mathcal{A}} \sup_{\gamma^1 \in \Pi_\delta(\mu^1, \cdot)} \int \sup_{\gamma^2 \in \Pi_\delta(\mu_{x_1}, \cdot)} \int \ldots \sup_{\gamma^N \in \Pi_\delta(\mu_{x_{1:N-1}}, \cdot)} \int f(y, \alpha(x, y))\, d\gamma^N \ldots d\gamma^1\\
&= V^\delta_0.
\end{align*}
This concludes the proof.
\end{proof}

\begin{proof}[Proof of Theorem \ref{equiv:control_open} for $|f(x, \alpha)| \lesssim 1 + \|x\|^{p - \varepsilon}$]
Consider
$$
f_n(x) = f(x) \vee (-n), \;\; n \in \mathbb{N}.
$$
This function is lower semicontinuous as a maximum of two lower semicontinuous functions, and is bounded from below by $-n$. Hence,
\begin{align*}
&\inf_{\alpha \in \mathcal{A}} \sup_{\gamma \in \Pi_{\operatorname{bc}, \delta}(\mu, \cdot)} \int f_n(y, \alpha(x, y)) \, \gamma(dx, dy)\\
&= \inf_{\alpha_1 \in K} \sup_{\gamma^1 \in \Pi_\delta(\mu^1, \cdot)} \int \ldots \inf_{\alpha_N \in K} \sup_{\gamma^N \in \Pi_\delta(\mu_{x_{1:N-1}}, \cdot)} \int f_n(y, \alpha(x, y)) \, d\gamma^N \ldots d\gamma^1
\end{align*}
by [Theorem \ref{equiv:control_open} for $f$ bounded from below]. It remains to show that
\begin{align}\label{eqn:thm:equiv_control:dro_convergence}
\lim_{n\to \infty}\inf_{\alpha \in \mathcal{A}} \sup_{\gamma \in \Pi_{\operatorname{bc}, \delta}(\mu, \cdot)} \int f_n(y, \alpha(x, y)) \, \gamma(dx, dy) = \inf_{\alpha \in \mathcal{A}} \sup_{\gamma \in \Pi_{\operatorname{bc}, \delta}(\mu, \cdot)} \int f(y, \alpha(x, y)) \, \gamma(dx, dy)
\end{align}
and
\begin{align}\label{eqn:thm:equiv_control:dpp_convergence}
\begin{split}
&\lim_{n\to \infty}\inf_{\alpha_1 \in K} \sup_{\gamma^1 \in \Pi_\delta(\mu^1, \cdot)} \int \ldots \inf_{\alpha_N \in K} \sup_{\gamma^N \in \Pi_\delta(\mu_{x_{1:N-1}}, \cdot)} \int f_n(y, \alpha(x, y)) \, d\gamma^N \ldots d\gamma^1\\
&= \inf_{\alpha_1 \in K} \sup_{\gamma^1 \in \Pi_\delta(\mu^1, \cdot)} \int \ldots \inf_{\alpha_N \in K} \sup_{\gamma^N \in \Pi_\delta(\mu_{x_{1:N-1}}, \cdot)} \int f(y, \alpha(x, y)) \, d\gamma^N \ldots d\gamma^1.
\end{split}
\end{align}
By applying repeatedly the inequalities $|\sup A - \sup B| \leq \sup |A - B|$, $|\inf A - \inf B| \leq \sup |A - B|$ and $|\int f| \leq \int |f|$ to \eqref{eqn:thm:equiv_control:dro_convergence} and \eqref{eqn:thm:equiv_control:dpp_convergence}, we obtain
\begin{align}\label{eqn:thm:equiv_control:dro_convergence_bound}
&\left|\inf_{\alpha \in \mathcal{A}} \sup_{\gamma \in \Pi_{\operatorname{bc}, \delta}(\mu, \cdot)} \int f_n(y, \alpha(x, y)) \, \gamma(dx, dy) - \inf_{\alpha \in \mathcal{A}} \sup_{\gamma \in \Pi_{\operatorname{bc}, \delta}(\mu, \cdot)} \int f(y, \alpha(x, y)) \, \gamma(dx, dy)\right|\nonumber\\
&\leq \sup_{\alpha \in \mathcal{A}} \sup_{\gamma \in \Pi_{\operatorname{bc}, \delta}(\mu, \cdot)} \int |f_n(y, \alpha(x, y)) - f(y, \alpha(x, y))| \, \gamma(dx, dy),
\end{align}
and
\begin{align}\label{eqn:thm:equiv_control:dpp_convergence_bound}
&\left|\inf_{\alpha_1 \in K} \sup_{\gamma^1 \in \Pi_\delta(\mu^1, \cdot)} \int \ldots \inf_{\alpha_N \in K} \sup_{\gamma^N \in \Pi_\delta(\mu_{x_{1:N-1}}, \cdot)} \int f_n(y, \alpha(x, y)) \, d\gamma^N \ldots d\gamma^1\right.\nonumber\\
&\left.- \inf_{\alpha_1 \in K} \sup_{\gamma^1 \in \Pi_\delta(\mu^1, \cdot)} \int \ldots \inf_{\alpha_N \in K} \sup_{\gamma^N \in \Pi_\delta(\mu_{x_{1:N-1}}, \cdot)} \int f(y, \alpha(x, y)) \, d\gamma^N \ldots d\gamma^1\right|\nonumber\\
&\leq \sup_{\alpha_1 \in K} \sup_{\gamma^1 \in \Pi_\delta(\mu^1, \cdot)} \int \ldots \sup_{\alpha_N \in K} \sup_{\gamma^N \in \Pi_\delta(\mu_{x_{1:N-1}}, \cdot)} \int |f_n(y, \alpha(x, y)) - f(y, \alpha(x, y))| \, d\gamma^N \ldots d\gamma^1.
\end{align}
We first show that both \eqref{eqn:thm:equiv_control:dro_convergence_bound} and \eqref{eqn:thm:equiv_control:dpp_convergence_bound} are bounded by
\begin{equation}\label{eqn:thm:equiv_control:common_bound}
\sup_{\alpha \in \mathcal{A}} \sup_{\gamma \in \Pi_\delta(\mu, \cdot)} \int |f_n(y, \alpha(x, y)) - f(y, \alpha(x, y))| \, \gamma(dx, dy).
\end{equation}
Indeed, \eqref{eqn:thm:equiv_control:dro_convergence_bound} is bounded by \eqref{eqn:thm:equiv_control:common_bound}, as $\Pi_{\operatorname{bc}, \delta}(\mu, \cdot) \subseteq \Pi_\delta(\mu, \cdot)$. For \eqref{eqn:thm:equiv_control:dpp_convergence_bound} we proceed with a measurable selection argument similar to the proof of Proposition \eqref{prop:epsilon_optimizers}: we define
\begin{align*}
&A_N(x, y, \alpha) := |f_n(y, \alpha) - f(y, \alpha)|\\
&A_t(x_{1:t}, y_{1:t}, \alpha_{1:t}) := \sup_{(\alpha_{t+1}, \gamma^{t+1}) \in K \times \Pi_\delta(\mu_{x_{1:t}}, \cdot)} \int A_{t+1}(x_{1:t+1}, y_{1:t+1}, \alpha_{1:t+1}) \, \gamma^{t+1}(dx_{t+1}, dy_{t+1}).
\end{align*}
Now we note that the set \text{graph}($x_{1:t} \twoheadrightarrow K \times \Pi_\delta(\mu_{x_{1:t}}, \cdot))$ is analytic by \cite[Proposition 7.40]{bertsekas1996stochastic} applied to $f: K \times \text{graph}(x_{1:t} \twoheadrightarrow\Pi_\delta(\mu_{x_{1:t}}, \cdot)) \ni (x, y) \mapsto y$ and $B = \text{graph}(x_{1:t} \twoheadrightarrow \Pi_\delta(\mu_{x_{1:t}}, \cdot))$, which is analytic by Proposition \ref{lem:meas}. We then argue by backward induction that $A_t$ is lower semianalytic: indeed, this holds for $A_N = |f_n - f|$. Suppose now that $A_{t+1}$ is lower semianalytic. Then $(x_{1:t}, y_{1:t}, \alpha_{1:t+1}, \gamma^{t+1}) \mapsto \int A_{t+1} \, d\gamma^{t+1}$ is lower semianalytic by \cite[Proposition 7.48]{bertsekas1996stochastic} applied with $f = A_{t+1}$ and $q = \gamma^{t+1}$. Next, $A_t$ is lower semianalytic by \cite[Proposition 7.47]{bertsekas1996stochastic}. Hence, taking $\varepsilon > 0$ and using \cite[Proposition 7.50.(b)]{bertsekas1996stochastic} we obtain universally measurable $\varepsilon$-optimizers $(x_{1:t}, y_{1:t}, \alpha_{1:t}) \mapsto (\alpha^\delta_{t+1}(x_{1:t}, y_{1:t}, \alpha_{1:t}), \gamma_{x_{1:t}, y_{1:t}, \alpha_{1:t}})$ of $A_t$, i.e.,
\begin{equation}\label{eqn:thm:equiv_control:dpp_convergence_bound_step}
A_t(x_{1:t}, y_{1:t}, \alpha_{1:t}) \leq \int A_{t+1}(x_{1:t+1}, y_{1:t+1}, \alpha_{1:t}, \alpha^\delta_{t+1}) \, \gamma_{x_{1:t}, y_{1:t}, \alpha_{1:t}}(dx_{t+1}, dy_{t+1}) + \varepsilon.
\end{equation}
In particular, the mappings $(x_{1:t}, y_{1:t}, \alpha_{1:t}) \mapsto \alpha^\delta_{t+1}(x_{1:t}, y_{1:t}, \alpha_{1:t})$ and $(x_{1:t}, y_{1:t}, \alpha_{1:t}) \mapsto \gamma_{x_{1:t}, y_{1:t}, \alpha_{1:t}}$ are universally measurable by \cite[Proposition 7.44]{bertsekas1996stochastic} applied to $f(x, y) = x$ and $f(x, y) = y$. Moreover, applying this proposition with
$$
f(x_{1:t}, y_{1:t}) = (x_{1:t}, y_{1:t}, \alpha^\delta_{1:t}(x_{1:t-1}, y_{1:t-1})) \;\; \text{and} \;\; g(x_{1:t}, y_{1:t}, \alpha_{1:t}) = \alpha^\delta(x_{1:t}, y_{1:t}, \alpha_{1:t})
$$
for $t=1,\dots,N$, we obtain universally measurable mappings $(x_{1:t}, y_{1:t}) \mapsto \alpha^\delta_{t+1}(x_{1:t}, y_{1:t})$. As the composition of universally measurable functions is again universally measurable (see \cite[Proposition 7.44]{bertsekas1996stochastic}), $(x_{1:t}, y_{1:t}) \mapsto \gamma_{x_{1:t}, y_{1:t}} := \gamma_{x_{1:t}, y_{1:t}, \alpha^\delta_{1:t}(x_{1:t-1}, y_{1:t-1})}$ is a universally measurable kernel. We now apply \cite[Lemma 7.28.(c)]{bertsekas1996stochastic} with $p = (\gamma^1 \otimes \ldots \otimes \gamma_{x_{1:t-1}, y_{1:t-1}})$ and $q = \gamma_{x_{1:t}, y_{1:t}}$ to obtain Borel measurable versions of $(x_{1:t}, y_{1:t}) \mapsto \gamma_{x_{1:t}, y_{1:t}}$. We thus have
\begin{equation}\label{eqn:thm:equiv_control:dpp_convergence_bound_total}
A_0 \leq \int |f_n(y, \alpha^\delta(x, y)) - f(y, \alpha^\delta(x, y))| \, \gamma(dx, dy) + N \varepsilon
\end{equation}
by a backward induction argument, where we set $\gamma := \gamma^1 \otimes \ldots \otimes \gamma_{x_{1:N-1}, y_{1:N-1}} \in \Pi_\delta(\mu, \cdot)$. As $(x, y) \mapsto |f_n(y, \alpha^\delta(x, y)) - f(y, \alpha^\delta(x, y))|$ is universally measurable by \cite[Proposition 7.44]{bertsekas1996stochastic} applied to $f(x, y) = (y, \alpha^\delta(x, y))$ and $g(y, \alpha) = |f_n(y, \alpha) - f(y, \alpha)|$, one can use \cite[Lemma 7.27]{bertsekas1996stochastic} with $p = \gamma$ and $f = \alpha^\delta$ to obtain a Borel measurable version of $(x, y) \mapsto \alpha^\delta(x, y)$. Hence, taking a  supremum over $\mathcal{A} \times \Pi_\delta(\mu, \cdot)$ in \eqref{eqn:thm:equiv_control:dpp_convergence_bound_total}, we obtain
\begin{align*}
&\sup_{\alpha_1 \in K} \sup_{\gamma^1 \in \Pi_\delta(\mu^1, \cdot)} \int \ldots \sup_{\alpha_N \in K} \sup_{\gamma^N \in \Pi_\delta(\mu_{x_{1:N-1}}, \cdot)} \int |f_n(y, \alpha(x, y)) - f(y, \alpha(x, y))| \, d\gamma^N \ldots d\gamma^1\\
&\leq \sup_{\alpha \in \mathcal{A}} \sup_{\gamma \in \Pi_\delta(\mu, \cdot)} \int |f_n(y, \alpha(x, y)) - f(y, \alpha(x, y))| \, \gamma(dx, dy) + N \varepsilon.
\end{align*}
As $\varepsilon > 0$ is arbitrary, this concludes the proof of \eqref{eqn:thm:equiv_control:common_bound}.

Lastly we prove that
\begin{equation}\label{eqn:thm:equiv_control:final_bound}
\lim_{n\to \infty}\sup_{\alpha \in \mathcal{A}} \sup_{\gamma \in \Pi_\delta(\mu, \cdot)} \int |f_n(y, \alpha(x, y)) - f(y, \alpha(x, y))| \, \gamma(dx, dy) = 0,
\end{equation}
which implies \eqref{eqn:thm:equiv_control:dro_convergence} and \eqref{eqn:thm:equiv_control:dpp_convergence}. For this, take any $\alpha \in \mathcal{A}$ and $\gamma \in \Pi_\delta(\mu, \cdot)$. As $\gamma \in \Pi_\delta(\mu, \cdot)$,
$$
\int \|x - y\|^p \, \gamma(dx, dy) \leq N \delta^p.
$$
Recalling that $|f(y, \alpha)| \lesssim 1 + \|y\|^{p - \varepsilon}$,
\begin{align*}
&\int |f_n(y, \alpha(x, y)) - f(y, \alpha(x, y))| \, \gamma(dx, dy)\\
&= \int_{\{f(y, \alpha(x, y)) \leq -n\}} |f(y, \alpha(x, y)) + n| \, \gamma(dx, dy)\\
&\leq \int_{\{n \lesssim 1 + \|y\|^{p - \varepsilon}\}} |f(y, \alpha(x, y)) + n| \, \gamma(dx, dy)\\
&\leq \int_{\{n \lesssim 1 + \|y\|^{p - \varepsilon}\}} |f(y, \alpha(x, y))| \, \gamma(dx, dy) + n \cdot \gamma\{n \lesssim 1 + \|y\|^{p - \varepsilon}\}\\
&\leq \|f\|_{L^\frac{p}{p - \varepsilon}(\gamma)} \cdot \gamma \{n \lesssim 1 + \|y\|^{p - \varepsilon}\}^\frac{\varepsilon}{p} + n \cdot \gamma\{n \lesssim 1 + \|y\|^{p - \varepsilon}\}\\
&\lesssim n^{-\frac{\varepsilon}{p - \varepsilon}} \to 0, \;\; n \to +\infty,
\end{align*}
where the first inequality follows from the growth condition on $f$, the second inequality follows from the triangle inequality, the third inequality follows from H\"older's inequality and the final inequality is a consequence of $\{n \lesssim 1 + \|y\|^{p - \varepsilon}\} = \{n^\frac{p}{p - \varepsilon} \lesssim 1 + \|y\|^p\}$, $p$-integrability of $\mu$ and $(|a| + |b|)^p \leq 2^{p - 1}(|a|^p + |b|^p)$. This concludes the proof.
\end{proof}

We now prove that the bicausal and causal optimization problems have the same value. 

\begin{proof}[Proof of Corollary \ref{cor:causal_bicausal}]
In light of Theorem \ref{equiv:control_open} it suffices to prove that
\begin{equation*}
V(\delta) = V^\delta_0 \geq \inf_{\alpha \in \mathcal{A}} \sup_{\gamma \in \Pi_{\delta}(\mu, \cdot)} \int f(y, \alpha(x, y)) \,\gamma(dx, dy),
\end{equation*}
since the opposite inequality follows from $\Pi_{\operatorname{bc}, \delta}(\mu, \cdot) \subseteq \Pi_{\delta}(\mu, \cdot)$. Similarly to the proof of Theorem \ref{equiv:control_open}, we choose Borel measurable selectors $(x_{1:t}, y_{1:t}, \alpha_{1:t}) \mapsto \alpha^\delta_{t+1}(x_{1:t}, y_{1:t}, \alpha_{1:t})$ for $V^\delta_t$ using \cite[Proposition 7.33]{bertsekas1996stochastic}, which gives rise to a Borel measurable control $\alpha^\delta = (\alpha^\delta_1, \ldots, \alpha^\delta_N) \in \mathcal{A}$. In particular 
\begin{align}\label{eqn:cor:bicausal_causal:optimal_control}
V^\delta_0 &= \sup_{\gamma^1 \in \Pi_\delta(\mu^1, \cdot)} \int \sup_{\gamma^2 \in \Pi_\delta(\mu_{x_1}, \cdot)} \ldots \sup_{\gamma^N \in \Pi_\delta(\mu_{x_{1:N-1}}, \cdot)} \int f(y, \alpha^\delta(x, y)) \,\gamma(dx, dy).
\end{align}
By definition, any plan $\gamma \in \Pi_{\delta}(\mu, \cdot)$ disintegrates into
\[
\gamma = \gamma^1 \otimes \gamma_{x_1, y_1} \otimes \ldots \otimes \gamma_{x_{1:N-1}, y_{1:N-1}}
\]
with $\gamma_{x_{1:t-1}, y_{1:t-1}} \in \Pi_\delta(\mu_{x_{1:t-1}}, \cdot)$ for $t =1,\dots ,N$. We obtain 
\[
V^\delta_0 \geq \sup_{\gamma \in \Pi_{\delta}(\mu, \cdot)} \int f(y, \alpha^\delta(x, y)) \, \gamma(dx, dy)
\]
from \eqref{eqn:cor:bicausal_causal:optimal_control}.
Taking an infimum over $\alpha \in \mathcal{A}$, we arrive at the desired conclusion.
\end{proof}

Before proving Corollary \ref{cor:linear}, we state the following lemma:

\begin{lemma}\label{density_of_maps_martingale}
Take any martingale probability measure \(\mu \in \mathcal{P}_p(\mathbb{R})\) and recall the set of martingale measures
\begin{align*}
\Pi^\mathcal{M}_{\delta}(\mu_{x_{1:t}}, \cdot) = \Big\{\pi \in \Pi(\mu_{x_{1:t}}, \cdot)\;:\; \mathcal{C}_p(\pi) < \delta, \int (x-y)\,\pi(dx,dy) =0  \Big\}.    
\end{align*}
Then the set $$\Pi_\delta^{\mathcal{M}, \operatorname{T}}(\mu_{x_{1:t}}, \cdot) := \Pi_\delta^\mathcal{M}(\mu_{x_{1:t}}, \cdot) \cap \Pi_\delta^{\operatorname{T}}(\mu_{x_{1:t}}, \cdot)$$ is dense in \(\Pi_\delta^{\mathcal{M}}(\mu_{x_{1:t}}, \cdot)\) with respect to \(\mathcal{W}_p\).
\end{lemma}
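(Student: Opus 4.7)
The approach is to adapt the two-step strategy of Lemma~\ref{density_of_maps}: first smooth $\pi$ by convolving with a centered Gaussian in the second coordinate so that the new second marginal is absolutely continuous, and then apply Lemma~\ref{lem:gangbo} to approximate the smoothed plan by Monge plans. The key observation that makes the martingale version essentially automatic is that the constraint $\int(x-y)\,\pi(dx,dy)=0$ depends only on the first moments of the two marginals of $\pi$, so any perturbation that preserves those first moments preserves the constraint for free.

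Fix $\pi \in \Pi_\delta^{\mathcal{M}}(\mu_{x_{1:t}}, \cdot)$ with second marginal $\nu$, so that $\int y\,\nu(dy) = \int x\,\mu_{x_{1:t}}(dx)$. For $\sigma>0$ define $\pi_\sigma$ exactly as in \eqref{eq:smooth}, i.e., as the law of $(X, Y+\sigma\eta)$ with $\eta \sim \mathcal{N}(0,1)$ independent of $(X,Y)\sim \pi$. Its second marginal $\nu_\sigma$ is absolutely continuous with respect to Lebesgue and has the same mean as $\nu$ because $\eta$ is centered. Consequently $\pi_\sigma \in \Pi(\mu_{x_{1:t}}, \nu_\sigma)$ with $\int(x-y)\,\pi_\sigma(dx,dy)=0$, while Minkowski's inequality yields $\mathcal{C}_p(\pi_\sigma) \leq \mathcal{C}_p(\pi) + C_p \sigma$, so $\pi_\sigma \in \Pi_\delta^{\mathcal{M}}(\mu_{x_{1:t}}, \cdot)$ for all sufficiently small $\sigma$. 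As in the proof of Lemma~\ref{density_of_maps}, $\mathcal{W}_p(\pi_\sigma, \pi) \to 0$ as $\sigma \downarrow 0$.

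Since $\nu_\sigma$ is atomless, Lemma~\ref{lem:gangbo} applied to $\pi_\sigma \in \Pi(\mu_{x_{1:t}}, \nu_\sigma)$ yields Monge plans $\pi^T = (T, \operatorname{Id})_\# \nu_\sigma$ with $T_\# \nu_\sigma = \mu_{x_{1:t}}$ and $\mathcal{W}_p(\pi^T, \pi_\sigma)$ arbitrarily small. Every such $\pi^T$ has marginals $(\mu_{x_{1:t}}, \nu_\sigma)$, so
\[
\int (x-y)\,\pi^T(dx,dy) = \int x\,\mu_{x_{1:t}}(dx) - \int y\,\nu_\sigma(dy) = 0
\]
by the marginal-only nature of the constraint. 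Moreover, using $|x-y|^p \leq 2^{p-1}(|x|^p + |y|^p)$ together with the characterization of $\mathcal{W}_p$-convergence \cite[Theorem 6.9]{villani2009optimal}, the functional $\mathcal{C}_p$ is continuous with respect to $\mathcal{W}_p$, so $\mathcal{C}_p(\pi^T) < \delta$ for approximations accurate enough. Hence $\pi^T \in \Pi_\delta^{\mathcal{M}, \operatorname{T}}(\mu_{x_{1:t}}, \cdot)$, and the triangle inequality $\mathcal{W}_p(\pi^T, \pi) \leq \mathcal{W}_p(\pi^T, \pi_\sigma) + \mathcal{W}_p(\pi_\sigma, \pi)$ completes the density argument.

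The only step one might fear in a martingale-constrained density result, namely having to actively rectify the Monge approximation to restore the constraint, does not arise here: the constraint $\int(x-y)\,d\pi = 0$ is preserved whenever the marginals are, and both the Gaussian convolution and the Monge approximation preserve the relevant first moments (indeed the full marginals in the latter case). This is in sharp contrast to genuine martingale optimal transport, where the constraint $\int(y-x)\,\pi_x(dy)=0$ must hold conditionally on $x$ and no such free-of-charge rectification is available.
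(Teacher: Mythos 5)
Your proof is correct and follows essentially the same route as the paper: Gaussian smoothing of the second coordinate as in Lemma \ref{density_of_maps}, followed by Lemma \ref{lem:gangbo}, with the key observation that the constraint $\int(x-y)\,\pi(dx,dy)=0$ depends only on the first moments of the marginals and is therefore preserved by both steps. Your additional remarks on $\mathcal{C}_p(\pi_\sigma)\le\mathcal{C}_p(\pi)+C_p\sigma$ and on the strict inequality $\mathcal{C}_p(\pi^T)<\delta$ being preserved under sufficiently accurate approximation only make explicit what the paper leaves implicit.
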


\begin{proof}
The proof is exactly the same as the proof of Lemma \ref{density_of_maps}: indeed, as in this proof we define the smoothed coupling $\pi_\sigma\in \Pi_\delta(\mu_{x_{1:t}}, \cdot)$ via \eqref{eq:smooth} for any $\pi\in \Pi_\delta^\mathcal{M}(\mu_{x_{1:t}},\cdot)$. By definition
\begin{align}
\int (y-x)\,\pi_\sigma(dx,dy) = \int (z-x) \varphi_\sigma(z-y)\,dz\, \pi(dx,dy) =\int (y-x)\,\pi(dx, dy)=0.
\end{align}
In conclusion, smoothing does not affect the linear constraint $\int (y-x)\,\pi(dx,dy)=0$. Applying Lemma \ref{lem:gangbo} preserves the constraint $\int (y-x)\,\pi(dx,dy)=0$ as well, because it only involves the first moments of the marginals of $\pi$, which are fixed in Lemma \ref{lem:gangbo}. 
\end{proof}

\begin{proof}[Proof of Corollary \ref{cor:linear}]
By Lemma \ref{density_of_maps_martingale} we conclude
\[
\sup_{\gamma^{t+1} \in \Pi^{\mathcal{M}}_\delta(\mu_{x_{1:t}}, \cdot)} \int V^{\mathcal{M}}_{t+1} \,d\gamma^{t+1} = \sup_{\gamma^{t+1} \in \Pi^{\mathcal{M},\operatorname{T}}_\delta(\mu_{x_{1:t}}, \cdot)} \int V^\mathcal{M}_{t+1}\, d\gamma^{t+1}.
\]
The remainder of the proof follows the proof of Theorem \ref{equiv:control_open} line by line.
\end{proof}

We now give the proof of Theorem \ref{minimax}, which is based on an application of the minimax theorem and the semi-separability assumption on $f.$

\begin{proof}[Proof of Theorem \ref{minimax}]
By Lemma \ref{lem:cost_to_go_regularity} we have enough regularity of $V_{t+1}^\delta$ to apply Lemmas \ref{lem:supremum_closed}, \ref{lem:supremum_closed} and conclude
\begin{align*}
V_t^\delta(x_{1:t}, y_{1:t}, \alpha_{1:t}) &= \inf_{\alpha_{t+1} \in K} \sup_{\gamma^{t+1} \in \overline\Pi_\delta(\mu_{x_{1:t}}, \cdot)} \int V^\delta_{t+1}(x_{1:t+1}, y_{1:t+1}, \alpha_{1:t+1}) \,\gamma^{t+1}(dx_{t+1}\, dy_{t+1})\\
&= \inf_{\alpha_{t+1} \in K} \sup_{\gamma^{t+1} \in \Pi^{\operatorname{T}}_\delta(\mu_{x_{1:t}}, \cdot)} \int V^\delta_{t+1}(x_{1:t+1}, y_{1:t+1}, \alpha_{1:t+1}) \,\gamma^{t+1}(dx_{t+1}\, dy_{t+1}).
\end{align*}
We now want to apply the minimax theorem \cite[Corollary 2]{terkelsen1972some} to interchange the order of the supremum and infimum. For this we note:
\begin{enumerate}
    \item The mapping $\overline\Pi_\delta(\mu_{x_{1:t}}, \cdot) \ni \gamma^{t+1} \mapsto \int V^\delta_{t+1}(x_{1:t+1}, y_{1:t+1}, \alpha_{1:t+1})\, \gamma^{t+1}(dx_{t+1}, dy_{t+1})$ is concave (even linear) for every $\alpha_{t+1} \in K$, and $\overline\Pi_\delta(\mu_{x_{1:t}}, \cdot)$ is convex.
    \item The mapping $K \ni \alpha_{t+1} \mapsto \int V^\delta_{t+1}(x_{1:t+1}, y_{1:t+1}, \alpha_{1:t+1})\, \gamma^{t+1}(dx_{t+1}, dy_{t+1})$ is convex by convexity of $\alpha \mapsto f(x, \alpha)$ and a backward induction argument. It is lower semicontinuous for every $\gamma^{t+1} \in \overline\Pi_\delta(\mu_{x_{1:t}}, \cdot)$ by Fatou's Lemma, lower semicontinuity of $V_{t+1}^\delta$ and the growth condition or boundedness from below. Moreover, $K$ is compact.
\end{enumerate}
Hence, applying the minimax theorem we obtain
\begin{align*}
V_t^\delta(x_{1:t}, y_{1:t}, \alpha_{1:t}) &= 
\inf_{\alpha_{t+1} \in K} \sup_{\gamma^{t+1} \in \overline \Pi_\delta(\mu_{x_{1:t}}, \cdot)} \int V^\delta_{t+1}(x_{1:t+1}, y_{1:t+1}, \alpha_{1:t+1}) \,\gamma^{t+1}(dx_{t+1}, dy_{t+1})\\
&= \sup_{\gamma^{t+1} \in \overline \Pi_\delta(\mu_{x_{1:t}}, \cdot)} \inf_{\alpha_{t+1} \in K} \int V^\delta_{t+1}(x_{1:t+1}, y_{1:t+1}, \alpha_{1:t+1}) \,\gamma^{t+1}(dx_{t+1}, dy_{t+1}).
\end{align*}
Furthermore, the above is equal to 
\begin{align*}
 \sup_{\gamma^{t+1} \in \Pi^{\operatorname{T}}_\delta(\mu_{x_{1:t}}, \cdot)} \inf_{\alpha_{t+1} \in K} \int V^\delta_{t+1}(x_{1:t+1}, y_{1:t+1}, \alpha_{1:t+1}) \,\gamma^{t+1}(dx_{t+1}, dy_{t+1}),
\end{align*}
since $(x_{1:t}, y_{1:t}, \alpha_{1:t+1}) \mapsto \int V^{t+1}_\delta \, d\gamma^{t+1}$ is lower semicontinuous by Fatou's Lemma, and an envelope over compact set $K$ of lower semicontinuous functions is again lower semicontinuous by \cite[Proposition 7.32.(a)]{bertsekas1996stochastic}. Hence, by weak density of $\Pi^{\operatorname{T}}_\delta(\mu_{x_{1:t}}, \cdot)$ in $\overline \Pi_\delta(\mu_{x_{1:t}}, \cdot)$ established in Lemma \ref{density_of_maps} and Proposition \ref{prop:open_dense_in_closed}, the conclusion follows. Next, we rewrite the dynamic programming principle for \(f(y, \alpha) = \sum_{t = 1}^N f_t(y_{1:t}, \alpha_t)\) as 
\begin{align}\label{minimax_proof:decomposition}
\begin{split}
V^\delta_0 &= \sup_{\gamma^1 \in \Pi^{\operatorname{T}}_\delta(\mu^1, \cdot)} \inf_{\alpha_1 \in K} \int \sup_{\gamma^2 \in \Pi^{\operatorname{T}}_\delta(\mu_{x_1}, \cdot)} \inf_{\alpha_2 \in K} \int \ldots \sup_{\gamma^N \in \Pi^{\operatorname{T}}_\delta(\mu_{x_{1:N-1}}, \cdot)} \inf_{\alpha_N \in K} \int f(y, \alpha) \,\gamma(dx, dy)\\
&= \sup_{\gamma^1 \in \Pi^{\operatorname{T}}_\delta(\mu^1, \cdot)} \inf_{\alpha_1 \in K} \int \dots \,
\left[\sum_{s=1}^{t} f_s(y_{1:s}, \alpha_s) + 
\sup_{\gamma^{t+1} \in \Pi^{\operatorname{T}}_\delta(\mu_{x_{1:t}}, \cdot)} \inf_{\alpha_{t+1} \in K} \int \dots\right.\\
&\quad \left.\dots \sup_{\gamma^N \in \Pi^{\operatorname{T}}_\delta(\mu_{x_{1:N-1}}, \cdot)} \inf_{\alpha_N \in K} \int \sum_{s=t}^Nf(y_{1:s}, \alpha_s) \,\gamma^N(dx_N, dy_N) \ldots \gamma^{t+1}(dx_{t+1}, dy_{t+1})\right] \,\gamma(dx_{1:t}, dy_{1:t}),
\end{split}
\end{align}
where $\gamma := \gamma^1 \otimes \ldots \otimes \gamma^N$. Consider the mapping
\begin{align}\label{eqn:minimax}
(x_{1:t}, y_{1:t}, \alpha_{1:t}, \gamma^{t+1}) \mapsto &\inf_{\alpha_{t+1} \in K} \int \ldots \sup_{\gamma^N \in \Pi_\delta^{\operatorname{T}}(\mu_{x_{1:N-1}}, \cdot)} \inf_{\alpha_N \in K} \int \sum_{s = t+1}^N f_s(y_{1:s}, \alpha_s) \, d\gamma^N \ldots d\gamma^{t+1}\nonumber\\
&= \inf_{\alpha_{t+1} \in K} \int V_{t+1}^\delta(x_{1:t+1}, y_{1:t+1}, \alpha_{1:t+1}) \, \gamma^{t+1}(dx_{t+1}, dy_{t+1}) - \sum_{s = 1}^{t} f_s(y_{1:s}, \alpha_s).
\end{align}
Since $V^\delta_{t+1}$ and $f$ are continuous and satisfy the growth condition, the mapping $$(x_{1:t}, y_{1:t}, \alpha_{1:t+1}, \gamma^{t+1}) \mapsto \int V^\delta_{t+1} \, d\gamma^{t+1} - \sum_{s = 1}^{t} f_s$$ is continuous  by Lemma \ref{sliced_continuity}. Consequently, \eqref{eqn:minimax} is upper semicontinuous as a negative of a supremum of lower semicontinuous mappings. By \cite[Proposition 7.50.(b)]{bertsekas1996stochastic} there exist universally measurable $\varepsilon$-optimizers $(x_{1:t}, y_{1:t}, \alpha_{1:t}) \mapsto \gamma_{x_{1:t}, y_{1:t}, \alpha_{1:t}} \in \Pi_{\delta}^{\operatorname{T}}(\mu_{x_{1:t}}, \cdot)$, i.e.,
\begin{equation}\label{eqn:minimax:near_optimizers_step}
\gamma_{x_{1:t}, y_{1:t}, \alpha_{1:t}} \in \varepsilon - \operatorname{argmax} \left(\inf_{\alpha_{t+1} \in K} \int V_{t+1}^\delta(x_{1:t+1}, y_{1:t+1}, \alpha_{1:t+1}) \, \gamma(dx_{t+1}, dy_{t+1}) - \sum_{s = 1}^{t} f_s(y_{1:s}, \alpha_s)\right).
\end{equation}
From \eqref{eqn:minimax} we note that $\gamma_{x_{1:t}, y_{1:t}, \alpha_{1:t}}$ does not depend on $\alpha_{1:t}$ due to semi-separability of $f$. Hence we denote it by $\gamma_{x_{1:t}, y_{1:t}}$, omitting the third argument. We then argue by \cite[Lemma 7.28.(c)]{bertsekas1996stochastic} applied with $p = \gamma^1 \otimes \ldots \otimes \gamma_{x_{1:t-1}, y_{1:t-1}}$ and $q = \gamma_{x_{1:t}, y_{1:t}}$ for $t =1, \dots, N-1,$ that $(x_{1:t}, y_{1:t}) \mapsto \gamma_{x_{1:t}, y_{1:t}}$ can be chosen to be Borel measurable, while \eqref{eqn:minimax:near_optimizers_step} still holds $(\gamma^1 \otimes \ldots \otimes \gamma_{x_{1:t-1}, y_{1:t-1}})$--almost surely. Setting $\gamma := \gamma^1 \otimes \ldots \otimes \gamma_{x_{1:N-1}, y_{1:N-1}} \in \Pi_{\operatorname{bc}, \delta}(\mu, \cdot)$ and applying \eqref{eqn:minimax:near_optimizers_step} we obtain
\begin{align*}
V^\delta_0 &\le  \inf_{\alpha_1 \in K} \int \inf_{\alpha_2 \in K} \int \dots \inf_{\alpha_N \in K} \int f(y, \alpha) \,\gamma(dx, dy) +N \varepsilon \\
&\leq \sup_{\gamma \in \Pi_{\operatorname{bc}, \delta}(\mu, \cdot)} \inf_{\alpha_1 \in K} \int \inf_{\alpha_2 \in K} \int \dots \inf_{\alpha_N \in K} \int f(y, \alpha) \,\gamma(dx, dy) + N \varepsilon\\
&\leq \sup_{\gamma \in \Pi_{\operatorname{bc}, \delta}(\mu, \cdot)} \inf_{\alpha \in \mathcal{A}} \int f(y, \alpha)\, \gamma(dx, dy) + N \varepsilon,
\end{align*}
where the final inequality holds since every predictable process $\alpha \in \mathcal{A}$ yields a selector for the objective $\inf_{\alpha_1 \in K} \int \inf_{\alpha_2 \in K} \int \dots \inf_{\alpha_N \in K} \int f \,d\gamma$. Now taking $\varepsilon \to 0$ we obtain
$$
V^\delta_0 \leq \sup_{\gamma \in \Pi_{\operatorname{bc}, \delta}(\mu, \cdot)} \inf_{\alpha \in \mathcal{A}} \int f(y, \alpha)\, \gamma(dx, dy),
$$
which confirms $\inf_{\alpha \in \mathcal{A}} \sup_{\gamma \in \Pi_{\operatorname{bc}, \delta}(\mu, \cdot)} \int f(y, \alpha) \, \gamma(dx, dy) \leq \sup_{\gamma \in \Pi_{\operatorname{bc}, \delta}(\mu, \cdot)} \inf_{\alpha \in \mathcal{A}} \int f(y, \alpha)\, \gamma(dx, dy)$, as $V^\delta_0 = \inf_{\alpha \in \mathcal{A}} \sup_{\gamma \in \Pi_{\operatorname{bc}, \delta}(\mu, \cdot)} \int f(y, \alpha) \, \gamma(dx, dy)$ by Theorem \ref{equiv:control_open}, as $f$ is lower semicontinuous and satisfies desired growth condition. The opposite inequality is trivial, and hence the proof is complete.
\end{proof}

\subsection{Proof of Theorem \ref{dro:control_sensitivity} and Corollary \ref{dro:control_martingale_sensitivity}}

 We now proceed with the proof of Theorem \ref{dro:control_sensitivity}. We start with the easier control free case.

\begin{proof}[Proof of Theorem \ref{dro:control_sensitivity}, control free, upper bound.]
We first note that 
\begin{align}
f(x)-f(0)= \int_0^1  \langle \nabla_x f(\lambda x),x\rangle \,d\lambda \lesssim \int_0^1 (1+\|x\|)^{p-1-\epsilon} \|x\| \,d\lambda \lesssim 1 + \|x\|^{p-\epsilon}
\end{align}
for any $x \in \R^N,$ so that $f$ satisfies the growth condition of Theorem \ref{equiv:basic}.
By Theorem \ref{equiv:basic} and Lemma \ref{lem:supremum_closed} we have
\[
V(\delta) - V(0) = \sup_{\gamma^1 \in \overline\Pi_\delta(\mu^1, \cdot)} \int \sup_{\gamma^2 \in \overline\Pi_\delta(\mu_{x_{1}}, \cdot)} \int \ldots \sup_{\gamma^N \in \overline\Pi_\delta(\mu_{x_{1:N-1}}, \cdot)} \int [f(y) - f(x)] \,\gamma(dx, dy).
\]
By Lemma \ref{lem:cost_to_go_regularity} the function $V_t^\delta$ is continuous and satisfies $|V_t^\delta(x_{1:t}, y_{1:t})| \lesssim 1 + \|x_{1:t}\|^{p - \varepsilon} + \|y_{1:t}\|^{p - \varepsilon}$, hence $(x_{1:t}, y_{1:t}, \gamma^{t+1}) \mapsto \int V_{t+1}^\delta \, d\gamma^{t+1}$ is continuous by Lemma \ref{sliced_continuity}. Moreover, $\overline \Pi_\delta(\mu_{x_{1:t}}, \cdot)$ is compact in $(\mathcal{P}_{p - \varepsilon}(\mathbb{R}^2), \mathcal{W}_{p - \varepsilon})$ by Proposition \ref{hemicontinuity}. Hence, \cite[Proposition 7.33]{bertsekas1996stochastic} applied with $f = -\int V_{t+1}^\delta$ yields Borel measurable optimizers \((x_{1:t}, y_{1:t}) \mapsto \gamma_{x_{1:t}, y_{1:t}} \in \overline\Pi_\delta(\mu_{x_{1:t}}, \cdot)\) for  \(V_t^\delta(x_{1:t}, y_{1:t})\).

We now set \(\gamma: = \gamma^1\otimes \gamma_{x_1,y_1}\otimes\dots \gamma_{x_{1:N-1}, y_{N-1}}\). Clearly $\gamma$ depends on $\delta$; we omit this dependence for now to shorten notation, and will refer to $\gamma=\gamma^\delta$ in the second part of the proof. Using a telescoping argument,
\begin{align}\label{eq:telescope}
\begin{split}
V(\delta) - V(0) &= \int \sum_{t = 1}^N [f(y_{1:t}, x_{t+1:N}) - f(y_{1:t-1}, x_{t:N})]\, \gamma(dx, dy)\\
&= \sum_{t = 1}^N \int [f(y_{1:t}, x_{t+1:N}) - f(y_{1:t-1}, x_{t:N})]\, \gamma(dx, dy)\\
&= \sum_{t = 1}^N S_t(\gamma), \;\; \text{where} \;\; S_t(\gamma) := \int [f(y_{1:t}, x_{t+1:N}) - f(y_{1:t-1}, x_{t:N})] \,\gamma (dx, dy).
\end{split}
\end{align}
We now deal with each term \(S_t(\gamma)\) separately. Using differentiability of \(f\) and Fubini's theorem, we obtain
\begin{align}\label{eq:S}
\begin{split}
S_t(\gamma) &= \int [f(y_{1:t}, x_{t+1:N}) - f(y_{1:t-1}, x_{t:N})]\, \gamma(dx, dy)\\
&= \int \left[\int_0^1 \partial_t f(y_{1:t-1}, x_t + \lambda (y_t - x_t), x_{t+1:N}) (y_t - x_t) \,d\lambda\right]\, \gamma(dx, dy)\\
&= \int_0^1 \left[\int \partial_t f(y_{1:t-1}, x_t + \lambda (y_t - x_t), x_{t+1:N}) (y_t - x_t) \bar{\mu}_{x_{1:t}}(dx_{t+1:N})\,\gamma(dx_{1:t}, dy_{1:t})\right] \,d\lambda.
\end{split}
\end{align}
We now disintegrate the measure \(\gamma(dx_{1:t}, dy_{1:t}) = \gamma_{x_{1:t-1}, y_{1:t-1}}(dx_t, dy_t) \otimes \gamma(dx_{1:t-1}, dy_{1:t-1})\) and apply H\"older's inequality for the probability measure  \(\gamma_{x_{1:t-1}, y_{1:t-1}}(dx_t, dy_t)\). As \(\mathcal{C}_p(\gamma_{x_{1:t-1}, y_{1:t-1}}) < \delta\) we obtain for $q=p/(p-1)$
\begin{align*}
&\int \partial_t f(y_{1:t-1}, x_t + \lambda (y_t - x_t), x_{t+1:N}) (y_t - x_t) \,\bar{\mu}_{x_{1:t}}(dx_{t+1:N})\,\gamma(dx_{1:t}, dy_{1:t})\\
&\le \delta \int \left\|\int \partial_t f(y_{1:t-1}, x_t + \lambda (y_t - x_t), x_{t+1:N}) \,\bar{\mu}_{x_{1:t}}(dx_{t+1:N})\right\|_{L^q(\gamma_{x_{1:t-1}, y_{1:t-1}})} \,\gamma(dx_{1:t-1}, dy_{1:t-1})\\
&\le \delta \int F_\delta(\lambda, x_{1:t-1}, y_{1:t-1})\, \gamma(dx_{1:t-1}, dy_{1:t-1}),
\end{align*}
where 
\[
F_\delta(\lambda,x_{1:t-1}, y_{1:t-1}) := \sup_{\gamma \in \overline\Pi_\delta(\mu_{x_{1:t-1}}, \cdot)}\left\|\int \partial_t f(y_{1:t-1}, x_t + \lambda (y_t - x_t), x_{t+1:N}) \,\bar{\mu}_{x_{1:t}}(dx_{t+1:N})\right\|_{L^q(\gamma)}.
\]
Our next aim is to apply Lemma \ref{uniform_convergence_dini} to $F_\delta$ with $K_\delta(x) =\overline\Pi_\delta(\mu_{x_{1:t-1}}, \cdot)$. For this we collect a number of observations:
\begin{enumerate}
\item Recalling the growth condition $\|\nabla_x f(x, \alpha)\| \leq C(1 + \|x\|^{p - 1 - \varepsilon})$ we have
    \begin{align*}
    &\left|\int \partial_t f(y_{1:t-1}, x_t + \lambda (y_t - x_t), x_{t+1:N}) \,\bar{\mu}_{x_{1:t}}(dx_{t+1:N})\right|\\
    &\lesssim 1 + \|y_{1:t-1}\|^{p - 1 - \varepsilon} + \|x_t + \lambda (y_t - x_t)\|^{p - 1 - \varepsilon} + \int \|x_{t+1:N}\|^{p - 1 - \varepsilon}\, \bar{\mu}_{x_{1:t}}(dx_{t+1:N})\\
    &\lesssim 1 + \|y_{1:t}\|^{p - 1 - \varepsilon} + \|x_{1:t}\|^{p - 1 - \varepsilon}
    \end{align*}
    for all $x_{1:t}, y_{1:t}\in \R^{t-1}$ and $\lambda\in [0,1]$,
    where the last inequality follows from 
    \begin{align}\label{eq:useful}
    \int \|x_{t+1:N}\|^{p - 1 - \varepsilon} \bar{\mu}_{x_{1:t}}(dx_{t+1:N}) \leq \left(\int \|x_{t+1:N}\|^p \bar{\mu}_{x_{1:t}}(dx_{t+1:N})\right)^{\frac{p - 1 - \varepsilon}{p}} \lesssim 1+\|x_{1:t}\|^{p - 1 - \varepsilon}
    \end{align}
    and the assumption \(\int \|x_{t+1:N}\|^p \,\bar{\mu}_{x_{1:t}}(dx_{t+1:N}) \lesssim \|x_{1:t}\|^p\).
    Recalling that $q=p/(p-1)$ and using the inequality $(|a| + |b|)^p \leq 2^{p-1} (|a|^p + |b|^p)$  we conclude that 
    \[
    \left|\int \partial_t f(y_{1:t-1}, x_t + \lambda (y_t - x_t), x_{t+1:N}) \,\bar{\mu}_{x_{1:t}}(dx_{t+1:N})\right|^q \lesssim 1 + \|y_{1:t}\|^\frac{p(p - 1 - \varepsilon)}{p-1} + \|x_{1:t}\|^\frac{p(p - 1 - \varepsilon)}{p-1}.
    \]
    We also note that $$ \frac{p(p-1-\epsilon)}{p-1} \le p-1-\epsilon +1 =p-\epsilon<p-\epsilon/2.$$
    \item The correspondence \((x_{1:t-1}, y_{1:t-1}) \twoheadrightarrow \overline{\Pi}_\delta(\mu_{x_{1:t-1}}, \cdot)\) is continuous in $(\mathcal{P}_{p - \varepsilon/2}(\mathbb{R}^2), \mathcal{W}_{p - \varepsilon/2})$ by Proposition \ref{hemicontinuity}, and $\overline \Pi_\delta(\mu_{x_{1:t-1}}, \cdot)$ is compact. Moreover, \(\overline{\Pi}_\delta(\mu_{x_{1:t-1}}, \cdot) \downarrow \{(\operatorname{Id}, \operatorname{Id})_\# \mu_{x_{1:t-1}}\}\), as \(\delta \to 0\).
\end{enumerate}
This allows to conclude that $$\lim_{\delta\to 0}F_\delta(\lambda, x_{1:t-1}, y_{1:t-1}) = F_0:=\Big\|\int \partial_t f(y_{1:t-1}, x_{t:N}) \,\bar{\mu}_{x_{1:t}}(dx_{t+1:N})\Big\|_{L^q(\mu_{x_{1:t-1}})}$$
uniformly on any compact set \(K \subset [0,1]
\times \mathbb{R}^{t - 1} \times \mathbb{R}^{t - 1}\) by Lemma \ref{uniform_convergence_dini} (with $K_\delta(x) =\overline\Pi_\delta(\mu_{x_{1:t-1}}, \cdot)$ and $\mathcal{W}_{p-\epsilon/2}$). Recalling that $\gamma=\gamma^\delta$ we furthermore have \(\gamma^{\delta} \to (\operatorname{Id}, \operatorname{Id})_\# \mu\) in \(\mathcal{W}_p\). Thus we can use Prokhorov's Theorem to find compact sets $K_\epsilon$ for each $\epsilon>0$, such that
\begin{align}\label{eq:prokhorov}
\lim_{\varepsilon \to 0} \sup_{\delta \in [0, 1]} \gamma^{\delta}((K_\epsilon)^\text{c}) = 0.
\end{align}
Defining \(\Delta F_\delta(\lambda,\delta, x_{1:t-1}, y_{1:t-1}) := F_\delta(\lambda,\delta, x_{1:t-1}, y_{1:t-1}) - F_0(\lambda, x_{1:t-1}, y_{1:t-1})\) we conclude
\begin{align*}
&\limsup_{\delta \to 0} \int \Delta F_\delta(\lambda,\delta, x_{1:t-1}, y_{1:t-1}) \,\gamma^{\delta}(dx_{1:t-1}, dy_{1:t-1})\\
&\qquad\leq \limsup_{\delta \to 0} \int_{K_\varepsilon} \Delta F_\delta(\lambda,\delta, x_{1:t-1}, y_{1:t-1}) \,\gamma^{\delta}(dx_{1:t-1}, dy_{1:t-1})\\
&\qquad\qquad+ \limsup_{\delta \to 0} \int_{(K_\epsilon)^\text{c}} \Delta F_\delta(\lambda,\delta, x_{1:t-1}, y_{1:t-1}) \,\gamma^{ \delta}(dx_{1:t-1}, dy_{1:t-1}) \to 0
\end{align*}
as \(\varepsilon \to 0\): the first term vanishes due to the uniform convergence of $F_\delta\to F_0$ for $\delta\to 0$ on $K_\epsilon$, and the second term vanishes by H\"older's inequality, \eqref{eq:useful} and \eqref{eq:prokhorov}. Recalling \eqref{eq:S} and \eqref{eq:useful} we now use the dominated convergence theorem for the $d\lambda$-integral to conclude 
\begin{align*}
\begin{split}
\limsup_{\delta \to 0} \frac{S_t(\gamma^\delta)}{\delta} &\leq \int_0^1 \limsup_{\delta \to 0} \int F_\delta(\lambda, x_{1:t-1}, y_{1:t-1}) \,\gamma^{\delta}(dx_{1:t-1}, dy_{1:t-1}) \,d\lambda\\
&\leq \int_0^1 \limsup_{\delta \to 0} \int \Delta F_\delta(\lambda, x_{1:t-1}, y_{1:t-1}) \,\gamma^{\delta}(dx_{1:t-1}, dy_{1:t-1}) \,d\lambda\\
&\quad + \limsup_{\delta \to 0} \int\left\|\int \partial_t f(y_{1:t-1}, x_{t:N}) \,\bar{\mu}_{x_{1:t}}(dx_{t+1:N})\right\|_{L^q(\mu_{x_{1:t-1}})} \gamma^{\delta}(dx_{1:t-1}, dy_{1:t-1})\\
&= \int \left\|\int \partial_t f(x) \,\bar{\mu}_{x_{1:t}}(dx_{t+1:N})\right\|_{L^q(\mu_{x_{1:t-1}})} \,\mu(dx_{1:t-1}),
\end{split}
\end{align*}
noting that \(\gamma^{\delta} \to (\operatorname{Id}, \operatorname{Id})_\# \mu\) in \(\mathcal{W}_p(\mathbb{R}^{N} \times \mathbb{R}^{N})\) for the last equality. Combining the estimates for \(S_t(\gamma^\delta)\) we obtain
\begin{align*}
&\limsup_{\delta \to 0} \frac{V(\delta) - V(0)}{\delta} \leq \sum_{t = 1}^N \limsup_{\delta \to 0} \frac{S_t(\gamma^\delta)}{\delta}& \\
&\leq \left\|\int \partial_1 f(x) \,\bar{\mu}_{x_{1}}(dx_{2:N})\right\|_{L^q(\mu^1)} + \sum_{t = 2}^N \int \left\|\int \partial_t f(x) \,\bar{\mu}_{x_{1:t}}(dx_{t+1:N})\right\|_{L^q(\mu_{x_{1:t-1}})} \,\mu(dx_{1:t-1}),
\end{align*}
as claimed.
\end{proof}

\begin{proof}[Proof of Theorem \ref{dro:control_sensitivity}, control free, lower bound]
By duality between \(L^p(\mu_{x_{1:t-1}})\) and \(L^q(\mu_{x_{1:t-1}})\) there exists Borel measurable functions $T_t:\R^t\to \R$ with $\|T_t(x_{1:t})\|_{L^p(\mu_{x_{1:t-1}})} = 1$, that satisfy
\begin{align}\label{eq:dual}
\begin{split}
\left\|\int \partial_t f(x)\, \bar{\mu}_{x_{1:t}}(dx_{t+1:N})\right\|_{L^q(\mu_{x_{1:t-1}})} &= 
\int \Big(\int \partial_t f(x) \, \bar{\mu}_{x_{1:t}}(dx_{t+1:N}) \Big) T_t(x_{1:t}) \,\mu_{x_{1:t-1}}(dx_{t})\\
&=\int \partial_t f(x) T_t(x_{1:t}) \,\bar{\mu}_{x_{1:t-1}}(dx_{t:N}).
\end{split}
\end{align}
We now define 
$$
\gamma^\delta_{x_{1:t-1}, y_{1:t-1}} := (x_t, x_t + \bar \delta T_t(x_{1:t}))_\# \mu_{x_{1:t-1}} \;\; \text{for} \;\; \bar{\delta} = \frac{\delta}{1 + \delta},
$$
and set $\gamma^\delta := \gamma^{1, \delta} \otimes \gamma^\delta_{x_1, y_1} \otimes \ldots \otimes \gamma^\delta_{x_{1:N-1}, y_{1:N-1}}$. This transport plan is causal by \cite[Proposition 2.4, 2]{backhoff2017causal}, and $\gamma^\delta \in \Pi_{\delta}(\mu, \cdot)$ holds as $\|T_t(x_{1:t})\|_{L^p(\mu_{x_{1:t-1}})} = 1$ and $\bar \delta < \delta$. Recalling the definition of $S_t$ from \eqref{eq:telescope} and using the fundamental theorem of calculus together with Fubini's Theorem we have
\begin{align*}
S_t(\gamma^\delta) &= \int [f(y_{1:t}, x_{t+1:N}) - f(y_{1:t-1}, x_{t:N})]\, \gamma^{\delta}(dx, dy)\\
&= \int \left[\int_0^1 \partial_t f(y_{1:t-1}, x_t + \lambda (y_t - x_t), x_{t+1:N}) (y_t - x_t) \,d\lambda \right]\, \gamma^{\delta}(dx, dy)\\
&= \bar \delta \int_0^1 \left[\int \partial_t f(x_{1:t-1} + \bar \delta T_{1:t-1}(x_{1:t-1}), x_t + \lambda \bar \delta T_t(x_{1:t}), x_{t+1:N}) T_t(x_{1:t})\, \mu(dx)\right]\, d\lambda.
\end{align*}
We note that $\partial_t f \cdot T_t$ is bounded in $L^1(\mu)$ uniformly in $\lambda$ by H\"older's inequality due to the growth condition on $\nabla f$ and $\|T_t\|_{L^p(\mu)} = 1$. Hence, applying the dominated convergence theorem we obtain 
\begin{align*}
&\liminf_{\delta \to 0} \frac{S_t(\gamma^\delta)}{\delta}\\
&\geq \liminf_{\delta \to 0} \frac{1}{1 + \delta} \int_0^1 \left[\int \partial_t f(x_{1:t-1} + \bar \delta T_{1:t-1}(x_{1:t-1}), x_t + \lambda \bar \delta T_t(x_{1:t}), x_{t+1:N}) T_t(x_{1:t})\, \mu(dx)\right]\, d\lambda\\
&= \int \partial_t f(x) T_t(x_{1:t}) \,\mu(dx)\stackrel{\eqref{eq:dual}} {=} \int \left\|\int \partial_t f(x) \,\bar{\mu}_{x_{1:t}}(dx_{t+1:N})\right\|_{L^q(\mu_{x_{1:t-1}})} \mu(dx_{1:t-1}).
\end{align*}

Finally, using Corollary \ref{cor:causal_bicausal} we have
\begin{align}
\frac{V(\delta) - V(0)}{\delta} \ge \sum_{t = 1}^N  \frac{S_t(\gamma^\delta)}{\delta}.
\end{align}
Taking the limit inferior on both sides and sending $\varepsilon \to 0$, we arrive to the desired conclusion.
\end{proof}

To extend the results to the controlled case, we start with the auxiliary regularity result for the cost-to-go functions $V_t^\delta$ for the case $\delta = 0$.

\begin{lemma}\label{lem:control_certain:strong_convexity}
Consider the dynamic programming principle for the case $\delta = 0$, i.e., without model uncertainty:
\begin{align}
\begin{split}\label{eq:V0}
V^0_N(x, \alpha) &= f(x, \alpha)\\
V^0_t(x_{1:t}, \alpha_{1:t}) &= \inf_{\alpha_{t+1} \in K} \int V^0_{t+1}(x_{1:t+1}, \alpha_{1:t+1}) \,\mu_{x_{1:t}}(dx_{t+1}).
\end{split}
\end{align}
In the setting of Lemma \ref{lem:cost_to_go_regularity} assume that $\alpha \mapsto f(x, \alpha)$ is $\varepsilon(x)$-strongly convex. Then the cost-to-go functions $\alpha_{1:t} \mapsto V_t^0(x_{1:t}, \alpha_{1:t})$ are $\varepsilon_t(x_{1:t})$-strongly convex, where $\epsilon_N(x):=\epsilon(x)$ and
$$
\varepsilon_t(x_{1:t}) := \int \varepsilon(x) \,\bar{\mu}_{x_{1:t}}(dx_{t+1:N}).
$$
\end{lemma}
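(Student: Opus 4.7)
The plan is a backward induction on $t$, relying on two preservation properties of strong convexity: (i) averaging against a probability kernel preserves strong convexity with an averaged constant, and (ii) partial minimization over a convex compact set preserves strong convexity in the remaining variables. For the inductive argument I will use the equivalent midpoint formulation of $\eta$-strong convexity, namely
$$
g(\lambda u + (1 - \lambda) v) \leq \lambda g(u) + (1 - \lambda) g(v) - \tfrac{1}{2} \eta \lambda (1 - \lambda) \|u - v\|^2 \qquad \forall \lambda \in [0, 1],
$$
which is equivalent to the subgradient formulation assumed for $f$ whenever $g$ is convex (cf. the standard characterization of strong convexity).

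\emph{Base case $t = N$:} By definition $V_N^0(x, \alpha) = f(x, \alpha)$, and by hypothesis this is $\varepsilon(x)$-strongly convex in $\alpha$; since $\varepsilon_N(x) = \varepsilon(x)$, the claim holds.

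\emph{Inductive step.} Assume $\alpha_{1:t+1} \mapsto V_{t+1}^0(x_{1:t+1}, \alpha_{1:t+1})$ is $\varepsilon_{t+1}(x_{1:t+1})$-strongly convex. Set
$$
G(x_{1:t}, \alpha_{1:t+1}) := \int V_{t+1}^0(x_{1:t+1}, \alpha_{1:t+1}) \, \mu_{x_{1:t}}(dx_{t+1}),
$$
so that $V_t^0(x_{1:t}, \alpha_{1:t}) = \inf_{\alpha_{t+1} \in K} G(x_{1:t}, \alpha_{1:t+1})$ and note that the infimum is attained thanks to Lemma \ref{lem:cost_to_go_regularity} and compactness of $K$. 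Plugging the midpoint inequality for $V_{t+1}^0$ into the integral defining $G$ and using the tower property for $\bar{\mu}$, I obtain that for every $\lambda \in [0, 1]$ and every $\alpha_{1:t+1}, \widetilde \alpha_{1:t+1}$,
$$
G(x_{1:t}, \lambda \alpha_{1:t+1} + (1 - \lambda) \widetilde \alpha_{1:t+1}) \leq \lambda G(x_{1:t}, \alpha_{1:t+1}) + (1 - \lambda) G(x_{1:t}, \widetilde \alpha_{1:t+1}) - \tfrac{1}{2} \varepsilon_t(x_{1:t}) \lambda(1 - \lambda) \|\alpha_{1:t+1} - \widetilde \alpha_{1:t+1}\|^2,
$$
where I have used
$$
\int \varepsilon_{t+1}(x_{1:t+1}) \, \mu_{x_{1:t}}(dx_{t+1}) = \int \int \varepsilon(x) \, \bar{\mu}_{x_{1:t+1}}(dx_{t+2:N}) \, \mu_{x_{1:t}}(dx_{t+1}) = \int \varepsilon(x) \, \bar{\mu}_{x_{1:t}}(dx_{t+1:N}) = \varepsilon_t(x_{1:t}).
$$
Hence $G$ is jointly $\varepsilon_t(x_{1:t})$-strongly convex in $\alpha_{1:t+1}$.

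It remains to transfer this to $V_t^0$ via partial minimization. Fix $\alpha_{1:t}, \widetilde \alpha_{1:t}$ and let $\alpha_{t+1}^*, \widetilde \alpha_{t+1}^* \in K$ be minimizers of $G(x_{1:t}, \alpha_{1:t}, \cdot)$ and $G(x_{1:t}, \widetilde \alpha_{1:t}, \cdot)$, respectively. Convexity of $K$ makes $\lambda \alpha_{t+1}^* + (1 - \lambda) \widetilde \alpha_{t+1}^* \in K$ admissible, so
\begin{align*}
V_t^0(x_{1:t}, \lambda \alpha_{1:t} + (1 - \lambda) \widetilde \alpha_{1:t}) &\leq G\bigl(x_{1:t}, \lambda(\alpha_{1:t}, \alpha_{t+1}^*) + (1 - \lambda)(\widetilde \alpha_{1:t}, \widetilde \alpha_{t+1}^*)\bigr) \\
&\leq \lambda V_t^0(x_{1:t}, \alpha_{1:t}) + (1 - \lambda) V_t^0(x_{1:t}, \widetilde \alpha_{1:t}) - \tfrac{1}{2} \varepsilon_t(x_{1:t}) \lambda(1 - \lambda) \|\alpha_{1:t} - \widetilde \alpha_{1:t}\|^2,
\end{align*}
using the strong convexity of $G$ and the elementary bound $\|\alpha_{1:t} - \widetilde \alpha_{1:t}\|^2 \leq \|(\alpha_{1:t}, \alpha_{t+1}^*) - (\widetilde \alpha_{1:t}, \widetilde \alpha_{t+1}^*)\|^2$. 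This is precisely $\varepsilon_t(x_{1:t})$-strong convexity of $\alpha_{1:t} \mapsto V_t^0(x_{1:t}, \alpha_{1:t})$, closing the induction.

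The only potential subtlety is the existence of the minimizers $\alpha_{t+1}^*, \widetilde \alpha_{t+1}^*$; Lemma \ref{lem:cost_to_go_regularity} applied to the uncertainty-free case $\delta = 0$ (where $\overline{\Pi}_0(\mu_{x_{1:t}}, \cdot) = \{(\mathrm{Id}, \mathrm{Id})_\# \mu_{x_{1:t}}\}$) together with compactness of $K$ supplies this.
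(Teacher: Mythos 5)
Your proof is correct and follows essentially the same route as the paper: backward induction combining the tower property $\int \varepsilon_{t+1}\,d\mu_{x_{1:t}} = \varepsilon_t$ with the midpoint strong-convexity inequality under the integral, then plugging in the minimizers of the inner problem and discarding the $|\alpha_{t+1}^*-\widetilde\alpha_{t+1}^*|^2$ contribution. Your packaging as "integration preserves strong convexity, then partial minimization over the convex compact $K$ preserves it" is just a reorganization of the paper's single chain of inequalities (the paper likewise uses convexity of $K$ and existence of optimizers from Lemma \ref{lem:cost_to_go_regularity}).
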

\begin{proof}
We show the claim by backward induction. The statement clearly holds for $t = N$. Suppose that $t < N$, fix $x_{1:t} \in \R^t$ and assume that the statement holds for $V_{t+1}^0$. We first note that
\begin{align}\label{eq:epsilon}
\int \epsilon_{t+1}\,\mu_{x_{1:t}}(dx_{t+1}) =\int \int \epsilon(x)\, \bar{\mu}_{x_{1:t+1}}(dx_{t+2:N})\,\mu_{x_{1:t}}(dx_{t+1}) = \int \epsilon(x) \, \bar{\mu}_{x_{1:t}}(dx_{t+1:N}) =\epsilon_t(x_{1:t}).
\end{align}
Pick any $\lambda \in (0, 1)$ and $\alpha_{1:t}^{(1)}, \alpha_{1:t}^{(2)} \in K^t$, and let $\alpha_{t+1}^{(i)} \in K$ be an optimizer for $V^0_t(x_{1:t}, \alpha_{1:t}^{(i)})$, where $i = 1, 2$ (note that existence of optimizers is guaranteed in the setting of Lemma \ref{lem:cost_to_go_regularity}). Then we have
\begin{align*}
&\lambda V_t^0(x_{1:t}, \alpha_{1:t}^{(1)}) + (1 - \lambda) V^0_t(x_{1:t}, \alpha_{1:t}^{(2)})\\
&= \int \lambda V_t^0(x_{1:t+1}, \alpha_{1:t+1}^{(1)}) + (1 - \lambda) V^0_t(x_{1:t+1}, \alpha_{1:t+1}^{(2)})\,\mu_{x_{1:t}}(dx_{t+1})\\
&\stackrel{\text{(IH)}}{\ge} \int \Big[ V_t^0(x_{1:t+1}, \lambda \alpha_{1:t+1}^{(1)} + (1 - \lambda) \alpha_{1:t+1}^{(2)})) + \frac{1}{2}\epsilon_{t+1}(x_{1:t+1}) \lambda(1-\lambda) \sum_{s=1}^{t+1} |\alpha_s^{(1)}-\alpha_s^{(2)}|^2 \Big]\,\mu_{x_{1:t}}(dx_{t+1})\\
&\stackrel{\eqref{eq:epsilon}}{=} 
\int [ V_t^0(x_{1:t+1}, \lambda \alpha_{1:t+1}^{(1)} + (1 - \lambda) \alpha_{1:t+1}^{(2)}))]\,\mu_{x_{1:t}}(dx_{t+1}) + \frac{1}{2}\epsilon_{t}(x_{1:t}) \lambda(1-\lambda) \sum_{s=1}^{t} |\alpha_s^{(1)}-\alpha_s^{(2)}|^2 \\
&\ge V_t^0(x_{1:t}, \lambda \alpha_{1:t}^{(1)} + (1 - \lambda) \alpha_{1:t}^{(2)})+ \frac{1}{2}\epsilon_{t}(x_{1:t}) \lambda(1-\lambda) \sum_{s=1}^{t} |\alpha_{s}^{(1)}-\alpha_{s}^{(2)}|^2.
\end{align*}
This shows the claim.
\end{proof}

Lemma \ref{lem:control_certain:strong_convexity} guarantees continuity of the optimal control. In consequence, the proof of the upper bound in Theorem \ref{dro:control_sensitivity} is straightforward. 

\begin{proof}[Proof of Theorem \ref{dro:control_sensitivity} with control, upper bound]
As in the uncontrolled case, it is straightforward to check that $V_t^\delta$ satisfies the growth assumptions of Lemma \ref{lem:cost_to_go_regularity}.(2). 
Since $f$ is also continuous and $\varepsilon(x)$--strongly convex by assumption, Lemma \ref{lem:cost_to_go_regularity}.(2) shows that each cost-to-go function $V^0_t$ is continuous and Lemma \ref{lem:control_certain:strong_convexity} shows that $\alpha_{1:t}\mapsto V_t^0(x_{1:t}, \alpha_{1:t})$ is $\varepsilon_t(x_{1:t})$--strongly convex.
Hence, the optimal control $\alpha^\star(x) \in \operatorname{argmin}(V(0))$ is unique and continuous (by Berge's maximum theorem \cite[Theorem 17.31]{guide2006infinite}, noting that a single-valued correspondence is continuous if it is upper hemicontinuous). Thus
\[
V(\delta) - V(0) \leq \sup_{\gamma\in \Pi_{\operatorname{bc}, \delta}(\mu,\cdot)} \int [f(y, \alpha^\star(x)) - f(x, \alpha^\star(x))] \,\gamma(dx, dy).
\]
We now proceed in the same way as in [Proof of Theorem \ref{dro:control_martingale_sensitivity}, control free, upper bound].
\end{proof}

\begin{proof}[Proof of Theorem \ref{dro:control_sensitivity} with control, lower bound]
We first note that
\begin{align*}
V(\delta) &= \inf_{\alpha \in \mathcal{A}} \sup_{\gamma \in \Pi_{\operatorname{bc}, \delta}(\mu, \cdot)} \int f(y, \alpha(x, y)) \,\gamma(dx, dy)\\
&= \inf_{\alpha \in \mathcal{A}} \sup_{\gamma \in \Pi_\delta(\mu, \cdot)} \int f(y, \alpha(x, y)) \,\gamma(dx, dy)\\
&\geq \sup_{\gamma \in \Pi_\delta(\mu, \cdot)} \inf_{\alpha \in \mathcal{A}} \int f(y, \alpha(x, y))\, \gamma(dx, dy)\\
& =: \widetilde{V}(\delta),
\end{align*}
where the second equality follows from Corollary \ref{cor:causal_bicausal}, and the inequality is valid since $\inf \sup \geq \sup \inf$. As $\widetilde{V}(0) = V(0)$ we have
$$
V(\delta) - V(0) \geq \widetilde{V}(\delta) - \widetilde{V}(0).
$$
Throughout the rest of the proof we estimate $\widetilde{V}(\delta) - \widetilde{V}(0)$. The remainder of the proof is very similar to 
[Proof of Theorem \ref{dro:control_sensitivity}, control free, lower bound]. For completeness, we state it in full detail: 
denote by $\alpha^\star$ the optimal control for $V(0)$. Similarly to the control-free case, we use duality between $L^p(\mu_{x_{1:t-1}})$ and $L^q(\mu_{x_{1:t-1}})$ to find Borel measurable functions $T_t:\R^t\to \R$, which satisfy the identities $\|T_t(x_{1:t-1}, \cdot)\|_{L^p(\mu_{x_{1:t-1}})} = 1$ and
\begin{align*}
\left\|\int \partial_t f(x, \alpha^\star(x)) \,\bar{\mu}_{x_{1:t}}(dx_{t+1:N})\right\|_{L^q(\mu_{x_{1:t-1}})} &= 
\int \Big(\int \partial_t f(x, \alpha^\star(x)) \, \bar{\mu}_{x_{1:t}}(dx_{t+1:N}) \Big) T_t(x_{1:t}) \,\mu_{x_{1:t-1}}(dx_{t})\\
&=\int \partial_t f(x, \alpha^\star(x)) T_t(x_{1:t}) \,\bar{\mu}_{x_{1:t-1}}(dx_{t:N}).
\end{align*}
We now define 
$$\gamma^\delta_{x_{1:t-1}, y_{1:t-1}} := (x_t, x_t + \bar \delta T_t(x_{1:t}))_\# \mu_{x_{1:t-1}} \;\; \text{for} \;\; \bar \delta = \frac{\delta}{1 + \delta},$$
and $\gamma^\delta := \gamma^{1, \delta} \otimes \gamma^\delta_{x_1, y_1} \otimes \ldots \otimes \gamma^\delta_{x_{1:N-1}, y_{1:N-1}}$, which is causal by \cite[Proposition 2.4, 2]{backhoff2017causal}. We now take controls $(x,y)\mapsto \alpha^\delta(x, y)$, which minimize $\inf_{\alpha \in \mathcal{A}} \int f(y, \alpha(x, y)) \,\gamma^\delta(dx, dy)$ so that
\begin{align}\label{eqn:thm:lower_bound_control:selected}
\widetilde{V}(\delta) - \widetilde{V}(0) &\geq \int f(y, \alpha^\delta(x, y))-f(x, \alpha^\delta(x,y)) \,\gamma^\delta(dx, dy) \nonumber\\
&= \int [f(x + \bar \delta T(x), \alpha^\delta(x)) - f(x, \alpha^\delta(x))]\,\mu(dx),
\end{align}
where we set $T(x):=(T_1(x_1), \dots, T_N(x_{1:N}))$ and $\alpha^\delta(x) := \alpha^\delta(x, x 
+ \bar \delta T(x))$. For now assume that $\alpha^\delta \to \alpha^\star$ in $\mu$-measure, and hence the convergence holds $\mu$-almost everywhere along a subsequence. We now use the same telescoping and fundamental theorem of calculus argument for \eqref{eqn:thm:lower_bound_control:selected} as in [Proof of Theorem \ref{dro:control_sensitivity}, control free, lower bound], i.e.,
\begin{align*}
&\int [f(x + \bar \delta T(x), \alpha^\delta(x)) - f(x, \alpha^\delta(x))]\,\mu(dx) = \sum_{t = 0}^N S_t(\gamma^\delta, \alpha^\delta), \;\; \text{where}\\
&S_t(\gamma^\delta, \alpha^\delta) := \bar \delta \int_0^1 \left[\int \partial_t f(x_{1:t-1} + \bar \delta T_{1:t-1}(x_{1:t-1}), x_t + \lambda \bar \delta T_t(x_{1:t}), x_{t+1:N}, \alpha^\delta(x)) T_t(x_{1:t})\, \mu(dx)\right]\, d\lambda.
\end{align*}
Note that $\partial f \cdot T_t$ is uniformly bounded in $L^1(\mu)$ along a subsequence due to H\"older's inequality and the growth condition on $\nabla f$ together with boundedness of $\alpha^\delta$. We we can thus apply the dominated convergence theorem to obtain
\begin{align*}
&\liminf_{\delta \to 0} \frac{S_t(\gamma^\delta, \alpha^\delta)}{\delta}\\
&\geq \liminf_{\delta \to 0} \frac{1}{1 + \delta} \int_0^1 \left[\int \partial_t f(x_{1:t-1} + \bar \delta T_{1:t-1}(x_{1:t-1}), x_t + \lambda \bar \delta T_t(x_{1:t}), x_{t+1:N}, \alpha^\delta(x)) T_t(x_{1:t})\, \mu(dx)\right]\, d\lambda\\
&= \int \partial_t f(x, \alpha^\star(x)) T_t(x_{1:t}) \,\mu(dx) = \int \left\|\int \partial_t f(x, \alpha^\star(x)) \,\bar{\mu}_{x_{1:t}}(dx_{t+1:N})\right\|_{L^q(\mu_{x_{1:t-1}})} \mu(dx_{1:t-1}),
\end{align*}
where we used that $\alpha^\delta \to \alpha^\star$ along a subsequence $\mu$-almost everywhere. Finally, summing up the estimates for $t =1, \dots, N$ we conclude
\begin{align*}
\liminf_{\delta \to 0} \frac{V(\delta) - V(0)}{\delta} &\geq \liminf_{\delta \to 0} \sum_{t = 1}^N \frac{S_t(\gamma^\delta, \alpha^\delta)}{\delta} \geq \sum_{t = 1}^N \liminf_{\delta \to 0} \frac{S_t(\gamma^\delta, \alpha^\delta)}{\delta}\\
&\geq \left\|\int \partial_1 f(x, \alpha(x)) \,\bar{\mu}_{x_{1}}(dx_{2:N})\right\|_{L^q(\mu^1)}\\
&+ \sum_{t = 2}^N \int \left\|\int \partial_t f(x, \alpha(x)) \,\bar{\mu}_{x_{1:t}}(dx_{t+1:N})\right\|_{L^q(\mu_{x_{1:t-1}})} \mu(dx_{1:t-1}).
\end{align*}
It remains to show that $\alpha^\delta \to \alpha^\star$ in $\mu$-measure. To that end, note that for any transport plan $\gamma \in \Pi_\delta(\mu, \cdot)$ and control $\alpha \in \widetilde{\mathcal{A}}$ we have 
\begin{align*}
\int [f(y, \alpha(x)) - f(x, \alpha(x))]\,\gamma(dx, dy) &= \int \int_0^1 \langle y - x, \nabla_x f(x + \lambda (y - x), \alpha(x)) \rangle\,d\lambda\,\gamma(dx, dy)\\
&\lesssim \mathcal{C}_p(\gamma) \cdot \left(1+\|\|x\|^{p-1} +\|y-x\|^{p-1} \|_{L^q(\gamma)}\right) \leq C \delta
\end{align*}
using again H\"older's inequality for $q=p/(p-1)$ and the growth assumption on $\nabla_x f$. Note that $C$ depends only on $\mu$ and $\delta$, but not on $\alpha$. Hence, by $\varepsilon(x)$-strong convexity of $\alpha \mapsto f(x, \alpha)$ we have
\begin{align*}
\int f(y, \alpha^\delta(x))\, \gamma(dx, dy) - \int f(x, \alpha^\star(x))\, \mu(dx)
&\geq \int f(x, \alpha^\delta(x))\, \mu(dx) - \int f(x, \alpha^\star(x))\,\mu(dx) + O(\delta)\\
&\geq \int \langle\nabla_\alpha f(x, \alpha^\star(x)),\alpha^\delta(x) - \alpha^\star(x))(x)\rangle\, \mu(dx) \\
&\quad + \frac{1}{2} \int \varepsilon(x) \|\alpha^\delta(x) - \alpha^\star(x)\|_2^2\, \mu(dx) + O(\delta), \;\; \delta \to 0.
\end{align*}
The first term on the right hand side is non-negative by optimality of $\alpha^\star$. Recall that $\mu(\varepsilon(x) > 0)=1$ by assumption. Since $V(\delta)\to V(0)$ we thus conclude \(\alpha^\delta \to \alpha^\star\) in \(\mu\)-measure by Markov's inequality, which completes the argument and hence the proof.
\end{proof}

To extend the result to the martingale case, we first prove several regularity results, including an extension of Proposition \ref{hemicontinuity} to the correspondence $$x_{1:t-1} \twoheadrightarrow \overline \Pi_\delta^{\mathcal{M}}(\mu_{x_{1:t-1}}, \cdot) := \Big\{\gamma \in \overline \Pi_\delta(\mu_{x_{1:t-1}}, \cdot)\,:\, \int (x - y) \, \gamma(dx, dy) = 0\Big\}.$$ 
These are straightforward and we state them for completeness only.

\begin{proposition}\label{prop:martingale_hemicontinuity}
Let \(\mu \in \mathcal{P}_p(\mathbb{R}^N)\) be a successively $\mathcal{W}_p$--continuous probability measure. Then the correspondence $x_{1:t-1} \twoheadrightarrow \overline \Pi_\delta^{\mathcal{M}}(\mu_{x_{1:t-1}}, \cdot)$ is continuous in $(\mathcal{P}_{p - \varepsilon}(\mathbb{R}^2), \mathcal{W}_{p - \varepsilon})$ for any $\varepsilon > 0$.
\end{proposition}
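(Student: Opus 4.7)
The plan is to mirror the two-part structure of Proposition \ref{hemicontinuity}. Upper hemicontinuity will follow almost for free from the inclusion $\overline\Pi_\delta^{\mathcal{M}}\subseteq\overline\Pi_\delta$ combined with the passage of an affine constraint to the limit; lower hemicontinuity will instead require a mean-correcting modification of the construction in Proposition \ref{prop:correcpondence_lhs}, since the naive scaling trick used there is not compatible with the martingale condition.

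For the upper part, given $x_{1:t-1}^{(n)}\to x_{1:t-1}$ and $\pi_n\in\overline\Pi_\delta^{\mathcal{M}}(\mu_{x_{1:t-1}^{(n)}},\cdot)$, Proposition \ref{hemicontinuity} supplies a subsequence with $\pi_n\to\pi\in\overline\Pi_\delta(\mu_{x_{1:t-1}},\cdot)$ in $\mathcal{W}_{p-\varepsilon}$. Choosing $\varepsilon$ so small that $p-\varepsilon>1$ (as is the case in the context of Corollary \ref{dro:control_martingale_sensitivity}), the function $(x,y)\mapsto x-y$ has strictly sub-$(p-\varepsilon)$ growth, so $\int(x-y)\,d\pi_n\to\int(x-y)\,d\pi$ by \cite[Theorem 6.9]{villani2009optimal} and the martingale constraint passes to the limit. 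Non-emptiness and $\mathcal{W}_{p-\varepsilon}$-compactness of $\overline\Pi_\delta^{\mathcal{M}}(\mu_{x_{1:t-1}},\cdot)$ then follow respectively from the diagonal coupling $(\operatorname{Id},\operatorname{Id})_\#\mu_{x_{1:t-1}}$ (which is a martingale coupling because $\mu$ is a martingale measure) and from the above closure argument combined with the known compactness of $\overline\Pi_\delta(\mu_{x_{1:t-1}},\cdot)$.

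For the lower part, fix $\pi\in\overline\Pi_\delta^{\mathcal{M}}(\mu_{x_{1:t-1}},\cdot)$, let $\eta_n\in\Pi(\mu_{x_{1:t-1}^{(n)}},\mu_{x_{1:t-1}})$ be $\mathcal{W}_p$-optimal, and write $m_n,m$ for the conditional means of $\mu_{x_{1:t-1}^{(n)}}$ and $\mu_{x_{1:t-1}}$. A direct calculation shows that the coupling produced by Proposition \ref{prop:correcpondence_lhs} has residual bias $\int(x-y')\,d\pi_n=(1-\lambda_n)(m_n-m)$, which is nonzero whenever $m_n\neq m$; this is the only obstruction to the martingale property at level $n$. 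I would cancel this bias by adding a deterministic shift to the target coordinate, setting
\begin{align*}
\pi_n := \bigl(x_t^{(n)},\, y_t + \lambda_n(x_t^{(n)}-y_t) + c_n\bigr)_{\#}(\eta_n\dot\oplus\pi), \qquad c_n := (1-\lambda_n)(m_n-m).
\end{align*}
Since $|m_n-m|\le\mathcal{W}_1(\mu_{x_{1:t-1}^{(n)}},\mu_{x_{1:t-1}})\le\mathcal{C}_p(\eta_n)$ by Kantorovich duality and Jensen's inequality, two applications of Minkowski's inequality on $L^p(\eta_n\dot\oplus\pi)$ give $\mathcal{C}_p(\pi_n)\le(1-\lambda_n)(2\mathcal{C}_p(\eta_n)+\delta)$; tuning $1-\lambda_n:=\delta/(\delta+2\mathcal{C}_p(\eta_n))$ then places $\pi_n$ in $\overline\Pi_\delta^{\mathcal{M}}(\mu_{x_{1:t-1}^{(n)}},\cdot)$. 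Both $\lambda_n$ and $c_n$ vanish as $\mathcal{C}_p(\eta_n)\to 0$ by successive $\mathcal{W}_p$-continuity of $\mu$, so the natural coupling $(x_t^{(n)},y_t',x_t,y_t)_\#(\eta_n\dot\oplus\pi)$ yields $\mathcal{W}_p(\pi_n,\pi)\to 0$ and in particular $\mathcal{W}_{p-\varepsilon}$-convergence.

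The main obstacle is exactly this algebraic balancing: the scaling factor $\lambda_n$ needed to keep the cost under the budget $\delta$ simultaneously determines how much of the bias $m_n-m$ must be reabsorbed by the shift $c_n$, and the two corrections must be calibrated so that the martingale equality holds \emph{exactly} while $\mathcal{C}_p(\pi_n)\le\delta$. Everything else is a direct transcription of the arguments in Propositions \ref{prop:correcpondence_lhs} and \ref{hemicontinuity}, or a routine passage-to-the-limit for a single affine constraint under Wasserstein convergence.
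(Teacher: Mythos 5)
Your proposal is correct and takes essentially the same route as the paper: upper hemicontinuity via the closure/compactness argument of Proposition \ref{hemicontinuity} together with passing the linear constraint to the limit, and lower hemicontinuity via a mean-corrected, rescaled modification of the coupling from Proposition \ref{prop:correcpondence_lhs} (the paper centers by $D_n=m_n-m$ and then scales, you scale and then shift by $c_n=(1-\lambda_n)(m_n-m)$ — algebraically the same correction, with your choice $1-\lambda_n=\delta/(\delta+2\,\mathcal{C}_p(\eta_n))$ giving the same cost bound $\mathcal{C}_p(\pi_n)\le\delta$). Two cosmetic remarks: the diagonal coupling lies in $\overline\Pi^{\mathcal{M}}_\delta(\mu_{x_{1:t-1}},\cdot)$ simply because $x=y$ holds almost surely under it (no martingale property of $\mu$ is needed), and your reduction to small $\varepsilon$ is harmless since both hemicontinuity properties and compact-valuedness are preserved when passing to the weaker metric $\mathcal{W}_{p-\varepsilon}$ for larger $\varepsilon$.
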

\begin{proof}[Proof.]
The proof of upper hemicontinuity follows the proof of Proposition \ref{hemicontinuity} line by line. The proof lower hemicontinuity is also similar to the proof of Proposition \ref{prop:correcpondence_lhs}: take any sequence $(x_{1:t-1}^{(n)})_{n\in \mathbb{N}}$ converging to some $x_{1:t-1}\in \mathbb{R}^{t-1}$, and a probability measure \(\pi \in \overline \Pi_\delta^{\mathcal{M}}(\mu_{x_{1:t-1}}, \cdot)\). Consider the optimal transport plan \(\eta_n \in \Pi(\mu_{x_{1:t-1}^{(n)}}, \mu_{x_{1:t-1}})\) for \(\mathcal{W}_p(\mu_{x_{1:t-1}^{(n)}}, \mu_{x_{1:t-1}})\), and define
\[
\pi_n := (x_t^{(n)}, y_t + D_n + \lambda_n(x_t^{(n)} - y_t - D_n))_\# (\eta_n \dot{\oplus} \pi) (dx_t^{(n)}, dx_t, dy_t),
\]
where $D_n := \int (x_t^{(n)} - y_t) \,(\eta_n \dot{\oplus} \pi)(dx_t^{(n)}, dy_t)$ and $1 - \lambda_n := \frac{\mathcal{C}_p(\pi)}{\mathcal{C}_p(\eta_n) + \mathcal{C}_p(\pi) + |D_n|}$ (assuming without loss of generality that $\mathcal{C}_p(\pi)>0$). We aim to show that $\pi_n \in \overline \Pi^{\mathcal{M}}_\delta(\mu_{x_{1:t-1}^{(n)}}, \cdot)$ and $\pi_n \to \pi$ in $(\mathcal{P}_{p}(\mathbb{R}^2), \mathcal{W}_{p})$, which implies convergence in $(\mathcal{P}_{p - \varepsilon}(\mathbb{R}^2), \mathcal{W}_{p - \varepsilon})$:
\begin{itemize}
    \item $\pi_n \in \overline \Pi^{\mathcal{M}}_\delta(\mu_{x_{1:t-1}^{(n)}}, \cdot)$: by \cite[Gluing lemma, p.12] {villani2009optimal} we have $\pi_n \in \Pi(\mu_{x_{1:t-1}^{(n)}}, \cdot)$. The martingale constraint is satisfied, because
    \begin{align*}
    &\int [x_t^{(n)} - (y_t + D_n + \lambda_n (x_t^{(n)} - y_t - D_n))] \, (\eta_n \dot{\oplus} \pi) (dx_t^{(n)}, dy_t)\\
    &= (1 - \lambda_n) \left(\int [x_t^{(n)} - y_t] \, (\eta_n \dot{\oplus} \pi) (dx_t^{(n)}, dy_t) - D_n \right)\\
    &= 0
    \end{align*}
    by the definition of $D_n$. Moreover,
    \begin{align*}
    &\mathcal{C}_p(\pi_n)\\
    &=\left(\int \left|x_t^{(n)} - (y_t + D_n + \lambda_n (x_t^{(n)} - y_t - D_n))\right|^p \, (\eta_n \dot{\oplus} \pi) (dx_t^{(n)}, dy_t)\right)^\frac{1}{p}\\
    &= (1 - \lambda_n) \left(\int \left|x_t^{(n)} - y_t - D_n\right|^p \, (\eta_n \dot{\oplus} \pi) (dx_t^{(n)}, dy_t)\right)^\frac{1}{p}\\
    &\leq (1 - \lambda_n) (\mathcal{C}_p(\eta_n) + \mathcal{C}_p(\pi) + |D_n|)\\
    &\leq \mathcal{C}_p(\pi)\\
    &\leq \delta,
    \end{align*}
    where the first inequality follows from Minkowski's inequality for $L^p(\eta_n \dot{\oplus} \pi)$ and the second and third hold by the definition of $\lambda_n$ and $\pi$ respectively.
    \item To establish convergence, we estimate $\mathcal{W}_p(\pi_n, \pi)$ using the following transport plan:
    $$
    (x_t^{(n)}, y_t + D_n + \lambda_n(x_t^{(n)} - y_t - D_n), x_t, y_t)_\# (\eta_n \dot{\oplus} \pi) (dx_t^{(n)}, dx_t, dy_t) \in \Pi(\pi_n, \pi).
    $$
    The $p$-transportation cost for this plan can be bounded as follows:
    \begin{align*}
    &\mathcal{W}_p(\pi_n, \pi)\\
    &\leq \left(\int (|x_t^{(n)} - x_t|^p + |\lambda_n (x_t^{(n)} - y_t) + (1 - \lambda_n) D_n|)^p \, (\eta_n \dot{\oplus} \pi) (dx_t^{(n)}, dx_t, dy_t)\right)^\frac{1}{p}\\
    &\leq \mathcal{C}_p\left(\eta_n\right) + \lambda_n \left(\int |x_t^{(n)} - y_t|^p \, (\eta_n \dot{\oplus} \pi) (dx_t^{(n)}, dy_t)\right)^\frac{1}{p} + (1 - \lambda_n) |D_n|\\
    &\leq (1 + \lambda_n) \mathcal{C}_p\left(\eta_n\right) + \lambda_n \mathcal{C}_p(\pi) + (1 - \lambda_n) |D_n| \to 0, \;\; n \to +\infty,
    \end{align*}
    where  convergence to zero can be justified as follows: first, $\lambda_n \in (0, 1)$ and $$\mathcal{C}_p(\eta_n) = \mathcal{W}_p(\mu_{x_{1:t-1}^{(n)}}, \mu_{x_{1:t-1}}) \to 0$$ as $\mu$ is successively $\mathcal{W}_p$--continuous. For the second term,  $\lambda_n = \frac{\mathcal{C}_p(\eta_n) + |D_n|}{\mathcal{C}_p(\eta_n) + \mathcal{C}_p(\pi) + |D_n|} \to 0$ as $\mathcal{C}_p(\eta_n) \to 0$ and $|D_n| \to 0$ by Jensen's inequality, because $\mu_{x_{1:t-1}^{(n)}} \to \mu_{x_{1:t-1}}$ in $\mathcal{W}_p$. The third term converges to zero, as $|D_n| \to 0$ and $(1 - \lambda_n)_{n\in \N}$ is bounded.
\end{itemize}
\end{proof}

\begin{lemma}\label{lem:control_martingale}
Let \(p > 1\) and let \(\mu \in \mathcal{P}_p(\mathbb{R}^N)\) be a successively $\mathcal{W}_p$--continuous probability measure, which satisfies \(\int \|x_{t+1:N}\|^p \bar{\mu}_{x_{1:t}}(dx_{t+1:N}) \lesssim 1 + \|x_{1:t}\|^p\). Take a compact set $K \subset \mathbb{R}$, and let \(f: \mathbb{R}^N \times K^N \to \mathbb{R}\) be a continuous function, such that $|f(x, \alpha)| \lesssim 1 + \|x\|^{p - \varepsilon}$ for some $\varepsilon > 0$. Then $V^{\mathcal{M},\delta}_t$ is continuous, and satisfies 
\begin{align}\label{eq:growth_martingale}
|V^{\mathcal{M},\delta}_t(x_{1:t}, y_{1:t}, \alpha_{1:t})| \lesssim 1 + \|x_{1:t}\|^{p - \varepsilon} + \|y_{1:t}\|^{p - \varepsilon}.
\end{align}
\end{lemma}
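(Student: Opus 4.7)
The plan is to proceed by backward induction on $t$, mirroring the proof of Lemma \ref{lem:cost_to_go_regularity}(2) with $\overline{\Pi}_\delta(\mu_{x_{1:t}}, \cdot)$ replaced by its martingale analogue $\overline{\Pi}^{\mathcal{M}}_\delta(\mu_{x_{1:t}}, \cdot)$. The base case $t = N$ is immediate since $V^{\mathcal{M},\delta}_N(x,y,\alpha) = f(y,\alpha)$ satisfies both the continuity and growth assumptions by hypothesis. For the inductive step, I would first invoke Lemma \ref{lem:supremum_closed} (applied with $g = V^{\mathcal{M},\delta}_{t+1}$, which is continuous and bounded by $1+\|x_{1:t+1}\|^{p-\varepsilon}+\|y_{1:t+1}\|^{p-\varepsilon}$ via the induction hypothesis) to rewrite
\[
V^{\mathcal{M},\delta}_t(x_{1:t}, y_{1:t}, \alpha_{1:t}) = \inf_{\alpha_{t+1} \in K} \sup_{\gamma^{t+1} \in \overline{\Pi}^{\mathcal{M}}_\delta(\mu_{x_{1:t}}, \cdot)} \int V^{\mathcal{M},\delta}_{t+1}(x_{1:t+1}, y_{1:t+1}, \alpha_{1:t+1}) \, \gamma^{t+1}(dx_{t+1}, dy_{t+1}).
\]

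The growth bound \eqref{eq:growth_martingale} follows exactly as in the proof of Lemma \ref{lem:cost_to_go_regularity}(2): for any $\gamma^{t+1} \in \overline{\Pi}^{\mathcal{M}}_\delta(\mu_{x_{1:t}}, \cdot)$ and $\alpha_{t+1} \in K$, the induction hypothesis combined with $\mathcal{C}_{p-\varepsilon}(\gamma^{t+1}) \leq \mathcal{C}_p(\gamma^{t+1}) \leq \delta$ and the moment condition $\int \|x_{t+1:N}\|^p \bar{\mu}_{x_{1:t}}(dx_{t+1:N}) \lesssim 1 + \|x_{1:t}\|^p$ yields a uniform bound of the form $1 + \|x_{1:t}\|^{p-\varepsilon} + \|y_{1:t}\|^{p-\varepsilon}$, which survives both the supremum over $\overline{\Pi}^{\mathcal{M}}_\delta$ and the infimum over the compact set $K$.

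For the continuity step, I would apply Lemma \ref{sliced_continuity}(1) to deduce that $(x_{1:t}, y_{1:t}, \alpha_{1:t+1}, \gamma) \mapsto \int V^{\mathcal{M},\delta}_{t+1} \, d\gamma$ is continuous on $\mathbb{R}^t \times \mathbb{R}^t \times K^{t+1} \times (\mathcal{P}_{p-\varepsilon}(\mathbb{R}^2), \mathcal{W}_{p-\varepsilon})$. Proposition \ref{prop:martingale_hemicontinuity} provides that $x_{1:t} \twoheadrightarrow \overline{\Pi}^{\mathcal{M}}_\delta(\mu_{x_{1:t}}, \cdot)$ is continuous and compact-valued in $(\mathcal{P}_{p-\varepsilon}(\mathbb{R}^2), \mathcal{W}_{p-\varepsilon})$; non-emptiness follows since $(\mathrm{Id}, \mathrm{Id})_\# \mu_{x_{1:t}}$ always lies in this set. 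Berge's maximum theorem \cite[Theorem 17.31]{guide2006infinite} then gives continuity of the inner supremum, and \cite[Theorem 7.30]{guide2006infinite} (envelope of continuous functions over a fixed compact set) handles the infimum over $\alpha_{t+1} \in K$.

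The main obstacle is the same as in Lemma \ref{lem:cost_to_go_regularity}(2): the original set $\Pi^{\mathcal{M}}_\delta(\mu_{x_{1:t}}, \cdot)$ is open in the $\mathcal{C}_p$-constraint and not compact, so Berge's theorem cannot be applied directly. This is why the first step — replacing it with $\overline{\Pi}^{\mathcal{M}}_\delta(\mu_{x_{1:t}}, \cdot)$ via Lemma \ref{lem:supremum_closed} (whose validity in the martingale case rests on Corollary \ref{cor:open_martingale_dense_in_closed}) — is essential. The martingale linearity constraint $\int(x-y)\,\pi(dx,dy) = 0$ introduces no new difficulty here since Proposition \ref{prop:martingale_hemicontinuity} has already been established precisely to accommodate it.
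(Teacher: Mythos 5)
Your proposal is correct and follows essentially the same route as the paper: backward induction using Lemma \ref{lem:supremum_closed} to pass to $\overline{\Pi}^{\mathcal{M}}_\delta$, Lemma \ref{sliced_continuity} for continuity of the integrated value function, Proposition \ref{prop:martingale_hemicontinuity} plus Berge's maximum theorem for the supremum, and the envelope over the compact set $K$ for the infimum. The only cosmetic difference is that the paper obtains the growth bound \eqref{eq:growth_martingale} by the comparison $|V^{\mathcal{M},\delta}_t|\le |V^\delta_t|$ with Lemma \ref{lem:cost_to_go_regularity}(2), whereas you re-run the estimate directly; both are fine.
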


\begin{proof}
First we note that $|V^{\mathcal{M},\delta}_t|\le |V_t^\delta|$, so that the growth bound \eqref{eq:growth_martingale} follows directly from Lemma \ref{lem:cost_to_go_regularity}.(2).
For continuity, we proceed by backward induction. Assume that $(x_{1:t}, y_{1:t}, \alpha_{1:t}) \mapsto V^{\mathcal{M},\delta}_t(x_{1:t}, y_{1:t}, \alpha_{1:t})$ is continuous. Then  $(x_{1:t-1}, y_{1:t-1}, \alpha_{1:t}, \gamma^t) \mapsto \int V^{\mathcal{M},\delta}_t \, d\gamma^t$ is continuous by \eqref{eq:growth_martingale} and Lemma \ref{sliced_continuity}. By Lemma 
\ref{lem:supremum_closed} we have
\begin{equation}\label{lem:control_martingale:regularity}
\sup_{\gamma^{t} \in \Pi^{\mathcal{M}}_\delta(\mu_{x_{1:t-1}}, \cdot)} \int V^{\mathcal{M},\delta}_t \, d\gamma^t = \sup_{\gamma^{t} \in \overline \Pi^{\mathcal{M}}_\delta(\mu_{x_{1:t-1}}, \cdot)} \int V^{\mathcal{M},\delta}_t \, d\gamma^t.
\end{equation}
Recall that the correspondence $x_{1:t-1} \twoheadrightarrow \overline \Pi_\delta^{\mathcal{M}}(\mu_{x_{1:t-1}}, \cdot)$ is continuous by Proposition \ref{prop:martingale_hemicontinuity}. Therefore, by Berge's maximum theorem \cite[Theorem 17.31]{guide2006infinite}, the mapping $$(x_{1:t-1}, y_{1:t-1}, \alpha_{1:t}) \mapsto \sup_{\gamma^{t} \in \overline \Pi^{\mathcal{M}}_\delta(\mu_{x_{1:t-1}}, \cdot)} \int V^{\mathcal{M},\delta}_t \, d\gamma^t$$ is continuous. Finally, an envelope over compact set of continuous function is continuous by another application of Berge's maximum theorem \cite[Theorem 17.31]{guide2006infinite}, hence $V^{\mathcal{M},\delta}_{t-1}$ is continuous. 
\end{proof}

\begin{proposition}\label{prop:martingale_sensitivity_lagrange_multiplier_continuity}
Let $\mu \in \mathcal{P}_p(\mathbb{R}^N)$ be a successively $\mathcal{W}_p$--continuous probability measure, such that $\int \|x_{t+1:N}\|^p \, \bar{\mu}_{x_{1:t}}(dx_{t+1:N}) \lesssim 1 + \|x_{1:t}\|^p$, and suppose that $f: \mathbb{R}^N \to \mathbb{R}$ is a continuous function, which satisfies $|\nabla f(x)| \lesssim 1 + \|x\|^{p - 1 - \varepsilon}$ for some $\varepsilon > 0$. Then
\begin{align}\label{eq:opt_lambda}
x_{1:t-1} \mapsto \inf_{\lambda_t \in \mathbb{R}} \left\|\int \partial_t f(x) \, \bar{\mu}_{x_{1:t}}(dx_{t+1:N}) + \lambda_t \right\|_{L^q(\mu_{x_{1:t-1}})}.
\end{align}
admits a continuous optimizer $x_{1:t-1} \mapsto \lambda^\star_t (x_{1:t-1})$.
\end{proposition}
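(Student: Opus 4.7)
The plan is to apply Berge's maximum theorem after reducing the optimization over $\mathbb{R}$ to an optimization over a compact interval that varies continuously with $x_{1:t-1}$. Let
\[
F(x_{1:t-1}, x_t) := \int \partial_t f(x) \, \bar{\mu}_{x_{1:t}}(dx_{t+1:N}), \qquad G(\lambda, x_{1:t-1}) := \|F(x_{1:t-1}, \cdot) + \lambda\|_{L^q(\mu_{x_{1:t-1}})}.
\]
Since $1 < q < \infty$, the map $s \mapsto |s|^q$ is strictly convex; hence $\lambda \mapsto G(\lambda, x_{1:t-1})$ is strictly convex on $\mathbb{R}$ as the $L^q$-norm of an affine perturbation by a constant. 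Combining this with the reverse triangle inequality $G(\lambda, x_{1:t-1}) \ge |\lambda| - \|F(x_{1:t-1}, \cdot)\|_{L^q(\mu_{x_{1:t-1}})}$, we conclude that $G(\cdot, x_{1:t-1})$ is strictly convex and coercive, so a unique minimizer $\lambda_t^\star(x_{1:t-1})$ exists.

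The next step is to establish joint continuity of $G$. A backward induction on $s = N, N-1, \dots, t$ using the successive $\mathcal{W}_p$--continuity of $\mu$ and Lemma \ref{sliced_continuity}.(1) yields continuity of $x_{1:s} \mapsto \bar{\mu}_{x_{1:s}}$ with respect to $\mathcal{W}_p$. Together with the growth condition $|\nabla f(x)| \lesssim 1 + \|x\|^{p-1-\varepsilon}$ and the assumption $\int \|x_{t+1:N}\|^p\,\bar{\mu}_{x_{1:t}}(dx_{t+1:N}) \lesssim 1 + \|x_{1:t}\|^p$, Lemma \ref{sliced_continuity}.(1) implies continuity of $F$. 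Moreover, H\"older's inequality together with the growth assumptions yields
\[
|F(x_{1:t-1}, x_t)|^q \lesssim 1 + \|x_{1:t-1}\|^{q(p-1-\varepsilon)} + |x_t|^{q(p-1-\varepsilon)},
\]
and $q(p-1-\varepsilon) = p - \tfrac{p\varepsilon}{p-1} < p$ gives strictly sub-$p$ growth in $x_t$. Therefore $(\lambda, x_{1:t-1}, x_t) \mapsto |F(x_{1:t-1}, x_t) + \lambda|^q$ is continuous with locally bounded (in $(\lambda, x_{1:t-1})$) coefficient multiplying an at-most $p$-growth term in $x_t$, and a final application of Lemma \ref{sliced_continuity}.(1) with integration against the $\mathcal{W}_p$-continuous kernel $\mu_{x_{1:t-1}}$ shows that $G$ is continuous.

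Setting $R(x_{1:t-1}) := G(0, x_{1:t-1}) = \|F(x_{1:t-1}, \cdot)\|_{L^q(\mu_{x_{1:t-1}})}$ (a continuous function of $x_{1:t-1}$ by the previous step), the reverse triangle bound combined with $G(\lambda_t^\star, x_{1:t-1}) \le R(x_{1:t-1})$ forces $|\lambda_t^\star(x_{1:t-1})| \le 2 R(x_{1:t-1})$. Hence
\[
\lambda_t^\star(x_{1:t-1}) = \operatorname{argmin}_{\lambda \in [-2R(x_{1:t-1}),\, 2R(x_{1:t-1})]} G(\lambda, x_{1:t-1}),
\]
and the correspondence $x_{1:t-1} \twoheadrightarrow [-2R(x_{1:t-1}), 2R(x_{1:t-1})]$ is continuous and compact-valued. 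Berge's maximum theorem \cite[Theorem 17.31]{guide2006infinite} then yields upper hemicontinuity of the argmin correspondence, which by uniqueness of the minimizer is equivalent to continuity of $x_{1:t-1} \mapsto \lambda_t^\star(x_{1:t-1})$.

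The main obstacle is the verification of joint continuity of $G$: one has to propagate continuity through two successive integrations, ensuring that the growth of the integrand is dominated (locally in $(\lambda, x_{1:t-1})$) by an integrable function against the $\mathcal{W}_p$-varying kernel $\mu_{x_{1:t-1}}$. This is where the interplay between the sub-$(p-1)$ growth of $\nabla f$ and the $p$-growth of $\bar{\mu}_{x_{1:t}}$ is crucial, as it leaves a strictly positive margin $\varepsilon' := \tfrac{p\varepsilon}{p-1} > 0$ that allows the application of Lemma \ref{sliced_continuity}.(1).
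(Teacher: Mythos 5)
Your proof follows essentially the same route as the paper's: reduce the minimization over $\lambda_t\in\mathbb{R}$ to a compact interval $[-2R(x_{1:t-1}),\,2R(x_{1:t-1})]$ via Minkowski's (reverse triangle) inequality in $L^q(\mu_{x_{1:t-1}})$, establish joint continuity of $(\lambda_t,x_{1:t-1})\mapsto \bigl\|\int\partial_t f\,\bar{\mu}_{x_{1:t}}(dx_{t+1:N})+\lambda_t\bigr\|_{L^q(\mu_{x_{1:t-1}})}$ through Lemma \ref{sliced_continuity} and the growth conditions, and conclude with Berge's maximum theorem; in fact you go a step further than the paper by addressing single-valuedness of the argmin, which is precisely what converts upper hemicontinuity into continuity of the selector. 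One claim, however, needs repair: $\lambda\mapsto\|F(x_{1:t-1},\cdot)+\lambda\|_{L^q(\mu_{x_{1:t-1}})}$ is \emph{not} strictly convex in general --- if $F(x_{1:t-1},\cdot)$ is $\mu_{x_{1:t-1}}$-a.s.\ constant (for instance when $\mu_{x_{1:t-1}}$ is a Dirac mass), the map equals $|c+\lambda|$, which is piecewise linear; strict convexity of $s\mapsto|s|^q$ does not survive taking the $q$-th root. The consequence you actually use (uniqueness of the minimizer) still holds: work instead with $H(\lambda):=\int|F(x_{1:t-1},x_t)+\lambda|^q\,\mu_{x_{1:t-1}}(dx_t)$, which is strictly convex for $q>1$ as an integral of strictly convex functions, and observe that the argmins of $G=H^{1/q}$ and of $H$ coincide because $s\mapsto s^{1/q}$ is strictly increasing on $[0,\infty)$. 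With this one-line fix your argument is complete and, if anything, more detailed than the paper's proof, which stops at invoking Berge for existence of minimizers.
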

\begin{proof}[Proof.]
First, we restrict the optimization problem to a compact subset of $\mathbb{R}$. Indeed, by Minkowski's inequality for $L^q(\mu_{x_{1:t-1}})$ we have
$$
|\lambda_t| - \left\|\int \partial_t f(x) \, \bar{\mu}_{x_{1:t}}(dx_{t+1:N})\right\|_{L^q(\mu_{x_{1:t-1}})} \leq \left\|\int \partial_t f(x) \, \bar{\mu}_{x_{1:t}}(dx_{t+1:N}) + \lambda_t \right\|_{L^q(\mu_{x_{1:t-1}})}
$$
for any $\lambda_t \in \mathbb{R}$, hence \eqref{eq:opt_lambda} is equivalent to
$$
x_{1:t-1}\mapsto \inf_{|\lambda_t| \leq C_{x_{1:t-1}}} \left\|\int \partial_t f(x) \, \bar{\mu}_{x_{1:t}}(dx_{t+1:N}) + \lambda_t \right\|_{L^q(\mu_{x_{1:t-1}})}
$$
for $$C_{x_{1:t-1}} := 2 \left\|\int \partial_t f(x) \, \bar{\mu}_{x_{1:t}}(dx_{t+1:N})\right\|_{L^q(\mu_{x_{1:t-1}})}.$$ We note that the mapping $$(x_{1:t-1}, \lambda_t) \mapsto \left\|\int \partial_t f(x) \, \bar{\mu}_{x_{1:t}}(dx_{t+1:N}) + \lambda_t\right\|_{L^q(\mu_{x_{1:t-1}})}$$ is continuous by Minkowski's inequality for $L^q(\mu_{x_{1:t-1}})$ and Lemma \ref{sliced_continuity} together with the growth condition on $\nabla_x f$. Using the same arguments, it can be checked that the correspondence $x_{1:t-1} \twoheadrightarrow [-C_{x_{1:t-1}}, C_{x_{1:t-1}}]$ is continuous. Hence, existence of minimizers follows from Berge's maximum theorem \cite[Theorem 17.31]{guide2006infinite}.
\end{proof}

\begin{proposition}\label{prop:martingale_sensitivity_duality}
Let $\mu \in \mathcal{P}_p(\mathbb{R}^N)$ be a successively $\mathcal{W}_p$-continuous probability measure, such that $\int \|x_{t+1:N}\|^p \, \bar{\mu}_{x_{1:t}}(dx_{t+1:N}) \lesssim 1 + \|x_{1:t}\|^p$, and suppose that $f: \mathbb{R}^N \to \mathbb{R}$ is a continuous function, which satisfies $\|\nabla f(x)\| \lesssim 1 + \|x\|^{p - 1 - \varepsilon}$ for some $\varepsilon > 0$. Then
\begin{align*}
&\sup_{\substack{\|T_t \|_{L^p(\mu_{x_{1:t-1}})} \leq 1\\\int T_t(x_{t}) \, \mu_{x_{1:t-1}}(dx_t) = 0}} \int \left[\int\partial_t f(x) \, \bar{\mu}_{x_{1:t}}(dx_{t+1:N}) \right] T_t(x_{t}) \, \mu_{x_{1:t-1}}(dx_t)\\
&= \inf_{\lambda_t \in \mathbb{R}} \left\|\int \partial_t f(x) \, \bar{\mu}_{x_{1:t}}(dx_{t+1:N}) + \lambda_t\right\|_{L^q(\mu_{x_{1:t-1}})}
\end{align*}
Moreover, the supremum is attained by some Borel measurable function $x_{1:t} \mapsto T^\star_t(x_{1:t})$.
\end{proposition}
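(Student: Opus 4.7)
The plan is to recognize the identity as the standard $L^p$-$L^q$ duality with a mean-zero constraint, which admits a Lagrangian reformulation indexed by the multiplier $\lambda_t\in \R$ appearing in the right-hand side. Throughout, write $g(x_{1:t}) := \int \partial_t f(x)\,\bar\mu_{x_{1:t}}(dx_{t+1:N})$, which is continuous in $x_{1:t}$ by Lemma \ref{sliced_continuity} and the growth bound on $\nabla_x f$. Fix $x_{1:t-1}\in \R^{t-1}$, and write $\mu_0 := \mu_{x_{1:t-1}}$ and $h_\lambda := g(x_{1:t-1},\cdot) + \lambda$ for brevity.

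The ``$\le$''-direction is immediate: for any admissible $T_t$ and any $\lambda_t\in\R$, the mean-zero constraint gives
\[
\int g\, T_t \, d\mu_0 = \int h_{\lambda_t}\, T_t \, d\mu_0 \le \|h_{\lambda_t}\|_{L^q(\mu_0)} \|T_t\|_{L^p(\mu_0)} \le \|h_{\lambda_t}\|_{L^q(\mu_0)}
\]
by H\"older's inequality. Taking the infimum over $\lambda_t$ on the right and the supremum over $T_t$ on the left gives the ``$\le$''-direction.

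For the ``$\ge$''-direction, I would use Proposition \ref{prop:martingale_sensitivity_lagrange_multiplier_continuity} to obtain a continuous optimizer $x_{1:t-1}\mapsto \lambda^\star_t(x_{1:t-1})$, set $h^\star := h_{\lambda^\star_t}$, and define
\[
T_t^\star(x_{1:t}) := \frac{|h^\star(x_{1:t})|^{q-1}\operatorname{sgn}(h^\star(x_{1:t}))}{\|h^\star\|_{L^q(\mu_0)}^{q-1}}
\]
whenever $\|h^\star\|_{L^q(\mu_0)}>0$, and $T_t^\star \equiv 0$ otherwise. (The degenerate case $\|h^\star\|_{L^q(\mu_0)}=0$ forces both sides of the claimed identity to vanish, so it is trivial.) A direct computation shows $\|T_t^\star\|_{L^p(\mu_0)}=1$ and $\int g\, T_t^\star\, d\mu_0 = \int h^\star \, T_t^\star\, d\mu_0 = \|h^\star\|_{L^q(\mu_0)}$, which matches the right-hand side. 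What remains is to verify the mean-zero constraint $\int T_t^\star\, d\mu_0 =0$ and measurability of $T_t^\star$ in $x_{1:t}$.

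The mean-zero constraint is the first-order optimality condition for $\lambda_t^\star$. Since $p>1$ implies $q\in(1,\infty)$, the function $\phi(\lambda) := \int |h_\lambda|^q\, d\mu_0$ is convex and finite, and by dominated convergence it is differentiable with
\[
\phi'(\lambda) = q\int |h_\lambda|^{q-1}\operatorname{sgn}(h_\lambda)\, d\mu_0
\]
(one dominates by $q(|g|+|\lambda|+1)^{q-1}$ on a neighborhood of $\lambda^\star_t$ using the $L^q$-bound coming from the growth of $\nabla_x f$ together with the moment assumption on $\mu$). Since $\lambda_t^\star$ minimizes $\phi$ over $\R$, $\phi'(\lambda_t^\star)=0$, which is exactly $\int T_t^\star\, d\mu_0=0$ after normalization. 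Borel measurability of $x_{1:t}\mapsto T_t^\star(x_{1:t})$ follows from continuity of $g$, continuity of $x_{1:t-1}\mapsto \lambda^\star_t(x_{1:t-1})$, and continuity of $x_{1:t-1}\mapsto \|h^\star\|_{L^q(\mu_0)}$ (again by Lemma \ref{sliced_continuity} together with successive $\mathcal{W}_p$-continuity of $\mu$).

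The main obstacle I anticipate is the justification of differentiation under the integral in the computation of $\phi'$ when $q\in(1,2)$, where $x\mapsto |x|^q$ has unbounded second derivative at $0$; this is resolved by the dominated-convergence argument above applied to the difference quotient rather than the pointwise derivative, using the locally uniform bound $||a+\epsilon|^q - |a|^q|\le q\epsilon(|a|+|\epsilon|)^{q-1}$. The degenerate case $h^\star\equiv 0$ and measurability of $T_t^\star$ across the zero set of $\|h^\star\|_{L^q(\mu_0)}$ are handled by setting $T_t^\star=0$ there, at which point the identity reads $0=0$.
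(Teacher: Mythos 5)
Your proof is correct, but it reaches the ``$\geq$''-inequality and the attainment claim by a genuinely different route than the paper. The paper writes the constrained supremum as a $\sup_T\inf_{\lambda}$ Lagrangian, invokes the minimax theorem of Terkelsen together with Banach--Alaoglu weak compactness of the $L^p(\mu_{x_{1:t-1}})$-ball to swap the order of optimization, and then obtains a Borel maximizer by reformulating the supremum over martingale couplings $\overline\Pi_1^{\mathcal{M}}(\mu_{x_{1:t-1}},\cdot)$, applying Berge's maximum theorem via Proposition \ref{prop:martingale_hemicontinuity}, and running a strict-convexity argument to show the optimal coupling is unique (hence of Monge form) on the non-degenerate set. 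You instead bypass the minimax step entirely: starting from the continuous selection $\lambda_t^\star$ of Proposition \ref{prop:martingale_sensitivity_lagrange_multiplier_continuity}, you write down the explicit dual element $T_t^\star = |h^\star|^{q-1}\operatorname{sgn}(h^\star)/\|h^\star\|_{L^q}^{q-1}$, check $\|T_t^\star\|_{L^p}=1$ and $\int h^\star T_t^\star\,d\mu_{x_{1:t-1}}=\|h^\star\|_{L^q}$ (using $(q-1)p=q$), and obtain the mean-zero constraint as the first-order condition $\phi'(\lambda_t^\star)=0$ for the scalar convex problem $\phi(\lambda)=\int|g+\lambda|^q\,d\mu_{x_{1:t-1}}$, whose minimizers coincide with those of $\lambda\mapsto\|h_\lambda\|_{L^q}$; the dominated-convergence justification of $\phi'$ and your handling of the degenerate case $\|h^\star\|_{L^q}=0$ are both sound, and the explicit formula makes Borel measurability of $x_{1:t}\mapsto T_t^\star(x_{1:t})$ immediate (continuity of $g$, of $\lambda_t^\star$, and of the normalizing norm, with $T_t^\star:=0$ on the closed degenerate set). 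What each approach buys: the paper's argument avoids any differentiability of $\lambda\mapsto\|h_\lambda\|_{L^q}^q$ and stays within the correspondence/Berge machinery already built for the rest of the section, whereas yours is more elementary and constructive, gives a closed-form optimizer, and sidesteps both the minimax theorem and the somewhat delicate uniqueness-of-argmax argument over martingale couplings; the only ingredients you still import from the paper are the continuity of $g$ and of $\lambda_t^\star$ (Lemma \ref{sliced_continuity} and Proposition \ref{prop:martingale_sensitivity_lagrange_multiplier_continuity}) and the $L^q$-integrability of $g$ coming from the growth and moment assumptions.
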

\begin{proof}[Proof.]
First, we rewrite the constraint $\int T_t(x_{t}) \, \mu_{x_{1:t-1}}(dx_t) = 0$ by introducing a Lagrange multiplier: indeed,
\begin{align}\label{eqn:prop:martingale_sensitivity_duality}
&\sup_{\substack{\|T_t\|_{L^p(\mu_{x_{1:t-1}})} \leq 1\\\int T_t(x_{t}) \, \mu_{x_{1:t-1}}(dx_t) = 0}} \int \left[\int\partial_t f(x) \, \bar{\mu}_{x_{1:t}}(dx_{t+1:N}) \right] T_t(x_{t}) \, \mu_{x_{1:t-1}}(dx_t)\nonumber\\
&= \sup_{\|T_t\|_{L^p(\mu_{x_{1:t-1}})} \leq 1} \inf_{\lambda_t \in \mathbb{R}} \int \left[\int\partial_t f(x) \, \bar{\mu}_{x_{1:t}}(dx_{t+1:N}) + \lambda_t\right] T_t(x_{t}) \, \mu_{x_{1:t-1}}(dx_t).
\end{align}
Next we justify interchange of the order of $\inf$ and $\sup$:
\begin{enumerate}
    \item the set of functions $$\{T_t\in L^p(\mu_{x_{1:t-1}})\,:\,\|T_t\|_{L^p(\mu_{x_{1:t-1}})} \leq 1\}$$ is $L^p(\mu_{x_{1:t-1}})$--weakly compact by the Banach-Alaoglu Theorem. Furthermore, for fixed $\lambda_t\in \R$, the map
    $$
    T_t \mapsto \int \left[\int\partial_t f(x) \, \bar{\mu}_{x_{1:t}}(dx_{t+1:N}) +\lambda_t \right] T_t(x_{t}) \, \mu_{x_{1:t-1}}(dx_t)
    $$
    is $L^p(\mu_{x_{1:t-1}})$--weakly continuous by the definition of weak convergence in $L^p(\mu_{x_{1:t-1}})$ and the fact that the function $\int\partial_t f(x) \, \bar{\mu}_{x_{1:t}}(dx_{t+1:N}) \in L^q(\mu_{x_{1:t-1}})$ by the growth condition on $\nabla f(x)$.
    \item $\mathbb{R}$ is convex, and $\lambda_t \mapsto \int \left[\int\partial_t f(x) \, \bar{\mu}_{x_{1:t}}(dx_{t+1:N})+\lambda_t \right] T_t(x_{t}) \, \mu_{x_{1:t-1}}(dx_t)$ is linear.
\end{enumerate}
Hence, by the minimax theorem \cite[Corollary 2]{terkelsen1972some} applied with $X = \{\|T_t\|_{L^p(\mu_{x_{1:t-1}})} \leq 1\}\subseteq L^p(\mu_{x_{1:t-1}})$ and $Y = \mathbb{R}$ we obtain
\begin{align}\label{eq:mart_minmax}
\begin{split}
&\sup_{\|T_t\|_{L^p(\mu_{x_{1:t-1}})} \leq 1} \inf_{\lambda_t \in \mathbb{R}} \int \left[\int\partial_t f(x) \, \bar{\mu}_{x_{1:t}}(dx_{t+1:N}) + \lambda_t\right] T_t(x_{t}) \, \mu_{x_{1:t-1}}(dx_t)\\
&= \inf_{\lambda_t \in \mathbb{R}} \sup_{\|T_t\|_{L^p(\mu_{x_{1:t-1}})} \leq 1}  \int \left[\int\partial_t f(x) \, \bar{\mu}_{x_{1:t}}(dx_{t+1:N}) + \lambda_t\right] T_t(x_{t}) \, \mu_{x_{1:t-1}}(dx_t)\\
&= \inf_{\lambda_t \in \mathbb{R}} \left\|\int \partial_t f(x) \, \bar{\mu}_{x_{1:t}}(dx_{t+1:N}) + \lambda_t\right\|_{L^q(\mu_{x_{1:t-1}})},
\end{split}
\end{align}
where the last step follows from duality between $L^p(\mu_{x_{1:t-1}})$ and $L^q(\mu_{x_{1:t-1}})$.

It remains to show the existence of a Borel measurable optimizer $x_{1:t} \mapsto T^\star_t(x_{1:t})$ satisfying the constraint $\int T^\star_t(x_{1:t}) \, \mu_{x_{1:t-1}}(dx_t) = 0$. For this we first note that
\begin{align}\label{eqn:prop:martingale_sensitivity_duality_extension}
&\sup_{\substack{\|T_t\|_{L^p(\mu_{x_{1:t-1}})} \leq 1\\\int T_t(x_{t}) \, \mu_{x_{1:t-1}}(dx_t) = 0}} \int \left[\int\partial_t f(x) \, \bar{\mu}_{x_{1:t}}(dx_{t+1:N})\right] T_t(x_t) \, \mu_{x_{1:t-1}}(dx_t)\nonumber\\
&= \sup_{\gamma \in \overline \Pi^{\mathcal{M}}_1(\mu_{x_{1:t-1}}, \cdot)} \int \left[\int\partial_t f(x) \, \bar{\mu}_{x_{1:t}}(dx_{t+1:N})\right] (y_t - x_t) \, \gamma(dx_t, dy_t).
\end{align}
Indeed, the ``$\leq$"--inequality holds as
$$
\int \left[\int\partial_t f(x) \, \bar{\mu}_{x_{1:t}}(dx_{t+1:N})\right] T_t(x_t) \, \mu_{x_{1:t-1}} = \int \left[\int\partial_t f(x) \, \bar{\mu}_{x_{1:t}}(dx_{t+1:N})\right] (y_t - x_t) \, \gamma(dx_t, dy_t)
$$
for $\gamma := (x_t, x_t + T_t(x_t))_\# \mu_{x_{1:t-1}} \in \overline \Pi^\mathcal{M}_1(\mu_{x_{1:t-1}}, \cdot)$. The ``$\ge$"--inequality follows from H\"older's inequality: for any $\gamma \in \overline \Pi^\mathcal{M}_1(\mu_{x_{1:t-1}}, \cdot)$ and $\lambda_t \in \mathbb{R}$ we have
\begin{align*}
&\int \left[\int\partial_t f(x) \, \bar{\mu}_{x_{1:t}}(dx_{t+1:N})\right] (y_t - x_t) \, \gamma(dx_t, dy_t)\\
&= \int \left[\int\partial_t f(x) \, \bar{\mu}_{x_{1:t}}(dx_{t+1:N}) + \lambda_t\right] (y_t - x_t) \, \gamma(dx_t, dy_t)\\
&\leq \left\|\int \partial_t f(x) \, \bar{\mu}_{x_{1:t}}(dx_{t+1:N}) + \lambda_t\right\|_{L^q(\mu_{x_{1:t-1}})}.
\end{align*}
Taking infimum over $\lambda_t\in \R$ and recalling \eqref{eq:mart_minmax} concludes the proof of the ``$\geq$"--inequality.

The correspondence $x_{1:t-1} \twoheadrightarrow \overline \Pi^\mathcal{M}_1(\mu_{x_{1:t-1}}, \cdot)$ is continuous by Proposition \ref{prop:martingale_hemicontinuity}, and the mapping
$$
(x_{1:t-1}, \gamma) \mapsto \int \left[\int\partial_t f(x) \, \bar{\mu}_{x_{1:t}}(dx_{t+1:N})\right] (y_t - x_t) \, \gamma(dx_t, dy_t)
$$
is continuous in $\|\cdot\| + \mathcal{W}_{p - \varepsilon}(\cdot)$ by Lemma \ref{sliced_continuity}. Therefore, by Berge's maximum theorem \cite[Theorem 17.31]{guide2006infinite} the argmax correspondence 
\begin{align*}
x_{1:t-1} \twoheadrightarrow \arg \max_{\gamma \in \overline \Pi^{\mathcal{M}}_1(\mu_{x_{1:t-1}}, \cdot)} \int \left[\int\partial_t f(x) \, \bar{\mu}_{x_{1:t}}(dx_{t+1:N})\right] (y_t - x_t) \, \gamma(dx_t, dy_t)
\end{align*}
 is continuous. We also claim that the argmax is unique for all $x_{1:t-1}\in \R^{t-1}$ satisfying 
 \begin{align}\label{eq:non-degeneracy}
 \mu_{x_{1:t-1}}\left(\int\partial_t f(x) \, \bar{\mu}_{x_{1:t}}(dx_{t+1:N}) \neq 0\right) > 0.
 \end{align}
 Indeed, suppose that $\gamma_1, \gamma_2 \in \overline \Pi^\mathcal{M}_1(\mu_{x_{1:t-1}}, \cdot)$ attain the supremum and $\gamma_1 \neq \gamma_2$. Define
$$
\pi := \left[(y_t^{(1)}, x_t)_\# \gamma_1(dx_t, dy_t^{(1)})\right] \dot{\oplus} \,\gamma_2(dx_t, dy_t^{(2)}), \;\; \gamma := (x_t, \frac{1}{2}(y_t^{(1)} + y_t^{(2)}))_\# \pi.
$$
Clearly, $\gamma$ attains the supremum too, and
$$
\int (x_t - y_t) \, \gamma(dx_t, dy_t) = \frac{1}{2} \int (x_t - y_t^{(1)}) \, \gamma_1(dx_t, dy_t^{(1)}) + \frac{1}{2} \int (x_t - y_t^{(2)}) \, \gamma_2(dx_t, dy_t^{(2)}) = 0.
$$
Moreover, by strict convexity of $L^p(\pi)$ for $p>1$ and Minkowski's inequality we have
$$
\mathcal{C}_p(\gamma) < \frac{1}{2}(\mathcal{C}_p(\gamma_1) + \mathcal{C}_p(\gamma_2)) \leq 1, 
$$
so that $\gamma \in \Pi_1^{\mathcal{M}}(\mu_{x_{1:t-1}}, \cdot).$
Since $\gamma$ is the optimizer, we must have $\mathcal{C}_p(\gamma) > 0$ by \eqref{eq:non-degeneracy}, and
$$
\int \left[\int\partial_t f(x) \, \bar{\mu}_{x_{1:t}}(dx_{t+1:N})\right] (y_t - x_t) \, \gamma(dx_t, dy_t) > 0.
$$
Hence, the transport plan $\widehat{\gamma} := (x_t, x_t + \frac{1}{\mathcal{C}_p(\gamma)}(y_t - x_t))_\# \gamma$ is well-defined, belongs to $\overline \Pi^\mathcal{M}_1(\mu_{x_{1:t-1}}, \cdot)$, and
\begin{align*}
&\int \left[\int\partial_t f(x) \, \bar{\mu}_{x_{1:t}}(dx_{t+1:N})\right] (y_t - x_t) \, \widehat{\gamma}(dx_t, dy_t)\\
&= \frac{1}{\mathcal{C}_p(\gamma)} \int \left[\int\partial_t f(x) \, \bar{\mu}_{x_{1:t}}(dx_{t+1:N})\right] (y_t - x_t) \, \gamma(dx_t, dy_t)\\
&> \int \left[\int\partial_t f(x) \, \bar{\mu}_{x_{1:t}}(dx_{t+1:N})\right] (y_t - x_t) \, \gamma(dx_t, dy_t),
\end{align*}
which contradicts the optimality assumption. Therefore, the argmax is unique on the set
$$
U := \left\{x_{1:t-1} \in \R^{t-1}\,:\, \mu_{x_{1:t-1}}\left\{\int\partial_t f(x) \, \bar{\mu}_{x_{1:t}}(dx_{t+1:N}) \neq 0\right\} > 0\right\},
$$
and is equal to $\overline \Pi^\mathcal{M}_1(\mu_{x_{1:t-1}}, \cdot)$ on $\R^{t-1} \setminus U$, as
\begin{align*}
\int \left[\int\partial_t f(x) \, \bar{\mu}_{x_{1:t}}(dx_{t+1:N})\right] (y_t - x_t) \, \gamma(dx_t, dy_t)=0    
\end{align*}
for all $\gamma \in \overline \Pi^\mathcal{M}_1(\mu_{x_{1:t-1}}, \cdot)$
in this case. The set $U$ is Borel, since the mapping
$$
x_{1:t-1} \mapsto \mu_{x_{1:t-1}}\left\{\int\partial_t f(x) \, \bar{\mu}_{x_{1:t}}(dx_{t+1:N}) \neq 0\right\}
$$
is Borel by \cite[Corollary 7.26.1]{bertsekas1996stochastic}. The argmax on $U$ has the form $(x_t, x_t + T_{x_{1:t-1}}(x_t))_\# \mu_{x_{1:t-1}}$, because the supremum in \eqref{eqn:prop:martingale_sensitivity_duality} is attainable by some Borel function $x_t \mapsto T_{x_{1:t-1}}(x_t)$ (recall point (1) above). Hence, we define $T^\star_t$ as follows:
$$
T^\star_t(x_{1:t}) := \begin{cases}
    T_{x_{1:t-1}}(x_t), \;\; x_{1:t-1} \in U\\
    0, \;\; \text{otherwise.}
\end{cases}
$$
This concludes the proof.

\end{proof}

\begin{proof}[Proof of Corollary \ref{dro:control_martingale_sensitivity}.]
We only prove the case without controls. To extend the result to the controlled case, we proceed in the same way as in Theorem \ref{dro:control_sensitivity}. 

Recall that by Corollary \ref{cor:linear} we have
\begin{align}\label{eqn:dro:control_martingale_sensitivity:dpp}
V^{\mathcal{M}}(\delta) &= V^{\mathcal{M},\delta}_0\nonumber\\
&= \sup_{\gamma^1 \in \Pi^{\mathcal{M}}_\delta(\mu^1, \cdot)} \int \ldots \sup_{\gamma^N \in \Pi_\delta^{\mathcal{M}}(\mu_{x_{1:N-1}}, \cdot)} \int f(y) \, d\gamma^N \ldots d\gamma^1\nonumber\\
&= \sup_{\gamma^1 \in \Pi_\delta(\mu^1, \cdot)} \inf_{\lambda_1 \in \mathbb{R}} \int \ldots \sup_{\gamma^N \in \Pi_\delta(\mu_{x_{1:N-1}}, \cdot)} \inf_{\lambda_N \in \mathbb{R}} \int f(y) + \sum_{t = 1}^N \lambda_t (x_t - y_t) \, d\gamma^N \ldots d\gamma^1,
\end{align}
where we have used Lagrange multipliers to enforce the martingale constraint. To obtain the upper bound, we apply Proposition \ref{prop:martingale_sensitivity_lagrange_multiplier_continuity} to find continuous maps $x_{1:t-1} \mapsto \lambda^\star(x_{1:t-1})$. Using  \eqref{eqn:dro:control_martingale_sensitivity:dpp} we estimate
$$
V^{\mathcal{M}}(\delta) \leq \sup_{\gamma^1 \in \Pi_\delta(\mu^1, \cdot)} \int \ldots \sup_{\gamma^N \in \Pi_\delta(\mu_{x_{1:N-1}}, \cdot)} \int f(y) + \sum_{t = 1}^N \lambda^\star_t (x_t - y_t) \, d\gamma^N \ldots d\gamma^1.
$$
We now copy the proof of the upper bound in Theorem \ref{dro:control_sensitivity} line by line. Using Proposition \ref{prop:martingale_sensitivity_lagrange_multiplier_continuity} we find
\begin{align*}
\Upsilon^{\mathcal{M}} &\leq \inf_{\lambda_1 \in \mathbb{R}} \left\|\int \partial_{1} f(x) \,\bar{\mu}_{x_{1}}(dx_{2:N}) + \lambda_1 \right\|_{L^q(\mu^1)}\\
&+ \sum_{t = 2}^{N} \int \inf_{\lambda_t \in \mathbb{R}} \left\|\int \partial_{t} f(x) \,\bar{\mu}_{x_{1:t}}(dx_{t+1:N}) + \lambda_t \right\|_{L^q(\mu_{x_{1:t-1}})} \,\mu(dx_{1:t-1}).
\end{align*}
To get the lower bound, we make a specific choice of $(x_{1:t-1}, y_{1:t-1}) \mapsto \gamma_{x_{1:t-1}, y_{1:t-1}} \in \Pi_\delta^{\mathcal{M}}(\mu_{x_{1:t-1}}, \cdot)$ similarly to the Theorem \ref{dro:control_sensitivity}. In order to achieve this, we use Proposition \ref{prop:martingale_sensitivity_duality} to obtain Borel measurable mappings $x_{1:t} \mapsto T_t(x_{1:t})$ satisfying $\|T_t(x_{1:t-1}, \cdot)\|_{L^p(\mu_{x_{1:t-1}})}\le 1$ and $\int T_t(x_{1:t}) \, \mu_{x_{1:t-1}}(dx_t) = 0$, such that
\begin{align*}
\int \left[\int\partial_t f(x) \, \bar{\mu}_{x_{1:t}}(dx_{t+1:N}) \right] T_t(x_{1:t}) \, \mu_{x_{1:t-1}}(dx_t) = \inf_{\lambda_t \in \mathbb{R}} \left\|\int \partial_t f(x) \, \bar{\mu}_{x_{1:t}}(dx_{t+1:N}) + \lambda_t\right\|_{L^q(\mu_{x_{1:t-1}})}.
\end{align*}
Then we set $\gamma^{\delta}_{x_{1:t-1}, y_{1:t-1}} := (x_t, x_t + \bar \delta T_t(x_{1:t}))_\# \mu_{x_{1:t-1}} $ for $\bar \delta := \delta/(1 + \delta)$. By definition, $\gamma^\delta_{x_{1:t-1}, y_{1:t-1}} \in \Pi^\mathcal{M}_\delta(\mu_{x_{1:t-1}}, \cdot)$, hence we obtain the following lower bound:
\begin{align*}
V^{\mathcal{M}}(\delta) - V^{\mathcal{M}}(0) \geq \int f(y) - f(x) \, \gamma^\delta(dx, dy),
\end{align*}
where we set $\gamma^\delta := \gamma^{1, \delta} \otimes \ldots \otimes \gamma^{\delta}_{x_{1:N-1}, y_{1:N-1}}$. Using the dominated convergence theorem and the growth condition of $f$ we obtain 
\begin{align*}
\Upsilon^\mathcal{M} &\geq \inf_{\lambda_1 \in \mathbb{R}} \left\|\int \partial_{1} f(x) \,\bar{\mu}_{x_{1}}(dx_{2:N}) + \lambda_1 \right\|_{L^q(\mu^1)}\\
&+ \sum_{t = 2}^{N} \int \inf_{\lambda_t \in \mathbb{R}} \left\|\int \partial_{t} f(x) \,\bar{\mu}_{x_{1:t}}(dx_{t+1:N}) + \lambda_t \right\|_{L^q(\mu_{x_{1:t-1}})} \,\mu(dx_{1:t-1}).
\end{align*}
\end{proof}

\bibliographystyle{siam}
\bibliography{bib}

\end{document}